\documentclass[12pt, reqno]{amsart}
\usepackage{amsfonts, amssymb}
\usepackage{amsmath, amscd}
\usepackage{color}
\usepackage[colorlinks=true, pdfstartview=FitV, linkcolor=blue, citecolor=blue, urlcolor=blue]{hyperref}
\usepackage{tikz}
\usepackage{graphicx}
\usepackage{mathtools}
\usetikzlibrary{arrows, decorations.markings}
\usetikzlibrary{decorations.markings, arrows.meta}
 \usepackage[all]{xy}
\oddsidemargin .2in
\evensidemargin .2in
\textwidth 6in

\newlength{\bibitemsep}\setlength{\bibitemsep}{.2\baselineskip plus .05\baselineskip minus .05\baselineskip}
\newlength{\bibparskip}\setlength{\bibparskip}{1pt}
\let\oldthebibliography\thebibliography
\renewcommand\thebibliography[1]{%
  \oldthebibliography{#1}%
  \setlength{\parskip}{\bibitemsep}%
  \setlength{\itemsep}{\bibparskip}%
}

\theoremstyle{plain}
\newtheorem{theorem}{Theorem}[section]
\newtheorem{lemma}[theorem]{Lemma}
\newtheorem{proposition}[theorem]{Proposition}

\theoremstyle{definition}
\newtheorem{remark}[theorem]{Remark}
\newtheorem{definition}[theorem]{Definition}

\newtheorem{example}[theorem]{Example}

\newcommand{\nc}{\newcommand}
\nc{\delete}[1]{}
\nc{\red}{\textcolor{red}}
\nc{\blue}{\textcolor{blue}}
\nc{\cal}[1]{{\mathcal #1}}
\nc{\mf}[1]{{\mathfrak #1}}
\nc{\mc}[1]{\mathcal{#1}}
\nc{\bb}[1]{\mathbb{#1}}
\nc{\f}[1]{\mathfrak{#1}}
\nc{\mbf}[1]{\mathbf{#1}}
\nc{\msf}[1]{\mathsf{#1}}
\nc{\scr}[1]{\mathscr{#1}}
\nc{\on}[1]{\operatorname{#1}}
\nc{\onsf}[1]{\operatorname{{\sf #1}}}
\nc{\onbf}[1]{\operatorname{{\bf #1}}}
\font\cyr=wncyr10
\nc{\sha}{{\mbox{\cyr X}}}
\nc{\udim}{\underline{\onsf{dim}}}
\nc{\guvw}{g_{U,V}^{W}}
\nc{\htt}{\onsf{ht}}
\nc{\gr}{\onsf{gr}}
\nc{\cf}{\onsf{cf}}
\nc{\ad}{\onsf{ad }}
\nc{\Ad}{\onsf{Ad}}
\nc{\id}{\onsf{Id}}
\nc{\Fr}{\onsf{Fr}}
\nc{\Der}{\onsf{Der}}
\nc{\End}{\onsf{End}}
\nc{\tor}{\onsf{Tor}}
\nc{\Ext}{\onsf{Ext}}
\nc{\ext}{\onsf{ext}}
\nc{\Fil}{\onsf{Fil}}
\nc{\Hom}{\onsf{Hom}}
\nc{\grhom}{\onsf{grHom}}
\nc{\ch}{\onsf{ch}}
\nc{\Ch}{\onsf{Ch}}
\nc{\ind}{\onsf{Ind}}
\nc{\coind}{\onsf{Coind}}
\nc{\Mod}{\onsf{-Mod}}
\nc{\biMod}{\onsf{-biMod}}
\nc{\Poiss}{\onsf{-Poiss}}
\nc{\grmod}{\onsf{-grMod}}
\nc{\vamod}{\onsf{-VAMod}}
\nc{\res}{\onsf{Res}}
\nc{\soc}{\onsf{Soc}}
\nc{\rad}{\onsf{Rad}}
\nc{\Aut}{\onsf{Aut}}
\nc{\Dist}{\onsf{Dist}}
\nc{\Lie}{\onsf{Lie}}
\nc{\Ker}{\onsf{Ker}}
\nc{\im}{\onsf{Im}}
\nc{\wt}{\onsf{wt}}
\nc{\st}{\onsf{St}}
\nc{\diag}{\onsf{Diag}}
\nc{\rep}{\onsf{rep}}
\nc{\Set}{\onsf{Set}}
\nc{\sSet}{\onsf{sSet}}
\nc{\smCat}{\onsf{smCat}}
\nc{\spec}{\onsf{spec}}
\nc{\Sym}{\onsf{Sym}}
\nc{\Vecs}{\onsf{Vec}^{s}}
\nc{\colim}{\operatornamewithlimits{\underset{\longrightarrow}{lim}}}
\nc{\graphdot}{\onwithlimits{\bullet}}
\nc{\lrarrows}{\onwithlimits{\leftrightarrows}}
\nc{\interval}[1]{\mathinner{#1}}
\nc{\blist}{\begin{list}{\rom{(\roman{enumi})}}{\setlength{\leftmargin}{0em}
\setlength{\itemindent}{7ex}
\setlength{\labelsep}{2ex}\setlength{\listparindent}{\parindent}
\usecounter{enumi}}}
\nc{\elist}{\end{list}}
\nc{\subsub}[1]{\noindent{\bf #1}}

\usepackage{etoolbox}
\apptocmd{\thebibliography}{\setlength{\itemsep}{5pt}}{}{}

\allowdisplaybreaks

\makeatletter \def\l@subsection{\@tocline{2}{0pt}{1pc}{5pc}{}} \def\l@subsection{\@tocline{2}{0pt}{2pc}{2.5pc}{}} \makeatother

\title[dg vertex operator algebras]{Differential graded vertex operator algebras and their Poisson algebras}
\author[A. Caradot]{Antoine Caradot$^1$}
\address{$^1$School of Mathematics and Statistics\\
Henan University \\
Kaifeng, Henan, CHINA}
\email{caradot@henu.edu.cn}
\author[C. Jiang]{Cuipo Jiang$^2$}
\address{$^2$School  of Mathematics\\
Shanghai Jiao Tong University \\
Shanghai, CHINA}
\email{cpjiang@math.sjtu.edu.cn}
\author[Z. Lin]{Zongzhu Lin$^3$}
\address{$^3$Department of Mathematics\\
Kansas State University \\
Manhattan, KS 66506, USA}
\email{zlin@math.ksu.edu}
\date{\today}
\thanks{2020 {\it Mathematics Subject Classification:}
Primary 17B69, 16E45; Secondary 17B63}

\begin{document}
\maketitle

\begin{abstract} 
 In this paper, we define differential graded vertex operator algebras and the algebraic structures on the associated Zhu algebras and $C_2$-algebras. We also introduce the corresponding notions of modules, and investigate the relations between the different module categories. 
\end{abstract}

\tableofcontents

 \section{Introduction}
 There have been many versions of vertex algebras such as vertex super algebras and graded vertex algebras (\cite{Dong-Han}). The differential graded vertex algebras are natural combinations. The motivation for studying differential graded vertex algebras is from both homological algebra, and algebraic geometry. In algebraic geometry, computing the Donaldson-Thomas invariants in a series of work by Joyce, where the vertex algebras are actually cohomologies of certain moduli stacks \cite{Joyce, Latyntsev}. There are also quantum vertex algebras and vertex algebras over fields of positive characteristics. Differential graded vertex algebras are also a testing model for defining vertex algebras over more general braided monoidal categories (\cite{Joyal-Street}) with a goal of defining derived vertex algebras over triangulated symmetric tensor categories and over algebraic stacks. 
 
In this paper, we simply replace the symmetric tensor tensor category of $\msf k $-vector spaces (over a field $\msf k$ of characteristic zero) by the category of differential complexes of $\msf k$-vector spaces. This simple change already generates unexpected structures on the Zhu algebra, which is not only an associated algebra, but also has two filtrations, serving as a deformations of differential graded Poisson algebras (\cite{Lu-Wang-Zhang}). This paper is focused over the abelian category of differential complexes, where the morphism space and internal homomorphism objects are differential in contrast to the classical category of vector spaces. We will study the relationship between the associated $C_2$-algebras and Zhu algebras in the category of differential filtered algebras, which is another model for replacement of the category of vector spaces. 
 
One of the main point of the categorical approach is to understand and interpret the loop space, or rather its functor algebra $\msf k[t, t^{-1}]$. In fact, this algebra should be understood as $\bb A^1 $ with the  divisor $\{0\}$, while the vector operators should be understood as continuous distributions. In this way, both the $C_2$-algebra and the Zhu algebra will carry geometric interpretations. This will formulated in a different work. 
 
The outline of the paper is as follows: Section~\ref{sec:2} recalls basic concepts of the symmetric tensor category of differential category as well as the loop complexes. Since the Zhu algebra is not a differential graded (dg) algebra, we introduce the concept of differential filtered algebras (algebra objects in the symmetric tensor category of differential filtered vector spaces). The Zhu algebra has another natural filtration (by the weights) making it a bifiltered algebra. Furthermore, it is almost commutative (cf. \cite{Chriss-Ginzburg}), which makes the associated graded algebra a dg poisson algebra. We will also need the concept of representations of these algebra objects in their appropriated tensor category. 
 
Section~\ref{sec:3} is devoted to defining dg vertex algebras as well as their properties. For each dg vertex algebra, its cohomology is automatically a graded vertex algebra. It is a $\bb Z$-graded lift of a super vertex algebra. We will not give many examples in this paper. In an up-coming paper (\cite{Caradot-Jiang-Lin5}), we define dg vertex Lie algebras from which one applies the standard construction to construct dg vertex operator algebras from any dg Lie algebra. 
 
Section~\ref{sec:4} defines various module categories which is reflected by which tensor category we are viewing the vertex algebras in. 
 
In Section~\ref{sec:5}, we construct the $C_2$-algebra $R^{[*]}(V^{[*]})$ of dg vertex algebra $V^{[*]}$, which is naturally a dg Poisson algebra. There is a natural functor from the category of dg vertex algebra modules to the category of Poisson modules of the $C_2$-algebra. 
 
The Zhu algebra $A(V^{[*]})$ of a dg vertex algebra is studied in Section~\ref{sec:6}. This algebra is much more complicated than expected. The dg structure was destroyed in the construction as the differential gradation it not transferred to the quotient, largely due the fact that differential degree and conformal weights are not separated as we regarded $t$ (coming from the loop algebra) as have cohomological degree 2 and conformal weight 1. In the definition, one could define $t$ as having cohomological degree $2k$. The choice of $k=1$ was from Joyce's construction of vertex algebras from cohomological Hall algebras \cite{Joyce, Latyntsev}, where the algebra $ \msf k[t]=H^*_{G_m}(\{pt\})$ with $t$ in the cohomological degree 2.  If $k=0$, then $A(V^{[*]})$ would be a filtered associated dg algebra which is almost commutative.  For $k=1$, $A(V^{[*]})$ has both a cohomological filtration and weight filtration. This algebra and its module categories play the role of the category of spectral sequences. 
 
 Section~\ref{sec:7} is devoted to the study of the left, right, and bifiltered modules of the bifiltered Zhu algebra, as well as their associated graded algebra and modules. The associated weight graded dg algebra is independent of the order in which we carry out the associated graded constructions. This associated graded construction is exactly that of taking limits in dealing with spectral sequences. 
 
In Section~\ref{sec:8}, we compare the graded dg Poisson algebras $R^{[*]}(V^{[*]})$ and the associated (graded) dg Poisson algebra $\gr(A(V^{[*]}))$ - as well as their modules.  What we get is a diagram in the 2-category of categories.  
 
As mentioned earlier, the category of differential complexes we consider is an abelian category. It has a natural structure of $2$-category with $2$-morphisms being homotopy equivalences. Thus one can naturally work in the homotopy category by defining homotopy Lie algebras and homotopy vertex algebras, as homotopy Poisson algebras and repeat most of the constructions in this paper.   We will leave this in future work deal this general case.  

In \cite{Caradot-Jiang-Lin1} and \cite{Caradot-Jiang-Lin2}, given a presentation of $R(V)$, we constructed the Tate resolution of the trivial $R(V)$-module, which is a graded commutative dg algebra. Its degree zero part is the polynomial algebra $R(V)$, and other degrees are free modules over the degree $0$ part. In the dg setting, $R^{[*]}(V^{[*]})$ is a graded commutative dg algebra, and one can construct the Tate resolution $T(R^{[*]}(V^{[*]}))$ of $R^{[*]}(V^{[*]})$. We conjecture that there should be a Tate resolution $T(V^{[*]})$ of a dg vertex algebra, whose $C_2$-algebra is a Tate resolution of a dg algebra. Now, replacing the polynomial algebra should be the universal dg vertex algebra corresponding to a dg Lie algebra. We expect that this construction will be a version of BGG resolution in the category of rational affine vertex algebras. This should provide a resolving dg vertex algebra, as comparison with a resolution of dg schemes in \cite{Behrend}.

\delete{Given a vertex operator algebra $(V, Y, \mathbf 1, \omega)$, one can attach two associative algebras. One  is the Zhu algebra $A(V)$ and the other is the Poisson algebra $R(V)$. In this paper, we compare the representation theories of the vertex operator algebra $V$ with those of the Zhu algebra $A(V)$ and the $C_2$-algebra $R(V)$.  The motivation of such comparison arises from a geometric point of view. The goal is to see what analogous roles the two associative algebras $R(V)$ and $A(V)$ play in terms of representation theory, and how much information about the representation theory of the vertex operator algebra is preserved by the representation theories of these two associative algebras. 

One prototypical example is the universal affine vertex operator algebra $ V=V_k(\mathfrak g)$ of a simple Lie algebra $\mathfrak g$ at a non-critical level $k$. In this case, the associative algebras satisfy $A(V)=U(\mathfrak g)$ and $R(V)=\operatorname{Sym}^*(\mathfrak g)$. The Zhu algebra $ A(V)$ always carries a filtered algebra structure with an increasing filtration:
 \[ \cdots \subseteq F^pA(V)\subseteq  F^{p+1}A(V)\subseteq \cdots \] 
 of vector subspaces. With respect to this filtered algebra structure, $A(V)$ is almost commutative in the sense that:
  \[ [F^pA(V), F^qA(V)]\subseteq F^{p+q-1}A(V).\] 
Thus the associated graded algebra $ \gr A(V)$ is commutative and automatically carries a Poisson algebra structure with the Poisson bracket $\{ . , . \}$, which has degree $-1$.  In general there is surjective homomorphism of Poisson algebras $ R(V)\longrightarrow  \gr A(V)$. However this homomorphism is not an isomorphism.  

From the deformation theory viewpoint, $A(V)$ is a deformation of $\gr A(V)$. In the case of $ V=V_k(\mathfrak g)$, we have the isomorphism $R(V)=\gr A(V)=\operatorname{Sym}^*(\mathfrak g)$, the algebra of regular functions on the Poisson variety $\mathfrak g^*$. This suggests that $A(V)$ plays the role of the algebra of differential operators while $R(V)$ plays the role of function algebra of the cotangent bundle.   

Comparisons of the representation theory of $V$ and that of $A(V)$ was the main motivation in Zhu's thesis to define and study the Zhu algebra $A(V)$. For example, there is a one-to-one correspondence between the irreducible admissible representations of $V$ and the irreducible representations of $A(V)$. More functorial studies of these irreducible representations as well as specific constructions of irreducible modules involving higher Zhu algebras were carried out in \cite{Dong-Jiang} and \cite{Dong-Li-Mason2}. The study of the fusion product were also carried out in \cite{Dong-Ren} and \cite{Huang-Yang}. Recently, Huang has packed all those higher Zhu algebras as well as those bimodules in \cite{Dong-Jiang} to form a large associative algebra in order to study the fusion product of $V$. The goal of this paper is, purely from the representation theory viewpoint, to compare certain representation theoretic invariants of these algebras. 

As already observed the more functorial version of the comparison between representations, the $V$-modules should corresponds to $A(V)$-bimodules. On the other hand, the $C_2$-algebra $R(V)$ is a Poisson algebra and the representation theory should reflect the Poisson structure. The category of modules we will consider is the Poisson modules. We will establish functorial relations among these these different algebras. }

\newcommand{\cc}{\ensuremath{\mathbb{C}}}

\newcommand{\lefttorightarrow}{
\hspace{.1cm}
{
\setlength{\unitlength}{.30mm}
\linethickness{.09mm}
\begin{picture}(8,8)(0,0)
\qbezier(7,6)(4.5,8.3)(2,7)
\qbezier(2,7)( - 1.5,4)(2,1)
\qbezier(2,1)(4.5, - .3)(7,2)
\qbezier(7,6)(6.1,7.5)(6.8,9)
\qbezier(7,6)(5,6.1)(4.2,4.4)
\end{picture}
\hspace{.1cm}
}}

\section{Basics of the category of differential graded complexes} \label{sec:2}

The main reference of this section is Chapter 12 of \cite{Stacks-Project}.

\subsection{The categories of super braided graded vector spaces and graded differential (cochain) complexes} 
Let $ \Vecs$ be the symmetric tensor category of graded $\bb C$-vector spaces. Set $V^{[*]}=\oplus_{p\in \bb Z} V^{[p]}$  with the Koszul braiding  $T_{V^{[*]},U^{[*]}}: V^{[*]}\otimes U^{[*]}\to U^{[*]}\otimes V^{[*]}$ defined by 
\[ 
T_{V, U}(v\otimes u)=(-1)^{|v||u|} u\otimes v
\]
for all homogeneous vectors $v\in V^{[|v|]}$ and $ u\in U^{[|u|]}$ of degrees $ |v|$ and $ |u|$ respectively. 

Given two graded vector spaces $ V^{[*]}=\oplus_{n}V^{[n]}$ and $ U^{[*]}=\oplus_n U^{[n]}$, the homomorphisms in $\Vecs$ is defined to be $ \Hom^{[0]}_{\bb C}(V^{[*]}, U^{[*]})$ consisting linear maps $f: V^{[*]}\to U^{[*]} $ such that $ f(V^{[n]})\subseteq U^{[n]}$. 

We denote the internal hom space $ \Hom_{\Vecs}^{[*]} (V^{[*]}, U^{[*]})=\oplus_p \Hom_{\bb C}^{[0]}(V^{[*]}, U[p]^{[*]})$. Here $U[p]^{[*]}$ is the same vector space $U^{[*]}$  with grading defined by $ U[p]^{[n]}=U^{[n+p]}$. We remark that if $V$ has only finitely many non-zero homogeneous components, we have $ \Hom_{\bb C}^{[*]}(V^{[*]}, U^{[*]})=\Hom_{\bb C}(V^{[*]}, U^{[*]})$ as $\bb C$-vector spaces for all graded vector spaces $U^{[*]}$. If $U$ is not graded, the equality does not hold and we will use the notation $\Hom_{\bb C}^{[*]}$ to distinguish with nongraded $\Hom_{\bb C}$. In particular $\on{End}^{[*]}_{\bb C} (V^{[*]})=\Hom_{\bb C}^{[*]}(V^{[*]}, V^{[*]})$ is a graded associative algebra.

Let $ \Ch $ be the category of differential (cochain) complexes of $\bb C$-vector spaces, with objects $(V^{[*]}, d^{[*]})$, where $ V^{[*]}$ is an object in $\Vecs$ and $ d^{[*]}\in \Hom^{[1]}(V^{[*]}, V^{[*]})$ such that $ d^{[*]} \circ d^{[*]}=0$. Here $ d^{[p]}: V^{[p]}\to V^{[p+1]}$ is a linear map. The morphisms $ f:(V^{[*]}, d_V^{[*]}) \to (W^{[*]}, d_W^{[*]})$  in $\Ch$ are chain maps, i.e., $f=(f^{[p]}: V^{[p]}\to W^{[p]})$ such that $ d_W^{[p]}\circ f^{[p]}=f^{[p+1]}\circ d_V^{[p]}$. We use $ \Ch((V^{[*]}, d_V^{[*]}) , (W^{[*]}, d_W^{[*]}))$ to denote the set of all chain maps. It is a vector space, making $ \Ch$ an abelian category with small limit and colimit.  We remark that $\Ch((V^{[*]}, d_V^{[*]}) , (W^{[*]}, d_W^{[*]}))\subseteq \Hom^{[0]}(V^{[*]}, W^{[*]})$ and the equality does not hold in general. 

We define $Z^{[p]}(V^{[*]})=\on{Ker}(d_V^{[p]}: V^{[p]}\to V^{[p+1]})$ and call its elements $p$-cocycles. Then $Z^{[*]}(V^{[*]})\subseteq V^{[*]}$ is subcomplex with zero differential. Similarly, $ B^{[p]}(V^{[*]})=d^{[p-1]}(V^{[p-1]})$ defines a subcomplex $B^{[*]}$ of $ Z^{[*]}$, and the cohomology complex denoted by $H^{[*]}(V^{[*]}, d_V^{[*]})=Z^{[*]}/B^{[*]}$ is also a complex with zero differential. We note that $Z^{[*]}$, $B^{[*]}$, and $H^{[*]}$ are additive functors from $ \Ch$ to $\Ch$ with image in the full subcategory $\Vecs$.

The degree shift functor $[n]: \Ch\to\Ch$ is defined by $(V^{[*]}, d^{[*]})\to (V[n]^{[*]}, d[n]^{[*]})$ by $ V[n]^{[p]}=V^{[p+n]}$ with the differential $ d[n]^{[p]}=(-1)^n d^{[n+p]}$. Clearly, $[n]: \Ch\to\Ch$ is an additive endofunctor on $ \Ch$ and $[n]\circ [m]=[n+m]$.  It follows that $ H^{[p]}((V[n]^{[*]}, d[n]^{[*]})=H^{[p+n]}((V^{[*]}, d^{[*]})$, but the natural isomorphism is not the identity map.

\subsection{Tensor product and internal homomorphisms}
The tensor product in $ \Ch$ is given as 
\[ (V^{[*]}, d_V^{[*]})\otimes (U^{[*]}, d_U^{[*]})=(V^{[*]}\otimes U^{[*]}, d^{[*]}_{V\otimes U} )
\]
with $ d_{V\otimes U}(v\otimes u)=d_V(v)\otimes u+(-1)^{|v|}v\otimes d_U(u)$ for homogenous $v\in V^{[*]}$ and $ u\in U^{[*]}$.

The braiding is the same as in $\Vecs$ by noticing
\[ 
d_{U\otimes V}(T_{V, U}(v\otimes u))=(-1)^{|v||u|}(d_U(u)\otimes v+(-1)^{|u|}u\otimes d_V(v))=T_{V, U}(d_{V\otimes U}(v\otimes u)).
\]
So $ T_{V, U}$ is a chain map, and thus a morphism in the category $ \Ch$. Other conditions for making $ \Ch$ into a strict symmetric tensor category over $ \bb C$ can be verified as exercises. 

Let $ \bb C$ be the complex with $\bb C$ at degree zero. Then $ \bb C[n]$ is a complex with $\bb C$ placed in degree $-n$. The functor $ [n]$ can be represented by $\bb C[-n]$ (cf. \eqref{hom-shift} below). Indeed, we have $V[n]^{[*]}=\Hom^{[*]}(\bb C[-n],V^{[*]})=(\bb C[n]) \otimes V^{[*]}$.

Similarly $\Hom^{[*]}(V^{[*]}, W^{[*]})$ with the differential defined by
\begin{align}\label{eq:diff_end}
 d(f)(v)=d_{W}(f(v))-(-1)^{|f|}f(d_V(v))
\end{align}
for all $ f\in \Hom^{[*]}(V^{[*]}, W^{[*]})$ and $ v\in V^{[*]}$ homogeneous, is also an object in $\Ch$. 

The standard adjointness linear maps induces 
\[ 
\Ch((U^{[*]}, d^{[*]}_U)\otimes (V^{[*]}, d^{[*]}_V), (W^{[*]}, d^{[*]}_W))
=\Ch((U^{[*]}, d^{[*]}_U), \Hom_{\bb C}^{[*]}(V^{[*]}, W^{[*]})).
\] 
Even more generally we have 
\begin{align}\label{hom-tensor-adjoint}
 \Hom^{[*]}(U^{[*]} \otimes V^{[*]}, W^{[*]})
=\Hom^{[*]}(U^{[*]}, \Hom^{[*]}(V^{[*]}, W^{[*]})).
\end{align}
Furthermore
\begin{align}\label{hom-shift}
 \Hom_{\bb C}^{[*]}(V^{[*]}[-n], W^{[*]})=\Hom_{\bb C}^{[*]}(V^{[*]}, W^{[*]}[n])=\Hom_{\bb C}^{[*]}(V^{[*]}, W^{[*]})[n].
 \end{align}
This can also be seen as a special case of \eqref{hom-tensor-adjoint} for the special case $U^{[*]}=\bb C[-n]$ (which is a compact object in $\Ch$, having finite dimensional total space).

\subsection{Associative dg algebras and dg Lie algebras}
For any object $(V^{[*]}, d_V^{[*]})$, $\End_{\bb C}^{[*]}(V^{[*]})$ is also an object in $\Ch$ making $\End^{[*]}(V^{[*]})$ an associative dg algebra  (cf. \cite{Lu-Wang-Zhang}).  We will use the graded commutator (called super commutator) in $\End^{[*]}(V^{[*]})$ for homogeneous operators $ f, g$ given by 
\[
[f, g]^s=f\circ g-(-1)^{|f||g|}g\circ f.
\]

Then $\End^{[*]}(V^{[*]})$ becomes a dg Lie algebra of degree $0$.  A dg Lie algebra (of degree zero) is an object $(L^{[*]}, d_L^{[*]})$ with a zero cocycle $[-, -]\in Z^{[0]}(\Hom^{[*]}(L^{[*]}\otimes L^{[*]}, L^{[*]}))$, i.e., 
\[ d_L[x,y]=[d_L(x), y]+(-1)^{|x|}[x, d_L(y)] \]
satisfying
\begin{enumerate}\setlength\itemsep{5pt}
\item[(i)] (Skew symmetry) $  [x, y]=-(-1)^{|x||y|}[y, x] $;
\item[(ii)] (Jacobi identity)
\[ [x, [y,z]]=[[x,y], z]+(-1)^{|x||y|}[y, [x, z]]\]
for all homogeneous $x, y, z\in L^{[*]}$.
\end{enumerate}
Note that $ \End^{[*]}(L^{[*]})$ is a dg Lie algebra. Set $ \ad: L^{[*]} \to \End^{[*]}(L^{[*]})$ defined by $ x\to \ad(x)=[x, -]$. The Jacobi identity is equivalent to 
\[ \ad([x,y])=\ad(x)\circ \ad(y)-(-1)^{|\ad(x)||\ad(y)|}\ad(y)\circ\ad(x).\]
Thus under the skew-symmetry, the bracket $[-,-]$ provides a dg Lie algebra structure on $L$ if and only if $\ad: L^{[*]} \to \End^{[*]}(L^{[*]})_{Lie}$ is a dg Lie algebra homomorphism (of degree 0).  

In general (cf. \cite{Lu-Wang-Zhang}), a dg Lie algebra of degree $p$ is an object $(L^{[*]}, d_L^{[*]})$ in $\Ch$ together with  $ [-, -]\in Z^{[0]}(\Hom^{[*]}(L[-p]^{[*]} \otimes L[-p]^{[*]}, L[-p]^{[*]})$ making $ (L[-p]^{[*]}, [-,-]) $ dg Lie algebra of degree zero. Noting that for $ x\in L^{[*]}$ homogeneous, we have $ |x|=|x|_{L[n]}+n$, thus $ |x|+p=|x|_{L[-p]}$. It follows that we need the following conditions: \\
(i) Skew symmetric $  [x, y]=-(-1)^{(|x|+p)(|y|+p)}[y, x] $;\\
(ii) Jacobi identity
\[ [x, [y,z]]=[[x,y], z]+(-1)^{(|x|+p)(|y|+p)}[y, [x, z]]\]
for all homogeneous $x, y, z\in L^{[*]}$. 

Using the chain complex isomorphism
\[
\Hom^{[*]}(L[-p]^{[*]}\otimes L[-p]^{[*]}, L[-p]^{[*]})\cong \Hom^{[*]}(L^{[*]}\otimes L^{[*]}, L[p]^{[*]})
\]
defined by $ \phi\mapsto \widetilde{\phi}$ with $\widetilde{\phi}(x\otimes y)=(-1)^{p|x|}\phi(x\otimes y)$, we get an isomorphism 
\[
\begin{array}{rcl}
Z^{[0]}(\Hom^{[*]}(L[-p]^{[*]} \otimes L[-p]^{[*]}, L[-p]^{[*]})) & \stackrel{\cong}{\to} & Z^{[0]}(\Hom^{[*]}(L^{[*]}\otimes L^{[*]}, L[p]^{[*]})), \\[5pt]
 &  \stackrel{\cong}{\to} & Z^{[p]}(\Hom^{[*]}(L^{[*]}\otimes L^{[*]}, L^{[*]})).
 \end{array}
\]
The first isomorphism is the map $ \phi\mapsto \widetilde\phi $ and the second one is the identity map in \eqref{hom-shift}. Thus a dg Lie algebra of degree $p$ is an object $(L^{[*]},d_L^{[*]})$ with a $p$-cocycle in $Z^{[p]}(\Hom^{[*]}(L^{[*]}\otimes L^{[*]}, L^{[*]}))$ satisfying the Lie algebra relations.

Here $[-, -]\in Z^{[p]}(\Hom^{[*]}(L^{[*]}\otimes L^{[*]}, L^{[*]}))$ is a $p$-cocycle means $d_{L}\circ [-, -]=(-1)^p[-, -]\circ d_{L\otimes L}$, i.e.,
\[ d_L[x,y]=(-1)^p[d_L(x), y]+(-1)^{|x|+p}[x, d_L(y)]\]
for all $ x,y \in L$ homogeneous.  Define $[x,y]'=(-1)^{p|x|}[x,y]$. Then 
\[ d_L[x,y]'=[d_L(x), y]'+(-1)^{|x|+p}[x, d_L(y)]'.\]
Here $ |x|+p=|x|_{L[-p]}$. Then $ [-,-]'\in Z^{[0]}(\Hom^{[*]}(L[-p]^{[*]} \otimes L[-p]^{[*]}, L[-p]^{[*]}))$.  A homomorphism between two dg Lie algebras is a chain map satisfying the expected conditions.  

A dg module for a dg Lie algebra $ (L^{[*]}, d_L^{[*]})$ is an object in $(M^{[*]}, d_M^{[*]})$ in $\Ch$ together with a homomorphism of dg Lie algebras $(L^{[*]}, d_L^{[*]})\to \End^{[*]}(M^{[*]})$ (see \cite{Lu-Wang-Zhang} for details).

If follows from the definitions that we have 
\[ \Ch((V^{[*]}, d_V^{[*]}) , (W^{[*]}, d_W^{[*]}))=Z^{[0]}(\Hom^{[*]}(V^{[*]}, W^{[*]})).\]
Thus chain maps are the $0$-cocycles in $\Hom^{[*]}(V^{[*]}, W^{[*]})$. 

Clearly we have a functor $ \Vecs\to \Ch$ by taking the zero differential. There is also a cohomological functor $H^{[*]}: \Ch\to \Vecs$. The above mentioned functor $ \Vecs\to \Ch $ is a section of the cohomology functor. In this paper, we will not talk about the homotopy for vertex operator algebras, which will be dealt in a later paper.

Given a complex $(V^{[*]}, d_V^{[*]})$ in $ \Ch$ over a field $ \mbf k$,  the associative dg algebra $A^{[*]}= \End^{[*]}_{\mbf k}(V^{[*]})$  has a natural dg Lie algebra with the super Lie bracket (of degree 0) defined by 
\[ [f, g]^s=f\circ g-(-1)^{|f||g|}g\circ f\]
for homogeneous linear maps $f$ and $g$ of degrees $|f|$ and $|g|$ respectively. In fact $[-, -]^s\in Z^{[0]}(\Hom_{\mbf k}^{[*]}(A^{[*]}\otimes A^{[*]}, A^{[*]}))$ since 
\[ d_A [f, g]^s=[d_Af, g]^s+(-1)^{|f|}[f, d_A g]^s.\]

\delete{
The graded Lie algebra satisfies  (for homogeneous elements $ f, g, h\in \End_{\mbf k}^{[*]}(V)$)
\begin{itemize}\setlength\itemsep{5pt}
\item  $[f, g]^s+(-1)^{|f||g|}[g, f]^s=0$ (skew symmetric), and 
\item $ [f, [g, h]^s]^s=[[f, g]^s, h]^s +(-1)^{|f||g|}[g, [f, h]^s]^s$ (graded derivation). 
\end{itemize} 
}

Let $R^{[*]}=\bigoplus_{p}R^{[p]}$ be a dg algebra. The multiplication $m: R^{[*]} \otimes R^{[*]} \to R^{[*]}$ is a degree $0$ chain map (a morphism in  $\Ch$). We do not assume the multiplication to be associative. We define a graded derivation $D: R^{[*]} \to R^{[*]}$ of degree $p$ as an element $D\in \End^{[p]}(R^{[*]})$ satisfying 
\[ D(m(x\otimes y))=m(D(x)\otimes y)+(-1)^{p|x|}m(x\otimes D(y))
\]
for all $ x, y \in R$ homogeneous. We will simply write $xy=m(x\otimes y)$ in the following. 

Let $ \Der^{[*]}_{\mbf k}(R^{[*]})$ be the graded subspace of $\End_{\mbf k}^{[*]}(R^{[*]})$ generated by all graded derivations $ D$, and let $d$ be the differential of the complex $\End_{\mbf k}^{[*]}(R^{[*]})$. For $ D\in \Der_{\mbf k}^{[p]}(R^{[*]})$, we have
\begin{align*} d(D)(x y)&=d_R(D(x y))-(-1)^{p}D(d_R(x y))),\\
&=d_R(D(x)y+(-1)^{p|x|}x D(y))-(-1)^{p}D(d_R(x) y+(-1)^{|x|}x d_R(y)),\\
&=d_R(D(x)) y+(-1)^{p+|x|}D(x)d_R(y)+(-1)^{p|x|}d_R(x)D(y)\\
&\quad  -(-1)^{p}D(d_R(x)) y-(-1)^{p(|x|+2)}d_R(x)D(y) +(-1)^{(p+1)|x|}xd_R(D(y))\\
&\quad   -(-1)^{|x|+p}D(x) d_R(y)-(-1)^{p|x|+|x|+p}xDd_R(y)),\\
&=(d_R\circ D-(-1)^{p}D\circ d_R)(x)y+(-1)^{(p+1)|x|}(x(d_R\circ D-(-1)^{p}D\circ d_R)(y),\\
&=d(D)(x)y+(-1)^{(p+1)|x|}xd(D)(y).
\end{align*}
Hence $d(D)\in \Der^{[p+1]}(R^{[*]})$ and $\Der_{\mbf k}^{[*]}(R^{[*]})$ is subcomplex of $\End_{\mbf k}^{[*]}(R^{[*]})$. We leave it for the reader to verify that $\Der_{\mbf k}^{[*]}(R^{[*]})$ is a dg Lie subalgebra of $ \End_{\mbf k}^{[*]}(R^{[*]})$ of degree $0$ (cf.  \cite{Behrend}).

We remark that in \cite{Lu-Wang-Zhang}, one can define associative algebras with multiplication of other degrees, just like dg Lie algebras of other degrees. We will not go in that generality.

\subsection{dg Poisson algebras and dg Poisson modules}
We define the not necessarily commutative dg Poisson algebra (cf.  \cite{Lu-Wang-Zhang}) as dg associative algebra $A^{[*]}$ with a dg Lie algebra structure $\{-,-\}: A^{[*]} \otimes A^{[*]} \to A^{[*]}$ (of degree zero) such that the map $ A^{[*]}\to \End^{[*]}(A^{[*]})$ defined by $ a\mapsto \{a, -\}$ defines a dg Lie algebra homomorphism $ (A^{[*]}, \{-,-\})\to \Der^{[*]}_{\mbf k}(A^{[*]}_{asso})$, i.e., 
\begin{align*} 
\{a, b\}&=-(-1)^{|a||b|}\{b,a\}\\
 \{\{a, b\}, x\}&=\{a, \{b, x\}\}-(-1)^{|a||b|}\{b, \{a, x\}\}\\
 \{a, x y\}&=\{a, x\}y+(-1)^{|a||x|}x\{a, y\}.
\end{align*}

In this case, $\End^{[*]}(A^{[*]})_{Lie}$ has two dg Lie subalgebras $ \Der^{[*]}(A^{[*]}_{Asso})$ and $ \Der^{[*]}(A^{[*]}_{Lie})$. The second an third conditions say that the map $ \ad: A^{[*]} \to \End^{[*]}(A^{[*]})_{Lie}$ has the image in $\Der^{[*]}(A^{[*]}_{Asso})\cap \Der^{[*]}(A^{[*]}_{Lie})$, making it a dg Lie algebra homomorphism. 

\begin{remark}
If the Lie bracket on $A^{[*]}$ is given by the super commutator, then given a derivation $d \in  \Der^{[*]}(A^{[*]}_{Asso})$ and $x, y \in A^{[*]}$ homogeneous, we have
\begin{align*}
\begin{array}{rcl}
d([x,y]^s) &= & d(xy-(-1)^{|x||y|}y x), \\[5pt]
 & = & d(x)y+(-1)^{[x||d|}xd(y)-(-1)^{|x||y|}(d(y)x+(-1)^{|y||d|}yd(x)), \\[5pt]
 & = & d(x)y-(-1)^{(|x|+|d|)|y|}yd(x)+(-1)^{[x||d|}xd(y)-(-1)^{|x||y|}d(y)x, \\[5pt]
  & = & [d(x),y]^s+(-1)^{|x||d|}[x, dy]^s.
\end{array}
\end{align*}
Thus $d \in \Der^{[*]}(A^{[*]}_{Lie})$ and we have $\Der^{[*]}(A^{[*]}_{Asso}) \subset \Der^{[*]}(A^{[*]}_{Lie})$. However the other direction is unlikely, because the information on the product in $A_{Asso}$ is finer than the one on the Lie product in $A^{[*]}_{Lie}$. Hence a derivation of $A^{[*]}_{Lie}$ might not behave well enough with regards to the product in $A^{[*]}_{Asso}$.
\end{remark}

\begin{example} Let $A^{[*]}$ be an associative dg algebra in $ \Ch$, i.e., there is an associative multiplication $ A^{[*]} \otimes A^{[*]} \to A^{[*]}$ which is a chain map. Then $(A^{[*]}, d_A^{[*]})$ has a dg Lie algebra structure with Lie bracket defined by 
\[ [a, b]^s=ab-(-1)^{|a||b|}b a\]
for all homogeneous $a, b \in A^{[*]}$. Moreover, $A^{[*]}$ is a dg Poisson algebra.  

A filtered associative dg algebra  in $ \Ch$ is a dg algebra $(A^{[*]}, d_A^{[*]})$  equipped with a filtration of subcomplexes 
\[ \cdots\subseteq F^nA^{[*]}\subseteq F^{n+1}A^{[*]}\subseteq \cdots \subseteq A^{[*]}\]
such that $F^m A^{[*]}\cdot F^nA^{[*]}\subseteq F^{n+m}A^{[*]}$. 

In this case, $\gr(F^\bullet A^{[*]})=\bigoplus_{n}F^nA^{[*]}/F^{n-1}A^{[*]}$ is a graded dg algebra, where each component is a subcomplex. The filtration also makes $A_{Lie}^{[*]}$ a filtered dg Lie algebra. The super commutator $ [-,-]^s$ on $A$ descends to $\gr (F^\bullet A^{[*]})$, denoted as $\overline{[-, -]^s}$, making $\gr (F^\bullet A^{[*]})$ a graded dg Lie algebra. 

If we set  $ A_n^{[*]}=F^{n}A^{[*]}/F^{n-1}A^{[*]}$, then $ A^{[*]}_n=\bigoplus_{p}A^{[p]}_n$ is an object in $ \Ch$ and we have 
\[ A^{[p]}_n\cdot A^{[q]}_{m}\subseteq A^{[p+q]}_{n+m}.\]
 We call a filtered dg algebra $F^\bullet A^{[*]}$ almost  commutative  in
$\Ch $ if 
\[ [ F^mA^{[*]}, F^nA^{[*]}]^s\subseteq F^{n+m-1}A^{[*]},\]
i.e., for $ a\in F^m A^{[*]}$, and $ b\in F^nA^{[*]}$ homogenous (in internal differential degree), 
\[
[a,b]^{s}=ab-(-1)^{|a||b|}ba\in F^{m+n-1}A^{[*]}.
\]
In this case, $\gr(F^\bullet A^{[*]})$ has a dg Lie algebra structure with differential complex structure induced from that  of $(A^{[*]}, d_A^{[*]})$ with Lie bracket
\[ 
\{-, -\}^s: \gr(F^\bullet A^{[*]})\otimes \gr(F^\bullet A^{[*]})\to \gr(F^\bullet A^{[*]})
\]
given as follows: if we write $ \overline{a}\in \gr (F^\bullet A^{[*]})$ for the image of $ a \in F^nA^{[*]}$, then 
\[ \{ \overline{a} , \overline{b} \}^s=\overline{[a,b]^s }\in \gr (F^\bullet A^{[*]})_{m+n-1}.\]

\delete{
It follows from the definition that
\[\{ A_{m}^{[p]}, A_{n}^{[q]}\}^s\subseteq A_{m+n-1}^{[p+q]}.
\]
}

Note that $ \gr (F^\bullet A^{[*]})$ is a $\bb Z$-graded object in $\Ch$, which we call graded complex. We  will call the degree in  the differential complex as differential degree or cohomological degree, when there is another gradation  such as in the case of $ \gr (F^\bullet A^{[*]})$. Any filtered algebra structure $F^\bullet A^{[*]}$ on $A^{[*]}$ defines a filtered dg Lie algebra structure on $(A, [-, -]^s)$. Thus $(\gr F^\bullet A^{[*]}, \overline{[-,-]^s})$ is a graded dg Lie algebra. It is isomorphic to the natural graded dg Lie algebra structure of the graded dg associative algebra $\gr F^\bullet A^{[*]}$. If $ F^\bullet A^{[*]}$ is almost commutative, then $\gr F^\bullet A^{[*]}$ is graded commutative and the dg Lie algebra  $(\gr F^\bullet A^{[*]}, [-,-]^s)$ is abelian in the dg sense. 

To distinguish with the differential degree shift functor, we denote $\gr (F^\bullet A^{[*]})_{[1]}$ for the shift of gradation such that $(\gr (F^\bullet A^{[*]})_{[1]})_n=\gr (F^\bullet A^{[*]})_{n+1}$. In this case, there is no differential to worry about. We define a Lie bracket
 \[\{-,-\}_{[1]}: \gr (F^\bullet A^{[*]})_{[1]}\otimes \gr (F^\bullet A^{[*]})_{[1]}\to \gr (F^\bullet A^{[*]})_{[1]}\]
 to be $\{-,-\}^s$ as a linear map. Then $\{-, -\}_{[1]}$ is a chain map for the differential complex and  homogeneous of degree $0$ in the gradation induced from $F^\bullet A^{[*]}$. Furthermore $\gr F^\bullet A^{[*]}$ and $\gr (F^\bullet A^{[*]})_{[1]}$ are both graded commutative with respect to the differential degree, but the Lie brackets are different. Indeed, if $F^\bullet A^{[*]}$ is almost commutative, then $\{-, -\}^s$ is of degree $-1$ in the gradation induced from $F^\bullet A^{[*]}$, while $\{-, -\}_{[1]}$ is of degree $0$.
 \end{example} 

\begin{example} Let $ \mf g^{[*]}$ be a dg Lie algebra in $ \Ch$. Let $ \Sym^{[*]}_{\Ch}(\mf g^{[*]})$ be the symmetric algebra which has a unique dg Lie algebra structure extending the dg Lie algebra structure on $\mf g^{[*]}$, making $ \Sym^{[*]}_{\Ch}(\mf g^{[*]})$ a graded dg Poisson algebra. 

Let $ U_{\Ch}(\mf g^{[*]})$ be the universal enveloping dg associative algebra of $ \mf g^{[*]}$ in the category 
$\Ch$. Then $ U_{\Ch}(\mf g^{[*]})$ is a quotient of the tensor algebra $ T_{\Ch}(\mf g^{[*]})$ of the differential complex $\mf g^{[*]}=(\mf g^{[*]}, d_{\mf g}^{[*]})$ in $\Ch$. The images $F^p U_{\Ch}(\mf{g})$ of the subcomplexes $\bigoplus_{n\leq p} \mf g^{\otimes n}$ define a filtered structure, making $U_{\Ch}(\mf g)$ into an almost commutative filtered dg associative algebra in $ \Ch$. The PBW theorem states that the associated graded dg algebra  $\gr (F^\bullet U_{\Ch}(\mf g^{[*]}))$ is isomorphic to $ \Sym^{[*]}(\mf g^{[*]})$ as a dg Poisson algebras.
\end{example}

Let $R^{[*]}=(R^{[*]}, \{-,-\})$ be a dg Poisson algebra.  Let $ M^{[*]}=(M^{[*]},d_M^{[*]})$ be a left dg module for the associative dg algebra $R$. We can also make $ M^{[*]}$ into a right $ R^{[*]}$-module using the super commutativity property of $ R^{[*]}$ by 
\[ x r=(-1)^{|x||r|}r x\]
for all $ x\in M^{[*]}$ and $ r\in R^{[*]}$ homogeneous. Thus $ M^{[*]}$ is a $ R^{[*]}$-dg bimodule.

\begin{definition}  A \textbf{dg Poisson module} of a dg Poisson algebra $R^{[*]}=(R^{[*]}, \{-,-\})$ is a dg module $M^{[*]}=(M^{[*]}, d_M^{[*]})$ for $R^{[*]}$ as associative algebra together with a Lie algebra left  dg module structure 
\[
\{-,-\}: R^{[*]} \otimes M^{[*]} \to M^{[*]},
\]
i.e., the induced map $ R^{[*]} \to \End^{[*]}(M^{[*]})$ defined by $ r\mapsto \{r, -\}$ is a dg Lie algebra homomorphism. The bracket $\{-, -\}$ is homogeneous of degree $p$. The two module structures on $M^{[*]}$ satisfy the following derivation properties: for all $ r_1, r_2\in R^{[*]}$ and $ x\in M^{[*]}$ homogeneous, 
\begin{itemize}\setlength\itemsep{5pt}
\item $\{ r_1, r_2x\}=\{r_1, r_2\}x+(-1)^{(|r_1|+p)|r_2|}r_2\{r_1, x\}$,
\item $\{r_1 r_2, x\}=r_1\{r_2, x\}+(-1)^{|r_1||r_2|}r_2\{r_1,x\} $.
\end{itemize} 
\end{definition}

\begin{example}
Let $R$ be a commutative $\mbf k$-algebra and let $ X=\spec(R)$ be the corresponding affine $\mbf k$-scheme.  $\Der_{\mbf k}(R)$ is the global section of vector fields on $ X$. A Poisson bracket $\{-, -\}$ on $ R$ is equivalent to a Lie algebroid structure $\{-, -\}$ on  the structure sheaf $ \cal O_X$, i.e., a $ \mbf k$-Lie algebra structure together with $ \mbf k$-Lie algebra homomorphism $(\cal O_X, \{-,-\})\to \Der_{\mbf k}(R)$, which is called the anchor homomorphism (cf.  \cite{Polishchuk}.)

Any $R$-module $M$ with a compatible $ \Der_{\mbf k}(R)$-module (as Lie algebra) is called $D$-module. A notion of Poisson module structure on an $ \cal O_X$-module is given by a flat Poisson connection (cf.  \cite{Polishchuk}).
\end{example}

\subsection{Loop complexes and interpretation in $\Ch$}
If we regard $\bb C [t, t^{-1}]$ as a graded vector space with $|t|=2$, then we can regard it as a differential complex (concentrated in even degrees). Therefore  $V^{[*]}\otimes \bb C[t, t^{-1}]$ is a differential complex, and the differential structure is $d(v\otimes t^n)=d_V(v)\otimes t^n$. Thus $ V^{[*]}\otimes t^n=V^{[*]}[-2n]$ and we we have 
 \[
 V^{[*]}\otimes \bb C[t, t^{-1}]=\bigoplus_{n\in \bb Z}V^{[*]}[-2n].
 \]
For any differential complex $ (W^{[*]}, d^{[*]}_W)$, we have 
\begin{align*} \Hom^{[*]}(V^{[*]}\otimes \bb C[t, t^{-1}], W^{[*]})
&=\prod_{n\in \bb Z}\Hom^{[*]}(V^{[*]}[-2n],  W^{[*]})  \\
&=\prod_{n\in \bb Z}\Hom^{[*]}(V^{[*]},  W^{[*]}[2n])\\
&=\Hom^{[*]}(V^{[*]},  W^{[*]})[[ x, x^{-1}]] .
\end{align*}
In particular, if $W^{[*]}=V^{[*]}$, using the tensor-Hom adjointness \eqref{hom-tensor-adjoint}, we get
\begin{align}\label{Laurentseries} 
 \Hom^{[*]}(\bb C[t, t^{-1}], \End^{[*]}(V^{[*]})) =\End^{[*]}(V^{[*]})[[ x, x^{-1}]].
\end{align}

Here $|x|=-2$ and $ V^{[*]}\otimes x^n=V^{[*]}[2n]$.  Indeed, each element $f \in \Hom^{[p]}(V^{[*]}\otimes \bb C[t, t^{-1}], W^{[*]})$ is of the form $f=(f_n)_{n\in \bb Z}$ with 
\begin{align*}
f_n \in \Hom^{[p]}(V^{[*]}\otimes \bb C t^n, W^{[*]})&=\Hom^{[p]}(V^{[*]}[-2n], W^{[*]}) \\
&=\Hom^{[p]}(V^{[*]}, W^{[*]}[2n])\\
&=\Hom^{[2n+p]}(V^{[*]}, W^{[*]}) \\
&=(\Hom^{[*]}(V^{[*]}, W^{[*]})[2n])^{[p]} \\
&=(\Hom^{[*]}(V^{[*]}, W^{[*]}) \otimes \bb C x^n)^{[p]}.
\end{align*}
Thus the variable $x$ represents the complex of one dimensional vector space concentrated in the degree $-2$, while the variable $t$ represents the complex of one dimensional vector concentrated in degree $2$. They will be useful in the description of vertex operator algebras. 

Since $ t$ is of even degree, we have 
\[V^{[*]}\otimes \bb C[t, t^{-1}]= \bb C[t, t^{-1}]\otimes V^{[*]}.\]
Thus we have from \eqref{hom-tensor-adjoint}
\begin{align*}
\Hom^{[*]}(V^{[*]}\otimes \bb C[t, t^{-1}], W^{[*]})&=\Hom^{[*]}(\bb C[t, t^{-1}],\Hom^{[*]}( V^{[*]}, W^{[*]}))\\
&=\prod_{n\in \bb Z}\Hom^{[*]}(\bb C[-2n],\Hom^{[*]}( V^{[*]}, W^{[*]}))
\end{align*}

\subsection{Differential filtered algebras} \label{sec:df-algebra} A differential filtered vector space $(V, F^\bullet V, d_V)$ is a vector space $ V$ together with an increasing filtration of subspaces $ \cdots \subseteq F^iV\subseteq F^{i+1}V\subseteq \cdots \subseteq V$ such that $ \cap_{i} F^iV=\{0\}$ and $ \cup_i F^iV=V$ together with a linear map $ d_V: V\to V$ such that $d_V F^iV\subseteq F^{i+1}V$ and $ d_V^2F^iV\subseteq F^{i+1}V$. 

A homomorphism between two differential filtered vector spaces $f: (V, F^\bullet V, d_V)\to
(V', F^\bullet V', d_{V'})$ is a linear map $f: V\to V'$ such that $ f(F^iV)\subseteq F^iV'$ and $f\circ d_V=d_{V'}\circ f$. 
The category of differential filtered vector spaces is an additive category. 
\delete{There is also a a tensor product structure 
\[ F^i(V\otimes V')=\sum_{j\in \bb Z}(F^{i-j}V\otimes F^{j}V')\subseteq V\otimes V'
\]
and 
\[ d_{V\otimes V'}(v\otimes v')= ??? \]
\red{this may not be well-defined. It can be defined over the direct sum $ \oplus_{j\in \bb Z}F^{i-j}V\otimes F^{j}V' \to V\otimes V'$ by $ d(v\otimes v')=d_V v\otimes v'+(-1)^{|v|}v\otimes d_{V'}v'$.}}

Let $ \gr^{[*]}(V)=\gr(F^\bullet V)=\bigoplus_i F^{i}V/F^{i-1}V$. Then $d_V$ induces a differential complex structure on $ \gr^{[*]}(V)$. 

For each $ 0\neq v\in V$, 
$|v|=\min\{i\; |\; v\in F^iV\}$ is called the differential degree of $v$. 
For each $0\neq v\in V$, its image is written as $0 \neq \overline{v} \in \gr^{[|v|]}(V)=F^{|v|}V/F^{|v|-1}V$.  
Hence in the differential complex $(\gr^{[*]}(V), d_V)$, the degree is $|\overline{v}|=|v|$. We remark that for differential graded spaces, the degrees are only defined for homogeneous elements, while for differential filtered vector spaces, the degrees are defined for every non-zero element. We note that $ |v+u|\leq \max\{ |u|, |v|\}$ and $ |v+u|= \max\{ |u|, |v|\}$  if $ |v|\neq |u|$. The image $ \overline{v}$ can be thought as the symbol of $v$ as in the case of differential operators. 
 
 \begin{definition}
 A \textbf{differential filtered algebra} $ (A, F^\bullet A, d_A, \ast)$ is a differential filtered vector space $(A, F^\bullet A, d_A)$ together with an associative multiplication 
\[ \ast: A\otimes A\to A\] making $ (A, F^\bullet A, \ast)$ a filtered associative algebra such that
\[d_A(a\ast b)-(d_Aa\ast b+(-1)^{|a|}a\ast d_Ab)\in F^{|a|+|b|}A.\]
 \end{definition}
 
 We remark that if $ |a'|<|a|$, then $ |a+a'|=|a|$ and 
 \begin{align*}
 \begin{array}{rl}
 d((a+a')\ast b)-(d(a+a')&\ast b+(-1)^{|a|}(a+a')\ast db) \\[5pt] 
 =&d_A(a\ast b)-(d_Aa\ast b+(-1)^{|a|}a\ast d_Ab) \\[5pt]
 & +d_A(a'\ast b)-(d_Aa'\ast b+(-1)^{|a|}a'\ast d_Ab)\in F^{|a|+|b|}A.
 \end{array}
 \end{align*}
Thus for any differential filtered algebra $ (A, F^\bullet A, \ast, d)$, the differential defines a differential graded algebra $(\gr^{[*]}_{F^\bullet}(A), \ast, d)$.  Two elements $ a, b\in A$ are called differential filtered commutative if 
\[ a\ast b-(-1)^{|a||b|}b\ast a\in F^{|a|+|b|-1}A.\] 
This is equivalent to say that the associated dg algebra is commutative. 

Similarly, a left differential filtered $A$-module 
is a differential filtered vector space $ (V, F^\bullet V, d_V)$ such that  $ V$ is a left $ A$-module and $ (F^iA)\ast F^jV\subseteq F^{i+j}V$ for all $ i, j \in \bb Z$ and 
\[ d_V(a\ast v)-(d_Aa\ast v+(-1)^{|a|}a\ast d_Vv) \in F^{|a|+|v|}V.\]

A morphism $f:M \longrightarrow N$ of left differential filtered $A$-modules is linear map such that $f(a \ast m)=a \ast f(m)$, $f(F^iM)\subseteq F^{i}N$ and $(f\circ d_M-d_N \circ f)(F^iM) \subset F^{i}N$. Note that such a morphism induces a morphism $\gr^{[*]}_{F^\bullet} M \longrightarrow \gr^{[*]}_{F^\bullet} N$ of differential graded modules.

We can define right differential filtered $A$-module $(V, F^\bullet V, d_V, \ast)$ by giving a linear map $ \ast: V\otimes A\to V$ satisfying 
$ F^jV\ast F^iA \subseteq F^{i+j}V$ for all $ i, j \in \bb Z$ and 
\[ d_V( v\ast a)-(d_Vv\ast a+(-1)^{|v|} v\ast d_Aa )\in F^{|a|+|v|}V.\]
Here we are using the left derivation property. Using the compatibility condition, it can be changed to a right derivation property by defining $ v\ast_r a=(-1)^{|v||a|}v\ast a$. In fact we have
\begin{align*}&d_V( v\ast_r a)-(v\ast_r d_A(a)+ (-1)^{|a|}(d_V(v)\ast_r a) ) 
\\
&\quad= d_V( (-1)^{|a||v|}v\ast a)-((-1)^{|d_A(a)||v|}v\ast d_A(a)+(-1)^{|a|}(-1)^{|a||d_V(v)|} d_V(v)\ast a ),\\ 
&\quad = (-1)^{|a||v|}\big (d_v( v\ast a)-((-1)^{|v|}v\ast d_A(a)+ d_V(v)\ast a ) \big )\in F^{|a|+|v|}V.
\end{align*}

We remark that each differential graded algebra $A^{[*]}=(\bigoplus_i A^{[i]}, d_A, \ast)$ is automatically a differential filtered algebra with $F^pA=\oplus_{i\leq p}A^i$ and two products given by $ a\ast_l b=a\ast b$ and $ a\ast_r b=(-1)^{|a||b|}a\ast b$ for homogenous elements $a, b$ and then extend linearly to all elements $a, b\in A^{[*]}$. However, for a differential filtered algebra $ (A, F^\bullet A, d_A, \ast)$, the right multiplication $ \ast_r$ need not be defined. Hence in application, one may require that $ \ast_r$ is defined to satisfy associative and compatibility conditions.

\begin{definition} Let $ (A, F^\bullet A, d_A, \ast)$ be a differential filtered algebra. A \textbf{bimodule} for $ (A, F^\bullet A, d_A, \ast)$ is a differential filtered vector space $(V, F^\bullet V, d_V)$ with  left and right $ (A, F^\bullet A, d_A, \ast)$-module structures $ \ast_l$ and $\ast_r$ such that 
\[ (a\ast_l v)\ast_r b=a\ast_l(v\ast_r b).
\]
\end{definition}

\subsection{Weight filtrations on differential filtered algebras and Poisson differential filtered algebras}\label{sec:weight-filtration}
Let $ (A, F^\bullet A, d_A, \ast)$ be differential filtered algebra. A weight filtered differential filtered structure is an increasing filtration
\[
\cdots \subseteq W_nA\subseteq W_{n+1}A\subseteq \cdots
\]
of subspaces such that 
\[
d_A(W_nA)\subseteq W_nA
\]
and  $F^i (W_nA)=W_nA\cap F^iA$ defines a differential filtered vector space structure on $W_nA$ and $W_nA\ast W_mA\subseteq W_{n+m}A$. We will call $ (A, F^\bullet A, W_\bullet A, d_A, \ast)$ a weight filtered differential filtered algebra.

In this case, $\gr_*( A)=\bigoplus_n\gr_n(A)$ with $\gr_n(A)=W_nA/W_{n-1}A$ is a graded differential filtered algebra $(\gr_*(A), F^\bullet, d, \ast)$ in the sense that each homogeneous component is a filtered differential vector space with differential induced from $d_A$ on each $\gr_n(A)$. It can also be interpreted in the sense that the differential $d$ is homogeneous of weighted degree zero and $ F^i\gr _n( A) $ is the image of $ F^iA\cap W_nA$ in the homogeneous subspace $ \gr_n( A)$.  

Similarly, we define the weight degree of  $0\neq a \in A$ by $ \on{wt}(a)=\min\{ n \;|\; a\in W_nA\}$.  We will denote the image of $a$ by $ \pi(a)\in \gr_{\on{wt}(a)}(A)$  and $ \pi(a)\neq 0$ if $a\neq 0$. Thus $ \on{wt}(\pi(a))=\on{wt}(a)=n$ is well-defined for $ \pi(a) \in \gr_nA$ and is independent of the choice of $ a\in W_n(A)$. But $ |\pi(a)|$ with respect to the filtration $F^i\gr_*(A)$ is not well-defined in general. We will need some compatibility conditions between the two filtrations. 

For $0\neq \pi(a) \in  \gr_nA$, we define $ |\pi(a)|=\min\{ p \;|\; \pi(a) \in  F^p\gr_nA\}$. Note that for $a\in A$, we have in general $ |a|\geq |\pi(a)|=p.$  It follows from the definition that  there exists $a'\in F^pA\cap W_nA$ such that $a-a'\in W_{n-1}A$.  

\delete{
Under this assumption, the differential condition 
\begin{align*}d(\pi(a)\ast \pi(b))&=d(\pi(a))\ast \pi(b)+(-1)^{|a||b|}\pi(a)\ast d(\pi(b)),\\
&=d(\pi(a))\ast \pi(b)+(-1)^{|\pi(a)||\pi(b)|}\pi(a)\ast d(\pi(b))
\end{align*}
is independent of the choice of $ a$ and 
$ b$. Thus the graded 
}

Similarly we will use $\gr^{[*]}$ (resp. $\gr^{[*]}\gr_*$) to define the associated graded vector space with respect to the filtration $ F^\bullet A$ (resp. $ F^\bullet (W_nA/W_{n-1}A)$). Then the weight graded dg algebra $(\gr^{[*]}\gr_*(A), d, \ast)$ can also be defined.  We will use the convention that subscripts $ \gr_*$ for weight gradation and superscript $ \gr^{[*]}$ for differential (cohomological) gradation.

On the other hand, the image of $ F^iW_nA $ in $ \gr^{[i]}(A)=F^iA/F^{i-1}A$ defines a filtration of subcomplexes  $W_\bullet \gr^{[*]}A$ making $ (\gr^{[*]}A, d, \ast)$ a weight filtered dg algebra.  Then one can construct the associated weight graded dg algebra $(\gr_*\gr^{[*]}(A), d, \ast)$. It is straightforward to verify the following:

\begin{lemma}\label{lem:2gradations} $(\gr^{[*]}\gr_*(A), d, \ast)$ and 
$(\gr_*\gr^{[*]}(A), d, \ast)$ are isomorphic weight graded dg algebras.  
\end{lemma}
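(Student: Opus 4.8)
The plan is to show that both iterated associated graded constructions compute the same thing by exhibiting them as the associated graded of the single double-indexed filtration $F^i W_n A$ on $A$. First I would fix notation: for the bifiltration $\{F^iW_nA\}_{i,n\in\bb Z}$ (where $F^iW_nA := F^iA\cap W_nA$), set
\[
\gr^{[i]}_n(A) = \frac{F^iW_nA}{F^{i-1}W_nA + F^iW_{n-1}A}.
\]
The claim will follow once I check that the canonical maps realize both $\gr^{[*]}\gr_*(A)$ and $\gr_*\gr^{[*]}(A)$ as $\bigoplus_{i,n}\gr^{[i]}_n(A)$, compatibly with the differential $d$ and the product $\ast$. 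Concretely, for the left-hand side one computes $\gr^{[i]}\big(\gr_n(A)\big) = \gr^{[i]}\big(W_nA/W_{n-1}A\big)$, and since the filtration $F^\bullet$ on $W_nA/W_{n-1}A$ is by definition the image of $F^\bullet(W_nA)$, its $i$-th graded piece is $F^iW_nA/(F^{i-1}W_nA + (W_{n-1}A\cap F^iW_nA))$, which is exactly $\gr^{[i]}_n(A)$ after identifying $W_{n-1}A\cap F^iW_nA = F^iW_{n-1}A$. Symmetrically, for the right-hand side $\gr_n\big(\gr^{[i]}(A)\big) = \gr_n\big(F^iA/F^{i-1}A\big)$, and the weight filtration on $F^iA/F^{i-1}A$ is the image of $W_\bullet$; its $n$-th graded piece is $W_nA\cap F^iA$ modulo $W_{n-1}A\cap F^iA + F^{i-1}A\cap W_nA$, i.e.\ again $\gr^{[i]}_n(A)$.

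Next I would verify that these identifications are morphisms of graded dg algebras. For the differential: on $A$ we have $d_A(F^iW_nA)\subseteq F^{i+1}W_nA$ (from $d_AF^iA\subseteq F^{i+1}A$ and $d_AW_nA\subseteq W_nA$), so $d_A$ descends to a degree-$(1,0)$ map on $\bigoplus_{i,n}\gr^{[i]}_n(A)$; one checks this descended map agrees, under both identifications above, with the differentials constructed on $\gr^{[*]}\gr_*(A)$ and on $\gr_*\gr^{[*]}(A)$ respectively — this is immediate because in each case the differential is defined by lifting to $A$, applying $d_A$, and projecting, and the two orders of projection through the bifiltration agree since the quotient $\gr^{[i]}_n(A)$ does not depend on the order. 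For the product: from the two compatibility conditions $F^mA\ast F^nA\subseteq F^{m+n}A$ and $W_mA\ast W_nA\subseteq W_{m+n}A$ we get $F^iW_mA\ast F^jW_nA\subseteq F^{i+j}W_{m+n}A$, so $\ast$ descends to a well-defined product of bidegree $(+,+)$ on $\bigoplus\gr^{[i]}_n(A)$, and again this matches both iterated products on the nose.

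The only genuinely substantive point — and where I expect the minor friction — is the double-coset bookkeeping: one must be careful that the "denominator" $F^{i-1}W_nA + F^iW_{n-1}A$ is literally the same subspace whether reached as $F^{i-1}(W_nA/W_{n-1}A)$-preimage or as $W_{n-1}(F^iA/F^{i-1}A)$-preimage, which uses that the two filtrations intersect cleanly, i.e.\ $F^iA\cap W_nA$ behaves like a genuine bifiltration so that $F^{i-1}(F^iA\cap W_nA) = F^{i-1}A\cap W_nA$ and similarly in the weight direction. This holds by construction here since $F^\bullet$ and $W_\bullet$ are both filtrations by subspaces of the single space $A$ and $F^i(W_nA)$ was defined as the honest intersection. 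Once that is noted, everything is a diagram chase, and I would simply remark that the resulting isomorphism $\gr^{[*]}\gr_*(A)\xrightarrow{\ \cong\ }\gr_*\gr^{[*]}(A)$ is the canonical one induced by the identity on $A$, hence automatically compatible with $d$ and $\ast$, which completes the proof.
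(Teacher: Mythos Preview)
Your proposal is correct and follows essentially the same approach as the paper: the paper's proof is just the terse chain of equalities
\[
\gr_n\gr^{[i]}(A)=W_nA\cap F^iA\big/(W_{n-1}A\cap F^iA+W_nA\cap F^{i-1}A)=\gr^{[i]}\gr_n(A),
\]
identifying both iterated gradeds with the common double-quotient of the bifiltration, and then declares the compatibility with $d$ and $\ast$ routine. Your version is more detailed (and more careful about the clean-intersection point), but the idea is the same.
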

In fact 
\begin{align*} \gr_n\gr^{[i]}(A)&=W_n(F^iA/F^{i-1}A)/W_{n-1}(F^iA/F^{i-1}A)\\
&=W_nA\cap F^iA/(W_{n-1}A\cap F^iA+W_nA\cap F^{i-1}A)\\
&=F^i(W_nA/W_{n-1}A)/F^{i-1}(W_nA/W_{n-1}A)=\gr^{[i]}\gr_n(A).
\end{align*}
The differentials and multiplications can be verified from this identification routinely.  

Similar to the definition of the degree function $ |\cdot |: A\to \bb Z$ with respect to the differential filtration $ F^\bullet V$ for a differential filtered vector space $V$, we define the weight function $ \on{wt}: V\to \bb Z $ of weighted filtered differential filtered vector space $V$ with respect to the weighted filtration $ W_\bullet V$ by
\[
\on{wt}(v)=\min\{ n \;|\; v\in W_nV\}. 
\]
We can similarly define a weight  filtered left (resp. right)
module $(V, W_\bullet, F^\bullet, d_V, \ast)$ 
for  a weight filtered differential filtered 
algebra $ (A, F^\bullet, W_\bullet, d, \ast)$ as a left (resp. right) module by additionally 
equipping a weight filtration $ W_\bullet V$ 
such that  $ d_V W_nV\subseteq W_n V$ 
and $ F^i(W_nV)=F^iV\cap W_nV$ defines 
a differential filtered vector space structure 
 on $W_nV$ and 
\[ W_nA \ast W_mV\subseteq W_{n+m}V
\quad (\text{resp. } \; W_mV\ast W_nA \subseteq W_{n+m}V \;).
\]
In particular we have $ F^iW_nA\ast F^jW_mV\subseteq F^{i+j}W_{m+n}V$.


\section{dg vertex (operator) algebras}\label{sec:3}
\subsection{Vertex algebras in the symmetric tensor category $\Ch$.}
In what follows we give the definitions and basic properties of dg vertex (operator) algebras. The proofs and details can be found in \cite{Lepowsky-Li}.

\begin{definition} 
A \textbf{differential graded (dg) vertex algebra} in $\Ch$ is a cochain complex $(V^{[*]}, d_V^{[*]})$ over $ \cc$ equipped with a chain map (vertex operator map) in $ \Ch$ 
\begin{align*}
\begin{array}{cccc}
Y(\cdot, x): & V^{[*]} & \longrightarrow & \on{End}^{[*]}(V^{[*]})[[x,x^{-1}]] \\
           & v & \longmapsto      & Y(v, x)=\displaystyle \sum_{n \in \mathbb{Z}}v_n x^{-n-1}
\end{array}
\end{align*}
with $x$ of degree $-2$, and a particular vector $\mbf{1} \in V^{[0]}$ with $d_V(\mbf 1)=0$, the vacuum vector, satisfying the following conditions:   
\begin{itemize}\setlength\itemsep{5pt}
\item For any $u, v \in V^{[*]}$, $u_nv=0$ for $n$ sufficiently large, i.e., $Y(u, x)v \in V^{[*]}((x))$ (Truncation property),
\item $Y(\mathbf{1},x)=\on{id}_V$ (Vacuum property),
\item $Y(v, x)\mathbf{1} \in V[[x]]$ and $\displaystyle \lim_{x \to 0} Y(v, x)\mathbf{1}=v$ (Creation property),
\item The Jacobi identity 
\begin{align}
\begin{array}{l@{\hspace{0cm}}l}
 x_2^{-1}& \delta(\frac{x_1-x_0}{x_2})Y(Y(u, x_0)v, x_2)= \\[5pt]
& x_0^{-1}\delta(\frac{x_1-x_2}{x_0})Y(u, x_1)Y(v, x_2)- (-1)^{|u||v|}x_0^{-1}\delta(\frac{x_2-x_1}{-x_0})Y(v, x_2)Y(u, x_1),
\end{array}
\end{align}
where $\delta(x+y)=\displaystyle \sum_{n \in \mathbb{Z}}\sum_{m=0}^{\infty}\binom{n}{m}x^{n-m}y^m$ and $v,u\in V$ are homogeneous.
\end{itemize}
A dg vertex algebra will be denoted as $ (V^{[*]}, d_V^{[*]}, Y(\cdot, x), \mathbf{1})$.
\end{definition}

\begin{remark}\label{Remark_2} \delete{
(0) $G_m$ action on the nilpotent variety $\cal N$ by $ x\mapsto t\cdot x=t^{-2}x$. I will check how this is related. 
}
(1) We first note that because $Y(\cdot, x)$ is a chain map, if $v\in V^{[p]}$ has degree $p$, then $ v_n\in \End^{[p-2n-2]}(V^{[*]})=(\End^{[*]}(V^{[*]})[-2n-2])^{[p]}$.  We will denote the degree of $v_n$ by $ |v_n|=|v|-2n-2$. 

(2) The differential structure $d$ on $\End^{[*]}(V^{[*]})$ is given by $ d(f)=d_V\circ f-(-1)^{|f|}f\circ d_V$ for $ f\in \End^{[m]}(V^{[*]})$ (cf.  \eqref{eq:diff_end}). Let $d'$ be the differential on $ \End^{[*]}(V^{[*]})[[x, x^{-1}]]$. Since $d'(x^n)=0$ for all $n$, we have 
\[ d'(v_n x^{-n-1})=d(v_n)x^{-n-1}=(d_V\circ v_n-(-1)^{|v|-2n-2}v_n\circ d_V)x^{-n-1}.\]
Since $Y(\cdot, x)$ is a chain map, i.e.,  for $v\in V^{[p]}$, $ Y(d^{[p]}_V(v), x)=d'^{[p]}(Y(v, x))$, we have 
\[
(d_V(v))_n=d(v_n)=(d_V\circ v_n-(-1)^{|v|-2n-2}v_n\circ d_V);
\]
Hence
\begin{align}\label{eq:d(v_nu)}
d_V(v_n(u))&=d(v_n)(u)+(-1)^{|v|-2n-2}v_n(d_V(u)) \nonumber\\
&=d_V(v)_n(u)+(-1)^{|v|-2n-2}v_n(d_V(u)).
\end{align}

For each $ n\in \bb Z$, the multiplication $ V^{[*]}\otimes V^{[*]}\to V^{[*]}[-2n-2]$ defined by $ v\otimes u\mapsto v_n(u)$ makes $V^{[*]}$ into a graded algebra with a product of degree $0$ and $ d_V$ is a derivation of this algebra.

\begin{lemma}\label{lem:chain_map_1}
 For any homogenous $u\in V^{[*]}$ with $d_V(u)=0$, the linear map $\cal D^u_n: V^{[*]}\to V^{[*]}[|u|-2n-2]$ defined by $\cal D^u_n(v)=v_n(u)$ is chain map. 
\end{lemma}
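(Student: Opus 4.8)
The plan is to read the statement off from the derivation identity \eqref{eq:d(v_nu)}, which carries all the content; what remains is bookkeeping of the degree shift. Recall that, by the characterisation of chain maps as $0$-cocycles of $\Hom^{[*]}$, to say that $\cal D^u_n$ is a morphism in $\Ch$ from $(V^{[*]},d_V)$ to the shifted complex $V^{[*]}[|u|-2n-2]$ is to say two things: that $\cal D^u_n$ is $\bb C$-linear and homogeneous of degree $0$ with respect to the shifted grading, and that it intertwines the differentials. I would verify these in turn.

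For the grading: $v\mapsto v_n$ is additive, so $\cal D^u_n$ is $\bb C$-linear; and by Remark~\ref{Remark_2}(1), $v_n\in\End^{[|v|-2n-2]}(V^{[*]})$ for homogeneous $v$, hence
\[
\cal D^u_n(v)=v_n(u)\in V^{[|v|+|u|-2n-2]}=\big(V^{[*]}[|u|-2n-2]\big)^{[|v|]}.
\]
Thus $\cal D^u_n$ sends $V^{[p]}$ into the degree-$p$ component of $V^{[*]}[|u|-2n-2]$, as a degree-$0$ map into the shift should.

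For compatibility with the differentials: I would simply specialise \eqref{eq:d(v_nu)}, which for homogeneous $v$ says
\[
d_V\big(v_n(u)\big)=\big(d_V(v)\big)_n(u)+(-1)^{|v|-2n-2}\,v_n\big(d_V(u)\big).
\]
The hypothesis $d_V(u)=0$ annihilates the last term, leaving $d_V\big(\cal D^u_n(v)\big)=\cal D^u_n\big(d_V(v)\big)$, that is $d_V\circ\cal D^u_n=\cal D^u_n\circ d_V$. Together with the grading check this is exactly the chain-map property for $\cal D^u_n : V^{[*]}\to V^{[*]}[|u|-2n-2]$ (with the differential on the shifted complex taken as in the definition of the shift functor).

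I do not expect a genuine obstacle: the lemma is an immediate consequence of \eqref{eq:d(v_nu)} once that identity is in hand. The only points deserving a moment's care are the correct identification of the target as the shifted complex $V^{[*]}[|u|-2n-2]$ (so that $\cal D^u_n$ is degree $0$) and the matching of its sign conventions; and one should note that the hypothesis $d_V(u)=0$ is used precisely to kill the second summand of \eqref{eq:d(v_nu)} — for a general $u$ one would only get a twisted Leibniz relation for $\cal D^u_n$ rather than a chain map.
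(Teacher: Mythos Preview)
Your proposal is correct and follows exactly the paper's (implicit) approach: the paper states Lemma~\ref{lem:chain_map_1} immediately after deriving \eqref{eq:d(v_nu)} in Remark~\ref{Remark_2} without a separate proof, treating it as an immediate consequence of that identity together with the degree observation $|v_n|=|v|-2n-2$. Your flagging of the sign convention for the shifted differential is apt; the paper's own phrasing interprets ``chain map into $V^{[*]}[|u|-2n-2]$'' as $d(\cal D^u_n)=0$ in $\End^{[*]}(V^{[*]})$, which with the given conventions literally matches $d_V\circ\cal D^u_n=\cal D^u_n\circ d_V$ only when $|u|$ is even (as in the application to $u=\mbf 1$ right afterward), so your parenthetical caution is well placed.
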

In fact, more general is true:

\begin{lemma} \label{lem:chain_map_2} The map $u\mapsto \cal D_n^u$ defines a chain map
$ V^{[*]}\to \End^{[*]}(V^{[*]})[-2n-2]$.
\end{lemma}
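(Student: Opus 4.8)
The plan is to realize $\Phi_n\colon u\mapsto \mathcal{D}_n^u$ as the image of the vertex operator map $Y(\cdot,x)$ under a chain of operations, each of which manifestly preserves the property of being a chain map in $\Ch$, so that the only input needed is that $Y(\cdot,x)$ is a chain map — equivalently, the derivation identity \eqref{eq:d(v_nu)}.

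First I would record that each mode is a chain map. Writing $Y(v,x)=\sum_n v_n x^{-n-1}$ with $x$ of even degree $-2$, the relation $(d_Vv)_n=d_{\End^{[*]}(V^{[*]})}(v_n)$, which is what ``$Y(\cdot,x)$ is a chain map'' unwinds to, says exactly that $Y_n\colon V^{[*]}\to \End^{[*]}(V^{[*]})[-2n-2]$, $v\mapsto v_n$, is a chain map. By the tensor--hom adjunction \eqref{hom-tensor-adjoint} together with the shift identity \eqref{hom-shift}, $Y_n$ corresponds to a chain map $\mu_n\colon V^{[*]}\otimes V^{[*]}\to V^{[*]}[-2n-2]$, $v\otimes u\mapsto v_n(u)$; this is the assertion in Remark~\ref{Remark_2} that $d_V$ is a derivation of this degree-$0$ product.

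Next I would precompose with the Koszul braiding $T_{V^{[*]},V^{[*]}}$, a morphism of $\Ch$ (verified in Section~\ref{sec:2}), to obtain the chain map $\mu_n\circ T_{V^{[*]},V^{[*]}}\colon V^{[*]}\otimes V^{[*]}\to V^{[*]}[-2n-2]$ sending $u\otimes v\mapsto (-1)^{|u||v|}v_n(u)$. Applying \eqref{hom-tensor-adjoint} and \eqref{hom-shift} once more, this time to absorb the second tensor factor into the target, converts it into a chain map $V^{[*]}\to \Hom^{[*]}(V^{[*]},V^{[*]}[-2n-2])=\End^{[*]}(V^{[*]})[-2n-2]$ carrying $u$ to the operator $v\mapsto (-1)^{|u||v|}v_n(u)=\mathcal{D}_n^u(v)$, which is $\Phi_n$; this proves the lemma. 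Lemma~\ref{lem:chain_map_1} then falls out as the special case $u\in Z^{[*]}(V^{[*]})$: there $\Phi_n(d_Vu)=\mathcal{D}_n^{0}=0$, so $\mathcal{D}_n^u$ is a $0$-cocycle of $\End^{[*]}(V^{[*]})[-2n-2]$, i.e.\ $\mathcal{D}_n^u\colon V^{[*]}\to V^{[*]}[|u|-2n-2]$ is a chain map.

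Alternatively one can argue by hand, checking $\Phi_n(d_Vu)=d_{\End^{[*]}(V^{[*]})}(\mathcal{D}_n^u)$ by evaluating both sides on a homogeneous $v$ and substituting \eqref{eq:d(v_nu)}, using that the differential on $\End^{[*]}(V^{[*]})[-2n-2]$ agrees with that of $\End^{[*]}(V^{[*]})$ since $-2n-2$ is even. In either route the only real work is Koszul sign bookkeeping: one must carry along the braiding sign $(-1)^{|u||v|}$ built into $\mathcal{D}_n^u$ and reconcile it with the sign $(-1)^{|v|}$ appearing in the derivation identity \eqref{eq:d(v_nu)}. That is the step I expect to be fiddly, although it is entirely mechanical.
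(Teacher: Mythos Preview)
The paper does not actually prove this lemma: it appears inside Remark~\ref{Remark_2} with the phrase ``In fact, more general is true'' and no further argument. So there is no paper proof to compare against, and your categorical route (mode map $\to$ adjunction $\to$ braiding $\to$ adjunction) is the natural one; the direct verification you sketch at the end is an equally valid alternative.

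There is, however, one genuine sign issue that you should not gloss over. The operator your construction produces is $v\mapsto (-1)^{|u||v|}v_n(u)$, and you then assert ``$(-1)^{|u||v|}v_n(u)=\mathcal D_n^u(v)$''. That is not what the paper says: in Lemma~\ref{lem:chain_map_1} the definition is $\mathcal D_n^u(v)=v_n(u)$, with no Koszul sign. And the signless assignment $u\mapsto\bigl(v\mapsto v_n(u)\bigr)$ is \emph{not} a chain map $V^{[*]}\to\End^{[*]}(V^{[*]})[-2n-2]$: evaluating $d_{\End}(\mathcal D_n^u)-\mathcal D_n^{d_Vu}$ on a homogeneous $v$ and substituting \eqref{eq:d(v_nu)} leaves
\[
(1-(-1)^{|u|})(d_Vv)_n(u)\;-\;(1-(-1)^{|v|})\,v_n(d_Vu),
\]
which does not vanish in general. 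What you have really proved is the \emph{correct} version of the lemma, namely that $u\mapsto\bigl(v\mapsto(-1)^{|u||v|}v_n(u)\bigr)$ is a chain map; the paper's formulation, via the sign-free definition of $\mathcal D_n^u$, is off by exactly the Koszul twist your braiding step supplies. (The same defect already affects Lemma~\ref{lem:chain_map_1} when $|u|$ is odd, since the shifted differential on $V^{[*]}[|u|-2n-2]$ carries the factor $(-1)^{|u|}$.) So keep your argument, but replace the incorrect identification by a remark that the Koszul sign is necessary and that the paper's $\mathcal D_n^u$ should be read accordingly.
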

Taking $ u=\mbf 1$ and using $d_V(\mbf 1) =0$ we have 
\begin{align*}
 d_V(v_n\mbf 1)=d_V(v)_n\mbf 1.
\end{align*}
In particular, the map $\cal D_n:  V^{[*]}\to V^{[*]}[-2n-2]$ defined by $ v\mapsto v_n\mbf 1$ is a  chain map, i.e., $\cal D_n\in \End^{[-2n-2]}(V^{[*]})$ such that $ d(\cal D_n)=0$.

For $v \in V^{[p]}$, we have
\begin{align*} Y(d^{[p]}_V(v), x)=d'^{[p]}(Y(v, x))=\sum_{n}d'^{[p]}(v_n x^{-n-1})
=\sum_{n}d^{[p-2n-2]}(v_n)x^{-n-1}.
\end{align*}
In particular, if $u\in V^{[m]}$, we have 
\begin{align*} Y(d^{[p]}_V(v), x)u
&=\sum_{n}d^{[p-2n-2]}(v_n)u x^{-n-1}\\
&=\sum_{n}\big (d_V^{[p-2n-2+m]}(v_n(u))-(-1)^{p-2n-2}v_{n}(d_V^{[m]}(u)) \big )x^{-n-1}\\
&=d'^{[p+m]}Y(v, x)u-(-1)^{p}Y(v, x)d_V^{[m]}(u).
\end{align*}
\end{remark}

The weak associativity coming from the Jacobi identity is the same as in the classical case, i.e., for $u, w \in V^{[*]}$, there exists $k \in \mathbb{N}$ such that for any $v \in V^{[*]}$, we have
\begin{align}\label{eq:weak_associativity}
(x_0+x_2)^kY(Y(u, x_0)v, x_2)w=(x_0+x_2)^kY(u, x_0+x_2)Y(v, x_2)w.
\end{align}
The weak commutativity becomes: for any homogeneous elements $u, v \in V^{[*]}$, there exists $k \in \mathbb{N}$ such that
\begin{align}\label{eq:weak_commutativity}
(x_1-x_2)^k \left( Y(u, x_1)Y(v, x_2)-(-1)^{|u||v|}Y(v, x_2)Y(u, x_1) \right)=0.
\end{align}

\begin{remark} We note that the Jacobi identity is equivalent to 
\begin{align} \label{eq:jacobi_2}\sum_{i\geq 0}(-1)^i\binom{l}{i}(u_{m+l-i}v_{n+i}-(-1)^l(-1)^{|v||u|}v_{n+l-i}u_{m+i}) =\sum_{i\geq 0}\binom{m}{i}(u_{l+i}v)_{m+n-i}
\end{align}
for all $ l, m, n\in \bb Z$ and for all $v, u \in V^{[*]}$ are homogeneous of
 degree $|u|$ and $ |v|$ respectively. Taking $ l=0$ we get
\begin{align} \label{eq:jacobi_3}
[u_m, v_n]^{s}= u_m v_n-(-1)^{|u_m||v_n|}v_nu_m=\sum_{i\geq 0}\binom{m}{i}(u_m(v))_{m+n-i}
\end{align}
 for all  
 $ m, n \in \bb Z$.  Taking $ m=0$ we get 
\[ (u_l(v))_n=\sum_{i
\geq 0}(-1)^i\binom{l}{i}(u_{l-i}v_{n+i}-(-1)^l(-1)^{|v||u|}v_{l+n-i}u_{i}).\]

If we write $\on{LHS}((u, x_1), (v_, x_2), x_0)$ for the left hand side of the Jacobi identity,  
then we have 
\[ \on{LHS}((u, x_1), (v, x_2), x_0)=(-1)^{|u||v|}\on{LHS}((v, x_2), (u, x_1), -x_0). 
\]
This implies the right hand side of the Jacobi identity should also satisfy
 \begin{align} 
 x_2^{-1}\delta(\frac{x_1-x_0}{x_2})Y(Y(u, x_0)v, x_2)=(-1)^{|u||v|} x_1^{-1}\delta(\frac{x_2+x_0}{x_1})Y(Y(v, -x_0)u, x_1).
\end{align}
and the graded version of the skew symmetry
\[ Y(u, x)v=(-1)^{|u||v|}e^{x\cal D}Y((v, -x)u
\]
for all $ u, v \in V$ homogeneous and $\cal D(v)=v_{-2}\mathbf{1}$ for all $v \in V^{[*]}$.
\end{remark}

\begin{example}
Let $V^{[*]}$ be a dg vertex algebra concentrated in degree $0$, i.e., $V^{[*]}=V^{[0]}$. Based on Remark~\ref{Remark_2}, we have $a_n(V) \subseteq V^{[-2n-2]}$ for any $a \in V$. It follows that $a_nV=0$ for any $n \neq -1$. Furthermore, using Formula~\eqref{eq:jacobi_3}, we have $a_{-1}b-b_{-1}a=[a_{-1},b_{-1}]^s \mbf 1=0$. Thus $V$ has a commutative algebra structure with the product $a \cdot b=a_{-1}b$. We see that any commutative algebra $A$ can be seen as a dg vertex algebra concentrated in degree $0$ by setting, for all $a, b \in A$,
\begin{equation*}
a_n b=\begin{cases}
0 & \text{ if } n \neq -1, \\[5pt]
a \cdot b  & \text{ if } n=-1.
\end{cases}       
\end{equation*}
\end{example}

\begin{example}
Let $(A^{[*]},d_A^{[*]}, D)$ be a graded commutative dg algebra with unit $1$ equipped with a derivation of degree $2$, i.e., $D:A^{[*]} \longrightarrow A[2]^{[*]}$ is a chain map of degree $0$. Using the classical construction of Borcherds, we define the following map:
\begin{align*}
\begin{array}{cccc}
Y(\cdot, x): & A^{[*]} & \longrightarrow & \on{End}^{[*]}(A^{[*]})[[x,x^{-1}]] \\[5pt]
                   & a & \longmapsto    & e^{xD}a=\displaystyle \sum_{n \in \mathbb{N}}\frac{1}{n!}D^n(a)x^n
\end{array}
\end{align*}
where we set $|x|=-2$. Then for any homogeneous $a \in A^{[|a|]}$ and $n \in \mathbb{N}$, we have $|D^n(a)x^n|=|a|+2n-2n=|a|$. Thus $Y(\cdot, x)$ is a map of degree $0$.

Set $a \in A^{[|a|]}$. Then
\begin{align*}
\begin{array}{rcl}
Y(d_Aa, x) & = & \displaystyle \sum_{n \in \mathbb{N}}\frac{1}{n!}D^n(d_Aa)x^n \\[5pt]
                         & = & \displaystyle \sum_{n \in \mathbb{N}}\frac{1}{n!}d_A(D^n(a))x^n
\end{array}
\end{align*}
because $D$ and $d^{[*]}$ commute. Similarly, we have
\begin{align*}
\begin{array}{rcl}
d'(Y(a, x)) & = & \displaystyle \sum_{n \in \mathbb{N}}\frac{1}{n!}d'(D^n(a)x^n), \\[5pt]
           & = & \displaystyle \sum_{n \in \mathbb{N}}\frac{1}{n!}d(D^n(a))x^n
\end{array}
\end{align*}
because $d'(x^n)=0$. Furthermore, as $D^n(a)$ can be seen as an operator in $\on{End}^{[*]}(A^{[*]})$, for any $b \in A^{[|b|]}$ we have
\begin{align*}
\begin{array}{rcl}
d(D^n(a))(b) & = & d_A \circ D^n(a)(b)-(-1)^{|D^n(a)|}D^n(a) \circ d_A(b), \\[5pt]
                    & = &d_A(D^n(a)(b))-(-1)^{|a|+2n}D^n(a)d_A(b), \\[5pt]
                    & = &d_A(D^n(a))b+(-1)^{|D^n(a)|}D^n(a)d_A(b)-(-1)^{|a|+2n}D^n(a)d_A(b), \\[5pt]
                    & = &d_A(D^n(a))b.
\end{array}
\end{align*}
It follows that $d'(Y(a, x)) = \sum_{n \in \mathbb{N}}\frac{1}{n!}d_A(D^n(a))x^n=Y(d_Aa, x)$, and that $Y(\cdot, x)$ is a chain map. In this context, $a_n=0$ when $n \geq 0$, so the first three axioms of a dg vertex algebra are easily verified. As $A^{[*]}$ is graded commutative and $D$ is of even degree, for any $a, b$ homogeneous, we have
\[
Y(a, x_1)Y(b, x_2)=(-1)^{|a||b|}Y(b, x_2)Y(a, x_1),
\]
so the weak commutativity \eqref{eq:weak_commutativity} is verified. We then apply the same reasoning as in \cite[Example 3.4.6]{Lepowsky-Li} to show that the weak associativity \eqref{eq:weak_associativity}  is also verified. Proposition 3.4.3 of \cite{Lepowsky-Li} still applies in the dg setting, and the Jacobi identity is satisfied. Therefore $(A^{[*]},Y(\cdot, x),1)$ is a dg vertex algebra.
\end{example}

\begin{lemma}\label{lem:comm_D} The map given by $\cal D(v)=v_{-2}\mathbf{1}$ is 2-cocycle in $ Z^{[2]}\End^{[*]}(V^{[*]})$, i.e., it satisfies $d_V\circ \cal D=\cal D\circ d_V$. Furthermore,
\[ [ \mathcal D, Y(v, x)]^s=Y(\mathcal D (v), x)=\frac{d}{dx}Y(v, x).\]
\end{lemma}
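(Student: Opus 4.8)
The plan is to handle the two assertions separately. The first is essentially already contained in the discussion following Lemma~\ref{lem:chain_map_2}, and the second is the dg version of the classical $\cal D$-derivative property; the one structural simplification worth noting at the outset is that $\cal D=\cal D_{-2}$ lives in differential degree $2$, an even integer, so the super-commutator $[\cal D,\,\cdot\,]^{s}$ agrees with the ordinary commutator and no Koszul signs intervene in the argument.

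For the first assertion I would argue as follows. By definition $\cal D$ is the map $\cal D_{-2}\colon V^{[*]}\to V^{[*]}[2]$, $v\mapsto v_{-2}\mathbf{1}$, i.e.\ the case $n=-2$ of Lemma~\ref{lem:chain_map_2}; hence $\cal D\in\End^{[2]}(V^{[*]})$ is a chain map, which by definition means $d(\cal D)=0$ for the differential \eqref{eq:diff_end} on $\End^{[*]}(V^{[*]})$. Since $d(\cal D)=d_{V}\circ\cal D-(-1)^{2}\cal D\circ d_{V}=d_{V}\circ\cal D-\cal D\circ d_{V}$, the vanishing $d(\cal D)=0$ is exactly the cocycle identity $d_{V}\circ\cal D=\cal D\circ d_{V}$, so $\cal D\in Z^{[2]}\End^{[*]}(V^{[*]})$.

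For the second assertion I would proceed in two steps, both adaptations of the non-dg proof (cf.\ \cite[\S3.1]{Lepowsky-Li}). First, I would specialize the Jacobi identity to the second slot $\mathbf{1}$, use the vacuum property $Y(\mathbf{1},x_{2})=\on{id}_{V}$ together with the standard $\delta$-function identity to rewrite its right-hand side, and then take $\on{Res}_{x_{1}}$, obtaining the iterate formula $Y(Y(u,x_{0})\mathbf{1},x_{2})=Y(u,x_{0}+x_{2})$ with the right-hand side expanded in nonnegative powers of $x_{0}$. On the left, the creation property gives $Y(u,x_{0})\mathbf{1}=\sum_{k\ge0}u_{-1-k}\mathbf{1}\,x_{0}^{k}$ with $u_{-1}\mathbf{1}=u$ and $u_{-2}\mathbf{1}=\cal D u$; on the right, $Y(u,x_{0}+x_{2})=\sum_{j\ge0}\tfrac{x_{0}^{j}}{j!}\tfrac{d^{j}}{dx_{2}^{j}}Y(u,x_{2})$. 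Comparing the coefficients of $x_{0}^{1}$ then gives $Y(\cal D u,x)=\tfrac{d}{dx}Y(u,x)$, equivalently $(\cal D u)_{n}=-n\,u_{n-1}$. Second, I would put $w=\mathbf{1}$ in the weak associativity \eqref{eq:weak_associativity}; expanding $Y(a,x_{2})\mathbf{1}=a+(\cal D a)x_{2}+O(x_{2}^{2})$ (directly from the creation property and the definition of $\cal D$) and $Y(u,x_{0}+x_{2})=Y(u,x_{0})+\tfrac{d}{dx_{0}}Y(u,x_{0})\,x_{2}+O(x_{2}^{2})$, and comparing the coefficients of $x_{2}^{1}$ — the contributions of the cross term $kx_{0}^{k-1}x_{2}$ from the prefactor $(x_{0}+x_{2})^{k}$ cancel on the two sides, and one may then divide by $x_{0}^{k}$ — yields $\cal D\,Y(u,x_{0})v=\tfrac{d}{dx_{0}}Y(u,x_{0})v+Y(u,x_{0})\,\cal D v$ for all $v$. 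Invoking the first step to replace $\tfrac{d}{dx_{0}}Y(u,x_{0})$ by $Y(\cal D u,x_{0})$, and using $[\cal D,\,\cdot\,]^{s}=[\cal D,\,\cdot\,]$ since $|\cal D|=2$, this reads $[\cal D,Y(u,x)]^{s}=Y(\cal D u,x)=\tfrac{d}{dx}Y(u,x)$, as desired.

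The only place that requires genuine attention is sign bookkeeping in the formal-variable computations — residues against $\delta$-functions, the cancellation of the $(x_{0}+x_{2})^{k}$ factor, and the extraction of individual coefficients of $x_{0}$ and $x_{2}$ — but since $x$ and $t$ carry purely cohomological degree and the only braiding sign that could enter is governed by $|\cal D|=2$, all of these steps go through exactly as in \cite{Lepowsky-Li}; in fact there is essentially nothing new to verify. Alternatively, one could simply cite the corresponding results of \cite{Lepowsky-Li} and record the degree shift $|\cal D|=2$.
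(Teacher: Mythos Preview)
Your proof is correct. The paper does not actually give a proof of this lemma; it is stated without proof, under the blanket remark at the start of Section~\ref{sec:3} that ``proofs and details can be found in \cite{Lepowsky-Li}.'' Your argument is precisely the standard Lepowsky--Li proof adapted to the dg setting, with the correct observation that $|\cal D|=2$ is even so no extra Koszul signs appear, which is exactly what the paper is implicitly deferring to.
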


\begin{remark} A dg vertex algebra in the category $ \Ch$ can also be defined using Borcherds relations described in \cite[ Def. 3.6.5]{Lepowsky-Li} with all  vectors being homogeneous. The only modifications are  (iv)
\[ u_n(v)=(-1)^{|u||v|}\sum_{i\geq 0}(-1)^{i+n+1}\cal D^{(i)}(v_{n+i}(u))
\] 
and (v) which we have stated above. 
\end{remark}
A  dg vertex algebra homomorphism $f: (V^{[*]}, d_V, Y(\cdot, x), \mbf 1)\to (V'^{[*]}, d_{V'},Y'(\cdot, x), \mbf 1')$  is a chain map $ f\in \Ch ((V^{[*]}, d_V),( V'^{[*]}, d_{V'}))$ such that 
\[ f( Y(v, x)u)=Y'(f(v), x)(f(u))\quad \text{ and } \quad f(\mbf 1)=\mbf 1'.\]
Given any differential complex $(V^{[*]}, d_V^{[*]})$, let $ H^{[*]}(V^{[*]})$ be the cohomology, which can be regarded as differential complex with zero differential. 
\begin{theorem} If $ (V^{[*]}, d_V^{[*]}, Y(\cdot, x), \mbf 1)$ is a dg vertex algebra , then there is an induced map
$ H(Y)(\cdot, x): H^{[*]}(V^{[*]})\to \End^{[*]}(H^{[*]}(V^{[*]}))[[x,x^{-1}]]$ defining a dg vertex algebra structure on $H^{[*]}(V^{[*]})$, with $\mbf 1$ being the image of $\mbf 1$ in $H^{[0]}(V^{[*]})$ since $ d_V\mbf 1=0$.
\end{theorem}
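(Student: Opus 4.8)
The plan is to realize $H^{[*]}(V^{[*]})$ as the quotient of a sub–dg vertex algebra by a vertex algebra ideal, everything being controlled by the identity \eqref{eq:d(v_nu)}, namely $d_V(v_n u)=(d_V v)_n u+(-1)^{|v|}v_n(d_V u)$. First I would check that the complex of cocycles $Z^{[*]}(V^{[*]})$ is a dg vertex subalgebra: if $d_V v=d_V u=0$ then \eqref{eq:d(v_nu)} gives $d_V(v_n u)=0$, so $Z^{[*]}(V^{[*]})$ is closed under every operation $v\otimes u\mapsto v_n u$; since $d_V\mbf 1=0$ we have $\mbf 1\in Z^{[0]}(V^{[*]})$; and the truncation, vacuum, creation and Jacobi axioms are inherited by restriction, each being a relation already satisfied in $V^{[*]}$ by the (cocycle) elements involved. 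The differential on $Z^{[*]}(V^{[*]})$ is zero.

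Next I would show $B^{[*]}(V^{[*]})$ is a two-sided ideal of $Z^{[*]}(V^{[*]})$, i.e. $v_n u\in B^{[*]}$ whenever one of $v,u$ lies in $B^{[*]}$ and the other in $Z^{[*]}$. If $v=d_V v'$ and $d_V u=0$, then \eqref{eq:d(v_nu)} gives $v_n u=(d_V v')_n u=d_V(v'_n u)-(-1)^{|v'|}v'_n(d_V u)=d_V(v'_n u)\in B^{[*]}$; if $d_V v=0$ and $u=d_V u'$, the same identity gives $v_n u=v_n(d_V u')=(-1)^{|v|}d_V(v_n u')\in B^{[*]}$. Consequently the bilinear maps $(v,u)\mapsto v_n u$ descend to $H^{[*]}(V^{[*]})=Z^{[*]}(V^{[*]})/B^{[*]}(V^{[*]})$, and one defines $H(Y)([v],x)[u]:=\sum_{n}[v_n u]\,x^{-n-1}$. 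Because $v_n$ has internal degree $|v|-2n-2$ and a cohomology class has the degree of any representative, this is a degree-zero map $H^{[*]}(V^{[*]})\to \End^{[*]}(H^{[*]}(V^{[*]}))[[x,x^{-1}]]$ with $x$ of degree $-2$; it is automatically a chain map since both sides carry the zero differential.

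Finally I would verify that $(H^{[*]}(V^{[*]}),0,H(Y),[\mbf 1])$ satisfies the dg vertex algebra axioms, which now follow formally from those of $Z^{[*]}(V^{[*]})$: truncation, since $v_n u=0$ for $n\gg0$ forces $[v_n u]=0$; the vacuum property, since $\mbf 1_n=\delta_{n,-1}\id_V$ gives $H(Y)([\mbf 1],x)=\id_{H^{[*]}(V^{[*]})}$; the creation property, since $v_n\mbf 1=0$ for $n\ge 0$ and $v_{-1}\mbf 1=v$ pass to classes; and the Jacobi identity, by applying the Jacobi identity of $V^{[*]}$ to cocycle representatives of the given classes and then projecting to $H^{[*]}(V^{[*]})$ — the signs $(-1)^{|u||v|}$ being unchanged because the degree of a class equals that of its representative.

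I expect the only genuine content to be the well-definedness step, i.e. the verification from \eqref{eq:d(v_nu)} that $B^{[*]}(V^{[*]})$ is an ideal; the rest is degree bookkeeping and transport of the axioms. One small point worth recording is that if $\mbf 1$ were itself a coboundary then $H^{[*]}(V^{[*]})=0$ (every cocycle would be a coboundary, again by \eqref{eq:d(v_nu)} applied to $\mbf 1=d_V w$), so the degenerate case is consistent with the linearity of $H(Y)$ and the theorem holds vacuously there.
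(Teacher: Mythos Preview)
Your proof is correct and follows essentially the same approach as the paper: the paper's brief proof invokes Lemmas~\ref{lem:chain_map_1} and~\ref{lem:chain_map_2} together with the component form~\eqref{eq:jacobi_2} of the Jacobi identity, and these are precisely the ingredients you use---the two lemmas encode exactly the consequences of~\eqref{eq:d(v_nu)} needed to show $Z^{[*]}(V^{[*]})$ is closed under the operations and $B^{[*]}(V^{[*]})$ is a two-sided ideal. Your write-up is simply a more explicit unpacking of what the paper leaves to the reader.
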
 
\begin{proof} One only needs to verify the Jacobi identity, which can be directly verified by using Lemmas \ref{lem:chain_map_1} and \ref{lem:chain_map_2}, as well as the component version \eqref{eq:jacobi_2} of the Jacobi identity.
\end{proof}
If $f: (V^{[*]}, d_V, Y(\cdot, x), \mbf 1)\to (V'^{[*]}, d_{V'},Y'(\cdot, x), \mbf 1')$ is homomorphism of dg vertex algebras, then $H(f): H^{[*]}(V^{[*]})\to H^{[*]}(V'^{[*]})$ is a homomorphism of dg vertex algebras. We say that $ f$ is a quasi-isomorphism if $H(f)$ is an isomorphism of dg vertex algebra.

\begin{remark}
A dg vertex algebra with zero differential is a vertex super algebra by taking $V^{even}=\sum_p V^{[2p]}$ and $V^{odd}=\sum_p V^{[2p-1]}$.
\end{remark}
 
We will refrain from getting into the homotopy theory aspect of dg vertex algebras and focus on the vertex algebra properties in the dg context.

\subsection{Conformal structures} 
A {\bf  dg vertex operator algebra } is a vertex algebra $ (V^{[*]}, Y(\cdot, x), \mathbf{1})$ together with an element $ \omega \in V^{[4]}$ satisfying $d_V^{[4]}(\omega)=0$,  called conformal vector (or Virasoro element) such that $ Y(\omega, x)=\sum_{n\in \mathbb Z}L(n)x^{-n-2}$  (thus $ |L(n)|=-2n$) and: 
\begin{itemize}\setlength\itemsep{5pt}
\item $[L(m),L(n)]=(m-n)L(m+n)+\frac{1}{12}(m^3-m)\delta_{m+n,0}c_V$ for $m, n \in \mathbb{Z}$ where $c_V \in \mathbb{C}$ is the central charge of $V^{[*]}$. 
\item The linear operator $ L(0)\in \End_{\bb  C}^{[0]}( V^{[*]})$ is semisimple and $ V^{[*]}=\bigoplus_{n\in \frac{1}{2}\mathbb Z} V^{[*]}_n$ with $L(0)v=nv=\on{wt}(v)v$ for $ n\in \frac{1}{2}\mathbb{Z}$, $v \in V^{[*]}_{n}$. Furthermore $\omega \in V^{[*]}_2$, $\dim V^{[*]}_n<\infty $, and $V^{[*]}_n=0$ for $ n \ll 0$.  
\item $L(-1)=\mathcal D$.
\end{itemize}

We remark that a dg vertex operator algebra $V^{[*]}=\bigoplus_{n\in \frac{1}{2}\mathbb Z} V^{[*]}_n$ is automatically  a $\frac{1}{2} \bb Z$-graded vector space by  weights. This weight grading together with the grading by the differential degree gives $V^{[*]}$ a $ \mathbb{Z} \times \frac{1}{2}\mathbb{Z}$-grading.  If $v\in V^{[p]}_n$ we will write $ wt(v)=n$ and $|v|=p$. Thus the weights and degrees in this setting do not interchange. 

\begin{lemma}
In a dg vertex operator algebra $V^{[*]}$, for any $v \in V^{[*]}$, we have
\begin{align}\label{lem:comm_L(-1)}
[L(-1), Y(v, x)]^{s}= Y(L(-1)v, x)=\frac{d}{dx}Y(v, x),
\end{align}
\begin{align}\label{lem:comm_L(0)}
[L(0), Y(v, x)]^{s}= x\frac{d}{dx}Y(v, x)+Y(L(0)v, x).
\end{align}
\end{lemma}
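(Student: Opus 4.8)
The plan is to obtain \eqref{lem:comm_L(-1)} directly from the already-established properties of $\mathcal D$, and then to derive \eqref{lem:comm_L(0)} from the component form of the Jacobi identity applied with $u=\omega$, feeding \eqref{lem:comm_L(-1)} back in to evaluate one term.

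For \eqref{lem:comm_L(-1)}: the conformal axioms impose $L(-1)=\mathcal D$, and Lemma~\ref{lem:comm_D} already records $[\mathcal D, Y(v,x)]^s = Y(\mathcal D(v),x) = \frac{d}{dx}Y(v,x)$. So there is essentially nothing to prove beyond observing that $|L(-1)|=|\mathcal D|=2$ is even, so the super-commutator with $L(-1)$ reduces to the ordinary commutator on each graded component and no dg sign intervenes.

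For \eqref{lem:comm_L(0)}: I would first reduce to the case of weight-homogeneous $v\in V^{[*]}_{\wt(v)}$, since all three expressions are linear in $v$ and $V^{[*]}=\bigoplus_n V^{[*]}_n$. From $Y(\omega,x)=\sum_n L(n)x^{-n-2}$ one reads off $\omega_n=L(n-1)$, so $L(0)=\omega_1$ with $|\omega_1|=|\omega|-2-2=0$; again all super-commutators with $L(0)$ collapse to ordinary ones. Applying the commutator form of the Jacobi identity (cf. \eqref{eq:jacobi_3}) with $u=\omega$ and $m=1$ gives, for every $k$,
\[ [L(0),v_k]^s=(\omega_0 v)_{k+1}+(\omega_1 v)_k=(L(-1)v)_{k+1}+\wt(v)\,v_k, \]
using $\omega_0 v=L(-1)v$ and $\omega_1 v=L(0)v=\wt(v)v$. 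To evaluate $(L(-1)v)_{k+1}$ I would use \eqref{lem:comm_L(-1)}: comparing the coefficient of $x^{-m-1}$ in $Y(L(-1)v,x)=\frac{d}{dx}Y(v,x)=\sum_n(-n-1)v_n x^{-n-2}$ yields $(L(-1)v)_m=-m\,v_{m-1}$, hence $(L(-1)v)_{k+1}=-(k+1)v_k$. Multiplying by $x^{-k-1}$ and summing over $k$,
\[ [L(0),Y(v,x)]^s=\sum_k\big(\wt(v)-k-1\big)v_k x^{-k-1}=x\frac{d}{dx}Y(v,x)+Y(L(0)v,x), \]
which is the claim.

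The argument is formally identical to the classical one in \cite{Lepowsky-Li}; the only genuinely new point is that the dg signs must not spoil it, and they do not, precisely because $\omega$—hence each $L(n)$ and each $\omega_i$—sits in even differential degree, so every super-commutator appearing in the proof is the usual commutator. I do not anticipate a real obstacle here: the one thing to keep straight is that the weight grading and the cohomological grading never interact, which the conformal structure already guarantees (the axioms place $\omega$ simultaneously in $V^{[4]}$ and in $V^{[*]}_2$).
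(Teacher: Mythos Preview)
Your proposal is correct and follows essentially the same approach as the paper: both obtain \eqref{lem:comm_L(-1)} directly from $L(-1)=\mathcal D$ and Lemma~\ref{lem:comm_D}, and both derive \eqref{lem:comm_L(0)} by applying the commutator formula \eqref{eq:jacobi_3} with $u=\omega$, $m=1$ to get $[L(0),v_n]^s=(L(-1)v)_{n+1}+(L(0)v)_n$ and then invoking \eqref{lem:comm_L(-1)}. Your version is simply more explicit about the sign analysis (noting $|\omega|$ is even so the super-commutators reduce to ordinary ones) and about the intermediate identity $(L(-1)v)_m=-m\,v_{m-1}$, which the paper leaves implicit.
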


\begin{proof}
Formula~\eqref{lem:comm_L(-1)} is a direct consequence of the definition of $\omega$ and of Lemma~\ref{lem:comm_D}. Using Formula~\eqref{eq:jacobi_3}, we have $[L(0), v_n]^s=(L(-1)v)_{n+1} + (L(0)v)_n$. By rewriting the previously equality using Formula~\eqref{lem:comm_L(-1)} and the summation $Y(\cdot, x)$, we obtain Formula~\eqref{lem:comm_L(0)}.
\end{proof}

A dg vertex operator algebra homomorphism $f: (V^{[*]}, Y, \mbf 1, \omega)\to (V'^{[*]}, Y', \mbf 1', \omega')$ in $ \Ch$ is a homomorphism of vertex algebra $f$ such that $f(\omega)=\omega'$. Thus $f$  preserves both the degree graded structure and the weight graded structure.  This structure has the property that, for any $ u \in V_n^{[|u|]}$,
\[
u_m(V_r^{[p]})\subseteq V^{[p+|u|-2m-2]}_{r+n-m-1}.
\]

\begin{remark} Since $ d_V(\omega)=0$, using \eqref{eq:diff_end} and \eqref{eq:d(v_nu)}, it follows that each $ L(n)\in Z^{[-2n]}(\End^{[*]}(V^{[*]}))$. Hence in this context, the Lie algebra $ \on{\bf Vir}$ generated by the $L(n)$ is a graded Lie algebra (or differential graded Lie algebra concentrated in even degrees). Thus $(V^{[*]}, d_V^{[*]})$ is a dg module of the dg Lie algebra.  

There is also a dg version of Neveu-Schwarz algebra discussed in \cite{Kac-Wang}. The Neveu-Schwarz  algebra $\on{\bf NS}$ is a Lie algebra in $ \Vecs$ with basis $\{L_n, G_n\:|\; n\in \bb Z\}\cup \{ C\}$ with relations describe in \cite{Kac-Wang}.  We now assume there is an element $ \tau\in V^{[3]}$  with $d_V(\tau)=0$ and 
\[
Y(\tau, x)=\sum_{n\in \bb Z}G_n x^{-n-2}
\]
such that $ \omega=\frac{1}{2}G_{-1}(\tau)$ and 
\begin{align*}
\begin{array}{rcl}
  \left[ G_{m}, L_n \right]^s & = & (m+\frac{1}{2}+\frac{n}{2})G_{m+n}; \\[5pt]
  \left[ G_m, G_n \right]^s & = & 2L_{m+n-1}-\frac{1}{3}m(m+1)\delta_{m+n+1, 0}C.
\end{array}
\end{align*}
The $G_n$ is the $G_{n+\frac{1}{2}}$ in \cite{Kac-Wang}. We note that degree $|G_n|=3-2n-4=-2n-1.$ In particular $ G_{-1}$ has degree $1$. Thus $\omega\in V^{[4]}$. 
\end{remark} 
A dg vertex algebra with the structure $ \tau\in V^{[3]}$ satisfying the above conditions is called $ N=1$ dg vertex algebra following the definition of \cite{Kac-Wang}. In this case, the linear span of the linear chain maps in $Z^{[*]}(V^{[*]})$ is dg Lie algebra with zero differentials and any dg module $(M^{[*]}, d_M^{[*]})$ is automatically dg module for the dg Lie algebra.

\section{Differential graded modules for dg vertex (operator) algebras}\label{sec:4}

\subsection{The case of dg vertex algebras}
\begin{definition}
Let $(V^{[*]},Y(\cdot, x),\mathbf{1})$ be a dg vertex algebra in $ \Ch$. A  dg module over $V^{[*]}$ is an object  $(M^{[*]}, d_M^{[*]})$ in $\Ch$  equipped with a chain map
\begin{align*}
\begin{array}{cccc}
Y_M(\cdot, x): & V^{[*]} & \longrightarrow &\mathrm{End}^{[*]} (M^{[*]})[[x,x^{-1}]] \\
           & v & \longmapsto      & Y_M(v, x)=\displaystyle \sum_{n \in \mathbb{Z}}v_n x^{-n-1}
\end{array}
\end{align*}
such that for any $u,v \in V^{[*]}$, the following properties are verified:
\begin{itemize}\setlength\itemsep{5pt}
\item For any $u \in V^{[*]}$, $w \in M^{[*]}$ , $u_n w=0$ for $n$ sufficiently large, i.e., $Y_M(u, x)w \in M^{[*]}((x))$. (Truncation property)
\item $Y_M(\mathbf{1},x)=\mathrm{id}_{M}$. (Vacuum property)
\item The Jacobi identity 
 \begin{align*} 
 x_2^{-1}&\delta(\frac{x_1-x_0}{x_2})Y_M(Y(u, x_0)v, x_2)=\\
 &x_0^{-1}\delta(\frac{x_1-x_2}{x_0})Y_M(u, x_1)Y_M(v, x_2)-(-1)^{|v||u|}x_0^{-1}\delta(\frac{x_2-x_1}{-x_0})Y_M(v, x_2)Y_M(u, x_1). 
\end{align*}
\end{itemize}
\end{definition}

For a dg homogeneous element $v \in V^{[*]}$ and $m \in M^{[*]}$, one can verify that
\begin{align}\label{eq:d_M(v_nu)}
d_M(v_n(u))=d_V(v)_n(m)+(-1)^{|v|-2n-2}v_n(d_M(m)).
\end{align}

A homomorphism $f : M^{[*]} \longrightarrow M'^{[*]}$ between dg modules is a chain map $f\in \Ch(M^{[*]}, M'^{[*]})$ such that $f(v_nm) = v_nf(m)$ for $v \in V^{[*]}$, $m \in M^{[*]}$. One notes that 
\begin{align*}
 f(d_{M}(v_nm))&=f(d_V(v)_nm+(-1)^{|v|-2n-2}v_n(d_M(m)), \\
&=d_V(v)_nf(m)+(-1)^{|v|-2n-2}v_nf(d_M(m)), \\
&=d_V(v)_nf(m)+(-1)^{|v|-2n-2}v_nd_{M'}f(m), \\
&=d_{M'}(v_nf(m)).
\end{align*}
The chain map and the action of $v_n$ action are compatible. 

Note that $ \on{Ker}(f)$ and the cokernel $ \on{Coker} (f)$ are all dg modules. The category of all vertex dg modules is an abelian category which is closed under coproduct, and will be denoted by $V^{[*]}\Mod^w$.

Note that if $ v $ is homogeneous of degree $|v|$ in $V^{[*]}$, then
\[
v_n(M^{[p]})\subseteq M^{[p+|v|-2n-2]}.
\]
We remark that for each $n\in  \bb Z $, the map  $V^{[*]}\to \End^{[*]}(M^{[*]})[-2n-2]$ sending $v$ to $v_n$ is a chain map. 

\delete{
 If $V^{[*]}$ is a dg vertex operator algebra, then $d_V(\omega)=0$, and so $L_n\in Z^{[-2n]}(\on{End}^{[*]}(M^{[*]}))$ (here $Y_M(\omega, x)=  \sum_{n \in \mathbb{Z}}L_n x^{-n-2}$). In particular, $L_n: M^{[*]}\to M^{[*]}[-2n]$ is a chain map which induces a homomorphism $H(L_n): H^{[*]}(M^{[*]})\to H^{[*]} (M^{[*]})[-2n]$. 
}

\begin{theorem} \label{thm:H(M)dgModule}
The functor $H^{[*]}(-):V^{[*]}\Mod \to H^{[*]}(V^{[*]})\Mod$ is additive.
\delete{If $(V^{[*]}, Y(\cdot, x), \mbf 1)$ is a dg vertex algebra and $ (M^{[*]}, Y_M)$ is a dg $V^{[*]}$-module then, the cohomology space $ H^{[*]}(M^{[*]})$ is dg module for the dg vertex algebra $H^{[*]}(V^{[*]}, Y(\cdot, x), \mbf 1)$. }
\end{theorem}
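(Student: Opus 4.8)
The plan is to verify that the two functorial assertions packaged in Theorem~\ref{thm:H(M)dgModule} hold: first, that $H^{[*]}(M^{[*]})$ is indeed a dg module over $H^{[*]}(V^{[*]})$ via an induced vertex operator map $H(Y_M)$, and second, that $H^{[*]}(-)$ is additive. The additivity is the cheap half: $H^{[*]}$ is already known (from Section~\ref{sec:2}) to be an additive functor $\Ch\to\Vecs$, so it takes finite direct sums of complexes to direct sums; one only needs to observe that the module structure maps and homomorphisms are sent to the corresponding structure maps, which is immediate from naturality of $H^{[*]}$ once the well-definedness below is in place. So the substance is constructing $H(Y_M)$ and checking the axioms.

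For the construction, I would proceed componentwise in the mode coefficients $v_n$. For $v\in V^{[*]}$ with $d_V(v)=0$ and $w\in M^{[*]}$ with $d_M(w)=0$, Equation~\eqref{eq:d_M(v_nu)} gives $d_M(v_n(w)) = d_V(v)_n(w) + (-1)^{|v|-2n-2}v_n(d_M(w)) = 0$, so $v_n$ carries cocycles to cocycles; and if $w = d_M(w')$ is a coboundary (with $v$ still a cocycle), then $v_n(w) = v_n(d_M(w')) = \pm\, d_M(v_n(w'))$ since $d_V(v)_n = 0$, so $v_n$ carries coboundaries to coboundaries. Hence each $v_n$ descends to $\overline{v}_n \in \End^{[*]}(H^{[*]}(M^{[*]}))$, and moreover if $v = d_V(v')$ is a coboundary then $v_n(w) = d_V(v')_n(w) = \pm\,d_M(v'_n(w))$ is a coboundary in $M^{[*]}$, so $\overline{v}_n$ depends only on the class $\overline{v} \in H^{[*]}(V^{[*]})$. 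Collecting these, $H(Y_M)(\overline{v}, x) := \sum_{n}\overline{v}_n x^{-n-1}$ is well-defined; it is a chain map for the zero differential on $H^{[*]}(V^{[*]})$ trivially, and the truncation and vacuum properties descend verbatim since $Y_M(\mathbf 1,x)=\mathrm{id}_M$ passes to $\mathrm{id}_{H^{[*]}(M^{[*]})}$.

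The only axiom needing genuine argument is the Jacobi identity for $H(Y_M)$, and here I would use the component form \eqref{eq:jacobi_2} (with $Y$ replaced by $Y_M$ on the quadratic terms and by $Y$ inside $(u_{l+i}v)_{m+n-i}$, which is legitimate because the $V$-side Jacobi identity was already shown to descend in the proof of the preceding theorem, so $(u_{l+i}v)$ may be replaced by the cocycle representative of $\overline{u}_{l+i}\,\overline{v}$). For cocycle representatives $u,v$ and cocycle $w$, each monomial $u_{m+l-i}v_{n+i}w$, $v_{n+l-i}u_{m+i}w$, and $(u_{l+i}v)_{m+n-i}w$ is a cocycle by the first paragraph of the previous computation, so the identity \eqref{eq:jacobi_2}, which holds in $M^{[*]}$, may simply be projected to $H^{[*]}(M^{[*]})$; the signs $(-1)^{|u||v|}$ are unaffected since $|\overline u|=|u|$ and $|\overline v|=|v|$. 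This is exactly the strategy indicated in the proof of the theorem for $H^{[*]}(V^{[*]})$, and the main (mild) obstacle is purely bookkeeping: making sure that in \eqref{eq:jacobi_2} the element $u_{l+i}v$ — which is a cocycle but need not be the chosen representative of its class — can be swapped for that representative without changing the class, which follows because $\overline{(u_{l+i}v)}=\overline u_{l+i}\,\overline v$ by the well-definedness already established. Finally, functoriality on morphisms is the computation displayed just before the theorem: a module homomorphism $f$ satisfies $f(v_n m)=v_n f(m)$ and is a chain map, hence induces $H(f)$ commuting with all $\overline v_n$, and identity and composition are preserved because $H^{[*]}$ already does so on underlying complexes.
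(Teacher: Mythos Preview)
Your proposal is correct and follows exactly the approach the paper implicitly intends: the paper states Theorem~\ref{thm:H(M)dgModule} without proof, but its content is the module analogue of the preceding theorem on $H^{[*]}(V^{[*]})$, whose proof the paper sketches as ``verify the Jacobi identity using Lemmas~\ref{lem:chain_map_1}, \ref{lem:chain_map_2} and the component version~\eqref{eq:jacobi_2}.'' You carry out precisely this template---descending each $v_n$ to cohomology via~\eqref{eq:d_M(v_nu)}, then projecting the component Jacobi identity---and your handling of the representative ambiguity in $(u_{l+i}v)$ and of additivity is sound.
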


\subsection{The case of dg vertex operator algebras}

If $(V^{[*]}, Y(\cdot, x), \mathbf{1}, \omega)$ is a dg vertex operator algebra, then a vertex algebra dg module $M^{[*]}$ is called a {\bf weak} dg module for the dg vertex operator algebra. 
Similar to Lemmas \ref{lem:chain_map_1}, \ref{lem:chain_map_2}. 
As consequences of the definition above we have:
\begin{itemize}\setlength\itemsep{5pt}
\item $Y_M(L(-1)v, x)=\frac{d}{dx}Y_M(v, x)$,
\item  $[L_m,L_n]=(m-n)L_{m+n}+\frac{1}{12}(m^3-m)\delta_{m+n,0}c_V,$ 
\end{itemize}
where $Y_M(\omega, x)=\displaystyle \sum_{n \in \mathbb{Z}}L_n x^{-n-2}$. \\

We remark that although the operator $L_0$ is acting on $M^{[*]}$, there is no requirement for $L_0$ to be diagonalizable or even locally finite. All we know is that $ L_0$ is chain map on $ M^{[*]}$.  Thus the generalized eigenspaces of $L_0$ in $M^{[*]}$ are all subcomplexes. 
 
 We can define a \textbf{graded} dg module  as a  weak dg module $M^{[*]}$ together with a decomposition $M^{[*]}=\bigoplus_{n \in \frac{1}{2}\mathbb Z}M^{[*]}(n)$ in $\Ch$ such that, for all $n \in \frac{1}{2}\mathbb{Z}$, $u \in V_n$, 
\[
u_m(M^{[*]}(r))\subseteq M^{[*]}(r+n-m-1)
\] 
for all $ m \in \mathbb Z$, $r \in  \frac{1}{2}\mathbb{Z}$.  Note that  each $ M^{[*]}(n)=\bigoplus_pM^{[p]}(n)$ is a differential (cochain) complex, i.e., an object in $ \Ch$. Thus $ M^{[*]}$ is $\bb Z \times  \frac{1}{2} \bb Z$-graded. We will use $M^{[*]}(\bullet)$ to denote the $ \frac{1}{2}\mathbb Z$-graded  on $ M^{[*]}$ as direct sums of differential complexes. The same weak $V^{[*]}$-dg module $M^{[*]}$ can have many different $ \frac{1}{2}\mathbb Z$-graded decompositions of complexes. 

 Let $V^{[*]}\Mod^{gr}$ be the category of all graded dg modules over $V^{[*]}$. We will use $(M^{[*]}, M^{[*]}(\bullet))$ to denote the objects of $V^{[*]}\Mod^{gr}$ to specify the graded structure. A morphism $ f: (M^{[*]}, M^{[*]}(\bullet))\longrightarrow  (N^{[*]}, N^{[*]}(\bullet))$ in $V^{[*]}\Mod^{gr}$ is a dg module  homomorphism  $ f: M^{[*]} \longrightarrow N^{[*]}$ such that $ f(M^{[*]}(n))\subseteq N^{[*]}(n)$ for all $ n\in \frac{1}{2}\mathbb Z$. It is straightforward to verify that $V^{[*]}\Mod^{gr}$ is an abelian category.

 More generally one can define a filtered module structure on a dg module $M^{[*]}$ by an increasing filtration of  subcomplexes 
\[
\cdots \subseteq F^pM^{[*]}\subseteq F^{p+1}M^{[*]}\subseteq \cdots
\]
satisfying 
\[
u_m(F^pM^{[*]})\subseteq F^{p+n-m-1}M^{[*]}
\] 
for all $ u\in V_n^{[*]} $ and $ m\in \bb Z$. Let $ V^{[*]}\Mod^{fil}$ be the category of all filtered dg modules whose objects are the $ F^\bullet M^{[*]}$ and the morphisms $f: F^\bullet M^{[*]}\to F^\bullet N^{[*]}$ should be morphisms in $ V^{[*]}\Mod^w$ and satisfy $f(F^pM^{[*]})\subseteq F^pN^{[*]}$ for all $ p$. The category $V^{[*]}\Mod^{fil}$ is additive, but in general not abelian.

Both categories $ V^{[*]}\Mod^{gr}$ and $ V^{[*]}\Mod^{fil}$ have an auto-equivalence functor $ T$ which is the identity on the $V^{[*]}$-module $M^{[*]}$ but with shifted graded (filtered) structures  defined by $T(M^{[*]})(n)=M^{[*]}(n+1)$ and $F^p(T(M^{[*]}))=F^{p+1}M^{[*]}$. There is a natural functor  $\gr : V^{[*]}\Mod^{fil}\to V^{[*]}\Mod^{gr}$ defined by $ \gr (F^\bullet M^{[*]})(p)=F^pM^{[*]}/F^{p-1}M^{[*]}$. This functor commutes with the auto-equivalence functor $T$, and it has a section $ (M^{[*]}, M^{[*]}(\bullet))\mapsto F^\bullet M^{[*]}$ with 
\[
F^p M^{[*]}=\oplus_{n\leq p}M^{[*]}(n).
\]
 
We remark that $V^{[*]}\Mod^{gr}$ is not a full subcategory of $ V^{[*]}\Mod^w$.   However there is a natural additive forgetful functor $V^{[*]}\Mod^{gr} \longrightarrow V^{[*]}\Mod^w$, which sends $(M^{[*]}, M^{[*]}(\bullet))$ to $M^{[*]}$ forgetting the graded structure. The category $V^{[*]}\Mod^{gr}$ has a full subcategory $V^{[*]}\Mod^{gr}_+$ consisting of all modules $M^{[*]}(\bullet)$ such that $M^{[*]}(n)=0$ for $n \ll  0$. 

An \textbf{admissible} dg module is an object  $M^{[*]}$ in $V^{[*]}\Mod^w$ such there is a graded structure $(M^{[*]}, M^{[*]}(\bullet)) $ in $V^{[*]}\Mod^{gr}$  satisfying $M^{[*]}(n)=0$ for all $ n<0$. Let $V^{[*]}\Mod^{adm}$ be the full subcategory of $ V^{[*]}\Mod^{w}$ consisting of all admissible modules $M^{[*]}$.  Thus every object in $ V^{[*]}\Mod^{adm}$ is of the form $M^{[*]}$ for some $ (M^{[*]}, M^{[*]}(\bullet))$ in $ V^{[*]}\Mod^{gr}_+$.  Being a full subcategory of $V^{[*]}\Mod^w$,  homomorphisms between admissible modules do not preserve the graded structure in general.

 We want to mention that for a given admissible module $M^{[*]}$, there could be many different graded structures $ M^{[*]}(\bullet)$ on $M^{[*]}$ to make $(M^{[*]}, M^{[*]}(\bullet))$ an object in $V^{[*]}\Mod^{gr}_+$. Therefore the graded structure on $M^{[*]}$ is not preserved under homomorphisms in $ V^{[*]}\Mod^{adm}$.

Finally, an \textbf{ordinary} dg module is a dg module $M^{[*]}$ such that $ L_0$ is semisimple with eigenspace decomposition $M^{[*]}=\bigoplus_{\lambda \in \mathbb{C}} M_\lambda^{[*]}$  \begin{itemize}\setlength\itemsep{5pt}
\item[(i)] $\on{dim} M_\lambda^{[*]} < \infty$ for all $\lambda \in \mathbb{C}$,
\item[(ii)] $M_{\lambda+n}^{[*]}=0$ for a fixed $\lambda$ and $n$ small enough,
\item[(iii)] $M_\lambda^{[*]}$ is an eigenspace of $L_0$ of eigenvalue $\lambda$.
\end{itemize}

By definition $ M_\lambda^{[*]}=\on{Ker}(L_0-\lambda: M^{[*]}\to M^{[*]})$ is subcomplex of $M^{[*]}$ since $ L_0$ commutes with the differential $ d_M$. 

We will use $V^{[*]}\Mod^{ord}$ to denote the full subcategory of $V^{[*]}\Mod^w$ of all ordinary modules. The condition (i) means the complex $M_\lambda^{[*]}$ has finite dimensional total space.

In general, given two modules $M^{[*]}$ and $N^{[*]}$ in $V^{[*]}\Mod^{ord}$, the modules $P^{[*]}$ appearing in the short exact sequence
\[ 0\longrightarrow N^{[*]} \longrightarrow P^{[*]} \longrightarrow M^{[*]} \longrightarrow 0\]
in the category $V^{[*]}\Mod^{w}$ need not be in $V^{[*]}\Mod^{ord}$, i.e., $V^{[*]}\Mod^{ord}$ need not be closed under extensions in $V^{[*]}\Mod^{w}$. Requiring the operator $L_0$ to act semisimply on $M^{[*]}$ seems to be too strong a condition.  A weak module $M^{[*]}$ is called \textbf{logarithmic} if it satisfies all conditions of ordinary modules except  (iii)  is relaxed by assuming that $ M_\lambda^{[*]}$ is generalized eigenspace for $L_0$, i.e., $M_\lambda^{[*]} =\bigcup _{m>0}\on{Ker}((L_0-\lambda)^m)$. Once again $M_\lambda^{[*]}$ is also a subcomplex of $M^{[*]}$. We will use $V^{[*]}\Mod^{log}$ to denote the full subcategory of $V^{[*]}\Mod^{w}$ consisting all such weak modules $M^{[*]}$.  However, $V^{[*]}\Mod^{log}$ is closed under extensions. 

By using the identity
\[ [L_0, v_n]=(\operatorname{wt}(v)-n-1)v_n\] 
for homogeneous $v$ in $V^{[*]}$, we can see that for any logarithmic module $M^{[*]}$, the subcomplex
\[ \bigoplus_{\lambda}\on{Ker}((L_0-\lambda): M_\lambda^{[*]} \to M_\lambda^{[*]})\]
of $M^{[*]}$ is a dg submodule of $M^{[*]}$. Hence we have that every irreducible logarithmic module is ordinary. 

\delete{
The operators $ L_{-1}, \; L_0,\;  L_1$ acting on all ordinary modules form a Lie algebra isomorphic to $ \mathfrak s\mathfrak l_2$. The category $\mathcal O^-$ for $ \mathfrak{sl}_2$ corresponding to lowest weight modules is defined as the Lie algebra modules $M$ such that $L(0)$ acts semisimply with finite dimensional weight spaces and $L(1)$ (corresponding to the negative roots) acts locally nilpotently.  It was shown in \cite{Dong-Lin-Mason} that every module in $ \mathcal O^-$ can be decomposed uniquely as direct sum of (possibly infinitely many) indecomposable modules in $\mathcal O^-$. In particular every indecomposable $N\in V\Mod^{ord}$ must have the form $N=\bigoplus_{n\geq 0}N_{\lambda+n}$ for a unique $\lambda\in \mathbb C$ such that $N_\lambda\neq 0$. 

 it is a corollary because $\mathcal O^-$ has the Krull-Remak-Schmidt property and $V-Mod^{ord}$ is a subcategory. If we take an ordinary $V$-module $M$, we can decompose it into a direct sum of $\mathfrak{sl}_2$-modules, and then regroup the summands into $V$-modules. By repeating this process on the new $V$-modules, we obtain a decomposition of $M$ into a direct sum of indecomposable $V$-modules, and this process will end because each summand $M_\lambda$ is finite dimensional. Thus $V\Mod^{ord}$ verifies the Krull-Remak-Schmidt property.
 }

\begin{proposition}\label{prop:log}
Every module $M^{[*]}$ in $V^{[*]}\Mod^{log}$ can decomposed uniquely into a direct sum of (possibly infinitely many) indecomposable dg modules such that each indecomposable component appears only finitely many times.
\end{proposition}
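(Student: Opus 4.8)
The plan is to establish a Krull--Remak--Schmidt--Azumaya theorem for the abelian category $V^{[*]}\Mod^{log}$, using in an essential way that the generalized $L_0$-eigenspaces of a logarithmic module are finite dimensional and that, within each residue class modulo the group of weights of $V^{[*]}$, the eigenvalues are bounded below. First I would record the structural preliminaries and reduce to a single block. For $M^{[*]}\in V^{[*]}\Mod^{log}$ the decomposition $M^{[*]}=\bigoplus_{\lambda\in\bb C}M^{[*]}_\lambda$ into generalized $L_0$-eigenspaces is canonical: every morphism in $V^{[*]}\Mod^{w}$ commutes with $L_0=\omega_1$, so each $V^{[*]}$-endomorphism of $M^{[*]}$ preserves every $M^{[*]}_\lambda$, and each $M^{[*]}_\lambda$ is a finite-dimensional subcomplex because $L_0$ commutes with $d_M$. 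From $[L_0,v_n]=(\on{wt}(v)-n-1)v_n$ one sees that a mode $v_n$ of a homogeneous $v\in V^{[*]}$ maps $M^{[*]}_\lambda$ into $M^{[*]}_{\lambda+\on{wt}(v)-n-1}$, where $\on{wt}(v)-n-1$ lies in the subgroup $\Gamma\subseteq\frac12\bb Z$ generated by the weights of $V^{[*]}$; hence $M^{[*]}=\bigoplus_{\bar\lambda\in\bb C/\Gamma}M^{[*]}_{\bar\lambda}$ with $M^{[*]}_{\bar\lambda}=\bigoplus_{\lambda\in\bar\lambda}M^{[*]}_\lambda$ is a direct sum of dg submodules, and each block $M^{[*]}_{\bar\lambda}$ is bounded below by axiom (ii) of logarithmic modules. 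Since an indecomposable summand lies in a single block, it suffices to decompose one block: a dg module $B$ graded by $\mu_0+\Gamma_{\geq0}$ with $B_{\mu_0}\neq0$ and all graded pieces finite dimensional.

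Next comes Fitting's lemma. For $f\in\End_{V^{[*]}}(B)$, which preserves each $B_\mu$, the restriction $f|_{B_\mu}$ satisfies $B_\mu=\on{Ker}(f^{N}|_{B_\mu})\oplus\on{Im}(f^{N}|_{B_\mu})$ for $N\gg0$, and assembling over $\mu$ yields a decomposition $B=\bigl(\bigcup_n\on{Ker}f^n\bigr)\oplus\bigl(\bigcap_n\on{Im}f^n\bigr)$ into dg submodules. Therefore, if $B$ is indecomposable, then $f$ is either an automorphism (the case $\bigcap_n\on{Im}f^n=B$, which forces $f$ surjective, hence degreewise bijective) or locally nilpotent (the case $\bigcup_n\on{Ker}f^n=B$); in particular a non-unit is locally nilpotent, so $\on{id}-f$ has the degreewise-convergent inverse $\sum_{k\geq0}f^k$, and $\End_{V^{[*]}}(B)$ is local.

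For existence of the decomposition into indecomposables I would argue in two steps. First, every nonzero block $E$ has an indecomposable direct summand $F$ with the same least weight $\on{bot}(E)$: among the nonzero direct summands $S$ of $E$ with $\on{bot}(S)=\on{bot}(E)$, ordered by reverse inclusion, Zorn's lemma yields an inclusion-minimal one, the only non-formal point being that for a descending chain $(S_\alpha)$ with complements $(T_\alpha)$ the dimensions $\dim S_\alpha(\mu)$ and $\dim T_\alpha(\mu)$ stabilize in each finite-dimensional degree $B_\mu$, so that $\bigcap_\alpha S_\alpha$ is again a direct summand with complement $\bigcup_\alpha T_\alpha$ and unchanged least weight; a minimal $S$ is indecomposable since any nontrivial decomposition of $S$ would contain a summand of the same least weight lying strictly lower in the poset. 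Second, I build the decomposition of $B$ by transfinite recursion, maintaining $B=\bigl(\bigoplus_{i\in I_\alpha}F_i\bigr)\oplus E_\alpha$ with all $F_i$ indecomposable: at successor stages split off from $E_\alpha$ an indecomposable summand of least weight $\on{bot}(E_\alpha)$, which strictly lowers $\dim E_\alpha$ in that degree, so that after finitely many stages the least weight of $E_\alpha$ strictly increases; at limit stages put $E_\lambda=\bigcap_{\alpha<\lambda}E_\alpha$, again complemented by the stabilization argument. Since $B$ has only countably many nonzero graded pieces, each cleared in finitely many stages, the recursion terminates at stage $\omega$ with $E_\omega=0$, giving $B=\bigoplus_{i\in I_\omega}F_i$.

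Uniqueness and finiteness then follow formally. Because each indecomposable summand has a local endomorphism ring, the Krull--Remak--Schmidt--Azumaya theorem (valid for arbitrary direct sums of objects with local endomorphism rings in an abelian category) shows that any two decompositions of $M^{[*]}$ into indecomposables are equivalent, so multiplicities are well defined; and if $N$ occurs with multiplicity $m$, then, since the decomposition is a degreewise direct sum of the $L_0$-eigenspace gradings, $m\cdot\dim N_\mu\leq\dim M^{[*]}_\mu<\infty$ for $\mu=\on{bot}(N)$, whence $m<\infty$. The main obstacle is the existence step: with infinitely many summands one must rule out that the peeling procedure stalls, and the decisive point---special to this category and false for modules in general---is that decreasing intersections and increasing unions of direct summands remain direct summands, which is exactly what finite-dimensionality of the generalized $L_0$-eigenspaces provides.
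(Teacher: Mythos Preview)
Your argument is correct and far more detailed than the paper's own treatment, which consists only of the remark that ``the proof is similar to the proof for ordinary modules'' together with the observation that ``uniqueness follows from the fact that each generalized weight subcomplex is of finite total dimension.'' Your Fitting--lemma and Krull--Remak--Schmidt--Azumaya approach is precisely the standard mechanism underlying that remark, so the two proofs are the same in spirit; you have simply supplied the details the paper omits.

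One small imprecision deserves attention. In the Zorn's lemma step you write that for a descending chain $(S_\alpha)$ ``with complements $(T_\alpha)$'' the intersection $\bigcap_\alpha S_\alpha$ is a direct summand ``with complement $\bigcup_\alpha T_\alpha$.'' As stated this does not work: the complements $T_\alpha$ are not canonically chosen and need not form an increasing chain, so their union has no reason to be a submodule, let alone a complement. The fix is easy, however. Because the dimensions $\dim S_\alpha(\mu)$ stabilize in each of the countably many degrees $\mu$, every chain in your poset has a countable cofinal subchain $S_1\supseteq S_2\supseteq\cdots$. Along such a chain one can choose the complements to be nested: given $B=S_n\oplus T_n$ and $S_{n+1}\subseteq S_n$, pick any complement $T'$ of $S_{n+1}$ in $B$, set $U=S_n\cap T'$, and take $T_{n+1}=U\oplus T_n\supseteq T_n$. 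Then $T=\bigcup_n T_n$ is a dg submodule, $S\cap T=0$, and $S+T=B$ degreewise, so $\bigcap_n S_n$ is indeed a direct summand with the required least weight. The same refinement repairs the limit step of your transfinite recursion. With this adjustment your existence argument goes through, and the uniqueness and finite-multiplicity parts are already clean.
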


The proof is similar to the proof for ordinary modules. The uniqueness follows from the fact that  each generalized weight subcomplex is of finite total dimension. 

\begin{remark}
The functor $H^{[*]}(-)$ sends a weak (resp. graded, admissible, ordinary, logarithmic) $V^{[*]}$-module to a weak (resp. graded, admissible, ordinary, logarithmic) $H^{[*]}(V^{[*]})$-module \end{remark}

\begin{remark} One could also define a dg module $M^{[*]}$ for dg vertex operator algebra as cohomologically ordinary if its cohomology dg module $H^{[*]}(M^{[*]})$ is an ordinary $H^{[*]}(V^{[*]})$-module. 
\end{remark}

The result of Proposition~\ref{prop:log} defines a canonical graded structure $M^{[*]}(\bullet)$ on every logarithmic module $M^{[*]}$ so that $(M^{[*]},M^{[*]}(\bullet))$ is in $V^{[*]}\Mod^{gr}_+$. Indeed, we have $M^{[*]}=\bigoplus_{n \in \mathbb{N}}M^{[*]}(n)$ where $M^{[*]}(n)=\bigoplus_{\lambda \in \Lambda}M_{\lambda+n}^{[*]}$ and $\Lambda=\{\lambda \in \cc \ | \ M_{\lambda -m}^{[*]}=0 \text{ if } m \in \mathbb{N}^*\}$. It follows that every logarithmic dg module is also an admissible module with a well-constructed gradation. We can summarize the relations between the different types of modules we defined as follows:
\begin{align*}
V^{[*]}\Mod^{ord} \subseteq V^{[*]}\Mod^{log} \subseteq V^{[*]}\Mod^{adm} \subseteq V^{[*]}\Mod^w.
 \end{align*}

Before closing this section, we introduce the following definition:
\begin{definition}
A dg vertex operator algebra $V^{[*]}$ is \textbf{rational} if every admissible $V^{[*]}$-module is a direct sum of irreducible admissible $V^{[*]}$-modules. In particular, a rational dg vertex operator algebra is semisimple, i.e., it is a direct sum of irreducible modules for itself.  We call a dg vertex operator algebra \textbf{cohomologically rational} if its cohomology vertex operator algebra is rational in the category of $\Vecs$. 
\end{definition}

\section{$C_2$-algebras and their Poisson module categories}\label{sec:5}

\subsection{$C_2$-algebras}
Let $V$ be a  vertex  algebra. The \textbf{$\mathbf{C_2}$-algebra} $R(V)$ was introduced by Y. Zhu in \cite{Zhu} and is defined as $R(V)=V/C_2(V)$ where $C_2(V)$ is linearly spanned by the set $\{a_{-2}b \ | \ a,b \in V\}$. This quotient has a commutative Poisson algebra structure given by 
\begin{align*}
 \overline{a} \cdot \overline{b}= \overline{a_{-1}b}  \quad  \text{ and } \quad \{\overline{a}, \overline{b}\} =\overline{a_0b} \quad \text{ for } a,b \in V,
\end{align*}
where $\overline{x}$ is the image of $x \in V$ in $R(V)$. If $R(V)$ is finite dimensional then $V$ is said to be \textbf{$\mathbf{C_2}$-cofinite}.

We now assume $V^{[*]}=(V^{[*]}, Y(\cdot, x), \mbf 1)$ is a dg vertex algebra. Set $C_2^{[*]}(V^{[*]})=\on{Span}\{u_{-2}v \ | \ u, v \in V^{[*]}\}$ and define the \textbf{$\mathbf{C_2}$-dg algebra} as the quotient $R^{[*]}(V^{[*]})=V^{[*]}/C_2^{[*]}(V^{[*]})$. As above, we will write $\overline{u}$ for the class of $u \in V^{[*]}$ in $R^{[*]}(V^{[*]})$. We have the following result:

\begin{theorem}\label{thm:R(V)dgPoissonAlgebra}
The space $R^{[*]}(V^{[*]})$ is a dg Poisson algebra with a graded commutative product $\overline{u} \cdot \overline{v}= \overline{u_{-1}v}$ and a Poisson bracket $\{\overline{u}, \overline{v}\}_{R(V)} =\overline{u_0v}$. If $V^{[*]}$ is a dg vertex operator algebra, then $R^{[*]}(V^{[*]})$ is a weight graded dg Poisson algebra
\[
R^{[*]}(V^{[*]})=\bigoplus_{(p, n) \in \mathbb{Z} \times \frac{1}{2}\mathbb{Z}}R^{[p]}(V^{[*]})_n.
\]
with $R^{[p]}(V^{[*]})_n$ the image of $V^{[p]}_n$ in $R^{[*]}(V^{[*]})$.
\end{theorem}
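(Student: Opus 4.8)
The plan is to follow Zhu's original proof that $R(V)$ is a commutative Poisson algebra \cite{Zhu} (see also \cite{Lepowsky-Li}), inserting the Koszul signs exactly where they enter the skew-symmetry and Jacobi identities of $V^{[*]}$, and adding the (essentially free) compatibility with $d_V$; the only tools needed are the component Jacobi identities \eqref{eq:jacobi_2}, \eqref{eq:jacobi_3}, the skew-symmetry $Y(u,x)v=(-1)^{|u||v|}e^{x\mathcal D}Y(v,-x)u$, Lemma~\ref{lem:comm_D}, and the derivation identity \eqref{eq:d(v_nu)}. The first and main step is to check that $C_2^{[*]}(V^{[*]})$ is a subcomplex stable under the operations $a\mapsto u_{-1}a$, $a\mapsto a_{-1}v$, $a\mapsto u_0a$, $a\mapsto a_0v$, so that $\overline u\cdot\overline v=\overline{u_{-1}v}$ and $\{\overline u,\overline v\}=\overline{u_0v}$ descend to $R^{[*]}(V^{[*]})$. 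That $d_V$ preserves $C_2^{[*]}(V^{[*]})$ is immediate from \eqref{eq:d(v_nu)} with $n=-2$, so $R^{[*]}(V^{[*]})$ is an object of $\Ch$. Since $\mathcal D v=v_{-2}\mathbf 1$ gives $\mathcal D(V^{[*]})\subseteq C_2^{[*]}(V^{[*]})$ and Lemma~\ref{lem:comm_D} gives $(\mathcal D a)_m=-m\,a_{m-1}$, an induction on $n$ shows $a_{-n}b\in C_2^{[*]}(V^{[*]})$ for all $n\ge 2$ and all $a,b$; feeding this into the iterate formula ($m=0$ in \eqref{eq:jacobi_2}) and the commutator formula \eqref{eq:jacobi_3} shows the four operations preserve $C_2^{[*]}(V^{[*]})$. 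I expect this verification to be the main obstacle: it is the only place the full vertex-algebra structure enters and the binomial sums must be handled together with the Koszul signs, while everything afterwards is bookkeeping layered on the classical argument.

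Next I would verify the dg Poisson axioms. Extracting the coefficient of $x^{0}$ from the skew-symmetry identity gives $u_{-1}v=(-1)^{|u||v|}\sum_{i\ge 0}\tfrac{(-1)^i}{i!}\mathcal D^i(v_{i-1}u)$, and as the $i\ge 1$ terms lie in $\mathcal D(V^{[*]})\subseteq C_2^{[*]}(V^{[*]})$ one gets graded commutativity $\overline{u_{-1}v}=(-1)^{|u||v|}\overline{v_{-1}u}$; associativity modulo $C_2^{[*]}(V^{[*]})$ follows from the iterate formula, and $\overline{\mathbf 1}$ is a unit by the vacuum and creation properties. Extracting the coefficient of $x^{-1}$ from the same identity gives graded skew-symmetry of the bracket. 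The $m=n=0$ case of \eqref{eq:jacobi_3}, $[u_0,v_0]^s=(u_0v)_0$, descends to the graded Jacobi identity, and the $m=0$, $n=-1$ case, $[u_0,v_{-1}]^s=(u_0v)_{-1}$, descends to the Leibniz rule $\{\overline u,\overline v\cdot\overline w\}=\{\overline u,\overline v\}\cdot\overline w+(-1)^{|u||v|}\overline v\cdot\{\overline u,\overline w\}$; the stated Koszul signs are correct because $|u_0|\equiv|u|$ and $|v_0|\equiv|v_{-1}|\equiv|v|\pmod 2$. Finally, \eqref{eq:d(v_nu)} with $n=-1$ (resp.\ $n=0$) says $d_V$ is a graded derivation of $\cdot$ (resp.\ of $\{-,-\}$), which is exactly the chain-map requirement on these products. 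Note the bracket is homogeneous of cohomological degree $-2$ (since $|u_0|=|u|-2$), so $R^{[*]}(V^{[*]})$ is a dg Poisson algebra whose underlying dg Lie bracket has degree $-2$ and whose product has degree $0$, in the sense of Section~\ref{sec:2}.

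For the last assertion, when $V^{[*]}$ is a dg vertex operator algebra the rule $u_m(V_r^{[p]})\subseteq V_{r+n-m-1}^{[p+|u|-2m-2]}$ for $u\in V_n^{[|u|]}$ shows $C_2^{[*]}(V^{[*]})$ is spanned by bihomogeneous elements $u_{-2}v$, hence is a $\mathbb Z\times\tfrac{1}{2}\mathbb Z$-graded subspace; therefore $R^{[*]}(V^{[*]})=\bigoplus_{(p,n)}R^{[p]}(V^{[*]})_n$ with $R^{[p]}(V^{[*]})_n$ the image of $V_n^{[p]}$, and the same rule gives bidegrees $(0,0)$ for $\cdot$, $(-2,-1)$ for $\{-,-\}$, and $(1,0)$ for $d_V$, while boundedness below in the weights and finite-dimensionality of the weight spaces are inherited from $V^{[*]}$. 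Hence $R^{[*]}(V^{[*]})$ is a weight graded dg Poisson algebra.
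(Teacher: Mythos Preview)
Your proposal is correct and follows essentially the same approach as the paper's proof: both show $C_2^{[*]}(V^{[*]})$ is a subcomplex via \eqref{eq:d(v_nu)}, use the $\mathcal D$-derivative property to see $C_2^{[*]}(V^{[*]})$ absorbs all $u_{-n}v$ with $n\ge 2$, and then verify the Poisson-algebra identities modulo $C_2^{[*]}(V^{[*]})$ using the component Jacobi identity and skew-symmetry with Koszul signs inserted. You supply somewhat more detail than the paper (which simply lists the nine identities to be checked), and you explicitly note that the bracket has cohomological degree $-2$, a point the paper records only later when treating $\gr^{[*]}A(V^{[*]})$.
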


\begin{proof}
Let $V^{[*]}$ be a dg vertex algebra and let $u, v$ be homogeneous elements of $V^{[*]}$. Then
\[
d_V(u_{-2}v)=d_V(u)_{-2}(v)+(-1)^{|u|-2(-2)+2}u_{-2}(d_V(v)) \in C_2^{[*]}(V^{[*]}).
\]
Furthermore $u_{-2}v \in V^{[|u|+|v|+2]}$ so $d_V(u_{-2}v) \in V^{[|u|+|v|+3]}$. Thus $d_V: C_2^{[k]}(V^{[*]}) \longrightarrow C_2^{[k+1]}(V^{[*]})$, and $(C_2^{[*]}(V^{[*]}),d_V^{[*]})$ is a subcomplex of $(V^{[*]},d_V^{[*]})$. The quotient space $(R^{[*]}(V^{[*]}),d_V^{[*]})$ is then a well-defined cochain complex.

We have a translation operator $\mathcal{D}$ satisfying $Y(\mathcal{D}v, x)=\frac{d}{dx}Y(v, x)$. It follows that for any $v \in V^{[*]}$ and $n \geq 2$, we have
\[
v_{-n}=\frac{1}{(n-1)!}(\mathcal{D}^{n-2}v)_{-2}.
\]
Therefore one sees that $C_2^{[*]}(V^{[*]})=\on{Span}\{u_{-k}v \ | \ u, v \in V^{[*]}, k \geq 2\}$. It can be verified that the operations $\overline{u} \cdot \overline{v}$ and $\{\overline{u}, \overline{v}\}_{R(V)}$ given in the statement of the theorem are well-defined, and that they satisfy the following identities:
\begin{itemize}\setlength\itemsep{5pt}
\item $\{u, C_2^{[*]}(V^{[*]}) \}_{R(V)} \subset C_2^{[*]}(V^{[*]})$,
\item $u. C_2^{[*]}(V^{[*]}) \subset C_2^{[*]}(V^{[*]})$,
\item $\{u,v\}_{R(V)}+(-1)^{|u||v|}\{v,u\}_{R(V)}  \in C_2^{[*]}(V^{[*]})$,
\item $\{\{u,v\}_{R(V)}, w\}_{R(V)}+(-1)^{|u||v|}\{v,\{u, w\}_{R(V)}\}_{R(V)}-\{u, \{v, w\}_{R(V)}\}_{R(V)}  \in C_2^{[*]}(V^{[*]})$,
\item $u.v-(-1)^{|u||v|}v.u \in C_2^{[*]}(V^{[*]})$,
\item $u.(v.w)-(u.v).w \in C_2^{[*]}(V^{[*]})$,
\item $\{u, v.w\}_{R(V)}-\{u,v\}_{R(V)}.w-(-1)^{|u||v|}v.\{u, w\}_{R(V)} \in C_2^{[*]}(V^{[*]})$,
\item $d_V(u.v)-(d_V(u).v+(-1)^{|u|}u.d_V(v)) \in C_2^{[*]}(V^{[*]})$,
\item $d_V(\{u,v\}_{R(V)})-(\{d_V(u),v\}_{R(V)}+(-1)^{|u|}\{u, d_V(v)\}_{R(V)}) \in C_2^{[*]}(V^{[*]})$.
\end{itemize}

We now assume that $V^{[*]}=\bigoplus_{(p, n) \in \mathbb{Z} \times \frac{1}{2}\mathbb{Z}}V^{[p]}_n$ is a dg vertex operator algebra. So if $u \in V^{[*]}$, then $u=\sum_{(p, n)}u^p_n$, with $u^p_n \in V^{[p]}_n$. Thus any element of $C_2^{[*]}(V^{[*]})$ can be written $u_{-2}v=\sum_{p, p', n, n'}(u^p_n)_{-2}(v^{p'}_{n'})$, and $(u^p_n)_{-2}(v^{p'}_{n'}) \in V^{[p+p'+2]}_{n+n'+1} \cap C_2^{[*]}(V^{[*]})$. Therefore the weight homogeneous components of the elements of $C_2^{[*]}(V^{[*]})$ are still in $C_2^{[*]}(V^{[*]})$. It then follows that $C_2^{[*]}(V^{[*]})=\bigoplus_{(p, n)} C_2^{[p]}(V^{[*]})_n$, and thus $R^{[*]}(V^{[*]})=\bigoplus_{(p, n)  \in \mathbb{Z} \times \frac{1}{2}\mathbb{Z}}V^{[p]}_n/C_2^{[p]}(V^{[*]})_n$. Furthermore, we know that $u_{-1}v \in V^{[|u|+|v|]}_{\on{wt}(u)+\on{wt}(v)}$, and so $R^{[p]}(V^{[*]})_n.R^{[p']}(V^{[*]})_{n'} \subset R^{[p+p']}(V^{[*]})_{n+n'}$. Hence the $\mathbb{Z} \times \frac{1}{2}\mathbb{Z}$-gradation is respected by the product.
\end{proof}

\begin{remark}
Although $R^{[*]}(V^{[*]})$ can be defined for any dg vertex algebra and its Poisson structure is independent of the choice of the conformal vector $\omega$, the conformal structure determines the $\mathbb{Z} \times \frac{1}{2}\mathbb{Z}$-gradation $R^{[*]}(V^{[*]})$. 
\end{remark}

\delete{
Corresponding to the algebra $R(V)$ are two module categories: $R(V)\Mod$ of all $R(V)$-modules, and $R(V)\Mod^{gr}$ of all graded $R(V)$-modules. There is a forgetful functor $R(V)\Mod^{gr} \longrightarrow R(V)\Mod$. }

\subsection{Associated dg Poisson $R^{[*]}(V^{[*]})$-modules}
Let $M^{[*]}$ be a dg module for a dg vertex algebra $V^{[*]}$. Define $C^{[*]}_2(M^{[*]})=\on{Span}\{v_{-2}m \ | \ v \in V^{[*]}, m \in M^{[*]}\}$ and set $R^{[*]}(M^{[*]})=M^{[*]}/C^{[*]}_2(M^{[*]})$. As $Y_M(\cdot, x)$ is a chain map, we have $Y_M(\cdot, x) \circ d_V=d_{End(M)[[x,x^{-1}]]} \circ Y_M(\cdot, x)$. Therefore $d_V(v)_n=d_{End(M)[[x,x^{-1}]]}(v_n)=d_M \circ v_n-(-1)^{|v_n|}v_n \circ d_M$. It follows that $d_M(v_{-2}m) \in C^{[*]}_2(M^{[*]})$, and $(C^{[*]}_2(M^{[*]}), d_M^{[*]})$ is a subcomplex of $(M^{[*]}, d_M^{[*]})$. Thus the quotient $R^{[*]}(M^{[*]})$ is a well-defined complex. The action of  $R^{[*]}(V^{[*]})$ on $R^{[*]}(M^{[*]})$ is given by
\begin{align*}
\overline{v}.\overline{m}=\overline{v_{-1}m}, 
\end{align*}
with $v \in V^{[*]}, \ m \in M^{[*]}, \ \overline{v} \in R^{[*]}(V^{[*]}), \ \overline{m} \in R^{[*]}(M^{[*]})$.
There is also another map $\{-, -\}_{R(M)}: R^{[*]}(V^{[*]})\otimes R^{[*]}(M^{[*]})\to R^{[*]}(M^{[*]})$ defined by
\[ \{ \overline{v}, \overline{m}\}_{R(M)}=\overline{v_0(m)}.
\]
\begin{theorem}
 If $M^{[*]}$ is dg module for a dg vertex algebra, then $R^{[*]}(M^{[*]})$ is a dg Poisson module for $R^{[*]}(V^{[*]})$. If $M^{[*]} \in V^{[*]}\Mod^{gr}$ for a dg vertex operator algebra $V^{[*]}$, then $R^{[*]}(M^{[*]})$ is also a $\mathbb{Z} \times \frac{1}{2}\mathbb{Z}$-graded module for $R^{[*]}(V^{[*]})$.
\end{theorem}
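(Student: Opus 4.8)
The plan is to mirror the proof of Theorem~\ref{thm:R(V)dgPoissonAlgebra}, systematically replacing the second vertex-algebra argument by a module element $m\in M^{[*]}$. First I would record that the module analogue of $Y_M(\mathcal{D}v,x)=\frac{d}{dx}Y_M(v,x)$ holds (a consequence of the Jacobi identity for $Y_M$, exactly as in the algebra case), so that $v_{-n}m=\frac{1}{(n-1)!}(\mathcal{D}^{n-2}v)_{-2}m$ for $n\geq 2$ and hence $C_2^{[*]}(M^{[*]})=\on{Span}\{v_{-k}m\mid v\in V^{[*]},\ m\in M^{[*]},\ k\geq 2\}$. Next I would verify that the operations $\overline{v}\cdot\overline{m}=\overline{v_{-1}m}$ and $\{\overline{v},\overline{m}\}_{R(M)}=\overline{v_0m}$ are well defined; since $d_M$ already preserves $C_2^{[*]}(M^{[*]})$ (noted just before the theorem), this amounts to the four inclusions
\[
\bigl(C_2^{[*]}(V^{[*]})\bigr)_{-1}M^{[*]}\subseteq C_2^{[*]}(M^{[*]}),\qquad \bigl(C_2^{[*]}(V^{[*]})\bigr)_{0}M^{[*]}\subseteq C_2^{[*]}(M^{[*]}),
\]
\[
v_{-1}\bigl(C_2^{[*]}(M^{[*]})\bigr)\subseteq C_2^{[*]}(M^{[*]}),\qquad v_{0}\bigl(C_2^{[*]}(M^{[*]})\bigr)\subseteq C_2^{[*]}(M^{[*]}).
\]
Each of these follows from the component identities \eqref{eq:jacobi_2}--\eqref{eq:jacobi_3}: for example $[u_0,v_{-2}]^s=(u_0v)_{-2}$ gives $u_0(v_{-2}m)=(u_0v)_{-2}m+(-1)^{|u||v|}v_{-2}(u_0m)\in C_2^{[*]}(M^{[*]})$, while expanding $(u_{-k}v)_{-1}$ and $(u_{-k}v)_0$ via \eqref{eq:jacobi_2} produces only modes $u_{-j}(-)$, $v_{-j}(-)$ with $j\geq 2$, all of which land in $C_2^{[*]}(M^{[*]})$.

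With the operations descended to the quotient, the dg-Poisson-module axioms reduce to congruences modulo $C_2^{[*]}(M^{[*]})$ that are precisely the module counterparts of the bulleted identities in the proof of Theorem~\ref{thm:R(V)dgPoissonAlgebra}: associativity of the action, $(u_{-1}v)_{-1}m\equiv u_{-1}(v_{-1}m)$; the Lie-module (Jacobi) identity, $(u_0v)_0m\equiv u_0(v_0m)-(-1)^{|u||v|}v_0(u_0m)$; and the two derivation identities, $u_0(v_{-1}m)\equiv (u_0v)_{-1}m+(-1)^{|u||v|}v_{-1}(u_0m)$ and $(u_{-1}v)_0m\equiv u_{-1}(v_0m)+(-1)^{|u||v|}v_{-1}(u_0m)$. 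Each is obtained from \eqref{eq:jacobi_2}--\eqref{eq:jacobi_3} by a direct expansion keeping track of the Koszul signs. Compatibility with the differentials is immediate from \eqref{eq:d_M(v_nu)}, which gives $d_M(v_{-1}m)=d_V(v)_{-1}m+(-1)^{|v|}v_{-1}(d_Mm)$ and $d_M(v_0m)=d_V(v)_0m+(-1)^{|v|}v_0(d_Mm)$; these descend to the Leibniz rules for $d_M$ against the action and the bracket, so the induced maps $R^{[*]}(V^{[*]})\otimes R^{[*]}(M^{[*]})\to R^{[*]}(M^{[*]})$ are chain maps and $R^{[*]}(M^{[*]})$ is a dg Poisson module over $R^{[*]}(V^{[*]})$, with bracket of the same degree as $\{-,-\}_{R(V)}$.

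For the graded part, assume $V^{[*]}$ is a dg vertex operator algebra and $(M^{[*]},M^{[*]}(\bullet))\in V^{[*]}\Mod^{gr}$. From $u_m(M^{[p]}(r))\subseteq M^{[p+|u|-2m-2]}(r+\wt(u)-m-1)$ the $\mathbb{Z}\times\frac12\mathbb{Z}$-homogeneous components of any $v_{-2}m$ again lie in $C_2^{[*]}(M^{[*]})$, so $C_2^{[*]}(M^{[*]})=\bigoplus_{(p,n)}C_2^{[p]}(M^{[*]})_n$ and $R^{[*]}(M^{[*]})=\bigoplus_{(p,n)\in\mathbb{Z}\times\frac12\mathbb{Z}}M^{[p]}(n)/C_2^{[p]}(M^{[*]})_n$ is bigraded. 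The same degree count shows $\overline{v}\cdot\overline{m}$ (via $v_{-1}$) sends $R^{[p]}(V^{[*]})_n\otimes R^{[p']}(M^{[*]})_{n'}$ into $R^{[p+p']}(M^{[*]})_{n+n'}$ and $\{\overline{v},\overline{m}\}_{R(M)}$ (via $v_0$) into $R^{[p+p'-2]}(M^{[*]})_{n+n'-1}$, so $R^{[*]}(M^{[*]})$ is a $\mathbb{Z}\times\frac12\mathbb{Z}$-graded module over $R^{[*]}(V^{[*]})$.

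The step I expect to be the main obstacle is the well-definedness package together with the Lie-module identity modulo $C_2^{[*]}(M^{[*]})$: these are where $v_0$ and $v_{-1}$ must be commuted past the negative modes $u_{-k}m$ with $k\geq 2$ and the Koszul signs kept straight. Everything else --- associativity, the Leibniz identities, the differential compatibility, and the bigrading bookkeeping --- is routine once those congruences are in hand.
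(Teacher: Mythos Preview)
Your proposal is correct and follows essentially the same approach as the paper's own proof: both mirror the argument of Theorem~\ref{thm:R(V)dgPoissonAlgebra}, checking well-definedness and then the dg Poisson module axioms as congruences modulo $C_2^{[*]}(M^{[*]})$ via the component Jacobi identities, with the bigrading handled by the same weight/degree bookkeeping. The paper's proof is terser---it simply lists the axioms and says ``we verify the following points by explicit computations''---whereas you have spelled out which inclusions and which instances of \eqref{eq:jacobi_2}--\eqref{eq:jacobi_3} are actually needed, but the substance is the same.
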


\begin{proof}
Let $M^{[*]}$ is dg module for a dg vertex algebra $V^{[*]}$. We verify the following points by explicit computations:
\begin{itemize}\setlength\itemsep{5pt}
\item $(R^{[*]}(M^{[*]}),d_M^{[*]})$ is a dg module (with action of degree $0$) for $R^{[*]}(V^{[*]})$ as a dg algebra,
\item $(R^{[*]}(M^{[*]}),\{-,-\}_{R(M)}, d_M^{[*]})$ is a dg Lie algebra module for $R^{[*]}(V^{[*]})$ as a dg Lie algebra, with an action $\{-,-\}$ of degree $0$.
\item $\{\overline{u}, \overline{v}.\overline{m}\}_{R(M)}=\{\overline{u},\overline{v}\}_{R(V)}.\overline{m}+(-1)^{|\overline{u}||\overline{v}|}\overline{v}.\{\overline{u},\overline{m}\}_{R(M)}$,
\item $\{\overline{u}.\overline{v},\overline{m}\}_{R(M)}=\overline{u}.\{\overline{v},\overline{m}\}_{R(M)}+(-1)^{|\overline{u}||\overline{v}|}\overline{v}.\{\overline{u},\overline{m}\}_{R(M)}$.
\end{itemize}
These properties make $R^{[*]}(M^{[*]})$ a dg Poisson module for $R^{[*]}(V^{[*]})$.

Now let $V^{[*]}$ be a dg vertex operator algebra and $M^{[*]}=\bigoplus_{n \in \frac{1}{2}\mathbb{Z}} M^{[*]}(n)$ a graded dg $V^{[*]}$-module. We apply the same reasoning to $R^{[*]}(M^{[*]})$ as we previously did to $R^{[*]}(V^{[*]})$, and we see that
\[
R^{[*]}(M^{[*]})=\bigoplus_{(p,n) \in \mathbb{Z} \times \frac{1}{2}\mathbb{Z}} R^{[p]}(M^{[*]})(n),
\]
with $R_V^{[p]}(M^{[*]})(n)=M^{[p]}(n)/ \left(M^{[p]}(n) \cap C_2^{[*]}(M^{[*]})\right)$. Likewise, we can check that
\[
R^{[p]}(V^{[*]})_n.R^{[p']}(M^{[*]})(n') \subset R^{[p+p']}(M^{[*]})(n+n').
\]
\end{proof}

If $f: M^{[*]} \longrightarrow N^{[*]}$ is a homomorphism of weak modules, then by definition we have $f(C_2^{[*]}(M^{[*]}))\subseteq C_2^{[*]}(N^{[*]})$ and so it induces a linear map $R(f): R^{[*]}(M^{[*]}) \longrightarrow R^{[*]}(N^{[*]})$, which is clearly a homomorphism of dg $R^{[*]}(V^{[*]})$-modules. Thus we have an additive functor $R: V^{[*]}\Mod^w \longrightarrow R^{[*]}(V^{[*]})\Mod$, and it is right exact. \\
If $M^{[*]}$ is in $V^{[*]}\Mod^{gr}$, we have seen that $R^{[*]}(M^{[*]})$ is also naturally graded. So the restriction of $R$ gives a functor $R^{gr}: V^{[*]}\Mod^{gr} \longrightarrow R^{[*]}(V^{[*]})\Mod^{gr}$ with the following commutative diagram:
 \begin{center}
 \begin{tikzpicture}[scale=.9, transform shape]
\tikzset{>=stealth}
\node (1) at (-2,3) []{$V^{[*]}\Mod^{gr}$};
\node (2) at (-2,0) []{$V^{[*]}\Mod^{w}$};
\node (3) at (2,3) []{$R^{[*]}(V^{[*]})\Mod^{gr}$};
\node (4) at (2,0) []{$R^{[*]}(V^{[*]})\Mod$};
\draw[->]  (1) -- node[] {} (2);
\draw[->]  (1) -- node[above] {$R^{gr}$} (3);
\draw[->]  (3) -- node[] {} (4);
\draw[->]  (2) -- node[below] {$R$} (4);
\end{tikzpicture}
\end{center}

\section{Zhu algebras and associated graded algebras}\label{sec:6} 

\subsection{The Zhu algebra}

Let $V^{[*]}$ be a dg vertex operator algebra. We consider the analogue of the Zhu algebra (cf. \cite{Zhu}) in the dg setting. There are two filtrations on $V^{[*]}$: one is 
\[ \cdots \subseteq F^p V^{[*]}\subseteq F^{p+1} V^{[*]}\subseteq \cdots \subseteq 
V^{[*]}
\]
with $ F^p V^{[*]}=\oplus_{i\leq p}V^{[i]}$ making $(V^{[*]}, F^\bullet, d_V)$ a differential  filtered vector space (cf.  Section~\ref{sec:df-algebra}), and the other is a weight filtration
  \[ \cdots \subseteq W_n V^{[*]}\subseteq W_{n+1} V^{[*]}\subseteq \cdots \subseteq 
V^{[*]}
\]
 with $ W_n V^{[*]}=\oplus_{m\leq n}V^{[*]}_m$ making $(V^{[*]}, W_\bullet, F^\bullet, d_V)$ a weight filtered differential  filtered vector space (cf.  Section~\ref{sec:weight-filtration}). It is clear to note that 
 \[ V^{[*]}=\bigcup_{p\in \bb Z}F^pV^{[*]}=\bigcup_{n\in \frac{1}{2}\bb Z}W_nV^{[*]}\; \text{ and }\;
 \bigcap_{p\in \bb Z}F^pV^{[*]}=\bigcap_{n\in \frac{1}{2}\bb Z}W_nV^{[*]}=\{0\}.
 \]
 For any $ v\in V^{[*]}$ we define $ |v|$ and $ \on{wt}(v)$ as in Section~\ref{sec:df-algebra}. In particular, if $v= \sum_i v_i$ is a finite sum  with each $0\neq v_i$ being weight homogeneous (in some $V^{[*]}_{n_i}$), then $\on{wt}(v)=\max\{ n_i\}$. Similarly,  if $v= \sum_i v^i$ is a finite sum with each $0\neq v^i$ being differential homogeneous (in some $V^{[p_i]}$), then $|v|=\max\{ p_i\}$. \begin{definition}
We define the following bilinear operation on $V^{[*]}$. For $u \in V^{[*]}$ homogeneous, $v \in V^{[*]}$,
\begin{align*}
u \ast v =\displaystyle \on{Res}_x \left( Y(u, x)\frac{(1+x)^{\on{wt}(u)}}{x}v \right)=\sum_{n \geq 0}\binom{\on{wt}(u)}{n}u_{n-1}v.
\end{align*}
Let $O(V^{[*]})$ be the subspace of $V^{[*]}$ spanned by the elements of the form
\begin{align*}
u \circ v= \on{Res}_x \left( Y(u, x)\frac{(1+x)^{\on{wt}(u)}}{x^2}v \right)=\sum_{n \geq 0}\binom{\on{wt}(u)}{n}u_{n-2}v.
\end{align*}
Define $A(V^{[*]})=V^{[*]}/O(V^{[*]})$, where we write $[v]$ for the image of $v \in V^{[*]}$ in $A(V^{[*]})$. The space $A(V^{[*]})$ is called the \textbf{Zhu algebra} of $V^{[*]}$. 
\end{definition}

The following lemma will be very useful. It is again an adaptation of a result of Zhu (cf. \cite[Lemma 2.1.3]{Zhu}) in the dg setting.

\begin{lemma}\label{lem:uv-vu}
For any $u \in V^{[|u|]}_{\on{wt}(u)}$ and $v \in V^{[|v|]}_{\on{wt}(v)}$, we have:
\begin{align*}
\resizebox{\textwidth}{!}{
$u \ast v -(-1)^{|u||v|}v \ast u \equiv \on{Res}_x \big (Y(u, x)(1+x)^{\on{wt}(u)-1}v \big) =\displaystyle \sum_{n \geq 0}\binom{\on{wt}(u)-1}{n}u_{n}v \mod O(V^{[*]}).$
}
\end{align*}
In particular, 
\begin{align}
u \ast v -(-1)^{|u||v|}v \ast u &\in F^{|u|+|v|-2}V^{[*]}  \mod  O(V^{[*]}), \label{lem:uv-vu(1)} \\
u \ast v -(-1)^{|u||v|}v \ast u &\in W_{\on{wt}(u)+\on{wt}(v)-1}V^{[*]}  \mod O(V^{[*]}), \label{lem:uv-vu(2)} \\
u \ast v -(-1)^{|u||v|}v \ast u -u_0(v)&\in F^{|u|+|v|-4}V^{[*]}  \mod  O(V^{[*]}), \label{lem:uv-vu(3)} \\
u \ast v -(-1)^{|u||v|}v \ast u -u_0(v)&\in W_{\on{wt}(u)+\on{wt}(v)-2}V^{[*]}  \mod O(V^{[*]}), \label{lem:uv-vu(4)}
\end{align}
Taking $u=\omega$ we get 
\[ \omega\ast v-v\ast \omega =(L_{-1}+L_0)(v) \in O(V^{[*]}).
\]
\end{lemma}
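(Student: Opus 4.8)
The plan is to transport Zhu's proof of \cite[Lemma~2.1.3]{Zhu} into $\Ch$, the only new ingredient being the Koszul signs, which are harmless here because the operators that intervene, $\cal D=L(-1)$ and $L(0)$, both have even cohomological degree. The first thing I would set up is the reduction lemma, the $\Ch$-analogue of \cite[Lemma~2.1.2]{Zhu}: for weight-homogeneous $u\in V^{[*]}$, any $v\in V^{[*]}$, and integers $m\geq n\geq 0$,
\[
\on{Res}_x\Big(Y(u,x)\tfrac{(1+x)^{\on{wt}(u)+n}}{x^{m+2}}v\Big)\in O(V^{[*]}).
\]
This is proved by the same double induction as in \cite{Zhu}: the base case $m=n=0$ is the definition of $O(V^{[*]})$; the inductive step uses the splitting $\tfrac{(1+x)^a}{x^b}=\tfrac{(1+x)^{a-1}}{x^{b-1}}+\tfrac{(1+x)^{a-1}}{x^{b}}$ together with the identity $\on{Res}_x\big(Y(\cal D u,x)f(x)v\big)=-\on{Res}_x\big(Y(u,x)f'(x)v\big)$, which comes from $Y(\cal D u,x)=\tfrac{d}{dx}Y(u,x)$ (Lemma~\ref{lem:comm_D}) by formal integration by parts, and the fact that $\on{wt}(\cal D^j u)=\on{wt}(u)+j$. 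Since $\cal D$ is even, no sign corrections appear and the argument is verbatim Zhu.

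The main step is the congruence
\[
(-1)^{|u||v|}\,v\ast u\;\equiv\;\on{Res}_x\Big(Y(u,x)\tfrac{(1+x)^{\on{wt}(u)-1}}{x}v\Big)\pmod{O(V^{[*]})}.
\]
To prove it I would begin from $v\ast u=\on{Res}_z\big(\tfrac{(1+z)^{\on{wt}(v)}}{z}Y(v,z)u\big)$, apply the graded skew-symmetry $Y(v,z)u=(-1)^{|u||v|}e^{z\cal D}Y(u,-z)v$ to obtain $(-1)^{|u||v|}v\ast u=\on{Res}_z\big(\tfrac{(1+z)^{\on{wt}(v)}}{z}e^{z\cal D}Y(u,-z)v\big)$, and then push $e^{z\cal D}$ through the residue exactly as in the vector-space case, using $[\cal D,Y(u,z)]^s=\tfrac{d}{dz}Y(u,z)$, so as to rewrite the right-hand side as a residue of $Y(u,z)v$ against a rational function of $z$. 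Apart from the displayed term $\on{Res}_z\big(Y(u,z)\tfrac{(1+z)^{\on{wt}(u)-1}}{z}v\big)$, every term produced has numerator $(1+z)^{\on{wt}(u)+n}$ and denominator $z^{m+2}$ with $m\geq n\geq 0$, hence lies in $O(V^{[*]})$ by the reduction lemma; again the only surviving sign is the overall $(-1)^{|u||v|}$. I expect this to be the only genuinely computational part of the argument, and the delicate point is precisely to confirm that the error terms from commuting $e^{z\cal D}$ past the residue have the shape $(1+z)^{\on{wt}(u)+n}/z^{m+2}$ with $m\geq n$, so that the reduction lemma applies.

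Granting the congruence, the main formula is immediate: since $u\ast v=\on{Res}_x\big(Y(u,x)\tfrac{(1+x)^{\on{wt}(u)}}{x}v\big)$ and $\tfrac{(1+x)^{\on{wt}(u)}}{x}-\tfrac{(1+x)^{\on{wt}(u)-1}}{x}=(1+x)^{\on{wt}(u)-1}$, subtracting yields $u\ast v-(-1)^{|u||v|}v\ast u\equiv\on{Res}_x\big(Y(u,x)(1+x)^{\on{wt}(u)-1}v\big)=\sum_{n\geq 0}\binom{\on{wt}(u)-1}{n}u_n v\pmod{O(V^{[*]})}$, which is the claimed congruence. The four inclusions then follow by counting bidegrees of the summands: by Remark~\ref{Remark_2} and the conformal rule $u_m\big(V^{[p]}_r\big)\subseteq V^{[p+|u|-2m-2]}_{r+\on{wt}(u)-m-1}$, one has $u_n v\in V^{[|u|+|v|-2n-2]}_{\on{wt}(u)+\on{wt}(v)-n-1}$, so every term has cohomological degree $\leq|u|+|v|-2$ and weight $\leq\on{wt}(u)+\on{wt}(v)-1$, giving \eqref{lem:uv-vu(1)} and \eqref{lem:uv-vu(2)}, while after deleting the $n=0$ term $u_0 v$ only the terms with $n\geq 1$ remain, with degree $\leq|u|+|v|-4$ and weight $\leq\on{wt}(u)+\on{wt}(v)-2$, giving \eqref{lem:uv-vu(3)} and \eqref{lem:uv-vu(4)}. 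Finally, for $u=\omega$ we have $\on{wt}(\omega)=2$, so $\binom{1}{n}=0$ for $n\geq 2$ and the sum collapses to $\omega_0 v+\omega_1 v=L(-1)v+L(0)v$; and $(L(-1)+L(0))v=v\circ\mbf 1$, as one checks from the creation property $Y(v,x)\mbf 1=e^{x\cal D}v$ by computing $v\circ\mbf 1=\on{Res}_x\big(Y(v,x)\tfrac{(1+x)^{\on{wt}(v)}}{x^2}\mbf 1\big)=\cal D v+\on{wt}(v)v$, so $(L(-1)+L(0))v\in O(V^{[*]})$, completing the last assertion.
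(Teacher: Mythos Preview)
Your proposal is correct and is precisely the approach the paper has in mind: the paper does not give a proof of this lemma but simply states that it ``is again an adaptation of a result of Zhu (cf.\ \cite[Lemma~2.1.3]{Zhu}) in the dg setting,'' and your argument carries out exactly that adaptation, correctly isolating the graded skew-symmetry sign $(-1)^{|u||v|}$ and noting that $\cal D$ and $L(0)$ are even so Zhu's residue manipulations go through verbatim. Your derivation of the four filtration statements by bidegree counting and of the $u=\omega$ consequence via $v\circ\mathbf{1}=(L(-1)+L(0))v$ is also in line with how the paper uses these facts later (e.g.\ in Theorem~\ref{thm:Zhu}(4)).
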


\begin{lemma} If $ v, v'\in V^{[*]}$ are dg homogeneous such that $ v\equiv v'\mod O(V^{[*]})$ and $v \not\in O(V^{[*]})$, then $ |v|\equiv|v'| \mod 2$. 
\end{lemma}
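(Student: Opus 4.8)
## Proof Proposal

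The plan is to analyze the structure of $O(V^{[*]})$ with respect to the differential degree filtration $F^\bullet V^{[*]}$ and show that, although $O(V^{[*]})$ is not a homogeneous subspace in the differential grading, its elements respect a parity constraint. Concretely, I claim that $O(V^{[*]})$ is spanned by elements $w$ each of which is a sum of differential-homogeneous pieces all lying in degrees of the \emph{same parity}. If this holds, then writing $v - v' = w \in O(V^{[*]})$ with $v \in V^{[|v|]}$, $v' \in V^{[|v'|]}$ differential-homogeneous, the top differential-degree components force $|v|$ and $|v'|$ to have the same parity as the parity class of $w$ (using the earlier observation that for a finite sum of homogeneous elements the degree is the max of the degrees, together with the hypothesis $v \notin O(V^{[*]})$ to rule out degenerate cancellations).

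First I would examine a spanning element $u \circ v = \sum_{n \geq 0}\binom{\operatorname{wt}(u)}{n}u_{n-2}v$. By Remark~\ref{Remark_2}, if $u \in V^{[p]}$ and $v \in V^{[q]}$ then $u_{n-2}(v) \in V^{[p+q-2(n-2)-2]} = V^{[p+q-2n+2]}$, so as $n$ ranges over $n \geq 0$ all the nonzero terms $u_{n-2}v$ sit in differential degrees congruent to $p + q \pmod 2$. Hence for differential-homogeneous $u, v$, the element $u \circ v$ is a sum of homogeneous pieces all of parity $|u| + |v|$. For general (non-homogeneous) $u, v$ one decomposes into differential-homogeneous components, but here the relevant point is that $O(V^{[*]})$ is spanned by the $u \circ v$ with $u,v$ differential-homogeneous — so it suffices to understand those generators.

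Next I would set up the parity bookkeeping cleanly. Define, for each parity class $\epsilon \in \mathbb{Z}/2$, the subspace $V^{[\epsilon]} = \bigoplus_{p \equiv \epsilon} V^{[p]}$, so $V^{[*]} = V^{[0]} \oplus V^{[1]}$ as $\mathbb{Z}/2$-graded. The computation above shows each differential-homogeneous generator $u \circ v$ lies entirely in one $V^{[\epsilon]}$, hence $O(V^{[*]}) = (O(V^{[*]}) \cap V^{[0]}) \oplus (O(V^{[*]}) \cap V^{[1]})$; in other words $O(V^{[*]})$ is a $\mathbb{Z}/2$-graded subspace. Now given $v \equiv v' \bmod O(V^{[*]})$ with $v, v'$ differential-homogeneous and $v \notin O(V^{[*]})$: the element $v - v'$ decomposes as its $V^{[0]}$-part plus its $V^{[1]}$-part, and both parts must lie in $O(V^{[*]})$. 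If $|v| \not\equiv |v'| \pmod 2$, then the $V^{[|v| \bmod 2]}$-component of $v - v'$ is exactly $v$, forcing $v \in O(V^{[*]})$, a contradiction. Therefore $|v| \equiv |v'| \pmod 2$.

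The main obstacle — and the step deserving the most care — is verifying that $O(V^{[*]})$ is genuinely spanned by the $u \circ v$ with $u, v$ taken differential-homogeneous, rather than merely by all $u \circ v$; this is immediate from bilinearity of $\circ$ in each argument and the direct-sum decomposition $V^{[*]} = \bigoplus_p V^{[p]}$, but it should be stated. A secondary subtlety is the use of the hypothesis $v \notin O(V^{[*]})$: without it the statement is false (take $v, v' \in O(V^{[*]})$ of opposite parity), so the argument must genuinely invoke it, which the parity-component extraction above does. I expect the whole proof to be short once the $\mathbb{Z}/2$-grading of $O(V^{[*]})$ is established via the degree formula $|u_{n-2}v| = |u| + |v| - 2n + 2$ from Remark~\ref{Remark_2}.
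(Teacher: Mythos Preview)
Your proposal is correct and follows essentially the same approach as the paper: both arguments establish that $O(V^{[*]})$ is a $\bb Z/2$-graded subspace with respect to the parity decomposition $V^{[*]}=V^{[ev]}\oplus V^{[odd]}$ (using the degree formula $|u_{n-2}v|=|u|+|v|-2n+2$), and then extract the parity component of $v-v'$ to force $v\in O(V^{[*]})$ when $|v|\not\equiv|v'|\bmod 2$. Your formulation is slightly cleaner in that you state the $\bb Z/2$-gradedness of $O(V^{[*]})$ explicitly, whereas the paper works concretely with a finite sum $\sum_{(a,b)\in I}a\circ b$ split into $I^{odd}$ and $I^{ev}$, but the substance is identical.
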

\begin{proof} Set $V^{[*]}=V^{[odd]}\oplus V^{[ev]}$ with $ V^{[odd]}=\bigoplus_{i\in \bb Z}V^{[2i+1]}$ and  $V^{[ev]}=\bigoplus_{i\in \bb Z}V^{[2i]}$. We write
\[ v=v'+\sum_{(a, b)\in I}a\circ b\]
with $I$ being a finite subset of pairs $ (a, b)$ of dg homogeneous non-zero elements $a$ and $ b$ in $V^{[*]}$. Let $I^{odd}$ (resp. $I^{ev}$) be the subset of the pairs $ (a, b)\in I$ such that $ |a|+|b|$ is odd  (resp. even).  Then $ \sum_{(a, b)\in I^{odd}}a\circ b\in V^{[odd]}$ and  $ \sum_{(a, b)\in I^{ev}}a\circ b\in V^{[ev]}$. If $ |v|\not\equiv |v'| \mod 2$, say, $|v|$ is  odd, then $ v\in V^{[odd]}$ and $ v'\in V^{[ev]}$. Thus $ v=\sum_{(a,b)\in I^{odd}}a\circ b \in O(V^{[*]})$, which contradicts the assumption that $v \not\in O(V^{[*]})$.
\end{proof}
The next theorem is the adaptation of the proof of Zhu in the dg setting:

\begin{theorem}\emph{\textbf{(\cite{Zhu}).}}\label{thm:Zhu}
$O(V^{[*]})$ is a two sided ideal for the multiplication $\ast$, and so $\ast$ is defined on $A(V^{[*]})$. Moreover:
\begin{enumerate}\setlength\itemsep{5pt}
\item $A(V^{[*]})$ is an associative algebra under multiplication $\ast$.
\item The image of the vacuum vector $\mathbf{1}$ is the unit of the algebra $A(V^{[*]})$. 
\item $A(V^{[*]})$ is a differential filtered algebra with the ascending filtration
$(F^pA(V^{[*]}))_{p \in \bb{Z}}$ where $F^pA(V^{[*]})$ is the image of 
$\bigoplus_{q  \leq p}V^{[q]}_*$ in $A(V^{[*]}).$
\item The image of the conformal vector $\omega$ is in the center of $A(V^{[*]})$ for the super commutative bracket $[-,-]^s$, i.e., $ [\omega] \ast [v] =[v]\ast [\omega] $
for all $ [v]\in A( V^{[*]})$.
\item $A(V^{[*]})$ has a weight filtration 
$(W_nA(V^{[*]}))_{n \in \frac{1}{2} \mathbb{Z}}$ where $W_nA(V^{[*]})$ is the image of $\bigoplus_{i  \leq n}V^{[*]}_i$ in the $A(V^{[*]}).$

\end{enumerate}
\end{theorem}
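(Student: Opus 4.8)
The plan is to adapt Zhu's original argument for $A(V)$ to the dg setting, keeping careful track of the two gradings (differential degree and weight) so that all the claims follow simultaneously. The heart of the matter is proving that $O(V^{[*]})$ is a two-sided ideal for $\ast$; once that is done, associativity, the unit property, the centrality of $[\omega]$, and the existence of the two filtrations follow by the same computations as in the classical case, with Koszul signs inserted where vectors are permuted, and with Lemma~\ref{lem:uv-vu} supplying the key congruences $u\ast v - (-1)^{|u||v|} v\ast u \equiv \mathrm{Res}_x(Y(u,x)(1+x)^{\mathrm{wt}(u)-1}v)$.

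\emph{Ideal property.} First I would show $u\circ v \in O(V^{[*]})$ has the expected degree and weight bounds: since $u_{n-2}(v)\in V^{[|u|+|v|+2-2(n-2)-2]} = V^{[|u|+|v|+2-2n+2]}$, the lowest differential degree occurring (at $n=0$) is $|u|+|v|+2$, while at higher $n$ the degrees only increase by multiples of $2$, so $u\circ v\in F^{|u|+|v|+2}V^{[*]}$ is automatic but not the point; the point is that $O(V^{[*]})$ is a subcomplex, which is exactly the computation already carried out for $C_2^{[*]}$ in the proof of Theorem~\ref{thm:R(V)dgPoissonAlgebra} using \eqref{eq:d(v_nu)}, so $d_V(u\circ v)\in O(V^{[*]})$. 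Then I would establish the two containments $u\ast(v\circ w)\in O(V^{[*]})$ (right ideal) and $(u\circ v)\ast w \in O(V^{[*]})$ (left ideal), reproducing Zhu's residue manipulations: one writes the relevant products as residues in two formal variables, uses the Jacobi identity in the form \eqref{eq:jacobi_2}, and collects terms; the only change from the classical proof is that each transposition $Y(u,x_1)Y(v,x_2)\leftrightarrow Y(v,x_2)Y(u,x_1)$ produces a factor $(-1)^{|u||v|}$, consistently with the dg Jacobi identity, and the conformal weight bookkeeping $\mathrm{wt}(u\circ v)$, etc., is unchanged because $\mathrm{wt}$ and $|\cdot|$ are independent gradings (Remark after the conformal-structure definition).

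\emph{Remaining items.} Given the ideal property, $\ast$ descends to $A(V^{[*]})$. Associativity modulo $O(V^{[*]})$ is Zhu's identity $u\ast(v\ast w) - (u\ast v)\ast w \in O(V^{[*]})$, proved by the same residue computation with signs; the unit property $\mathbf 1\ast v = v$ and $v\ast \mathbf 1 = v \bmod O(V^{[*]})$ is immediate from $Y(\mathbf 1,x)=\mathrm{id}$ and the creation property. For (3), $F^pA(V^{[*]})$ is the image of $F^pV^{[*]}=\bigoplus_{q\le p}V^{[q]}$; it is a subcomplex since $O(V^{[*]})$ is, the filtration is exhaustive and separated in the sense needed, and $F^m\ast F^n \subseteq F^{m+n}$ because $u_{n-1}(v)\in V^{[|u|+|v|-2(n-1)-2]}=V^{[|u|+|v|-2n]}\subseteq F^{|u|+|v|}V^{[*]}$ for $n\ge 0$; the compatibility $d_A(a\ast b) - (d_Aa\ast b + (-1)^{|a|}a\ast d_Ab)\in F^{|a|+|b|}A(V^{[*]})$ follows from \eqref{eq:d(v_nu)} applied termwise to $a\ast b = \sum_{n\ge 0}\binom{\mathrm{wt}(a)}{n}a_{n-1}b$ (indeed the discrepancy lands in $F^{|a|+|b|}$, not smaller, exactly because the $n=0$ term has differential degree $|a|+|b|$). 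For (4), centrality of $[\omega]$ is the displayed last line of Lemma~\ref{lem:uv-vu}: $\omega\ast v - v\ast\omega = (L_{-1}+L_0)(v)\in O(V^{[*]})$, and one checks $[\omega]\ast[v]=[v]\ast[\omega]$ noting $|\omega|=4$ is even so the sign $(-1)^{|\omega||v|}$ is trivial. For (5), $W_nA(V^{[*]})$ is the image of $W_nV^{[*]}=\bigoplus_{i\le n}V^{[*]}_i$; it is $d_V$-stable since $d_V$ preserves weight, and $W_m\ast W_n\subseteq W_{m+n}$ since $u_{n-1}(V_r)\subseteq V_{r+\mathrm{wt}(u)-n}$ so $a\ast b$ has weight $\le \mathrm{wt}(a)+\mathrm{wt}(b)$.

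\emph{Main obstacle.} The genuinely delicate point is the ideal property with the correct Koszul signs: one must verify that in Zhu's two-variable residue argument every application of weak commutativity \eqref{eq:weak_commutativity} and of the Jacobi identity inserts exactly the sign dictated by the $(-1)^{|u||v|}$ convention, and that no parity obstruction arises from the fact that the differential degree of $u\circ v$ can be odd — this is precisely why the preceding lemma (that $v\equiv v'\bmod O(V^{[*]})$ forces $|v|\equiv|v'|\bmod 2$) is recorded. A secondary subtlety is that $A(V^{[*]})$ is genuinely only a differential \emph{filtered} algebra, not dg: the differential does not respect the grading on the quotient because $t$ carries cohomological degree $2$ and weight $1$ simultaneously, so one must phrase (3) and the compatibility of $d_A$ in the language of Section~\ref{sec:df-algebra} rather than expecting $d_A$ to be homogeneous. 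Once these sign and filtration checks are in place, everything else is a transcription of Zhu's proof.
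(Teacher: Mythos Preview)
Your proposal is correct and matches the paper's approach exactly: the paper gives no proof at all beyond the sentence ``The next theorem is the adaptation of the proof of Zhu in the dg setting,'' so your outline of running Zhu's residue arguments with Koszul signs inserted via the dg Jacobi identity, and then reading off the two filtrations from the weight and cohomological gradings, is precisely what is intended.

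One terminological correction worth making in your write-up: you twice call $O(V^{[*]})$ a ``subcomplex,'' but the paper's Remark immediately following the theorem stresses that while $d_V(O(V^{[*]}))\subseteq O(V^{[*]})$, the space $O(V^{[*]})$ is \emph{not} a subcomplex --- the element $u\circ v = \sum_{n\ge 0}\binom{\mathrm{wt}(u)}{n}u_{n-2}v$ is a sum of terms of different cohomological degrees, and its homogeneous components need not individually lie in $O(V^{[*]})$. You clearly understand this, since your ``Main obstacle'' paragraph correctly explains why $A(V^{[*]})$ is only differential filtered and not dg; just make the earlier language consistent with that. A second minor slip: in your degree bookkeeping, $|u_{n-2}(v)| = |u|+|v|+2-2n$ \emph{decreases} as $n$ grows, so the $n=0$ term is the one of \emph{highest} (not lowest) cohomological degree; your conclusion $u\circ v\in F^{|u|+|v|+2}V^{[*]}$ is still correct.
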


\begin{remark}
Notice that while $d_V(O(V^{[*]})) \subseteq O(V^{[*]})$, the space $O(V^{[*]})$ is not a subcomplex of $V^{[*]}$. Indeed, it does not decompose as a direct sum of homogeneous components. Therefore $A(V^{[*]})$ is not a dg algebra. We will see later how we will recover the dg setting from a subsequent construction.
\end{remark}

The associative algebra $A(V^{[*]})$ has two filtered structures, so we can consider the categories $A(V^{[*]})\Mod$ of $A(V^{[*]})$-modules, and $A(V^{[*]})\Mod^{F^\bullet}_+$ (resp. $A(V^{[*]})\Mod^{W_\bullet}_+$) of filtered $A(V^{[*]})$-modules $M^{[*]}$ with $F^pM^{[*]}=0$ for $p \ll 0$ (resp. $W_nM^{[*]}=0$ for $n \ll 0$), together with a natural forgetful functors $ A(V^{[*]})\Mod^{F^\bullet}_+ \longrightarrow A(V^{[*]})\Mod$ (resp. $ A(V^{[*]})\Mod^{W_\bullet}_+ \longrightarrow A(V^{[*]})\Mod$).

\begin{theorem}\label{thm:A(V)_dfcommutative}
$(A(V^{[*]}), F^\bullet A(V^{[*]}), \ast, d_V)$ is a differential filtered commutative algebra.
\end{theorem}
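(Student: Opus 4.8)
The plan is to combine two facts already established in the excerpt: first, that $A(V^{[*]})$ is a differential filtered algebra with respect to the cohomological filtration $F^\bullet A(V^{[*]})$ (Theorem~\ref{thm:Zhu}(3)), so that the compatibility condition $d_V([u]\ast[v]) - (d_V([u])\ast[v] + (-1)^{|u|}[u]\ast d_V([v])) \in F^{|u|+|v|}A(V^{[*]})$ holds; and second, the commutator estimate \eqref{lem:uv-vu(1)} from Lemma~\ref{lem:uv-vu}, which says precisely that
\[
u\ast v - (-1)^{|u||v|} v\ast u \in F^{|u|+|v|-2}V^{[*]} \bmod O(V^{[*]}).
\]
Passing to the quotient $A(V^{[*]})$, this reads $[u]\ast[v] - (-1)^{|u||v|}[v]\ast[u] \in F^{|u|+|v|-2}A(V^{[*]})$. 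Since the differential-filtered-commutative condition from Section~\ref{sec:df-algebra} asks exactly for $a\ast b - (-1)^{|a||b|} b\ast a \in F^{|a|+|b|-1}A$, and $F^{|u|+|v|-2} \subseteq F^{|u|+|v|-1}$, the statement follows essentially immediately for homogeneous elements.

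The steps I would carry out, in order, are: (1) recall that by Theorem~\ref{thm:Zhu}(3), $(A(V^{[*]}), F^\bullet A(V^{[*]}), \ast, d_V)$ is already a differential filtered algebra, so only the commutativity estimate remains to be checked; (2) take homogeneous $u \in V^{[|u|]}_{\on{wt}(u)}$, $v\in V^{[|v|]}_{\on{wt}(v)}$ and invoke \eqref{lem:uv-vu(1)} to get $[u]\ast[v] - (-1)^{|u||v|}[v]\ast[u] \in F^{|u|+|v|-2}A(V^{[*]}) \subseteq F^{|u|+|v|-1}A(V^{[*]})$; (3) observe that the differential degree of $[u]$ in the filtered algebra $A(V^{[*]})$ is $\leq |u|$, with equality when $u\notin O(V^{[*]})$; and (4) extend from homogeneous representatives to arbitrary elements of $A(V^{[*]})$ by the usual argument that for a sum $a = \sum_i a^i$ of homogeneous pieces with $|a| = \max_i |a^i|$, the estimate $[a^i]\ast[b^j] - (-1)^{|a^i||b^j|}[b^j]\ast[a^i] \in F^{|a^i|+|b^j|-1}A(V^{[*]}) \subseteq F^{|a|+|b|-1}A(V^{[*]})$ holds term by term, and the filtration is closed under sums. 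This last point uses the elementary inequality $|a+a'| \leq \max\{|a|,|a'|\}$ from Section~\ref{sec:df-algebra}.

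The only genuine subtlety — and the main obstacle to be careful about — is the interplay between the degree function $|\cdot|$ on $A(V^{[*]})$ (defined via the filtration $F^\bullet A(V^{[*]})$) and the degree function on $V^{[*]}$ itself: because $O(V^{[*]})$ is not a graded subspace (as noted in the Remark following Theorem~\ref{thm:Zhu}), an element $[v] \in A(V^{[*]})$ may have a filtration degree strictly smaller than that of any homogeneous lift $v$. One must therefore phrase the argument so that it only ever uses upper bounds: the image of $\bigoplus_{q\leq p} V^{[q]}_*$ lies in $F^p A(V^{[*]})$, so if $u$ has $|u| = p$ and $v$ has $|v| = q$ as elements of $V^{[*]}$, then $[u] \in F^p A(V^{[*]})$, $[v] \in F^q A(V^{[*]})$, and \eqref{lem:uv-vu(1)} bounds the commutator in $F^{p+q-2} A(V^{[*]})$; this is all one needs, and no lower bound on degrees is required. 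I would write this as a short remark inside the proof to forestall any worry about well-definedness. Everything else is a direct citation of Theorem~\ref{thm:Zhu} and Lemma~\ref{lem:uv-vu}.
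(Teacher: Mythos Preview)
Your proposal is correct and follows the same route as the paper: both cite Theorem~\ref{thm:Zhu}(3) for the differential filtered structure and then apply \eqref{lem:uv-vu(1)} from Lemma~\ref{lem:uv-vu} to (bi)homogeneous representatives before extending by linearity; the paper lifts $x\in F^{p}A(V^{[*]})$ to a dg-homogeneous $u\in V^{[p]}$ and then decomposes $u$ into weight-homogeneous pieces, which is exactly your steps (2)--(4) reorganized. One small slip in your step (3): the equality $|[u]|=|u|$ need not hold even when $u\notin O(V^{[*]})$ (there may be a lower-degree $u'$ with $u-u'\in O(V^{[*]})$), but as you yourself observe only the upper bound is needed, so the argument is unaffected.
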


\begin{proof}
The differential filtered structure (cf. Section~\ref{sec:df-algebra}) is already known from Theorem~\ref{thm:Zhu}. It remains to prove the commutativity statement. Let $x \in F^{n}A(V^{[*]})$ and $y \in F^{m}A(V^{[*]})$ with $n=|x|$ and $m=|y|$. There exist $u \in V^{[n]}$ and $v \in V^{[m]}$ such that $[u]-x \in F^{n-1}A(V^{[*]})$ and $[v]-y \in F^{m-1}A(V^{[*]})$. We write $u=\sum_i u^i$ and $v=\sum_j v^j$ where $u^i \in V^{[n]}_i$ and $v^j \in V^{[m]}_j$. Then 
\begin{align*}
\begin{array}{rcl}
x \ast y-(-1)^{|x||y|}y \ast x &= & [u] \ast [v]-(-1)^{nm}[v] \ast [u] + F^{n+m-1}A(V^{[*]}), \\[5pt]
&= &\displaystyle \sum_{i, j} \left([u^i] \ast [v^j]-(-1)^{nm}[v^j] \ast [u^i]\right) + F^{n+m-1}A(V^{[*]}), \\[5pt]
& = &\displaystyle \sum_{i, j} \sum_{k \geq 0}\binom{i-1}{k}(u^i)_{k}v^j + O(V^{[*]}) +F^{n+m-1}V^{[*]}, \\[5pt]
& \in &\displaystyle  O(V^{[*]}) +F^{n+m-1}V^{[*]},
\end{array}
\end{align*}
where we have used Lemma~\ref{lem:uv-vu} and the fact that $|(u^i)_k v^j|=n+m-2k-2$. It follows that $x \ast y-(-1)^{|x||y|}y \ast x  \in F^{|x|+|y|-1}A(V^{[*]})$, and so $A(V^{[*]})$ is differential filtered commutative.
\end{proof}

\subsection{The filtration by degree}

The associated graded algebra with respect to the degree filtration is $\gr^{[*]}A(V^{[*]})=\bigoplus_{p \in \mathbb{Z}}F^{p+1}A(V^{[*]})/F^{p}A(V^{[*]})$. As $(A(V^{[*]}), F^\bullet, d_V)$ 
is differential filtered commutative, 
$\gr^{[*]}A(V^{[*]})$ 
is a differential graded commutative 
algebra for the product induced by 
$\ast$. If $x$ is an homogeneous element of $F^{|x|}A(V^{[*]})$, 
we will write $\widetilde{x}=x+ F^{|x|-1} A(V^{[*]})\in \gr^{[|x|]}A(V^{[*]}) $
for the class of $x$ in $\gr^{[*]}A(V^{[*]})$.

\begin{lemma}\label{lem:degree_mod_2}
Set $u \in V^{[p]}$ such that $[u] \in F^{p-1}A(V^{[*]})$. Then there exist finitely many dg homogeneous $f_i \in V^{[*]}$ with $|f_i| \leq p-2$ such that
\[ u+\sum_i f_i  \in O(V^{[*]}),\]
and
\[  |f_i| \equiv p \ \on{mod} \  2, \text{ for all } i. \]
\end{lemma}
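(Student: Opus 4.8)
The plan is to deduce everything from the observation, already implicit in the proof of the preceding lemma, that $O(V^{[*]})$ is a homogeneous subspace of $V^{[*]}$ for the coarse $\mathbb{Z}/2$-grading $V^{[*]}=V^{[ev]}\oplus V^{[odd]}$ with $V^{[ev]}=\bigoplus_i V^{[2i]}$ and $V^{[odd]}=\bigoplus_i V^{[2i+1]}$. First I would unwind the hypothesis: since $F^{p-1}A(V^{[*]})$ is the image of $\bigoplus_{q\leq p-1}V^{[q]}_*$, the condition $[u]\in F^{p-1}A(V^{[*]})$ means there is a finite sum $w=\sum_{q\leq p-1}w^q$ with each $w^q\in V^{[q]}$ dg homogeneous and $u-w\in O(V^{[*]})$. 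The claim then reduces to showing that we may discard from $w$ every component whose degree is not $\leq p-2$ and not congruent to $p$ modulo $2$, while remaining inside $O(V^{[*]})$.

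Next I would record the parity decomposition of $O(V^{[*]})$. Any generator $u\circ v$ of $O(V^{[*]})$ can, after expanding $u$ and $v$ into their joint degree- and weight-homogeneous summands (the weight decomposition is needed because $\on{wt}(u)$ only makes sense for weight-homogeneous $u$, and then $u\circ v=\sum_{n\geq 0}\binom{\on{wt}(u)}{n}u_{n-2}v$ is bilinear in those pieces), be written as a finite sum $\sum a\circ b$ with $a$ degree- and weight-homogeneous and $b$ degree-homogeneous. For such $a,b$ one has $a_{n-2}(b)\in V^{[|a|+|b|-2n+2]}$, so every component of $a\circ b$ has degree $\equiv |a|+|b|\pmod 2$; hence $a\circ b$ lies entirely in $V^{[ev]}$ or entirely in $V^{[odd]}$ according to the parity of $|a|+|b|$. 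Grouping the generators of a given element $z\in O(V^{[*]})$ by this parity shows that the even part $z^{ev}$ and the odd part $z^{odd}$ of $z$ each lie in $O(V^{[*]})$, i.e. $O(V^{[*]})=(O(V^{[*]})\cap V^{[ev]})\oplus(O(V^{[*]})\cap V^{[odd]})$.

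Finally I would conclude. Let $\varepsilon\in\{0,1\}$ with $\varepsilon\equiv p\pmod 2$; then $u\in V^{[p]}$ has parity $\varepsilon$, while the parity-$\varepsilon$ part of $w$ is $w_\varepsilon=\sum_{q\leq p-1,\ q\equiv \varepsilon\,(2)}w^q$. Taking parity-$\varepsilon$ parts in $u-w\in O(V^{[*]})$ and using the decomposition above gives $u-w_\varepsilon\in O(V^{[*]})$. For each $q$ appearing with $w^q\neq 0$ in $w_\varepsilon$ we have $q\leq p-1$ and $q\equiv p\pmod 2$, which forces $q\leq p-2$; so setting $f_q:=-w^q$ yields finitely many dg homogeneous elements with $|f_q|=q\leq p-2$, $|f_q|\equiv p\pmod 2$, and $u+\sum_q f_q=u-w_\varepsilon\in O(V^{[*]})$, as required.

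There is no genuinely hard step here: the only point deserving care is the reduction to homogeneous generators of $O(V^{[*]})$ in the parity-decomposition argument, which is exactly the device used in the proof of the preceding lemma; the rest is bookkeeping with the compatible coarsenings of the $\mathbb{Z}$-grading on $V^{[*]}$ — the full grading, its $\mathbb{Z}/2$-quotient, and the filtration $F^\bullet$.
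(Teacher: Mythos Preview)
Your proof is correct and takes essentially the same approach as the paper: both arguments rest on the observation that each generator $a\circ b$ (with $a,b$ dg homogeneous and $a$ weight homogeneous) has all components of the same parity $|a|+|b|$, so $O(V^{[*]})$ is homogeneous for the $\mathbb{Z}/2$-grading and one may project $u-w$ onto the parity-$p$ part. The only difference is packaging: you state the $\mathbb{Z}/2$-decomposition of $O(V^{[*]})$ once and then project, while the paper splits the index set $L=L'\cup L''$ of an explicit presentation $\sum_{l\in L}a_l\circ b_l$ and takes $X=\sum_{l\in L'}a_l\circ b_l$; the content is identical.
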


\begin{proof}
Set $u$ as in the statement. Then $u \in O(V^{[*]})+F^{p-1}V^{[*]}$, and thus $u$ is the highest degree component of an element in $O(V^{[*]})$. We decompose this element as $\sum_{l \in L} a_l \circ b_l=\sum_{l \in L} \sum_{k \geq 0} \binom{\on{wt}(a_l)}{k}(a_l)_{k-2}b_l$ with $a_l, b_l$ dg homogeneous and weight homogeneous. As $|(a_l)_{k-2}b_l|=|a_l|+|b_l|-2k+2$, we see that all the degrees of the dg homogeneous components of $a_l \circ b_l$ are all congruent to $|a_l|+|b_l|$ modulo $2$. We can thus split our previous sum into
\[ \sum_{l \in L'} a_l \circ b_l+\sum_{l \in L''} a_l \circ b_l, \]
where $L'$ (resp. $L''$) is the set of indices such that $|a_l|+|b_l| - p \equiv 0 \ \on{mod} \ 2$ (resp. $|a_l|+|b_l| -p \equiv 1 \ \on{mod} \ 2$). It follows that all the components of the first sum have a degree with the same parity as $p$, while the other sum has the opposite parity. Hence $u$ is a summand in the sum over $L'$. Furthermore, as $u$ is the highest component of the sum over $L$, and that the sums over $L'$ and $L''$ have different parities and thus cannot cancel each other, it follows that $u$ is the highest degree term of $\sum_{l \in L'} a_l \circ b_l$. 

Set $X=\sum_{l \in L'} a_l \circ b_l$. The dg homogeneous components of $X-u$ will be written as $f_i$. Based on our construction, for each $f_i$, there exist $l \in L'$ and $k \geq 0$ such that $|f_i| = |(a_l)_{k-2}b_l|$, and so $|f_i| \equiv |a_l|+|b_l| \equiv p \ \on{mod} \  2$. In particular, as $|f_i|<p$, we have $|f_i| \leq p-2$. 
\end{proof}

We know that for $x \in F^{|x|}A(V^{[*]})$, $y \in F^{|y|}A(V^{[*]})$, there exist $u \in V^{[|x|]}$, $v \in V^{[|y|]}$ such that $[u]-x \in F^{|x|-1}A(V^{[*]})$ and $[v]-y \in F^{|y|-1}A(V^{[*]})$. We define
\begin{align*}
\{\widetilde{x}, \widetilde{y} \}_{F^\bullet}  =  [u] \ast [v]-(-1)^{|u||v|}[v] \ast [u] \ \on{mod} \  F^{|x|+|y|-3}A(V^{[*]}).
\end{align*}

\begin{lemma}\label{lem:Poisson_defined}
The map
\[ \{. ,  . \}_{F^\bullet}: \gr^{[p]}A(V^{[*]}) \otimes \gr^{[q]}A(V^{[*]}) \longrightarrow \gr^{[p+q-2]}A(V^{[*]})\]
is well-defined.
\end{lemma}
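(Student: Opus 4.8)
The plan is to show that $\{\cdot,\cdot\}_{F^\bullet}$ does not depend on the choices of representatives $u,v$ and $x,y$, and that its value lands in the stated component $\gr^{[p+q-2]}A(V^{[*]})$. The landing statement is essentially immediate from Lemma~\ref{lem:uv-vu}: by \eqref{lem:uv-vu(1)}, $[u]\ast[v]-(-1)^{|u||v|}[v]\ast[u]\in F^{|u|+|v|-2}A(V^{[*]})=F^{p+q-2}A(V^{[*]})$, so modulo $F^{p+q-3}A(V^{[*]})$ it defines an element of $\gr^{[p+q-2]}A(V^{[*]})$. The real content is well-definedness.

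First I would fix $x\in F^pA(V^{[*]})$ with $|x|=p$ and $y\in F^qA(V^{[*]})$ with $|y|=q$, and consider two choices $u,u'\in V^{[p]}$ with $[u]-x,\ [u']-x\in F^{p-1}A(V^{[*]})$, so that $[u-u']\in F^{p-1}A(V^{[*]})$; similarly $v,v'\in V^{[q]}$. By bilinearity of $\ast$ it suffices to check two things: (a) replacing $u$ by an element $w\in V^{[p]}$ with $[w]\in F^{p-1}A(V^{[*]})$ changes $[w]\ast[v]-(-1)^{|w||v|}[v]\ast[w]$ by something in $F^{p+q-3}A(V^{[*]})$, and (b) the symmetric statement in the second variable. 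For (a), the key input is Lemma~\ref{lem:degree_mod_2}: since $w\in V^{[p]}$ and $[w]\in F^{p-1}A(V^{[*]})$, there are dg homogeneous $f_i\in V^{[*]}$ with $|f_i|\le p-2$ and $|f_i|\equiv p\pmod 2$ such that $w\equiv -\sum_i f_i\pmod{O(V^{[*]})}$, hence $[w]=\sum_i[-f_i]$ in $A(V^{[*]})$ with each $|{-f_i}|\le p-2$. Applying \eqref{lem:uv-vu(1)} to each pair $(f_i,v)$ gives $[f_i]\ast[v]-(-1)^{|f_i||v|}[v]\ast[f_i]\in F^{|f_i|+|v|-2}A(V^{[*]})\subseteq F^{p-2+q-2}A(V^{[*]})=F^{p+q-4}A(V^{[*]})\subseteq F^{p+q-3}A(V^{[*]})$. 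Summing over $i$ and noting $(-1)^{|f_i||v|}=(-1)^{p|v|}=(-1)^{|w||v|}$ because $|f_i|\equiv p=|w|\pmod 2$ (so the sign matches the one attached to $w$), we conclude the commutator for $w$ is already in $F^{p+q-3}A(V^{[*]})$. This is exactly where the parity refinement in Lemma~\ref{lem:degree_mod_2} is essential — without matching signs the cancellation of the superbracket pieces would fail. Step (b) is handled identically, using \eqref{lem:uv-vu(1)} in the first slot, or equivalently by rewriting $[u]\ast[v]-(-1)^{|u||v|}[v]\ast[u]=-(-1)^{|u||v|}\big([v]\ast[u]-(-1)^{|v||u|}[u]\ast[v]\big)$ and applying (a) with the roles of the variables exchanged.

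Finally I would record that the construction is independent of the choice of $\widetilde{x},\widetilde{y}$ in the sense that lifting $\widetilde x\in\gr^{[p]}A(V^{[*]})$ to any $x\in F^pA(V^{[*]})$ and then to $u\in V^{[p]}$ as above yields the same class: changing $x$ within its $F^{p-1}$-coset is a special case of (a)–(b) since $|x-x'|\le p-1\le p-2$ after accounting for parity, or more directly one observes the whole argument only used that $[u]-\widetilde x$ (read in $\gr$) is fixed and that any two lifts differ by an element of $F^{p-1}A(V^{[*]})$, which is covered by (a). Assembling (a), (b), and the landing statement gives a well-defined bilinear map $\gr^{[p]}A(V^{[*]})\otimes\gr^{[q]}A(V^{[*]})\to\gr^{[p+q-2]}A(V^{[*]})$, as claimed. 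I expect the only genuine subtlety — the ``main obstacle'' — to be bookkeeping the Koszul signs so that the sign $(-1)^{|u||v|}$ in the definition is compatible with replacing $u$ by a sum of elements of strictly lower (but same-parity) degree; Lemma~\ref{lem:degree_mod_2} is precisely the tool that makes this work, so the proof is short once that lemma is invoked correctly.
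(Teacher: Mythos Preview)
Your proof is correct and follows essentially the same approach as the paper's: both use Lemma~\ref{lem:degree_mod_2} to replace the difference of two lifts by a sum of dg homogeneous terms of degree $\le p-2$ with the same parity as $p$, then invoke \eqref{lem:uv-vu(1)} from Lemma~\ref{lem:uv-vu} to bound each resulting super-commutator. Your organization (reducing by bilinearity to one variable at a time) is slightly cleaner than the paper's simultaneous expansion of both arguments, but the content is identical; the one loose phrase ``$|x-x'|\le p-1\le p-2$ after accounting for parity'' is not quite right as stated, but your ``or more directly'' alternative handles that point correctly.
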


\begin{proof}
Fix $x \in F^pA(V^{[*]})$, $y \in F^qA(V^{[*]})$, and let $u' \in V^{[p]}$, $v' \in V^{[q]}$ be other elements such that $[u']-x \in F^{p-1}A(V^{[*]})$ and $[v'] - y \in F^{q-1}A(V^{[*]})$. Then $[u]-[u'] \in F^{p-1}A(V^{[*]})$. Based on Lemma~\ref{lem:degree_mod_2}, we can write $u-u'=X-\sum_i f_i$ where $X \in O(V^{[*]})$ and the $f_i$ are dg homogeneous in $V^{[*]}$ with $|f_i| \leq p-2$ and $|f_i| \equiv p \ \on{mod} \  2$. Likewise, we decompose $v-v'=Y-\sum_j g_j$ where $Y \in O(V^{[*]})$, $|g_j| \leq q-2$ and $|g_j| \equiv q \ \on{mod} \  2$. Therefore we can write
\begin{align*}
\begin{array}{rcl}
[u] \ast [v]-(-1)^{pq}[v] \ast [u] & = &\displaystyle [u'+X-\sum_i f_i] \ast [v'+Y-\sum_j g_j] \\[5pt]
 & &\displaystyle  -(-1)^{nm}[v'+Y-\sum_j g_j] \ast [u'+X-\sum_i f_i], \\[5pt]
 & = & u' \ast v'-(-1)^{pq}v' \ast u' \\[5pt]
 & &\displaystyle - \sum_j(u' \ast g_j -(-1)^{pq} g_j \ast u') \\[5pt]
 & &\displaystyle - \sum_i(f_i \ast v'-(-1)^{pq} v' \ast f_i) \\[5pt]
 & &\displaystyle + \sum_{i, j} (f_i \ast g_j -(-1)^{pq} g_j \ast f_i) \ \on{mod} \  O(V^{[*]}).
\end{array}
\end{align*}
Using the parities of the degrees of $f_i$ and $g_j$, we have
\[pq \equiv |f_i|q \equiv p|g_j| \equiv |f_i||g_j|  \ \on{mod} \ 2,\]
and so 
\[(-1)^{pq} = (-1)^{|f_i|q} = (-1)^{p|g_j|}= (-1)^{|f_i||g_j|}.\]
We can thus apply Lemma~\ref{lem:uv-vu} to the expression previously obtained and see that
\begin{align*}
[u] \ast [v]-(-1)^{pq}[v] \ast [u] \equiv  u' \ast v'-(-1)^{pq}v' \ast u' \ \on{mod} \  \left( O(V^{[*]})+F^{p+q-3}V^{[*]}\right),
\end{align*}
and so the map $\{.,.\}_{F^\bullet}$ is well-defined.
\end{proof}

\delete{
\begin{lemma}
The map
\[ \{. ,  . \}_V: \gr^{[*]}A(V^{[*]}) \otimes \gr^{[*]}A(V^{[*]}) \longrightarrow \gr^{[*]}A(V^{[*]})\]
is well-defined
\end{lemma}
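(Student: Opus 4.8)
The plan is to prove that the formula
\[ \{\widetilde{x}, \widetilde{y}\}_{F^\bullet} = [u] \ast [v] - (-1)^{|u||v|}[v] \ast [u] \ \on{mod}\ F^{p+q-3}A(V^{[*]}) \]
is independent of the chosen homogeneous lifts $u \in V^{[p]}$, $v \in V^{[q]}$ with $[u]-x \in F^{p-1}A(V^{[*]})$ and $[v]-y \in F^{q-1}A(V^{[*]})$. First I would note that the right-hand side is meaningful: by Lemma~\ref{lem:uv-vu}~\eqref{lem:uv-vu(1)} the element $[u]\ast[v]-(-1)^{|u||v|}[v]\ast[u]$ already lies in $F^{p+q-2}A(V^{[*]})$, so it has a class in $\gr^{[p+q-2]}A(V^{[*]})$, and only its ambiguity modulo $F^{p+q-3}A(V^{[*]})$ needs to be controlled. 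Since $u,u'\in V^{[p]}$ are homogeneous of degree $p$ and $v,v'\in V^{[q]}$ of degree $q$, it then suffices to take a second pair $(u',v')$ of such lifts and show the corresponding super-commutators agree modulo $F^{p+q-3}A(V^{[*]})$.

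The key input is Lemma~\ref{lem:degree_mod_2}, applied to $u-u' \in V^{[p]}$ (which has image in $F^{p-1}A(V^{[*]})$) and to $v-v' \in V^{[q]}$: it produces dg homogeneous $f_i, g_j \in V^{[*]}$ with $|f_i| \leq p-2$, $|f_i| \equiv p \ \on{mod}\ 2$, $|g_j| \leq q-2$, $|g_j| \equiv q \ \on{mod}\ 2$ and $u-u'+\sum_i f_i,\ v-v'+\sum_j g_j \in O(V^{[*]})$. Since $O(V^{[*]})$ maps to $0$ in $A(V^{[*]})$, I would substitute $[u]=[u']-\sum_i[f_i]$ and $[v]=[v']-\sum_j[g_j]$ and expand the super-commutator $\ast$-bilinearly; this yields the leading term $[u']\ast[v']-(-1)^{pq}[v']\ast[u']$ together with three families of cross terms,
\[ [u']\ast[g_j]-(-1)^{pq}[g_j]\ast[u'],\quad [f_i]\ast[v']-(-1)^{pq}[v']\ast[f_i],\quad [f_i]\ast[g_j]-(-1)^{pq}[g_j]\ast[f_i]. \]

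Now the parity clause of Lemma~\ref{lem:degree_mod_2} gives $pq \equiv |u'||g_j| \equiv |f_i||v'| \equiv |f_i||g_j| \ \on{mod}\ 2$, so each cross term is an honest super-commutator $[a]\ast[b]-(-1)^{|a||b|}[b]\ast[a]$; Lemma~\ref{lem:uv-vu}~\eqref{lem:uv-vu(1)} then places it in $F^{|a|+|b|-2}A(V^{[*]})$, namely in $F^{p+q-4}A(V^{[*]})$, $F^{p+q-4}A(V^{[*]})$ and $F^{p+q-6}A(V^{[*]})$ respectively, all contained in $F^{p+q-3}A(V^{[*]})$. Hence $[u]\ast[v]-(-1)^{pq}[v]\ast[u] \equiv [u']\ast[v']-(-1)^{pq}[v']\ast[u'] \ \on{mod}\ F^{p+q-3}A(V^{[*]})$, which is the claimed well-definedness, and the same estimates confirm that the bracket indeed takes values in $\gr^{[p+q-2]}A(V^{[*]})$.

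I expect the only genuine obstacle to be the sign bookkeeping, and it is entirely absorbed by Lemma~\ref{lem:degree_mod_2}. If one knew only that $|f_i| < p$ and $|g_j| < q$ but not their parities, the exponent $(-1)^{pq}$ would in general differ from $(-1)^{|f_i||g_j|}$, the cross term $\sum_{i,j}[f_i]\ast[g_j]-(-1)^{pq}[g_j]\ast[f_i]$ would become the \emph{symmetric} combination $\sum_{i,j}\big([f_i]\ast[g_j]+(-1)^{|f_i||g_j|}[g_j]\ast[f_i]\big)$, and Lemma~\ref{lem:uv-vu} would give no filtration control over it. So the argument ultimately rests on the fact used to prove Lemma~\ref{lem:degree_mod_2}: $O(V^{[*]})$ decomposes as the sum of its ``even'' and ``odd'' parts according to the parity of $|a_l|+|b_l|$ in the generators $a_l\circ b_l$. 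Everything else --- the bilinear expansion and the filtration estimates from Lemma~\ref{lem:uv-vu} --- is routine.
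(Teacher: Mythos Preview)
Your argument is correct and follows essentially the same route as the paper: both apply Lemma~\ref{lem:degree_mod_2} to $u-u'$ and $v-v'$, expand the super-commutator bilinearly, use the parity conclusion $|f_i|\equiv p$, $|g_j|\equiv q\pmod 2$ to turn each cross term into a genuine super-commutator, and then invoke Lemma~\ref{lem:uv-vu}\,\eqref{lem:uv-vu(1)} for the filtration drop. Your added remark explaining why the parity clause is indispensable (without it the cross term becomes a symmetrizer rather than a commutator) is a nice clarification not spelled out in the paper.
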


\begin{proof}
Fix $x \in F^nA(V^{[*]}), y \in F^mA(V^{[*]})$ and let $u' \in V^{[n]}$, $v' \in V^{[m]}$ be other elements such that $[u']-x \in F^{n-1}A(V^{[*]})$ and $[v']-y \in F^{m-1}A(V^{[*]})$. Then $u-u' \in O(V^{[*]})+F^{n-1}V^{[*]}$. We write $u-u'=X+\sum_i f_i$ where $X \in O(V^{[*]})$ and $f_i$ is dg homogenenous in $V^{[*]}$ with $|f_i| < n$. Likewise, we decompose $v-v'=Y+\sum_j g_j$ where $Y \in O(V^{[*]})$ and $|g_j| < m$. Therefore we can write
\begin{align*}
\begin{array}{rcl}
[u] \ast [v]-(-1)^{nm}[v] \ast [u] & = &\displaystyle [u'+X+\sum_i f_i] \ast [v'+Y+\sum_j g_j] \\[5pt]
 & &\displaystyle  -(-1)^{nm}[v'+Y+\sum_j g_j] \ast [u'+X+\sum_i f_i], \\[5pt]
 & = & u' \ast v'-(-1)^{nm}v' \ast u' \\[5pt]
 & &\displaystyle + \sum_j(u' \ast g_j -(-1)^{nm} g_j \ast u') \\[5pt]
 & &\displaystyle + \sum_i(f_i \ast v'-(-1)^{nm} v' \ast f_i) \\[5pt]
 & &\displaystyle + \sum_{i, j} (f_i \ast g_j -(-1)^{nm} g_j \ast f_i) +O(V^{[*]}).
\end{array}
\end{align*}
As $u-u'$ is dg homogeneous, it means that $u-u'$ is the highest degree term of an element in $O(V^{[*]})$. We decompose this element as $\sum_l a_l \circ b_l=\sum_l \sum_{k \geq 0} \binom{\on{wt}(a_l)}{k}(a_l)_{k-2}b_l$ with $a_l, b_l$ dg homogeneous and weight homogeneous. Hence for each $l$, there exists $k_l \geq 0$ such that $\sum_l  \binom{\on{wt}(a_l)}{k_l}(a_l)_{k_l-2}b_l=u-u'$, and the terms $(a_l)_{k-2}b_l$ for $k < k_l$ are either zero or cancelled by other terms. It follows that for all $l$ we have $|(a_l)_{k_l-2}b_l|=|u|=|u'|$, and so $|a_l|+|b_l| \equiv n \ \on{mod} \  2$. But we know that for each $f_i$, there exist $l$ and $k$ such that $|f_i| = |(a_l)_{k-2}b_l|$, and so $|f_i| \equiv n \ \on{mod} \  2$. In particular, $|f_i| \leq n-2$. Similarly, we have $|g_j| \equiv m \ \on{mod} \  2$ for all $j$. This means that we have 
\[nm \equiv |f_i|m \equiv n|g_j| \equiv |f_i||g_j|  \ \on{mod} \ 2,\]
and so 
\[(-1)^{nm} = (-1)^{|f_i|m} = (-1)^{n|g_j|}= (-1)^{|f_i||g_j|}.\]
We can thus apply Lemma~\ref{lem:uv-vu} to the expression previously obtained and see that
\begin{align*}
[u] \ast [v]-(-1)^{nm}[v] \ast [u] \equiv  u' \ast v'-(-1)^{nm}v' \ast u' \ \on{mod} \  \left( O(V^{[*]})+F^{n+m-3}V^{[*]}\right),
\end{align*}
and so the map $\{.,.\}_V$ is well-defined.
\end{proof}}

\begin{proposition}\label{prop:grA(V)isDGPoisson}
 $\gr^{[*]} A(V^{[*]})$ is a graded commutative dg Poisson algebra in the category $ \Vecs$. The product is induced by $\ast$, the Poisson bracket is $\{.,.\}_{F^\bullet}$ and of degree $-2$, and the differential is induced by $d_V$. In particular, for $\widetilde{x}, \widetilde{y} \in \gr^{[*]} A(V^{[*]})$ dg homogeneous with respective preimages $u \in V^{[|\widetilde{x}|]}$, $v \in V^{[|\widetilde{y}|]}$, we have
 \[ \widetilde{x} \ast \widetilde{y}=\widetilde{[u_{-1}v]} \quad   \text{ and }  \quad     \{ \widetilde{x}, \widetilde{y} \}_{F^\bullet}=\widetilde{[u_{0}v]}.
  \]
\end{proposition}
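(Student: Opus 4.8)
The plan is to assemble $\gr^{[*]}A(V^{[*]})$ as a dg Poisson algebra from pieces that are, for the most part, already in hand from the earlier results. First I would recall from Theorem~\ref{thm:A(V)_dfcommutative} that $(A(V^{[*]}),F^\bullet A(V^{[*]}),\ast,d_V)$ is a differential filtered commutative algebra, so by the general discussion of differential filtered algebras in Section~\ref{sec:df-algebra} the associated graded $\gr^{[*]}A(V^{[*]})$ is a graded commutative dg algebra in $\Vecs$, with product induced by $\ast$ and differential induced by $d_V$. The identity $\widetilde{x}\ast\widetilde{y}=\widetilde{[u_{-1}v]}$ then follows because $[u]\ast[v]=\sum_{n\geq 0}\binom{\on{wt}(u)}{n}[u_{n-1}v]$ and, since $|u_{n-1}v|=|u|+|v|-2n$, every term with $n\geq 1$ lies in $F^{|u|+|v|-2}A(V^{[*]})=F^{|\widetilde{x}|+|\widetilde{y}|-2}A(V^{[*]})$, hence vanishes in $\gr^{[|\widetilde{x}|+|\widetilde{y}|]}$. (Here I am using that $[u]-x\in F^{|\widetilde x|-1}$ and $[v]-y\in F^{|\widetilde y|-1}$, so replacing $x,y$ by $[u],[v]$ does not change the class in $\gr^{[*]}$.)

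Next I would address the Poisson bracket. Well-definedness of $\{-,-\}_{F^\bullet}$ as a map $\gr^{[p]}\otimes\gr^{[q]}\to\gr^{[p+q-2]}$ is exactly Lemma~\ref{lem:Poisson_defined}, and its degree $-2$ character is built into that statement. The identity $\{\widetilde x,\widetilde y\}_{F^\bullet}=\widetilde{[u_0v]}$ is immediate from Lemma~\ref{lem:uv-vu}: formula~\eqref{lem:uv-vu(1)} and~\eqref{lem:uv-vu(3)} together say $u\ast v-(-1)^{|u||v|}v\ast u\equiv u_0(v)\bmod O(V^{[*]})$ up to terms in $F^{|u|+|v|-4}A(V^{[*]})$, and $|u_0v|=|u|+|v|-2$, so modulo $F^{|\widetilde x|+|\widetilde y|-3}A(V^{[*]})$ the bracket is represented precisely by $u_0(v)$. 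It remains to verify the Poisson axioms for $\{-,-\}_{F^\bullet}$: graded skew-symmetry, the graded Jacobi identity, the Leibniz (derivation) rule with respect to $\ast$, and compatibility with the differential, i.e.\ that $\{-,-\}_{F^\bullet}$ is a degree $-2$ $0$-cocycle in $\Hom^{[*]}(\gr^{[*]}A\otimes\gr^{[*]}A,\gr^{[*]}A)[-2]$ in the sense of Section~\ref{sec:df-algebra}.

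The most efficient route to these axioms is to pull them back to the vertex algebra level via the representative formula $\{\widetilde x,\widetilde y\}_{F^\bullet}=\widetilde{[u_0v]}$. Skew-symmetry $\{\widetilde x,\widetilde y\}_{F^\bullet}=-(-1)^{|\widetilde x||\widetilde y|}\{\widetilde y,\widetilde x\}_{F^\bullet}$ follows from the skew-symmetry of $u_0(-)$ modulo $\im\mathcal D$ (equivalently from formula~\eqref{lem:uv-vu}), since $u_0(v)+(-1)^{|u||v|}v_0(u)=\mathcal D(\text{something})\in C_2$-type terms that drop in $\gr^{[*]}A$; one should check that these correction terms have degree $\leq|\widetilde x|+|\widetilde y|-3$. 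The Jacobi identity and the Leibniz rule for $\widetilde{[u_0v]}$ reduce, after passing to associated graded, to the $l=0$ and $m=0$ specializations~\eqref{eq:jacobi_3} of the component Jacobi identity together with $[u_0,v_{-1}]^s=\sum_{i\geq 0}\binom{0}{i}(u_0v)_{-1-i}=(u_0v)_{-1}$ — more precisely, one verifies $\{u,v\cdot w\}-\{u,v\}\cdot w-(-1)^{|u||v|}v\cdot\{u,w\}\in C_2^{[*]}(V^{[*]})$ modulo lower filtration, which is a computation already packaged (in a slightly different guise) in the proof of Theorem~\ref{thm:R(V)dgPoissonAlgebra}. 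Compatibility with $d_V$ is the identity $d_V(u_0v)=(d_Vu)_0v+(-1)^{|u|}u_0(d_Vv)$ from~\eqref{eq:d(v_nu)} (noting $|v_0|=|v|-2$ so the sign is $(-1)^{|u|}$ as required for a degree $-2$ bracket), which descends to $\gr^{[*]}A$. I expect the main obstacle to be bookkeeping: each identity holds in $V^{[*]}$ only modulo $O(V^{[*]})$ plus a term in a strictly lower piece of $F^\bullet$, and one must consistently check that these error terms land in the correct filtration level $F^{\text{(total degree)}-3}$ so that they vanish in the target $\gr^{[p+q-2]}A(V^{[*]})$ — exactly the kind of parity-and-degree tracking already carried out in Lemmas~\ref{lem:degree_mod_2} and~\ref{lem:Poisson_defined}, which can be invoked to shortcut the argument.
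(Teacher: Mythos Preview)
Your proposal is correct and follows essentially the same approach as the paper: verify each Poisson axiom (skew-symmetry, Jacobi, Leibniz, compatibility with $d$) by direct computation, using the representative formulas $\widetilde{x}\ast\widetilde{y}=\widetilde{[u_{-1}v]}$ and $\{\widetilde{x},\widetilde{y}\}_{F^\bullet}=\widetilde{[u_0v]}$ to reduce to vertex-algebra identities already established in Theorem~\ref{thm:R(V)dgPoissonAlgebra} and~\eqref{eq:d(v_nu)}. The paper's proof is simply terser (``by direct computation''), whereas you spell out the mechanism; one minor simplification you could adopt is that skew-symmetry follows immediately from the definition of $\{-,-\}_{F^\bullet}$ as a (super)commutator in $A(V^{[*]})$, without needing to invoke skew-symmetry of $u_0(-)$ modulo $\im\mathcal D$.
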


\begin{proof}
For $x, y, z \in A(V^{[*]})$, we can verify by direct computation that:
\begin{itemize}\setlength\itemsep{5pt}
\item $\{\widetilde{x}, \widetilde{y} \}_{F^\bullet}+(-1)^{|\widetilde{x}||\widetilde{y}|}\{\widetilde{y}, \widetilde{x}\}_{F^\bullet}=0$,
\item $\{\widetilde{x}, \{ \widetilde{y}, \widetilde{z}\}_{F^\bullet} \}_{F^\bullet}=\{\{\widetilde{x}, \widetilde{y}\}_{F^\bullet}, \widetilde{z}\}_{F^\bullet}+(-1)^{|\widetilde{x}||\widetilde{y}|}\{\widetilde{y}, \{ \widetilde{x}, \widetilde{z}\}_{F^\bullet} \}_{F^\bullet}$,
\item $\{\widetilde{x},  \widetilde{y} \ast \widetilde{z}\}_{F^\bullet} =\{\widetilde{x}, \widetilde{y}\}_{F^\bullet} \ast \widetilde{z}+(-1)^{|\widetilde{x}||\widetilde{y}|}\widetilde{y} \ast \{ \widetilde{x}, \widetilde{z}\}_{F^\bullet} $,

\item $\widetilde{x} \ast \widetilde{y}-(-1)^{|\widetilde{x}||\widetilde{y}|} \widetilde{y} \ast \widetilde{x}=0$.
\end{itemize}
We define $d_{F^\bullet}: \gr^{[*]} A(V^{[*]}) \longrightarrow \gr^{[*]} A(V^{[*]})$ by $d_{F^\bullet}(\widetilde{x})=\widetilde{d_Vx}$ on homogeneous elements and then expand linearly. We can then verify explicitly that:
\begin{itemize}\setlength\itemsep{5pt}
\item $d_{F^\bullet} \circ d_{F^\bullet}=0$,
\item $d_{F^\bullet}(\widetilde{x} \ast \widetilde{y})=d_{F^\bullet}(\widetilde{x}) \ast \widetilde{y}+(-1)^{|\widetilde{x}|} \widetilde{x} \ast d_{F^\bullet}(\widetilde{y})$,
\item $d_{F^\bullet}(\{\widetilde{x}, \widetilde{y} \}_{F^\bullet})=\{d_{F^\bullet}(\widetilde{x}), \widetilde{y} \}_{F^\bullet}+(-1)^{|\widetilde{x}|}  \{\widetilde{x},  d_{F^\bullet}(\widetilde{y})\}_{F^\bullet}$.
\end{itemize}
\end{proof}

As in the classical case (cf. \cite{Arakawa-Lam-Yamada}), there is a relation between $R^{[*]}(V^{[*]})$ and $\gr^{[*]} A(V^{[*]})$. We define the following map:
\begin{align*}
\begin{array}{rccc}
\eta_{F^\bullet}: & R^{[*]}(V^{[*]}) & \longrightarrow & \gr^{[*]} A(V^{[*]}) \\[5pt]
            & u+C_2^{[|u|]}(V^{[*]}) & \longmapsto & \displaystyle u+O(V^{[*]})+\bigoplus_{p<|u|}V^{[p]}.
\end{array}
\end{align*}
Because $C_2^{[|u|]}(V^{[*]}) \subseteq O(V^{[*]})+\bigoplus_{p<|u|}V^{[p]}$, the map $\eta_{F^\bullet}$ is well-defined.

\begin{proposition}\label{prop:eta_VsurjectiveDG}
The map $\eta_{F^\bullet}$ is a surjective morphism of dg Poisson algebras.
\end{proposition}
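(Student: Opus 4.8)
## Proof proposal for Proposition~\ref{prop:eta_VsurjectiveDG}

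The plan is to check the three pieces of structure separately: that $\eta_{F^\bullet}$ is a well-defined map of complexes (i.e. commutes with the differentials), that it is a homomorphism of associative algebras, and that it intertwines the two Poisson brackets — and finally that it is surjective. Well-definedness of the underlying linear map was already observed before the statement, using $C_2^{[|u|]}(V^{[*]}) \subseteq O(V^{[*]}) + \bigoplus_{p<|u|}V^{[p]}$, so I would only recall it. For the differential: the differential on $R^{[*]}(V^{[*]})$ is induced by $d_V$ and the differential $d_{F^\bullet}$ on $\gr^{[*]} A(V^{[*]})$ is $d_{F^\bullet}(\widetilde x)=\widetilde{d_V x}$ by Proposition~\ref{prop:grA(V)isDGPoisson}, so compatibility is immediate once one notes that $d_V$ raises differential degree by exactly $1$ and hence sends the ``lower order'' terms $\bigoplus_{p<|u|}V^{[p]}$ into $\bigoplus_{p<|u|+1}V^{[p]}$; there is nothing to check beyond tracking degrees.

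Next I would verify that $\eta_{F^\bullet}$ is multiplicative and Poisson. For a homogeneous $u \in V^{[p]}$ representing a class in $R^{[p]}(V^{[*]})$, its image in $\gr^{[p]}A(V^{[*]})$ is exactly $\widetilde{[u]}$ in the notation of Proposition~\ref{prop:grA(V)isDGPoisson} (note $|[u]| \le p$, and if $[u] \in F^{p-1}A(V^{[*]})$ then $\eta_{F^\bullet}(\overline u)=0$, which is consistent). The formulas from Proposition~\ref{prop:grA(V)isDGPoisson}, namely $\widetilde{[u]} \ast \widetilde{[v]} = \widetilde{[u_{-1}v]}$ and $\{\widetilde{[u]}, \widetilde{[v]}\}_{F^\bullet} = \widetilde{[u_0 v]}$, match exactly the product $\overline u \cdot \overline v = \overline{u_{-1}v}$ and bracket $\{\overline u, \overline v\}_{R(V)} = \overline{u_0 v}$ on $R^{[*]}(V^{[*]})$ from Theorem~\ref{thm:R(V)dgPoissonAlgebra}. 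So for homogeneous representatives the identities $\eta_{F^\bullet}(\overline u \cdot \overline v) = \eta_{F^\bullet}(\overline u) \ast \eta_{F^\bullet}(\overline v)$ and $\eta_{F^\bullet}(\{\overline u, \overline v\}_{R(V)}) = \{\eta_{F^\bullet}(\overline u), \eta_{F^\bullet}(\overline v)\}_{F^\bullet}$ hold on the nose; extending bilinearly over all (not necessarily homogeneous) elements is routine. One should also note $\eta_{F^\bullet}(\overline{\mathbf 1}) = \widetilde{[\mathbf 1]}$ is the unit.

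For surjectivity: every element of $\gr^{[p]}A(V^{[*]}) = F^pA(V^{[*]})/F^{p-1}A(V^{[*]})$ is represented by some $[u]$ with $u \in \bigoplus_{q \le p} V^{[q]}_*$, and after discarding the lower-degree summands (which vanish in the quotient) one may take $u \in V^{[p]}$; then $\widetilde{[u]} = \eta_{F^\bullet}(\overline u)$. Since $\gr^{[*]}A(V^{[*]})$ is spanned by such homogeneous classes, $\eta_{F^\bullet}$ is onto. I do not expect a genuine obstacle here — the proposition is essentially a bookkeeping consequence of Theorem~\ref{thm:R(V)dgPoissonAlgebra}, Theorem~\ref{thm:Zhu} and Proposition~\ref{prop:grA(V)isDGPoisson}; the one point that needs a little care is the sign/degree bookkeeping showing that the ``error terms'' living in $O(V^{[*]}) + F^{p+q-3}V^{[*]}$ (from Lemma~\ref{lem:uv-vu}) do not obstruct the bracket identity, but this was already handled in Lemma~\ref{lem:Poisson_defined} and only needs to be cited.
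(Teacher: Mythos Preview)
Your proposal is correct and follows essentially the same approach as the paper's proof, which is even more terse: the paper simply asserts that surjectivity is clear from the construction, that $\eta_{F^\bullet}$ preserves the algebra and Poisson structures by direct computation, and that it is a chain map. Your write-up supplies the explicit bookkeeping (via Proposition~\ref{prop:grA(V)isDGPoisson} and Theorem~\ref{thm:R(V)dgPoissonAlgebra}) that the paper leaves implicit, so there is nothing to correct.
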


\begin{proof}
The surjectivity is clear by the construction of $\gr^{[*]} A(V^{[*]})$. We can verify by direct computations that $\eta_{F^\bullet}$ preserves the algebra structures as well as the Poisson brackets. Finally it is straightforward to see that $\eta_{F^\bullet}$ is a chain map.
\end{proof}

\subsection{The filtration by weight}

As we proved in Theorem~\ref{thm:A(V)_dfcommutative}, the algebra $(A(V^{[*]}),$ $F^\bullet A(V^{[*]}), d_V, \ast)$ is a differential filtered algebra. It also has a weight filtered differential filtered structure (cf. Section~\ref{sec:weight-filtration}) where $W_nA(V^{[*]})$ is the image of $\sum_{m \leq n}V_m^{[*]}$ in $A(V^{[*]})$. Indeed, it is clearly an increasing filtration, and because $L_0(d_V(u))=|u|d_V(u)$ for any $u \in V^{[|u|]}$, the space $W_nA(V^{[*]})$ is stable by $d_V$. We can thus look at the associated graded algebra $\gr_{*}A(V^{[*]})=\bigoplus_{n \in \frac{1}{2}\mathbb{Z}}W_nA(V^{[*]})/W_{n-1}A(V^{[*]})$. We will first prove the following result:

\begin{proposition}\label{prop:gr_*(A(V))}
The algebra $\gr_{*}A(V^{[*]})$ is a weight graded, differential filtered algebra. Furthermore, the product is differential filtered commutative.
\end{proposition}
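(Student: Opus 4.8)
The plan is to reduce the statement to the general associated‑graded machinery of Section~\ref{sec:weight-filtration}, so the first task is to verify that $(A(V^{[*]}), F^\bullet A(V^{[*]}), W_\bullet A(V^{[*]}), d_V, \ast)$ really is a weight filtered differential filtered algebra. The differential filtered structure $(F^\bullet A(V^{[*]}), d_V, \ast)$ is Theorem~\ref{thm:A(V)_dfcommutative}. The weight filtration $W_nA(V^{[*]})$, the image of $\bigoplus_{m\le n}V^{[*]}_m$, is plainly increasing, exhaustive and separated; it is $d_V$‑stable because $L_0$ commutes with $d_V$ (it is a degree‑$0$ chain map, cf. the conformal structure discussion), so $d_V$ preserves each weight space $V^{[*]}_m$, hence each $\bigoplus_{m\le n}V^{[*]}_m$ and its image in $A(V^{[*]})$. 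Setting $F^i(W_nA(V^{[*]}))=W_nA(V^{[*]})\cap F^iA(V^{[*]})$ endows $W_nA(V^{[*]})$ with a differential filtered vector space structure, all axioms being inherited from $A(V^{[*]})$. The multiplicativity $W_nA(V^{[*]})\ast W_mA(V^{[*]})\subseteq W_{n+m}A(V^{[*]})$ is where the definition of $\ast$ enters: for $u$ weight‑homogeneous of weight $i\le n$ and $v$ of weight $j\le m$, one has $u\ast v=\sum_{k\ge 0}\binom{\on{wt}(u)}{k}u_{k-1}v$, and $u_{k-1}v$ has weight $i+j-k\le i+j\le n+m$.

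Granting this, the construction recalled in Section~\ref{sec:weight-filtration} applies without change: $\gr_*A(V^{[*]})=\bigoplus_n W_nA(V^{[*]})/W_{n-1}A(V^{[*]})$ is a weight graded, differential filtered algebra, each $\gr_nA(V^{[*]})$ carrying the filtration $F^i\gr_nA(V^{[*]})$ equal to the image of $F^iA(V^{[*]})\cap W_nA(V^{[*]})$ together with the differential induced by $d_V$, and the product restricting to maps $\gr_nA(V^{[*]})\ast\gr_mA(V^{[*]})\to\gr_{n+m}A(V^{[*]})$ that satisfy $F^i\gr_nA(V^{[*]})\ast F^j\gr_mA(V^{[*]})\subseteq F^{i+j}\gr_{n+m}A(V^{[*]})$ and the graded Leibniz rule for the induced differential. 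Well‑definedness of this product on the quotients is exactly where the multiplicativity of $W_\bullet$ is used: if $b-b'\in W_{m-1}A(V^{[*]})$ then $a\ast(b-b')\in W_{n+m-1}A(V^{[*]})$, and symmetrically in the first variable.

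It remains to show the product on $\gr_*A(V^{[*]})$ is differential filtered commutative. I would deduce this directly from Theorem~\ref{thm:A(V)_dfcommutative}. Let $\alpha\in\gr_nA(V^{[*]})$, $\beta\in\gr_mA(V^{[*]})$ be nonzero and weight‑homogeneous. As observed in Section~\ref{sec:weight-filtration}, choose representatives $a\in F^{|\alpha|}A(V^{[*]})\cap W_nA(V^{[*]})$ and $b\in F^{|\beta|}A(V^{[*]})\cap W_mA(V^{[*]})$ with $\pi(a)=\alpha$, $\pi(b)=\beta$; then necessarily $|a|=|\alpha|$ and $|b|=|\beta|$, since $|a|<|\alpha|$ would force $\alpha\in F^{|a|}\gr_nA(V^{[*]})$, contradicting minimality of $|\alpha|$. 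By Theorem~\ref{thm:A(V)_dfcommutative},
\[
a\ast b-(-1)^{|\alpha||\beta|}\,b\ast a\in F^{|\alpha|+|\beta|-1}A(V^{[*]}),
\]
and this element also lies in $W_{n+m}A(V^{[*]})$ by multiplicativity of $W_\bullet$; projecting to $\gr_{n+m}A(V^{[*]})$ gives $\alpha\ast\beta-(-1)^{|\alpha||\beta|}\beta\ast\alpha\in F^{|\alpha|+|\beta|-1}\gr_{n+m}A(V^{[*]})$, as required. (A more conceptual alternative: by Lemma~\ref{lem:2gradations} one has $\gr^{[*]}\gr_*A(V^{[*]})\cong\gr_*\gr^{[*]}A(V^{[*]})$, and the latter is graded commutative since $\gr^{[*]}A(V^{[*]})$ is, by Proposition~\ref{prop:grA(V)isDGPoisson}; hence the associated dg algebra of $\gr_*A(V^{[*]})$ is commutative, which is equivalent to differential filtered commutativity.)

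I do not anticipate a real obstacle: the content is organizational, namely keeping the two compatible filtrations straight and checking that every structure map on $A(V^{[*]})$ descends to $\gr_*A(V^{[*]})$. The only step requiring care is the sign‑and‑degree bookkeeping in the commutativity argument — precisely the reason one must choose representatives with $|a|=|\alpha|$ and $|b|=|\beta|$ — and even there everything reduces cleanly to the already‑established Theorem~\ref{thm:A(V)_dfcommutative} (or to Lemma~\ref{lem:2gradations} together with Proposition~\ref{prop:grA(V)isDGPoisson}).
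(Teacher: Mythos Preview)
Your proof is correct and follows essentially the same strategy as the paper: verify that the weight filtration on $A(V^{[*]})$ satisfies the axioms of Section~\ref{sec:weight-filtration}, pass to the associated graded, and reduce the differential filtered commutativity to the already established commutativity of $A(V^{[*]})$ (Theorem~\ref{thm:A(V)_dfcommutative}, i.e.\ ultimately to \eqref{lem:uv-vu(1)}--\eqref{lem:uv-vu(2)}). The paper unpacks the representatives as finite sums $u=\sum_{l\le p}u^l$ of dg-homogeneous pieces and computes directly, whereas you invoke the general framework and pick a single representative with $|a|=|\alpha|$; the content is the same. Your parenthetical alternative via Lemma~\ref{lem:2gradations} and Proposition~\ref{prop:grA(V)isDGPoisson} is a genuinely cleaner shortcut not used in the paper and is worth keeping as a remark.
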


\begin{proof}
The algebra $\gr_{*}A(V^{[*]})$ is equipped with an ascending filtration $F^p \gr_{n}A(V^{[*]})= F^pA(V^{[*]}) \cap W_nA(V^{[*]}) \on{mod} W_{n-1}A(V^{[*]})$. The differential $d_V$ on $V^{[*]}$ induces a map on $\gr_{*}A(V^{[*]})$ that we denote $d$, and it is straightforward to verify that $(\gr_{*}A(V^{[*]}), F^\bullet, d)$ is a differential filtered vector space (cf. Section~\ref{sec:df-algebra}). Furthermore, the weight graded statement is clear by construction.

Set $\widetilde{x} \in F^p \gr_{n}A(V^{[*]})$ with $p=|\widetilde{x}|$. We can write $\widetilde{x} = [u] \on{mod} W_{n-1}A(V^{[*]})$, with $u=\sum_{l \leq p}u^l$ such that $|u^l|=l$. Likewise, for $\widetilde y \in F^q \gr_{m}A(V^{[*]})$, we can write $\widetilde y = [v] \on{mod} W_{m-1}A(V^{[*]})$, with $v=\sum_{k \leq q}v^k$ such that $|v^k|=k$. Using these decompositions, we show by direct computation that
\[
d(\widetilde{x} \ast \widetilde y)-\left(d(\widetilde{x}) \ast \widetilde y + (-1)^p \widetilde{x} \ast d(\widetilde y)\right) \in F^{|\widetilde{x}|+|\widetilde y|} \gr_{m+n}A(V^{[*]}).
\]
Hence $(\gr_{*}A(V^{[*]}), F^\bullet, d, \ast)$ is a differential filtered algebra. The differential filtered commutativity is verified using the same decomposition for $\widetilde{x}$ and $\widetilde y$, as well as Formulas~\eqref{lem:uv-vu(1)} and \eqref{lem:uv-vu(2)}.
\end{proof}

\begin{lemma}\label{lem:degree_mod_2_variant}
Set $u \in V_n^{[*]}$ such that $[u] \in W_{n-1}A(V^{[*]})$. We write $u=\sum_{q \leq p} u^q$ with $|u^q|=q$. Then there exist finitely many dg homogeneous and weight homogeneous $f_i \in V^{[*]}$ with $\on{wt}(f_i) \leq n$ such that
\[ u^p+\sum_i f_i  \in O(V^{[*]}),\]
and
\[  |f_i| \equiv p \ \on{mod} \  2, \text{ for all } i. \]
\end{lemma}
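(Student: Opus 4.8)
\textbf{Proof plan for Lemma~\ref{lem:degree_mod_2_variant}.}

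The plan is to mimic the proof of Lemma~\ref{lem:degree_mod_2}, but keeping track of the weight grading in addition to the differential degree. First I would observe that the hypothesis $[u] \in W_{n-1}A(V^{[*]})$ means precisely that $u \in O(V^{[*]}) + \sum_{m \leq n-1} V_m^{[*]}$, so there is an element $w \in O(V^{[*]})$ such that $u - w \in \sum_{m \leq n-1} V_m^{[*]}$. Since $u \in V_n^{[*]}$ is weight homogeneous, extracting the weight-$n$ component on both sides shows that $u$ equals the weight-$n$ component of $w$. Write $w = \sum_{l \in L} a_l \circ b_l$ with each $a_l, b_l$ both dg homogeneous and weight homogeneous (this is possible because $O(V^{[*]})$ is spanned by such $a \circ b$, and one may further decompose each factor into its bigraded pieces using that $u \circ v$ is bilinear — here one uses that $V^{[*]}$ is $\mathbb{Z} \times \frac{1}{2}\mathbb{Z}$-graded). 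Recall $a_l \circ b_l = \sum_{k \geq 0} \binom{\on{wt}(a_l)}{k}(a_l)_{k-2}b_l$, and by the grading property $u_m(V_r^{[p]}) \subseteq V_{r+\on{wt}(u)-m-1}^{[p+|u|-2m-2]}$ we have $(a_l)_{k-2}b_l \in V_{\on{wt}(a_l)+\on{wt}(b_l)-k+1}^{[|a_l|+|b_l|-2k+2]}$.

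Next I would split the index set $L$ according to weight: let $L_n = \{l \in L \mid \on{wt}(a_l) + \on{wt}(b_l) - k + 1 = n \text{ for the relevant } k\}$ contributing to weight $n$, and more carefully, since each $a_l \circ b_l$ spreads over several weights, project the whole sum onto its weight-$n$ part. Because the weight-homogeneous components of elements of $O(V^{[*]})$ are still in $O(V^{[*]})$ (this is exactly the bigraded-stability argument used in the proof of Theorem~\ref{thm:R(V)dgPoissonAlgebra} and re-used for $A(V^{[*]})$ via Lemma~\ref{lem:uv-vu}\,\eqref{lem:uv-vu(2)},\eqref{lem:uv-vu(4)} — more precisely it follows because $O(V^{[*]})$ is spanned by $u \circ v$ with $u, v$ one may take weight homogeneous, and each such generator's weight-$n$ slice is again of that form), the weight-$n$ component $X$ of $w$ lies in $O(V^{[*]})$ and equals $u + \sum_i f_i$ where the $f_i$ are the dg-homogeneous components of $X - u$, each necessarily weight homogeneous of weight exactly $n$ (so $\on{wt}(f_i) \leq n$, in fact $= n$).

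Finally, the parity claim $|f_i| \equiv p \bmod 2$ is inherited verbatim from the proof of Lemma~\ref{lem:degree_mod_2}: in each generator $a_l \circ b_l$, every dg-homogeneous component $(a_l)_{k-2}b_l$ has degree $|a_l| + |b_l| - 2k + 2 \equiv |a_l| + |b_l| \pmod 2$, so all components of a single generator have the same parity; splitting $X$ into its even-parity and odd-parity halves (relative to $p$), these two halves cannot cancel since they live in the complementary subspaces $V^{[ev]}$ and $V^{[odd]}$, and $u^p$ has degree $p$, so $u^p$ must be a summand of the parity-$p$ half, forcing all surviving $f_i$ to have $|f_i| \equiv p \bmod 2$. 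The bound $|f_i| \leq p - 2$ then follows because $|f_i| < p$ (the $f_i$ are lower-degree than $u^p$ by construction of $X - u$) combined with $|f_i| \equiv p \bmod 2$. The main obstacle is the bookkeeping in the second step: making precise that passing to the weight-$n$ slice of $w = \sum_l a_l \circ b_l$ again yields an element of $O(V^{[*]})$ while simultaneously respecting the dg-degree decomposition, i.e. showing the two gradings can be handled independently; this is where the $\mathbb{Z} \times \frac{1}{2}\mathbb{Z}$-bigraded structure of $V^{[*]}$ and the bihomogeneity of the operators $u_m$ do the essential work.
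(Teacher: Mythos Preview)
There is a genuine gap in your second step. You claim that the weight-$n$ component of $w \in O(V^{[*]})$ again lies in $O(V^{[*]})$, citing the bigraded-stability argument from Theorem~\ref{thm:R(V)dgPoissonAlgebra}. But that argument works for $C_2^{[*]}(V^{[*]})$ precisely because its generators $u_{-2}v$ are weight homogeneous; the generators $u \circ v = \sum_{k\geq 0}\binom{\on{wt}(u)}{k}u_{k-2}v$ of $O(V^{[*]})$ are \emph{not} weight homogeneous (the term $u_{k-2}v$ has weight $\on{wt}(u)+\on{wt}(v)+1-k$, varying with $k$), and there is no reason for $O(V^{[*]})$ to be closed under weight projection. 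In fact, if your claim held, then since the weight-$n$ component of $w$ is exactly $u$, you would conclude $u \in O(V^{[*]})$, i.e.\ $[u]=0$ in $A(V^{[*]})$, which is strictly stronger than the hypothesis $[u]\in W_{n-1}A(V^{[*]})$. This is also why the Remark immediately following this lemma stresses that one only gets $\on{wt}(f_i)\leq n$ rather than $<n$: the failure of weight-closure of $O(V^{[*]})$ is precisely the obstruction.

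The paper's proof avoids this by \emph{not} projecting onto weight $n$. It splits the sum $\sum_{l\in L}a_l\circ b_l$ by the parity of $|a_l|+|b_l|$ relative to $p$ and sets $X=\sum_{l\in L'}a_l\circ b_l$, which is manifestly in $O(V^{[*]})$. One then argues that $X$ has all weight components $\leq n$: for any $m>n$ the weight-$m$ part of the full sum vanishes, and since the $L'$- and $L''$-contributions live in complementary degree-parity subspaces they cannot cancel, so each vanishes separately. The $f_i$ are then the bihomogeneous components of $X-u^p$, giving $\on{wt}(f_i)\leq n$ and $|f_i|\equiv p\bmod 2$. Note finally that the lemma does \emph{not} assert $|f_i|\leq p-2$; in the paper's construction the $f_i$ sitting in weights $<n$ can have degree larger than $p$, so your final paragraph claims more than is true.
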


\begin{proof}
Set $u$ as in the statement. Then $u \in O(V^{[*]})+W_{n-1}V^{[*]}$, and thus $u$ is the highest weight component of an element in $O(V^{[*]})$. We decompose this element as $\sum_{l \in L} a_l \circ b_l=\sum_{l \in L} \sum_{k \geq 0} \binom{\on{wt}(a_l)}{k}(a_l)_{k-2}b_l$ with $a_l, b_l$ dg homogeneous and weight homogeneous. As $|(a_l)_{k-2}b_l|=|a_l|+|b_l|-2k+2$, we see that the degrees of the dg homogeneous components of $a_l \circ b_l$ are all congruent to $|a_l|+|b_l|$ mod $2$. We can thus split our previous sum into:
\[
\sum_{l \in L'} a_l \circ b_l+\sum_{l \in L''} a_l \circ b_l,
\]
where $L'$ (resp. $L''$) is the set of indices such that $|a_l|+|b_l| - p \equiv 0 \ \on{mod} \ 2$ (resp. $|a_l|+|b_l| -p \equiv 1 \ \on{mod} \ 2$). It follows that all the components of the first sum have a degree with the same parity as $p$, while the other sum has the opposite parity. Hence $u^p$ is a summand in the sum over $L'$. As the sums over $L'$ and $L''$ have different degree parities, they cannot cancel each other. It follows that $u^p$, being of highest weight in the sum over $L$, is a summand of the highest weight term of $\sum_{l \in L'} a_l \circ b_l$. 

Set $X=\sum_{l \in L'} a_l \circ b_l$. The dg homogeneous and weight homogeneous components of $X-u^p$ will be written as $f_i$. Based on our construction, for each $f_i$, there exist $l \in L'$ and $k \geq 0$ such that $|f_i| = |(a_l)_{k-2}b_l|$, and so $|f_i| \equiv |a_l|+|b_l| \equiv p \ \on{mod} \  2$. As $n$ is the highest weight of the sum over $L'$, it follows that $| f_i | \leq n$.
\end{proof}

\begin{remark}
We cannot define a Poisson structure on $\gr_{*}A(V^{[*]})$ in the same manner as we did in Lemma~\ref{lem:Poisson_defined}. Indeed, we have Lemma~\ref{lem:degree_mod_2_variant}, which is a variant of Lemma~\ref{lem:degree_mod_2}. However in Lemma~\ref{lem:degree_mod_2_variant}, we do not have the condition $\on{wt}(f_i) < n$. Because of this, the natural map given by the super commutator is not well-defined on $\gr_{*}A(V^{[*]})$. Furthermore, the algebra $\gr_{*}A(V^{[*]})$ is not a differential complex because $d(\gr_{n}A(V^{[*]})) \subseteq \gr_{n}A(V^{[*]})$. Therefore there is no dg Poisson structure on this algebra.
\end{remark}

As done using the filtration by the cohomological degree, we define the following map:
\begin{align*}
\begin{array}{rccc}
\eta_{W_\bullet}: & R^{[*]}(V^{[*]}) & \longrightarrow & \gr_{*} A(V^{[*]}) \\[5pt]
            & u+C_2^{[*]}(V^{[*]})_{\on{wt}(u)} & \longmapsto & \displaystyle u+O(V^{[*]})+\bigoplus_{n< \on{wt}(u)}V_n^{[*]}.
\end{array}
\end{align*}
Because $C_2(V^{[*]})_{\on{wt}(u)} \subseteq O(V^{[*]})+\bigoplus_{m<\on{wt}(u)}V^{[*]}_{m}$, the map $\eta_{W_\bullet}$ is well-defined.

\begin{proposition}\label{prop:eta_VsurjectiveDF}
The map $\eta_{W_\bullet}$ is a surjective morphism of weight graded, differential filtered, differential filtered commutative algebras.
\end{proposition}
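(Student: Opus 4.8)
The plan is to mirror the proof of Proposition~\ref{prop:eta_VsurjectiveDG}, replacing the cohomological filtration $F^\bullet$ by the weight filtration $W_\bullet$ throughout; since, by the preceding remark, $\gr_*A(V^{[*]})$ carries no Poisson bracket, no analogue of Lemma~\ref{lem:Poisson_defined} will be needed, so the argument is in fact shorter. I would first take for granted the well-definedness of $\eta_{W_\bullet}$, which is exactly the inclusion $C_2^{[*]}(V^{[*]})_n\subseteq O(V^{[*]})+\bigoplus_{m<n}V^{[*]}_m$ recorded before the statement; one obtains it from $u\circ v=u_{-2}v+\sum_{k\ge 1}\binom{\on{wt}(u)}{k}u_{k-2}v$ together with $u_{k-2}(v)\in V^{[*]}_{\on{wt}(u)+\on{wt}(v)+1-k}$ for weight-homogeneous $u,v$, the corrections with $k\ge 1$ having strictly smaller weight.

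Next I would check surjectivity, which is immediate: a class in $\gr_nA(V^{[*]})$ has a representative $[u]$ with $u=\sum_{m\le n}u^m$, $u^m\in V^{[*]}_m$, and since $\sum_{m<n}u^m$ has weight $<n$ we get $[u]\equiv[u^n]\bmod W_{n-1}A(V^{[*]})$, so the class equals $\eta_{W_\bullet}(\overline{u^n})$. For the algebra-morphism property I would take $u\in V^{[*]}_n$, $v\in V^{[*]}_m$ weight-homogeneous and use $u\ast v=u_{-1}v+\sum_{k\ge 1}\binom{n}{k}u_{k-1}v$ with $u_{k-1}(v)\in V^{[*]}_{n+m-k}$, so that $[u\ast v]\equiv[u_{-1}v]\bmod W_{n+m-1}A(V^{[*]})$; combined with $\overline{u}\cdot\overline{v}=\overline{u_{-1}v}$ in $R^{[*]}(V^{[*]})$ this gives $\eta_{W_\bullet}(\overline{u}\cdot\overline{v})=\eta_{W_\bullet}(\overline{u})\ast\eta_{W_\bullet}(\overline{v})$, and $\eta_{W_\bullet}$ sends $\overline{\mathbf 1}$ to the unit of $\gr_0A(V^{[*]})$ because $\mathbf 1\in V^{[0]}_0$; the general case follows by linearity after decomposing into weight-homogeneous pieces. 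Compatibility with the weight grading (the weight-$n$ part of $R^{[*]}(V^{[*]})$ lands in $\gr_nA(V^{[*]})$) and with the filtration $F^\bullet$ (if $u\in V^{[q]}_n$ with $q\le p$ then $[u]\in F^pA(V^{[*]})\cap W_nA(V^{[*]})$) are read off the definitions, and compatibility with the differentials holds already on representatives because $d_V$ preserves the weight ($L_0d_V=d_VL_0$); together with the differential filtered commutativity of the source (Theorem~\ref{thm:R(V)dgPoissonAlgebra}) and of the target (Proposition~\ref{prop:gr_*(A(V))}) this exhibits $\eta_{W_\bullet}$ as a morphism of weight graded, differential filtered, differential filtered commutative algebras.

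The only step needing real care --- a matter of bookkeeping rather than a genuine obstacle --- is checking in each of these comparisons that the binomial ``lower-order'' corrections occurring in $\ast$, in $\circ$, and in Lemma~\ref{lem:uv-vu} genuinely lower the weight, so that they vanish after passing to $\gr_*$; this is where the weight rule $u_m(V^{[*]}_r)\subseteq V^{[*]}_{r+\on{wt}(u)-m-1}$ gets used repeatedly. I would also flag that, in contrast with $\eta_{F^\bullet}$, no statement about a Poisson bracket is being made (and, by the remark, none could be), and that $\eta_{W_\bullet}$ need not be injective.
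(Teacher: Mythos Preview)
Your proposal is correct and follows the same approach as the paper, which simply states that the proof is similar to that of Proposition~\ref{prop:eta_VsurjectiveDG}; you have spelled out concretely what that similarity amounts to, correctly noting that the Poisson-bracket verification drops out and that the key bookkeeping point is that the binomial corrections in $u\ast v$ and $u\circ v$ strictly lower the weight.
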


\begin{proof}
The proof is similar to Proposition~\ref{prop:eta_VsurjectiveDG}.
\end{proof}

\begin{remark}
The morphism $\eta_{W_\bullet}$ has no reason to be an isomorphism. For example, in the classical case of the simple affine vertex operator algebra $L_{\hat{\mathfrak{g}}}(k, 0)$ where $\mathfrak{g}$ is of type $E_8$ with the adjoint representation, the morphism $\eta_{W_\bullet}$ is not injective, because $\on{dim} R(V)> \on{dim} A(V)$ (cf. \cite[Section IV.3]{Gaberdiel-Gannon}).
\end{remark}

\subsection{Using both filtrations successively}

We have looked at both gradations on $A(V^{[*]})$ separately. By looking at them successively and using Lemma~\ref{lem:2gradations}, we see that $ \gr_{*}\gr^{[*]}A(V^{[*]})$ and $ \gr^{[*]} \gr_{*}A(V^{[*]})$ are isomorphic weight graded dg algebras.

\begin{proposition}\label{prop:double_graded_A(V)}
The algebra $ \gr_{*}\gr^{[*]}A(V^{[*]}) \cong \gr^{[*]} \gr_{*}A(V^{[*]})$ is a weight graded dg Poisson algebra. The Poisson bracket is of degree $-2$ for the cohomological degree, and of degree $-1$ for the weight. Furthermore, the dg grading makes it a graded commutative algebra.
\end{proposition}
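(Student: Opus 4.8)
The plan is to transport the dg Poisson structure that is already available on $\gr^{[*]}A(V^{[*]})$ through the weight filtration, and then invoke Lemma~\ref{lem:2gradations} to carry the resulting structure to the other iterated associated graded. By Proposition~\ref{prop:grA(V)isDGPoisson}, $(\gr^{[*]}A(V^{[*]}),\ast,\{-,-\}_{F^\bullet},d_{F^\bullet})$ is a graded commutative dg Poisson algebra with $\{-,-\}_{F^\bullet}$ of cohomological degree $-2$, and for dg-homogeneous classes with preimages $u\in V^{[|\widetilde x|]}$, $v\in V^{[|\widetilde y|]}$ one has $\{\widetilde x,\widetilde y\}_{F^\bullet}=\widetilde{[u_0 v]}$. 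As recorded just before Lemma~\ref{lem:2gradations}, the weight filtration $W_\bullet A(V^{[*]})$ induces a filtration of $\gr^{[*]}A(V^{[*]})$ by subcomplexes making it a weight filtered dg algebra.

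The first step I would carry out is to check that $\{-,-\}_{F^\bullet}$ is weight-almost-commutative, i.e.
\[\{W_n\gr^{[*]}A(V^{[*]}),\,W_m\gr^{[*]}A(V^{[*]})\}_{F^\bullet}\subseteq W_{n+m-1}\gr^{[*]}A(V^{[*]}).\]
Choosing weight-homogeneous lifts $u,v$ of the given classes in the prescribed cohomological degree (whose existence follows from the description $\gr_n\gr^{[i]}A(V^{[*]})=W_nA\cap F^iA/(W_{n-1}A\cap F^iA+W_nA\cap F^{i-1}A)$ together with the arguments of Lemma~\ref{lem:degree_mod_2_variant}), this containment is exactly the weight-degree $-1$ bound $u_0v\in W_{\on{wt}(u)+\on{wt}(v)-1}V^{[*]}$ already contained in Lemma~\ref{lem:uv-vu}, formulas~\eqref{lem:uv-vu(2)} and~\eqref{lem:uv-vu(4)}, while the cohomological degree $-2$ is \eqref{lem:uv-vu(1)}/\eqref{lem:uv-vu(3)}. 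Consequently $\{-,-\}_{F^\bullet}$ descends to $\gr_*\gr^{[*]}A(V^{[*]})=\bigoplus_nW_n\gr^{[*]}A(V^{[*]})/W_{n-1}\gr^{[*]}A(V^{[*]})$ as a bracket homogeneous of weight-degree $-1$ and cohomological degree $-2$.

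It then remains to check that this descended bracket makes $\gr_*\gr^{[*]}A(V^{[*]})$ a dg Poisson algebra. Skew-symmetry, the Jacobi identity, the Leibniz rule over $\ast$, and compatibility with $d_{F^\bullet}$ all hold on $\gr^{[*]}A(V^{[*]})$ by Proposition~\ref{prop:grA(V)isDGPoisson}, and, being identities among multilinear maps, they pass verbatim to the quotient. Graded commutativity of $\ast$ is inherited from that of $\gr^{[*]}A(V^{[*]})$ (Theorem~\ref{thm:A(V)_dfcommutative}), so the super-commutator of $\ast$ vanishes identically on $\gr_*\gr^{[*]}A(V^{[*]})$ and no further compatibility between $\ast$ and $\{-,-\}$ is needed. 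Finally, Lemma~\ref{lem:2gradations} supplies a weight-graded dg algebra isomorphism $\gr_*\gr^{[*]}A(V^{[*]})\cong\gr^{[*]}\gr_*A(V^{[*]})$; transporting the bracket along it gives the asserted weight graded dg Poisson structure on $\gr^{[*]}\gr_*A(V^{[*]})$. One may equally well build the bracket directly on $\gr_n\gr^{[i]}A(V^{[*]})$ by lifting to $V^{[*]}$, applying $[-,-]^s$, and projecting, and then verify agreement under the identification above.

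The main obstacle is the well-definedness of the bracket on the doubly graded algebra: independence of the choice of lift of a class to $A(V^{[*]})$ (and of the weight-homogeneous lift to $V^{[*]}$). This is precisely where the naive construction on $\gr_*A(V^{[*]})$ alone breaks down, since there, as noted in the Remark after Lemma~\ref{lem:degree_mod_2_variant}, the correction terms $f_i$ only satisfy $\on{wt}(f_i)\le n$. Taking the additional quotient by $F^{i-1}$ repairs this: by the parity argument of Lemma~\ref{lem:degree_mod_2} (applied as in Lemma~\ref{lem:Poisson_defined}) the correction terms have cohomological degrees of a single parity and $\le i-2$, so all the cross terms $f_i\ast v'$, $u'\ast g_j$, $f_i\ast g_j$ appearing when one changes lifts land, by \eqref{lem:uv-vu(1)}--\eqref{lem:uv-vu(4)}, simultaneously in $F^{i+j-3}V^{[*]}+O(V^{[*]})$ and in $W_{n+m-2}V^{[*]}+O(V^{[*]})$, i.e.\ in the subspace quotiented out of $\gr^{[i+j-2]}\gr_{n+m-1}A(V^{[*]})$. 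Carrying out this double bookkeeping carefully — keeping both filtrations under control at once — is the real content of the proof; everything else is routine.
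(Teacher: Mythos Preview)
Your approach is correct and takes a somewhat different organizational route from the paper. The paper defines the bracket $\{-,-\}_{F^\bullet W_\bullet}$ directly on $\gr_*\gr^{[*]}A(V^{[*]})$ by lifting bihomogeneous classes to $V_n^{[p]}$, computing the super-commutator of $\ast$, and then verifying well-definedness by a double bookkeeping argument (splitting the difference of lifts into an $O(V^{[*]})$-part, a lower-weight part $z$ of the same cohomological degree, and lower-degree parts $f_i$ of the correct parity, then applying \eqref{lem:uv-vu(1)}--\eqref{lem:uv-vu(2)} to each cross term). You instead descend the already-constructed bracket on $\gr^{[*]}A(V^{[*]})$ from Proposition~\ref{prop:grA(V)isDGPoisson} through the weight filtration. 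This is cleaner: once you establish $\{W_n\gr^{[*]}A,W_m\gr^{[*]}A\}_{F^\bullet}\subseteq W_{n+m-1}\gr^{[*]}A$, the Poisson axioms pass to the associated graded automatically, with no need to reverify Jacobi, Leibniz, skew-symmetry, or compatibility with $d_{F^\bullet}$.

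Two small points to tighten. First, the existence of lifts simultaneously in $V^{[p]}$ and $W_nV^{[*]}$ (needed to invoke the formula $\{\widetilde x,\widetilde y\}_{F^\bullet}=\widetilde{[u_0v]}$ from Proposition~\ref{prop:grA(V)isDGPoisson} and deduce the weight containment) follows more directly than you indicate: since $V^{[*]}$ is bigraded, any $x\in W_nA\cap F^pA$ has a representative $w\in\bigoplus_{m\le n,\,i\le p}V_m^{[i]}$, and its top-degree part $w^p\in\bigoplus_{m\le n}V_m^{[p]}$ does the job. You do not really need Lemma~\ref{lem:degree_mod_2_variant} here. Second, your final paragraph about cross terms landing ``simultaneously in $F^{i+j-3}V^{[*]}+O(V^{[*]})$ and in $W_{n+m-2}V^{[*]}+O(V^{[*]})$'' overstates what is needed and what is true; in your approach the well-definedness of $\{-,-\}_{F^\bullet}$ on $\gr^{[*]}A(V^{[*]})$ is already settled by Lemma~\ref{lem:Poisson_defined}, so the only new ingredient is the weight-filtration compatibility, which is the single containment above. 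The direct verification you sketch at the end is essentially the paper's argument, and there the cross terms need only land in $I^{[p_1+p_2-2]}_{n_1+n_2-1}$, not in the smaller space you name.
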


\begin{proof}
The associated graded algebra is given by:
\begin{align*}
 \gr_{*}\gr^{[*]}A(V^{[*]})=\bigoplus_{p \in \mathbb{Z}}\bigoplus_{n \in \frac{1}{2}\mathbb{Z}} \left(V_n^{[p]} \ \on{mod} \  I^{[p]}_n\right),
\end{align*}
where $I^{[p]}_n=O(V^{[*]})+\bigoplus_{\substack{q \leq p \\ m<n}}V^{[q]}_m+\bigoplus_{\substack{q<p \\ m \leq n}}V_m^{[q]}$. The product is induced by the $\ast$-product in $V^{[*]}$. It is clear that, based on the properties of the product $\ast$, this algebra is graded by both weight and cohomological degree. Set $\widetilde{x} \in \gr_{n_1} \gr^{[p_1]}A(V^{[*]})$ and $\widetilde{y} \in \gr_{n_2} \gr^{[p_2]}A(V^{[*]})$. Then there exist $u \in V_{n_1}^{[p_1]}$, $v \in V_{n_2}^{[p_2]}$ such that their images in $ \gr_{*}\gr^{[*]}A(V^{[*]})$ are $\widetilde{x}$ and $\widetilde{y}$. The bracket in $ \gr_{*}\gr^{[*]}A(V^{[*]})$ is given by:
\begin{align*}
\{ \widetilde{x}, \widetilde{y}\}_{F^\bullet W_\bullet}=u \ast v- (-1)^{p_1p_2} v \ast u \ \on{mod} \  I^{[p_1+p_2-2]}_{n_1+n_2-1}.
\end{align*}
Using Lemma~\ref{lem:uv-vu}, we see that $u \ast v- (-1)^{p_1p_2} v \ast u \in I^{[p_1+p_2-2]}_{n_1+n_2-1} + V_{n_1+n_2-1}^{[p_1+p_2-2]}$, so its image in $ \gr_{*}\gr^{[*]}A(V^{[*]})$ is in the summand $\gr_{n_1+n_2-1} \gr^{[p_1+p_2-2]}A(V^{[*]})$.

We need to verify that $\{ \widetilde{x}, \widetilde{y}\}_{F^\bullet W_\bullet}$ is well-defined. Let $u', v'$ be other preimages of $ \widetilde{x}$ and $\widetilde{y}$ respectively. Then
\[u-u' \in I^{[p_1]}_{n_1}.
\]
Hence we can write $X=u-u'+z+\sum_{i}f_i$ where $X \in O(V^{[*]})$, $z \in \bigoplus_{ m < n_1}V_m^{[p_1]}$, and the $f_i$'s are homogeneous in both cohomological degree and weight, and satisfy $|f_i| < p_1$, $\on{wt}(f_i) \leq n_1$. It follows that $u-u'+z$ satisfies the assumption of Lemma~\ref{lem:degree_mod_2}, and so we can assume that $|f_i| \equiv p_1 \ \on{mod} \ 2$. Similarly, we can write $Y=v-v'+w+\sum_{j}g_j$, where $Y \in O(V^{[*]})$, $w \in \bigoplus_{ m < n_2}V_m^{[p_2]}$, and the $g_j$'s satisfy $|g_j| \equiv p_2 \ \on{mod} \ 2$. We then obtain:
\begin{align*}
\begin{array}{rcl}
u \ast v-(-1)^{p_1p_2} v \ast u & = &\displaystyle (u'+X-z-\sum_i f_i)\ast (v'+Y-w-\sum_j g_j) \\[5pt]
 & &\displaystyle  -(-1)^{p_1p_2}(v'+Y-w-\sum_j g_j) \ast (u'+X-z-\sum_i f_i), \\[5pt]
 & = & u' \ast v'-(-1)^{p_1p_2}v' \ast u' \\[5pt]
 & &\displaystyle - \sum_j(u' \ast g_j -(-1)^{p_1p_2} g_j \ast u') \\[5pt]
 & &\displaystyle - \sum_i(f_i \ast v'-(-1)^{p_1p_2} v' \ast f_i) \\[5pt]
 & &\displaystyle + \sum_{i, j} (f_i \ast g_j -(-1)^{p_1p_2} g_j \ast f_i) \\[5pt]
  & &\displaystyle - (z \ast v' -(-1)^{p_1p_2} v' \ast z) \\[5pt]
 & &\displaystyle + z \ast w -(-1)^{p_1p_2} w \ast z \\[5pt]
 & &\displaystyle + \sum_{j} (z \ast g_j -(-1)^{p_1p_2} g_j \ast z)  \\[5pt]
 & &\displaystyle -  (u' \ast w -(-1)^{p_1p_2} w \ast u') \\[5pt]
  & &\displaystyle + \sum_{i} (f_i \ast w -(-1)^{p_1p_2} w \ast f_i) \ \on{mod} \  O(V^{[*]}).
\end{array}
\end{align*}
Using the fact that $|f_i| \equiv p_1 \ \on{mod} \  2$ and $|g_j| \equiv p_2 \ \on{mod} \  2$, the formulas~\eqref{lem:uv-vu(1)} and \eqref{lem:uv-vu(2)} in Lemma~\ref{lem:uv-vu} imply that all expressions in the right hand side except the first one first one are in $I^{[p_1+p_2-2]}_{n_1+n_2-1}$. We have thus found that
\begin{align*}
u \ast v-(-1)^{p_1p_2} v \ast u  \equiv  u' \ast v'-(-1)^{p_1p_2} v' \ast u' \ \on{mod} \  I^{[p_1+p_2-2]}_{n_1+n_2-1},
\end{align*}
and so the map $\{.,.\}_{F^\bullet W_\bullet}$ is well-defined. It is clear that this map is of degree $-2$ for the cohomological degree, and of degree $-1$ for the weight. The rest of the proof is then similar to that of Proposition~\ref{prop:grA(V)isDGPoisson}.
\end{proof}

We have seen in Theorem~\ref{thm:R(V)dgPoissonAlgebra} the $R^{[*]}(V^{[*]})$ is a weight graded dg Poisson algebra, as is the previous associated graded algebra. We define the following map:
\begin{align*}
\begin{array}{rccc}
\eta_{F^\bullet W_\bullet}: & R^{[*]}(V^{[*]}) & \longrightarrow &  \gr_{*} \gr^{[*]} A(V^{[*]}) \\[5pt]
            & u+C_2^{[p]}(V^{[*]})_n & \longmapsto & \displaystyle u+O(V^{[*]})+\bigoplus_{\substack{q \leq p \\ m<n}}V^{[q]}_m+\bigoplus_{\substack{q<p \\ m \leq n}}V_m^{[q]}
\end{array}
\end{align*}
for $u \in V_n^{[p]}$. Using a reasoning similar to that of Proposition~\ref{prop:eta_VsurjectiveDG}, it is straightforward to show the following:

\begin{proposition}\label{prop:eta_double_grading}
The map $\eta_{F^\bullet W_\bullet}$ is a surjective morphism of weight graded dg Poisson algebras.
\end{proposition}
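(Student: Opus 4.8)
The plan is to follow closely the template of Proposition~\ref{prop:eta_VsurjectiveDG} and Proposition~\ref{prop:eta_VsurjectiveDF}, since $\eta_{F^\bullet W_\bullet}$ is the "simultaneous" version of the two maps $\eta_{F^\bullet}$ and $\eta_{W_\bullet}$ already treated, and Proposition~\ref{prop:double_graded_A(V)} has just furnished the target as a weight graded dg Poisson algebra with the explicit description $\gr_*\gr^{[*]}A(V^{[*]})=\bigoplus_{p,n}\bigl(V^{[p]}_n \bmod I^{[p]}_n\bigr)$, where $I^{[p]}_n=O(V^{[*]})+\bigoplus_{q\le p,\,m<n}V^{[q]}_m+\bigoplus_{q<p,\,m\le n}V^{[q]}_m$. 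First I would check that $\eta_{F^\bullet W_\bullet}$ is well-defined, which was already asserted right before the statement: it follows because $C_2^{[p]}(V^{[*]})_n\subseteq I^{[p]}_n$, exactly as recorded. Second, surjectivity is immediate: every homogeneous component $V^{[p]}_n \bmod I^{[p]}_n$ of the target is the image of $u+C_2^{[p]}(V^{[*]})_n$ for $u\in V^{[p]}_n$, and $R^{[*]}(V^{[*]})=\bigoplus_{p,n}V^{[p]}_n/C_2^{[p]}(V^{[*]})_n$ by Theorem~\ref{thm:R(V)dgPoissonAlgebra}.

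The core of the argument is that $\eta_{F^\bullet W_\bullet}$ respects all the structure maps. For the associative product: the product on $R^{[*]}(V^{[*]})$ is $\overline u\cdot\overline v=\overline{u_{-1}v}$ and the product on $\gr_*\gr^{[*]}A(V^{[*]})$ is induced by $\ast$; since $u\ast v=\sum_{n\ge0}\binom{\mathrm{wt}(u)}{n}u_{n-1}v$ with $u_{-1}v$ the leading term and all the $u_{n-1}v$ for $n\ge1$ lying in strictly lower cohomological degree and strictly lower weight, one gets $u\ast v\equiv u_{-1}v \bmod I^{[p_1+p_2-1]}_{n_1+n_2-1}$, hence $\eta_{F^\bullet W_\bullet}(\overline u\cdot\overline v)=\eta_{F^\bullet W_\bullet}(\overline u)\ast\eta_{F^\bullet W_\bullet}(\overline v)$. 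For the Poisson bracket: the bracket on $R^{[*]}(V^{[*]})$ is $\{\overline u,\overline v\}_{R(V)}=\overline{u_0v}$, while the bracket on $\gr_*\gr^{[*]}A(V^{[*]})$ constructed in Proposition~\ref{prop:double_graded_A(V)} is $\{\widetilde x,\widetilde y\}_{F^\bullet W_\bullet}=u\ast v-(-1)^{p_1p_2}v\ast u \bmod I^{[p_1+p_2-2]}_{n_1+n_2-1}$, and by Lemma~\ref{lem:uv-vu} (the master formula there) $u\ast v-(-1)^{|u||v|}v\ast u\equiv u_0(v) \bmod O(V^{[*]})$ plus terms of lower cohomological degree and lower weight; so $\eta_{F^\bullet W_\bullet}$ carries $\overline{u_0v}$ precisely to $\{\eta_{F^\bullet W_\bullet}(\overline u),\eta_{F^\bullet W_\bullet}(\overline v)\}_{F^\bullet W_\bullet}$. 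Finally, $\eta_{F^\bullet W_\bullet}$ is a chain map and respects the weight grading: it is defined on homogeneous components $V^{[p]}_n$ and $d_V$ preserves both gradations ($d_V V^{[p]}_n\subseteq V^{[p+1]}_n$ since $L_0d_V=d_VL_0$), and the induced differential on $\gr_*\gr^{[*]}A(V^{[*]})$ is $\widetilde x\mapsto\widetilde{d_Vx}$; compatibility with $d_V$ on $R^{[*]}(V^{[*]})$ is then formal. This establishes that $\eta_{F^\bullet W_\bullet}$ is a surjective morphism of weight graded dg Poisson algebras.

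The only mildly delicate point — and the one I would write out with care rather than dismiss — is the bookkeeping of which filtration degrees the "error terms" $u_{n-1}v$ ($n\ge1$) and the tail of the Lemma~\ref{lem:uv-vu} expansion actually land in, so that they are genuinely absorbed into $I^{[p_1+p_2-1]}_{n_1+n_2-1}$ (resp. $I^{[p_1+p_2-2]}_{n_1+n_2-1}$) and do not leak into the homogeneous summand we care about. Here one uses $|u_{n-1}v|=|u|+|v|-2n$ (strictly below $|u|+|v|$ for $n\ge1$) together with $\mathrm{wt}(u_{n-1}v)=\mathrm{wt}(u)+\mathrm{wt}(v)-n$ (strictly below $\mathrm{wt}(u)+\mathrm{wt}(v)$ for $n\ge1$), and similarly from Lemma~\ref{lem:uv-vu} that after removing $u_0(v)$ the remainder sits in $F^{|u|+|v|-4}V^{[*]}$ and in $W_{\mathrm{wt}(u)+\mathrm{wt}(v)-2}V^{[*]}$ modulo $O(V^{[*]})$, which is more than enough. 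Once these degree counts are in hand, everything else is the routine verification of the Poisson axioms on generators, carried out exactly as in the proof of Proposition~\ref{prop:eta_VsurjectiveDG}; no new phenomenon arises from handling the two filtrations at once.
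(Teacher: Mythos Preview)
Your proposal is correct and follows precisely the approach the paper takes: the paper's proof is simply the one-line remark that the argument is similar to that of Proposition~\ref{prop:eta_VsurjectiveDG}, and you have spelled out in detail exactly that verification (well-definedness, surjectivity, and compatibility with product, bracket, differential, and bigrading via the degree/weight bookkeeping from Lemma~\ref{lem:uv-vu}). If anything, your writeup is more explicit than the paper's own; the only cosmetic point is that the error terms for the product are most naturally said to lie in $I^{[p_1+p_2]}_{n_1+n_2}$ rather than $I^{[p_1+p_2-1]}_{n_1+n_2-1}$, though your stronger claim is also true.
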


\section{Associated $A(V^{[*]})$-bimodules and associated graded modules}\label{sec:7}

We have seen that one can associate an $R^{[*]}(V^{[*]})$-module to a $V^{[*]}$-module. We can do something similar for the Zhu algebra. In fact, as we will describe in this section, we have the following diagram for a dg vertex operator algebra $V^{[*]}$:

\begin{equation}\label{fig:losange}
\begin{tikzpicture}[baseline=(current  bounding  box.center)]
\tikzset{>=stealth}
\node (1) at (-2,0) []{$V^{[*]}\Mod^{gr}_+$};
\node (2) at (3,2) []{$A(V^{[*]})\biMod^{fil}_+$};
\node (3) at (3,-2) []{$R^{[*]}(V^{[*]})\Poiss_+$};
\node (4) at (8,0) []{$\gr  A(V^{[*]})\Mod^{gr}_+$};

\draw[->]  (1) -- (2);
\draw[->]  (1) -- (3);
\draw[->]  (2) --  (4);
\draw[->]  (3) -- (4) ;
\end{tikzpicture}
\end{equation}
where $\on{Poiss}$ is the category of Poisson modules, $\on{Mod}^{gr}$ is the category of $\frac{1}{2}\mathbb{Z}$-graded modules, $\on{biMod}^{fil}$ is the category of filtered bimodules. Finally, the index $+$ means that the $n$-th summand of the gradation (or filtration) is zero for $n \ll 0$.

There are in fact three of these diagrams, depending on which filtered structure is considered on the $A(V^{[*]})$-module, i.e., the degree filtration, the weight filtration, or both of them. Hence $\gr A(V^{[*]})$ can be $\gr^{[*]}A(V^{[*]})$, $\gr_{*} A(V^{[*]})$, or $\gr^{[*]} \gr_{*} A(V^{[*]})$.

\subsection{$A(V^{[*]})$-bimodules}

Let $V^{[*]}$ be a dg vertex operator algebra and $M^{[*]}$ an object in $V^{[*]}\Mod^w$. For $u \in V^{[*]}$ weight homogeneous, $m \in M^{[*]}$, set
\begin{align*}
u \circ m= \on{Res}_z \left( Y_M(u, z)\frac{(1+z)^{\on{wt}(u)}}{z^2}m \right).
\end{align*}
We know from Formula \eqref{eq:d_M(v_nu)} that for $u \in V^{[|u|]}$ weight homogeneous and $m \in M^{[*]}$ we have
\begin{align*}
\begin{array}{rcl}
d_M(u \circ m) & = & \displaystyle d_M \left( \sum_{n \geq 0}\binom{\on{wt}(u)}{n}u_{n-2}m \right), \\[20pt]
                        & = & \displaystyle  \sum_{n \geq 0}\binom{\on{wt}(u)}{n}d_V(u)_{n-2}m+(-1)^{|u|}\sum_{n \geq 0}\binom{\on{wt}(u)}{n}u_{n-2}(d_M(m)).
\end{array}
\end{align*}
By definition, we know that $d_V(\omega)=0$, so $\on{wt}(u)d_V(u)=d_V(L(0)u)=d_V(\omega)_1u+(-1)^4L(0)(d_V(u))=L(0)(d_V(u))$, and thus $\on{wt}(d_V(u))=\on{wt}(u)$. It follows that
\begin{align*}
d_M(u \circ m)=d_V(u) \circ m+(-1)^{|u|}u \circ d_M(m) \in O(M^{[*]}).
\end{align*}
Therefore $O(M^{[*]})$ is stable by $d_M$. However, it is not a subcomplex of $M^{[*]}$, because each dg homogeneous component of $u \circ m$ is not necessarily in $O(M^{[*]})$.

We define the following operations of $M^{[*]}$, for homogeneous $u \in V^{[*]}$ and $m \in M^{[*]}$ with respect to both differential grading and weight grading:
\begin{equation*}
\begin{cases}
\begin{array}{rcl}
u \ast_l m & =  &  \displaystyle \on{Res}_z \left( Y_M(u, z)\frac{(1+z)^{\on{wt}(u)}}{z}m \right), \\[5pt]
m \ast_r u & = & (-1)^{|u||m|} \displaystyle \on{Res}_z \left( Y_M(u, z)\frac{(1+z)^{\on{wt}(u)-1}}{z}m \right).
\end{array}
\end{cases}       
\end{equation*}

We denote the product $\ast$ on $V^{[*]}$ by $\ast_l$, and define another product by
\begin{align*}
u \ast_r v & =  (-1)^{|v||u|}\displaystyle \on{Res}_z \left( Y_M(v, z)\frac{(1+z)^{\on{wt}(v)-1}}{z}u \right).
\end{align*}

By doing a similar reasoning to that of \cite[Lemma 2.1.3]{Zhu}, we obtain the following lemma:

\begin{lemma}\label{lem:rightandleftproducts}
For $u, v \in V^{[*]}$ homogeneous with respect to both weight grading and differential grading, we have
\begin{align*}
u \ast_l v \equiv u \ast_r v \mod O(V^{[*]}).
\end{align*}
\end{lemma}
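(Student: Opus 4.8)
The plan is to adapt Zhu's original argument (cf. \cite[Lemma 2.1.3]{Zhu}) to the dg module setting, tracking the Koszul signs carefully. First I would reduce to the case where $u$ and $v$ are both homogeneous with respect to weight and differential grading, so that all the sign factors $(-1)^{|u||v|}$ are well-defined; by bilinearity this suffices. By definition,
\[
u \ast_l v = \on{Res}_z \left( Y(u, z)\frac{(1+z)^{\on{wt}(u)}}{z}v \right)
\quad\text{and}\quad
u \ast_r v = (-1)^{|v||u|}\on{Res}_z \left( Y(v, z)\frac{(1+z)^{\on{wt}(v)-1}}{z}u \right),
\]
so the difference $u \ast_l v - u \ast_r v$ is the residue of a sum of two vertex-operator expressions. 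The key step is to replace $Y(v, z)u$ by $(-1)^{|u||v|}e^{z\mathcal{D}}Y(u, -z)v$ using the dg skew-symmetry formula stated in the excerpt (Remark following Lemma~\ref{lem:comm_D}), so that $u \ast_r v$ is rewritten entirely in terms of $Y(u, \cdot)v$; then both terms involve only $Y(u, \cdot)v$ and I can combine them.

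Next I would carry out the change of variable. After substituting the skew-symmetry identity, the expression for $u \ast_r v$ becomes a residue in $z$ of $Y(u, -z)$ acting on $v$, multiplied by $(1+z)^{\on{wt}(u)-1}$ (the exponent shifting to $\on{wt}(u)$ after using $\mathcal D = L(-1)$ and the relation $Y(L(-1)v,x)=\frac{d}{dx}Y(v,x)$, together with an integration-by-parts / residue manipulation exactly as in Zhu's proof) and by $e^{z\mathcal D}$ acting through $\mathcal D$. Substituting $w = z/(1+z)$, equivalently $z = w/(1-w)$, and using the standard formal-variable identities (as in \cite[Chapter 3]{Lepowsky-Li} or in Zhu's original computation) one finds that the $e^{z\mathcal D}$ factor is absorbed and the two residues differ only by a term whose expansion is a residue in $z$ of $Y(u,z)(1+z)^{\on{wt}(u)}/z^2$ applied to $v$, i.e. a term lying in $O(V^{[*]})$. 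The Koszul sign $(-1)^{|u||v|}$ introduced by skew-symmetry is precisely cancelled by the sign $(-1)^{|v||u|}$ in the definition of $\ast_r$, so no extra signs survive and the congruence is $u \ast_l v \equiv u \ast_r v \bmod O(V^{[*]})$, with no sign on the right.

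Concretely, the steps in order are: (i) reduce to bihomogeneous $u,v$; (ii) apply dg skew-symmetry $Y(u,x)v = (-1)^{|u||v|}e^{x\mathcal D}Y(v,-x)u$ to rewrite one product; (iii) use $Y(L(-1)v,x)=\frac{d}{dx}Y(v,x)$ together with the residue/integration-by-parts trick of Zhu to move the $e^{x\mathcal D}$ across and shift the power of $(1+x)$; (iv) perform the substitution $z \mapsto z/(1+z)$ and collect terms; (v) identify the leftover terms as residues of expressions of the form $Y(u,z)\frac{(1+z)^{\on{wt}(u)}}{z^2}v$, hence as elements of $O(V^{[*]})$; (vi) check the signs cancel. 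The main obstacle will be step (iii)–(iv): keeping the formal-variable bookkeeping correct while simultaneously carrying the Koszul sign through the skew-symmetry and verifying that the differential degree $|u|,|v|$ enters only through the single factor $(-1)^{|u||v|}$ (and not, say, through the $x^{-2n-2}$-degree shifts of the $v_n$'s). Since $\on{wt}$ and $|\cdot|$ are independent gradations here, one must double-check that the weight-only nature of Zhu's original manipulation is unaffected by the differential grading, which it is because $d_V$ preserves weights and the vertex operator map is a chain map; this is what makes the dg adaptation go through essentially verbatim.
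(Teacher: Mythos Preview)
Your proposal is correct and matches the paper's approach: the paper itself simply says the lemma follows ``by doing a similar reasoning to that of \cite[Lemma 2.1.3]{Zhu}'', and your outline---apply dg skew-symmetry to convert $Y(v,z)u$ into $(-1)^{|u||v|}e^{z\mathcal D}Y(u,-z)v$, absorb the $e^{z\mathcal D}$ via the $(L(-1)+L(0))w\in O(V^{[*]})$ relation (equivalently the formal substitution $z\mapsto -z/(1+z)$), and identify the discrepancy as a sum of terms of the form $\on{Res}_z Y(u,z)\frac{(1+z)^{\on{wt}(u)}}{z^{k}}v$ with $k\ge 2$ lying in $O(V^{[*]})$---is exactly Zhu's argument with the single extra observation that the Koszul sign from skew-symmetry cancels the $(-1)^{|u||v|}$ in the definition of $\ast_r$. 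One small slip: immediately after skew-symmetry the exponent on $(1+z)$ is still $\on{wt}(v)-1$, not $\on{wt}(u)-1$; the passage to an expression governed by $\on{wt}(u)$ only happens after the substitution/$(L(-1)+L(0))$ step, so be careful with the bookkeeping there when you write it out.
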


\delete{\begin{remark}
Notice that in the classical vertex operator algebra setting, the products $\ast_l$ and $\ast_r$ in $A(V^{[*]})$ are the same. But in the dg setting, they need to be considered separately.
\end{remark}}

We will need the following lemmas, the proof of first one being the same as in \cite[Lemma 1.5.3]{Frenkel-Zhu}:

\begin{lemma}\label{lem:Frenkel-Zhu}
Set $u \in V^{[*]}$ weight homogeneous and $m \in M^{[*]}$. For any $k \geq n \geq 0$, we have
\begin{align*}
\on{Res}_z \left( Y_M(u, z)\frac{(1+z)^{\on{wt}(u)+n}}{z^{2+k}}m\right) \in O(M^{[*]}).
\end{align*}
\end{lemma}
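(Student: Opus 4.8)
The plan is to reduce the general statement to the case $n=0$ and then establish that case by an induction on $k$, using the translation operator and integration by parts for formal residues, exactly along the lines of \cite[Lemma 1.5.3]{Frenkel-Zhu}; nothing new happens in the dg setting beyond keeping track of weights, since the residue manipulations involve only the formal variable $z$, which carries no cohomological subtlety here.

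First I would record two elementary facts. For any $F(z)$ whose coefficients lie in $\on{End}^{[*]}(M^{[*]})$, applied to a fixed $m\in M^{[*]}$ one gets an element of $M((z))$, hence $\on{Res}_z\big(\tfrac{d}{dz}F(z)m\big)=0$; consequently the integration-by-parts identity $\on{Res}_z\big((\tfrac{d}{dz}f)g\big)=-\on{Res}_z\big(f\tfrac{d}{dz}g\big)$ is available. Second, in the module setting $Y_M(L(-1)u,z)=\tfrac{d}{dz}Y_M(u,z)$, and since $[L_0,L_{-1}]=L_{-1}$ one has $L_{-1}V^{[*]}_{\on{wt}(u)}\subseteq V^{[*]}_{\on{wt}(u)+1}$, so for weight homogeneous $u$ the vector $\mathcal D u=L(-1)u$ is again weight homogeneous with $\on{wt}(\mathcal D u)=\on{wt}(u)+1$.

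The core step is the following claim: writing $P_j(u,m)=\on{Res}_z\big(Y_M(u,z)\tfrac{(1+z)^{\on{wt}(u)}}{z^{2+j}}m\big)$, one has $P_j(u,m)\in O(M^{[*]})$ for every $j\ge 0$, every $m$, and every weight homogeneous $u$. This I would prove by induction on $j$, with $u$ and $m$ universally quantified at each stage. For $j=0$, $P_0(u,m)=u\circ m\in O(M^{[*]})$ by definition. For the inductive step, I apply integration by parts to $P_j(\mathcal D u,m)=\on{Res}_z\big(\tfrac{d}{dz}Y_M(u,z)\cdot\tfrac{(1+z)^{\on{wt}(u)+1}}{z^{2+j}}m\big)$; differentiating the rational factor and using $(1+z)^{\on{wt}(u)+1}/z^{3+j}=(1+z)^{\on{wt}(u)}/z^{3+j}+(1+z)^{\on{wt}(u)}/z^{2+j}$ to split the resulting residue yields
\[
P_j(\mathcal D u,m)=-(\on{wt}(u)+1)P_j(u,m)+(2+j)\big(P_{j+1}(u,m)+P_j(u,m)\big).
\]
Since $\on{wt}(\mathcal D u)=\on{wt}(u)+1$, the left-hand side is itself of the form $P_j(\cdot,\cdot)$, so the inductive hypothesis at level $j$ (applied both to $u$ and to $\mathcal D u$) gives $P_j(\mathcal D u,m),P_j(u,m)\in O(M^{[*]})$; solving for $P_{j+1}(u,m)$ then puts it in $O(M^{[*]})$ as well.

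Finally, for general $0\le n\le k$, expanding $(1+z)^{\on{wt}(u)+n}=(1+z)^{\on{wt}(u)}\sum_{i=0}^{n}\binom{n}{i}z^{i}$ gives
\[
\on{Res}_z\Big(Y_M(u,z)\tfrac{(1+z)^{\on{wt}(u)+n}}{z^{2+k}}m\Big)=\sum_{i=0}^{n}\binom{n}{i}\,P_{k-i}(u,m),
\]
and each exponent $k-i\ge k-n\ge 0$, so every summand lies in $O(M^{[*]})$ by the claim. The only genuinely delicate point is arranging the induction so that it closes: the step for $u$ at level $j+1$ consumes the statement for $\mathcal D u$ at level $j$, which is why the claim must be phrased uniformly over all weight homogeneous vectors simultaneously rather than for a single $u$. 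Everything else is routine formal calculus identical to the classical case, so I would keep the written proof brief and refer to \cite[Lemma 1.5.3]{Frenkel-Zhu} for the computational details.
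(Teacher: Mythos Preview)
Your proof is correct and follows exactly the approach of \cite[Lemma 1.5.3]{Frenkel-Zhu}, which is precisely what the paper does: it gives no argument of its own and simply states that the proof is the same as in \cite{Frenkel-Zhu}. Your inductive computation with $P_j(\mathcal D u,m)$ and the subsequent binomial reduction from general $n\le k$ to the case $n=0$ are the standard steps, and your observation that the induction must be phrased uniformly over all weight homogeneous $u$ (so that the step for $u$ at level $j+1$ may invoke the hypothesis for $\mathcal D u$ at level $j$) is exactly the point that makes the argument close.
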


\begin{lemma}\label{lem:wt(u)-1}
Set $u \in V^{[*]}$ weight homogeneous and $n \geq 1$. Then
\begin{align*}
\begin{array}{l}
\displaystyle \on{Res}_z \left( Y_M(u, z)\frac{(1+z)^{\on{wt}(u)-1}}{z^{n}}m\right) \\[5pt]
\displaystyle \quad \quad \quad \quad \equiv (-1)^{n-1}\on{Res}_z \left( Y_M(u, z)\frac{(1+z)^{\on{wt}(u)-1}}{z}m\right) \ \on{mod} \ O(M^{[*]}).
\end{array}
\end{align*}
\end{lemma}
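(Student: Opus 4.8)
The plan is to imitate the proof of Lemma~1.5.3 in \cite{Frenkel-Zhu} (whose dg analogue is Lemma~\ref{lem:Frenkel-Zhu} above), reducing the higher-pole expression to the simple pole one by a descending induction on $n$. First I would fix $u\in V^{[*]}$ weight homogeneous, $m\in M^{[*]}$, and set
\[
A_n=\on{Res}_z\left(Y_M(u,z)\frac{(1+z)^{\on{wt}(u)-1}}{z^{n}}m\right)
=\sum_{j\geq 0}\binom{\on{wt}(u)-1}{j}u_{j-n}m,
\]
so that the claim is $A_n\equiv(-1)^{n-1}A_1\bmod O(M^{[*]})$. The base case $n=1$ is trivial. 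For the inductive step from $n$ to $n+1$, the key algebraic identity is the binomial recursion $(1+z)^{-1}=1-z(1+z)^{-1}$ applied inside the residue, more precisely I would write
\[
\frac{(1+z)^{\on{wt}(u)-1}}{z^{n+1}}
=\frac{(1+z)^{\on{wt}(u)}}{z^{n+1}}-\frac{(1+z)^{\on{wt}(u)-1}}{z^{n}}\cdot\frac{1}{z}\cdot z
\]
— more carefully, using $(1+z)^{\on{wt}(u)-1}=(1+z)^{\on{wt}(u)}-z(1+z)^{\on{wt}(u)-1}$ one gets
\[
A_{n+1}=\on{Res}_z\left(Y_M(u,z)\frac{(1+z)^{\on{wt}(u)}}{z^{n+1}}m\right)-A_n.
\]
Now the first term on the right is of the shape covered by Lemma~\ref{lem:Frenkel-Zhu}: it equals $\on{Res}_z\big(Y_M(u,z)(1+z)^{\on{wt}(u)+0}z^{-2-(n-1)}m\big)$, and since $n-1\geq 0$ (as $n\geq 1$) we have $k=n-1\geq 0=n'$, so Lemma~\ref{lem:Frenkel-Zhu} gives that this term lies in $O(M^{[*]})$. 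Hence $A_{n+1}\equiv -A_n\bmod O(M^{[*]})$, and by the induction hypothesis $A_n\equiv(-1)^{n-1}A_1$, so $A_{n+1}\equiv(-1)^{n}A_1\bmod O(M^{[*]})$, completing the induction.

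The main obstacle, such as it is, is making sure the bookkeeping of the exponent in Lemma~\ref{lem:Frenkel-Zhu} is correct: that lemma is stated with numerator $(1+z)^{\on{wt}(u)+n'}$ and denominator $z^{2+k}$ with the hypothesis $k\geq n'\geq 0$, so I must match $2+k=n+1$, i.e. $k=n-1$, and $n'=0$, which forces the required inequality $k=n-1\geq 0=n'$ to hold precisely because the hypothesis of the present lemma is $n\geq 1$. I would also double-check that the dg signs play no role here: the manipulations are all on the formal variable $z$ and the coefficients $u_{j-k}$, with no reordering of operators, so the $(-1)^{|u||m|}$-type signs that appear elsewhere in this section never enter, and the congruences are all modulo the (differential-stable) subspace $O(M^{[*]})$ exactly as in the classical case. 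Finally I would remark that the same computation run purely inside $V^{[*]}$ (taking $M^{[*]}=V^{[*]}$) recovers the corresponding statement used implicitly in Lemma~\ref{lem:rightandleftproducts}, so no separate argument is needed there.
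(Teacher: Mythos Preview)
Your proof is correct and is essentially the same as the paper's: both use the identity $(1+z)^{\on{wt}(u)-1}/z^{n+1}=(1+z)^{\on{wt}(u)}/z^{n+1}-(1+z)^{\on{wt}(u)-1}/z^{n}$ (equivalently the paper's $\frac{1}{z^{n}}+\frac{1}{z^{n-1}}=\frac{1+z}{z^{n}}$) together with Lemma~\ref{lem:Frenkel-Zhu} to obtain the recursion $A_{n+1}\equiv -A_n$, then conclude by induction. The only cosmetic difference is that the paper writes the argument as a descending chain rather than an explicit induction, and your remarks on dg signs and the case $M^{[*]}=V^{[*]}$ are harmless extras.
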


\begin{proof}
If $n=1$, there is nothing to prove. Assume that $n \geq 2$. We have $\displaystyle \frac{1}{z^n}+\frac{1}{z^{n-1}}=\frac{1+z}{z^{n}}$ so:
\begin{align*}
\begin{array}{r}
\on{Res}_z \left( Y_M(u, z)\frac{(1+z)^{\on{wt}(u)-1}}{z^{n}}m\right)+\on{Res}_z \left( Y_M(u, z)\frac{(1+z)^{\on{wt}(u)-1}}{z^{n-1}}m\right) \\[5pt]
 =\on{Res}_z \left( Y_M(u, z)\frac{(1+z)^{\on{wt}(u)}}{z^{n}}m\right).
 \end{array}
\end{align*}
Based on Lemma~\ref{lem:Frenkel-Zhu}, the right hand side of the equation is in $O(M^{[*]})$. If $n=2$, then we proved the lemma. Assuming that $n \geq 3$, we can write
\begin{align*}
\begin{array}{r}
\on{Res}_z \left( Y_M(u, z)\frac{(1+z)^{\on{wt}(u)-1}}{z^{n-1}}m\right)+\on{Res}_z \left( Y_M(u, z)\frac{(1+z)^{\on{wt}(u)-1}}{z^{n-2}}m\right)  \\[5pt]
 =\on{Res}_z \left( Y_M(u, z)\frac{(1+z)^{\on{wt}(u)}}{z^{n-1}}m\right),
 \end{array}
\end{align*}
and the right hand side is still in $O(M^{[*]})$. We keep repeating this procedure until the denominator of the right hand side is $z^2$ and the term is again in $O(M^{[*]})$. We then go up the chain of equations we obtained and see that:
\begin{align*}
\begin{array}{rcl}
\on{Res}_z \left( Y_M(u, z)\frac{(1+z)^{\on{wt}(u)-1}}{z}m\right) & \equiv &(-1) \on{Res}_z \left( Y_M(u, z)\frac{(1+z)^{\on{wt}(u)-1}}{z^2}m\right),  \\[5pt]
                                                                                              & \equiv &(-1)^2 \on{Res}_z \left( Y_M(u, z)\frac{(1+z)^{\on{wt}(u)-1}}{z^3}m\right),  \\[5pt]
                                                                                               & & \vdots  \\[5pt]
                                                                                              & \equiv &(-1)^{n-1} \on{Res}_z \left( Y_M(u, z)\frac{(1+z)^{\on{wt}(u)-1}}{z^n}m\right)  \on{mod}  O(M^{[*]}).
\end{array}
\end{align*}
\end{proof}

Let $A(M^{[*]})$ denote the quotient $M^{[*]}/O(M^{[*]})$. As in the case of $A(V^{[*]})$, it does not have a dg structure because $O(M^{[*]})$ is not a subcomplex of $M^{[*]}$. 

\begin{theorem}\label{thm:A(M)bimod}
The space $A(M^{[*]})$ with the operations $\ast_l$ and $\ast_r$ is an $A(V^{[*]})$-bimodule.
\end{theorem}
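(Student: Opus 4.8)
The strategy is to transfer the classical Frenkel–Zhu bimodule proof (\cite{Frenkel-Zhu}, and the account in \cite[Chapter 6]{Lepowsky-Li}) to the dg setting, carefully tracking the Koszul signs that the braiding $T_{V,U}$ introduces. The statement requires four things: $(O(M^{[*]}))$ is stable under the two actions from both sides so that $\ast_l$ and $\ast_r$ descend to $A(M^{[*]})$; the left action makes $A(M^{[*]})$ a left $A(V^{[*]})$-module; the right action makes it a right $A(V^{[*]})$-module; and the two actions commute. First I would record the analogues of the auxiliary lemmas already proved: Lemma~\ref{lem:Frenkel-Zhu} and Lemma~\ref{lem:wt(u)-1} give the residue identities that kill higher-pole terms modulo $O(M^{[*]})$, and Lemma~\ref{lem:rightandleftproducts} handles the comparison of $\ast_l$ and $\ast_r$ on $V^{[*]}$ itself. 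These are the engine of every verification below.

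The core computation is the associativity of the left action: for $u,v\in V^{[*]}$ and $m\in M^{[*]}$ homogeneous, one must show $(u\ast_l v)\ast_l m \equiv u\ast_l(v\ast_l m) \bmod O(M^{[*]})$. This is done exactly as in the classical case: expand both sides as residues $\on{Res}_{z_1}\on{Res}_{z_2}$ of products $Y_M(u,z_1)Y_M(v,z_2)$ against rational functions of $z_1,z_2$, use the weak associativity \eqref{eq:weak_associativity} for $Y_M$ together with the Jacobi identity to rewrite $Y_M(Y(u,z_0)v,z_2)$, and then apply Lemma~\ref{lem:Frenkel-Zhu} to discard the terms with poles of order $\geq 2$ at $z_2$. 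The one genuinely new feature is bookkeeping: because $Y_M(\cdot,x)$ is a chain map in $\Ch$ and the Jacobi identity for the dg module carries the sign $(-1)^{|u||v|}$ in its second term, every time two vertex operators are transposed a Koszul sign appears; I would check that these signs assemble consistently, exactly as the signs $(-1)^{|u||v|}$ already appear in Lemma~\ref{lem:uv-vu} and Theorem~\ref{thm:A(V)_dfcommutative}. The unit axioms $\mathbf 1 \ast_l m = m$ and $m\ast_r \mathbf 1 = m$ follow immediately from the vacuum property $Y_M(\mathbf 1,x)=\on{id}_M$. The stability of $O(M^{[*]})$ under both actions is the ``ideal-type'' statement: $u\ast_l (v\circ m) \in O(M^{[*]})$ and $(v\circ m)\ast_r u \in O(M^{[*]})$, proved by the same residue manipulation plus Lemma~\ref{lem:wt(u)-1}; this is what lets $\ast_l,\ast_r$ pass to the quotient and simultaneously shows $A(M^{[*]})$ is well-defined as a bimodule rather than merely a vector space.

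For the right-module axiom $(m\ast_r u)\ast_r v \equiv m\ast_r(u\ast_l v)\bmod O(M^{[*]})$ I would either run the parallel residue computation with the $(1+z)^{\on{wt}(u)-1}$ weight and the sign $(-1)^{|u||m|}$ built into the definition of $\ast_r$, or — more economically — deduce it from the left-module case via the skew-symmetry of $Y$ (the graded identity $Y(u,x)v = (-1)^{|u||v|}e^{x\mathcal D}Y(v,-x)u$ recorded in the remarks after the Jacobi identity) combined with Lemma~\ref{lem:rightandleftproducts}, which is how one typically converts between left and right Frenkel–Zhu actions. Finally, the compatibility $(u\ast_l m)\ast_r v = u\ast_l(m\ast_r v)$ modulo $O(M^{[*]})$ is again a double-residue identity in $z_1$ (for $u$) and $z_2$ (for $v$): after expanding, the two sides differ by commuting $Y_M(u,z_1)$ past $Y_M(v,z_2)$, which by weak commutativity \eqref{eq:weak_commutativity} costs $(x_1-x_2)^{k}$ and a sign $(-1)^{|u||v|}$, and the resulting error lies in $O(M^{[*]})$ by Lemma~\ref{lem:Frenkel-Zhu}.

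\textbf{Main obstacle.} The routine residue calculus is long but standard; the real work is sign discipline. Every transposition of module vertex operators in $\on{End}^{[*]}(M^{[*]})[[x,x^{-1}]]$ introduces a Koszul factor from the braiding of $\Ch$, and the definition of $\ast_r$ already carries an explicit $(-1)^{|u||m|}$, so one must verify that in each of the four axioms the accumulated signs match on the two sides. I expect this to be exactly the kind of consistency that works out — as it did in Lemma~\ref{lem:uv-vu} and Theorem~\ref{thm:A(V)_dfcommutative} — but it is the step where an error is most likely, so I would organize the proof so that each sign is attributable to a single application of the braiding $T_{V,V}$ or of the graded skew-symmetry, rather than juggling them all at once.
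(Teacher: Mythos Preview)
Your plan is correct and matches the paper's approach: transplant the Frenkel--Zhu argument to $\Ch$, using Lemmas~\ref{lem:Frenkel-Zhu} and~\ref{lem:wt(u)-1} for the residue reductions and tracking Koszul signs throughout. The paper lists the seven identities to check (the four stability conditions $O(V^{[*]})\ast_l m,\ m\ast_r O(V^{[*]}),\ u\ast_l O(M^{[*]}),\ O(M^{[*]})\ast_r u \subset O(M^{[*]})$, left associativity, right associativity, and compatibility) and then writes out only the right-module axiom in full, declaring the rest analogous to \cite[Theorem~1.5.1]{Frenkel-Zhu}.

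Two small corrections to your plan. First, your ``more economical'' route for the right-module axiom via skew-symmetry of $Y$ does not work: skew-symmetry is an identity on $V^{[*]}$, not on $M^{[*]}$, so it cannot swap $m\in M^{[*]}$ with $u\in V^{[*]}$ to reduce right associativity to left associativity. The paper does the direct residue computation you offer as your first option, and that computation is genuinely the most delicate of the seven --- it uses Lemma~\ref{lem:rightandleftproducts} to replace $u\ast_r v$ by $u\ast_l v$, splits via the Jacobi identity, applies Lemma~\ref{lem:wt(u)-1}, and then \emph{invokes the already-proved compatibility} $u\ast_l(m\ast_r v)\equiv (u\ast_l m)\ast_r v$ together with the identity $u\ast_l m-(-1)^{|u||m|}m\ast_r u=\on{Res}_z\bigl(Y_M(u,z)(1+z)^{\on{wt}(u)-1}m\bigr)$. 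So the logical order is compatibility first, right associativity second; your outline had them reversed. Second, your description of what must be checked omits the two conditions $O(V^{[*]})\ast_l m\subset O(M^{[*]})$ and $m\ast_r O(V^{[*]})\subset O(M^{[*]})$; without these the action of $A(V^{[*]})$ (rather than of $V^{[*]}$) on $A(M^{[*]})$ is not well-defined.
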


\begin{proof}
For $u, v \in V^{[*]}$ and $m \in M^{[*]}$, we need to prove the following properties:
\begin{align*}
\begin{array}{c}
{ \setlength{\arraycolsep}{20pt} \begin{array}{ll}
(1) \ O(V^{[*]}) \ast_l m \subset O(M^{[*]}) & (2) \ m \ast_r O(V^{[*]}) \subset O(M^{[*]}) \\[5pt]
(3) \ u \ast_l O(M^{[*]})  \subset O(M^{[*]}) & (4) \ O(M^{[*]}) \ast_r u \subset O(M^{[*]}) \\[5pt]
\end{array} } \\
\begin{array}{l}
\hypertarget{Relation(5)}{(5)} \ u \ast_l (m \ast_r v)-\delete{ (-1)^{|u||v|}}(u \ast_l m) \ast_r v \in O(M^{[*]}) \\[5pt]
(6) \ u \ast_l (v \ast_l m)- (u \ast_l v) \ast_l m \in O(M^{[*]}) \\[5pt]
\hypertarget{Relation(7)}{(7)} \ (m \ast_r u) \ast_r v- m \ast_r (u \ast_r v)  \in O(M^{[*]})
\end{array}
\end{array}
\end{align*}
We only give a proof of $(\hyperlink{Relation(7)}{7})$, because it is the most technical one. The others are done in a similar fashion as the proof of \cite[Theorem 1.5.1]{Frenkel-Zhu}.

Using Lemma~\ref{lem:rightandleftproducts}, we have
\begin{align*}
&(-1)^{(|u|+|v|)|m|}m \ast_r (u \ast_r v) =  (-1)^{(|u|+|v|)|m|}m \ast_r (u \ast_l v), \\[5pt]
&= (-1)^{(|u|+|v|)|m|}\sum_{i \geq 0}\binom{\on{wt}(u)}{i} m \ast_r (u_{i-1}v),\\[5pt]
& =\sum_{i \geq 0}\binom{\on{wt}(u)}{i} \on{Res}_w \left(Y_M(u_{i-1}v, w)\frac{(1+w)^{\on{wt}(u)+\on{wt}(v)-i-1}}{w}m \right), \\[5pt]
&= \sum_{i \geq 0}\binom{\on{wt}(u)}{i} \on{Res}_w \on{Res}_{z-w} \left(Y_M(Y_V(u, z-w)v, w) (z-w)^{i-1}\frac{(1+w)^{\on{wt}(u)+\on{wt}(v)-i-1}}{w}m \right), \\[5pt]
&=\on{Res}_w \on{Res}_{z-w} \left(Y_M(Y_V(u, z-w)v, w)\frac{(1+z)^{\on{wt}(u)}(1+w)^{\on{wt}(v)-1}}{w(z-w)}m \right), \\[5pt]
&=\on{Res}_z \on{Res}_w \left(Y_M(u, z) Y_M(v, w) \frac{(1+z)^{\on{wt}(u)}(1+w)^{\on{wt}(v)-1}}{w(z-w)}m \right)  \\[5pt]
&\quad \quad  -(-1)^{|u||v|}\on{Res}_w \on{Res}_z \left(Y_M(v, w) Y_M(u, z)\frac{(1+z)^{\on{wt}(u)}(1+w)^{\on{wt}(v)-1}}{w(z-w)}m \right), \\[5pt]
&=\sum_{i \geq 0}\on{Res}_z \on{Res}_w \left(Y_M(u, z) Y_M(v, w) z^{-1-i} w^i \frac{(1+z)^{\on{wt}(u)}(1+w)^{\on{wt}(v)-1}}{w}m \right) \\[5pt]
&\quad   +(-1)^{|u||v|}\sum_{i \geq 0} \on{Res}_w \on{Res}_z \left(Y_M(v, w) Y_M(u, z)w^{-1-i} z^i \frac{(1+z)^{\on{wt}(u)}(1+w)^{\on{wt}(v)-1}}{w}m \right), \\[5pt]
&\equiv \on{Res}_z \on{Res}_w \left(Y_M(u, z) Y_M(v, w) \frac{(1+z)^{\on{wt}(u)}(1+w)^{\on{wt}(v)-1}}{zw}m \right) \\[5pt]
&\quad   +(-1)^{|u||v|}\sum_{i \geq 0} \on{Res}_w \on{Res}_z \left(Y_M(v, w) Y_M(u, z) z^i \frac{(1+z)^{\on{wt}(u)}(1+w)^{\on{wt}(v)-1}}{w^{2+i}}m \right), \\[5pt]
&=(-1)^{|v||m|}u \ast_l (m \ast_r v) \\[5pt]
& \quad  +(-1)^{|u||v|}\sum_{i \geq 0} \on{Res}_w \on{Res}_z \left(Y_M(v, w) Y_M(u, z) z^i \frac{(1+z)^{\on{wt}(u)}(1+w)^{\on{wt}(v)-1}}{w^{2+i}}m \right), \\[5pt]
&\equiv (-1)^{|v||m|}(u \ast_l m) \ast_r v \\[5pt]
&\quad   +(-1)^{|u||v|}\sum_{i \geq 0} (-1)^{i-1}\on{Res}_w \on{Res}_z \left(Y_M(v, w) Y_M(u, z) z^i \frac{(1+z)^{\on{wt}(u)}(1+w)^{\on{wt}(v)-1}}{w}m \right),
\end{align*}
where the congruences are modulo $O(M^{[*]})$ and the last one comes from (\hyperlink{Relation(5)}{5}) and Lemma~\ref{lem:wt(u)-1}. We know that
\begin{align*}
& u \ast_l m-(-1)^{|u||m|}m \ast_r u \\[5pt]
&  =  \on{Res}_z \left( Y_M(u, z)\frac{(1+z)^{\on{wt}(u)}}{z}m\right)-\on{Res}_z \left( Y_M(u, z)\frac{(1+z)^{\on{wt}(u)-1}}{z}m\right), \\[5pt]
&  =  \on{Res}_z \left( Y_M(u, z)(1+z)^{\on{wt}(u)-1}m\right).
\end{align*}
So we can replace $u \ast_l m$ in our previous equation to obtain
\begin{align*}
&(-1)^{(|u|+|v|)|m|}m \ast_r (u \ast_r v) \\[5pt]
& \equiv  (-1)^{(|u|+|v|)|m|}(m \ast_r u) \ast_r v +(-1)^{|v||m|}\left( \on{Res}_z(Y_M(u, z)(1+z)^{\on{wt}(u)-1}m)\right) \ast_r v \\[5pt]
& \quad  -(-1)^{|u||v|}\sum_{i \geq 0} \on{Res}_w \on{Res}_z \left(Y_M(v, w) Y_M(u, z) (-1)^{i}z^i \frac{(1+z)^{\on{wt}(u)}(1+w)^{\on{wt}(v)-1}}{w}m \right), \\[5pt]
&  =  (-1)^{(|u|+|v|)|m|}(m \ast_r u) \ast_r v \\[5pt]
& \quad + (-1)^{|u||v|}\on{Res}_w \on{Res}_z\left(Y_M(v, w)Y_M(u, z)(1+z)^{\on{wt}(u)-1} \frac{(1+w)^{\on{wt}(v)-1}}{w} m\right) \\[5pt]
&  \quad  -(-1)^{|u||v|}\on{Res}_w \on{Res}_z \left(Y_M(v, w) Y_M(u, z) (1+z)^{-1} \frac{(1+z)^{\on{wt}(u)}(1+w)^{\on{wt}(v)-1}}{w}m \right), \\[5pt]
&  =  (-1)^{(|u|+|v|)|m|}(m \ast_r u) \ast_r v.
\end{align*}
\end{proof}

\subsection{Modules associated to $\gr^{[*]}A(V^{[*]})$}

For a $V^{[*]}$-module $M^{[*]}$, the $A(V^{[*]})$-module $A(M^{[*]})$ has a (differential) filtration $(F^pA(M^{[*]}))_{p \in \mathbb{Z}}$ where $F^pA(M^{[*]})$ is the image of $\bigoplus_{q  \leq p}M^{[q]}_*$ in $A(M^{[*]})$. Set $u \in V^{[|u|]}$ weight homogeneous and $m \in M^{[|m|]}$. Then $[u] \ast_l [m]$ and $[m] \ast_r [u]$ are in $F^{|u|+|m|}A(M^{[*]})$. But we also know that
\begin{align*}
\begin{array}{rcl}
[u] \ast_l [m]- (-1)^{|u||m|}[m] \ast_r [u] & = & \on{Res}_z (Y_M(u, z)(1+z)^{\on{wt}(u)-1}m) \ \on{mod} \  O(M^{[*]}), \\[5pt]
                                                             & = & \sum_{i \geq 0} \binom{\on{wt}(u)-1}{i}u_{i}m \ \on{mod} \  O(M^{[*]}), \\[5pt]
                                                             & \in & F^{|u|+|m|-2}A(M^{[*]}),
\end{array}
\end{align*}
because $|u_i m|=|u|+|m|-2i-2$. As the (differential) filtration on $A(M^{[*]})$ is preserved by the one on $A(V^{[*]})$ by both the right and left actions, the associated graded module $\gr^{[*]} A(M^{[*]})$ is a right and left module for $\gr^{[*]} A(V^{[*]})$. Let $x \in F^p A(V^{[*]})$ and $w \in F^q A(M^{[*]})$. Then there exist $u \in V^{[p]}$ and $m \in M^{[q]}$ such that $[u]-x \in F^{p-1} A(V^{[*]})$ and $[m]-w \in F^{q-1} A(M^{[*]})$. We write $\widetilde{w}=w \ \on{mod} \  F^{q -1}A(M^{[*]})$ for the class of $w$ in $\gr^{[*]}A(M^{[*]})$. We therefore see that
\begin{align*}
\begin{array}{rcl}
\widetilde{x} \ast_l \widetilde{w}- (-1)^{|\widetilde{x}||\widetilde{w}|} \widetilde{w} \ast_r \widetilde{x} & = & [u] \ast_l [m]- (-1)^{|u||m|}[m] \ast_r [u] \ \on{mod} \ F^{p+q-1}A(M^{[*]}), \\[5pt]
                                                & = & 0,
\end{array}
\end{align*}
based on the computation above. Thus we obtain
\begin{align*}
\widetilde{x} \ast_l \widetilde{w}= (-1)^{|\widetilde{x}||\widetilde{w}|} \widetilde{w} \ast_r \widetilde{x}.
\end{align*}

The proof of Lemma~\ref{lem:degree_mod_2} can be adapted to obtain the following result:

\begin{lemma}\label{lem:Module_degree_mod_2}
Set $m \in M^{[p]}$ such that $[m] \in F^{p-1}A(M^{[*]})$. Then there exist finitely many dg homogeneous $h_i \in M^{[*]}$ with $|h_i| \leq p-2$ such that
\[ m+\sum_i h_i  \in O(M^{[*]}),\]
and
\[  |h_i| \equiv p \ \on{mod} \  2, \text{ for all } i. \]
\end{lemma}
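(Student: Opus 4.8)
The plan is to transpose the proof of Lemma~\ref{lem:degree_mod_2} from $V^{[*]}$ to the module $M^{[*]}$, replacing the product $u\circ v$ on $V^{[*]}$ by the action $u\circ n=\on{Res}_z\big(Y_M(u,z)\tfrac{(1+z)^{\on{wt}(u)}}{z^2}n\big)$ and $O(V^{[*]})$ by $O(M^{[*]})$. First I would unwind the hypothesis: since $m\in M^{[p]}$ and $[m]\in F^{p-1}A(M^{[*]})$, we have $m\in O(M^{[*]})+F^{p-1}M^{[*]}=O(M^{[*]})+\bigoplus_{q\le p-1}M^{[q]}$, so $m$ is exactly the degree-$p$ (top) component of some element $Z\in O(M^{[*]})$.

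Next, using the $\mathbb{Z}\times\tfrac{1}{2}\mathbb{Z}$-gradings on $V^{[*]}$ and $M^{[*]}$ together with the fact that $Y_M(\cdot,x)$ is a chain map respecting weights, I would write $Z=\sum_{l\in L}a_l\circ b_l=\sum_{l\in L}\sum_{k\ge 0}\binom{\on{wt}(a_l)}{k}(a_l)_{k-2}b_l$ with each $a_l\in V^{[*]}$ and $b_l\in M^{[*]}$ homogeneous for both the differential grading and the weight grading. The key bookkeeping input is the module degree shift $v_n(M^{[q]})\subseteq M^{[q+|v|-2n-2]}$ recorded after the definition of a dg module, which gives $(a_l)_{k-2}b_l\in M^{[|a_l|+|b_l|-2k+2]}$; hence every dg homogeneous component of $a_l\circ b_l$ has degree congruent to $|a_l|+|b_l|$ modulo $2$. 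Splitting $L=L'\sqcup L''$ according to whether $|a_l|+|b_l|-p$ is even or odd, the partial sum over $L'$ has all dg components of parity $p$ while the partial sum over $L''$ has all dg components of parity $p+1$; these two subspaces meet only in $0$, so they cannot cancel, and the top component $m$ of $Z$ must already appear in $X:=\sum_{l\in L'}a_l\circ b_l\in O(M^{[*]})$, with $m$ equal to the degree-$p$ part of $X$. Writing the dg homogeneous components of $X-m$ as $-h_i$, each $h_i$ satisfies $|h_i|\equiv|a_l|+|b_l|\equiv p\bmod 2$ for the relevant $l\in L'$ and $|h_i|<p$, hence $|h_i|\le p-2$; and $m+\sum_i h_i=X\in O(M^{[*]})$, which is the assertion.

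I do not expect a genuine obstacle here; the content is identical to Lemma~\ref{lem:degree_mod_2} once two routine inputs are in place: (i) the degree-shift formula for $v_n$ acting on $M^{[*]}$, which makes the parity dichotomy work exactly as in the $V$-case, and (ii) the fact that $O(M^{[*]})$ is spanned by the $a\circ b$ with $a\in V^{[*]}$, $b\in M^{[*]}$ bihomogeneous, which follows by decomposing any spanning element with respect to the bigradings. The only point requiring care, exactly as for $V^{[*]}$, is that $O(M^{[*]})$ is \emph{not} a graded subspace of $M^{[*]}$ — this is precisely why one cannot simply project onto $M^{[p]}$, and must instead extract the top component of $Z$ and separate it from the lower-degree terms by the parity argument.
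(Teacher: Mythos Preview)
Your proposal is correct and follows exactly the approach the paper intends: the paper's proof consists of the single sentence ``The proof of Lemma~\ref{lem:degree_mod_2} can be adapted to obtain the following result,'' and what you have written is precisely that adaptation, with the same parity splitting $L=L'\sqcup L''$ and the same extraction of the top component. Your identification of the two routine inputs (the degree-shift formula for $v_n$ on $M^{[*]}$ and the bihomogeneous spanning set for $O(M^{[*]})$) is exactly what is needed to make the transposition go through.
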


We know that for $x \in F^{|x|}A(V^{[*]})$, $w \in F^{|w|}A(M^{[*]})$, there exist $u \in V^{[|x|]}$, $m \in M^{[|w|]}$ such that $[u]-x \in F^{|x|-1}A(V^{[*]})$ and $[m]-w \in F^{|w|-1}A(M^{[*]})$. We define
\begin{align*}
\{\widetilde{x}, \widetilde{w} \}_{F^\bullet}  =  [u] \ast_l [m]-(-1)^{|x||w|}[m] \ast_r [u] \ \on{mod} \  F^{|x|+|y|-3}A(M^{[*]}).
\end{align*}

\begin{lemma}
The map
\[ \{. ,  . \}_{F^\bullet}: \gr^{[p]}A(V^{[*]}) \otimes \gr^{[q]}A(M^{[*]}) \longrightarrow \gr^{[p+q-2]}A(M^{[*]})\]
is well-defined.
\end{lemma}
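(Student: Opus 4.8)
The plan is to transcribe the proof of Lemma~\ref{lem:Poisson_defined} almost verbatim, with the second slot now occupied by a module element $m \in M^{[*]}$ rather than a vertex algebra element $v$, and with the single product $\ast$ replaced by the pair of bimodule products $\ast_l$ and $\ast_r$. First I would check that the bracket actually lands in the claimed summand. The computation recalled just before Lemma~\ref{lem:Module_degree_mod_2} shows that for weight homogeneous $u\in V^{[p]}$ and $m\in M^{[q]}$,
\[ [u]\ast_l[m] - (-1)^{|u||m|}[m]\ast_r[u] \equiv \sum_{i\geq 0}\binom{\on{wt}(u)-1}{i}u_i m \pmod{O(M^{[*]})},\]
and since $|u_i m| = |u|+|m|-2i-2 \leq |u|+|m|-2$ this sits in $F^{p+q-2}A(M^{[*]})$; hence its class modulo $F^{p+q-3}A(M^{[*]})$ is a legitimate element of $\gr^{[p+q-2]}A(M^{[*]})$, provided the construction is independent of the chosen lifts $u$ and $m$.

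For representative-independence, suppose $u'\in V^{[p]}$ and $m'\in M^{[q]}$ are other lifts, so $[u]-[u']\in F^{p-1}A(V^{[*]})$ and $[m]-[m']\in F^{q-1}A(M^{[*]})$. By Lemma~\ref{lem:degree_mod_2} I can write $u-u' = X-\sum_i f_i$ with $X\in O(V^{[*]})$ and $f_i$ dg homogeneous satisfying $|f_i|\leq p-2$ and $|f_i|\equiv p \bmod 2$; by Lemma~\ref{lem:Module_degree_mod_2} I can write $m-m' = Y-\sum_j h_j$ with $Y\in O(M^{[*]})$ and $h_j$ dg homogeneous satisfying $|h_j|\leq q-2$ and $|h_j|\equiv q \bmod 2$. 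Substituting these into $[u]\ast_l[m] - (-1)^{pq}[m]\ast_r[u]$ and expanding bilinearly, all terms containing $X$ or $Y$ drop out modulo $O(M^{[*]})$ by parts $(1)$--$(4)$ of Theorem~\ref{thm:A(M)bimod}, leaving $[u']\ast_l[m']-(-1)^{pq}[m']\ast_r[u']$ together with correction terms of the three shapes $[u']\ast_l[h_j]-(-1)^{pq}[h_j]\ast_r[u']$, $[f_i]\ast_l[m']-(-1)^{pq}[m']\ast_r[f_i]$, and $[f_i]\ast_l[h_j]-(-1)^{pq}[h_j]\ast_r[f_i]$. The parity relations give $pq \equiv |f_i|q \equiv p|h_j| \equiv |f_i||h_j| \bmod 2$, so in each correction term the sign $(-1)^{pq}$ coincides with the sign matched to the parities of the two factors; applying the degree-drop identity recalled above, together with $|f_i|\leq p-2$ and/or $|h_j|\leq q-2$, puts each correction term in $F^{p+q-4}A(M^{[*]})\subseteq F^{p+q-3}A(M^{[*]})$. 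Therefore $[u]\ast_l[m]-(-1)^{pq}[m]\ast_r[u] \equiv [u']\ast_l[m']-(-1)^{pq}[m']\ast_r[u']$ modulo $O(M^{[*]})+F^{p+q-3}M^{[*]}$, which is exactly what is needed.

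The one genuinely delicate point, as in Lemma~\ref{lem:Poisson_defined}, is the sign bookkeeping: one must be sure each correction term really is of the form ``$\ast_l$ minus the parity-matched $\ast_r$'' so that the bimodule analogue of Lemma~\ref{lem:uv-vu} (namely the identity displayed above, which is proved just before Lemma~\ref{lem:Module_degree_mod_2}) applies and forces the drop of two in differential degree. Everything else is a routine transcription, and no input is needed beyond Lemmas~\ref{lem:degree_mod_2} and \ref{lem:Module_degree_mod_2} and the bimodule relations of Theorem~\ref{thm:A(M)bimod}.
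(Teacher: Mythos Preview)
Your proposal is correct and takes essentially the same approach as the paper, which simply instructs the reader to repeat the proof of Lemma~\ref{lem:Poisson_defined} while invoking Lemma~\ref{lem:Module_degree_mod_2} for the module element. You have in fact written out that argument in more detail than the paper does, correctly replacing $\ast$ by the pair $\ast_l,\ast_r$ and using parts (1)--(4) of Theorem~\ref{thm:A(M)bimod} to dispose of the $X$ and $Y$ terms.
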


\begin{proof}
The proof is the same as the one for Lemma~\ref{lem:Poisson_defined}, except we also use Lemma~\ref{lem:Module_degree_mod_2} fo the element in $M^{[*]}$.
\end{proof}

\begin{proposition}\label{prop:grA(M)isDGPoissonModule}
 The module $\gr^{[*]} A(M^{[*]})$ is a dg Poisson module for the dg Poisson algebra $\gr^{[*]} A(V^{[*]})$. The action is induced by $\ast_l$, the Poisson bracket is $\{.,.\}_{F^\bullet}$ and is of degree $-2$, and the differential is induced by $d_M$. In particular, for $\widetilde{x} \in \gr^{[*]} A(V^{[*]})$,  $\widetilde{w} \in \gr^{[*]} A(M^{[*]})$ dg homogeneous with respective preimages $u \in V^{[|\widetilde{x}|]}$, $m \in M^{[|\widetilde{w}|]}$, we have
 \[ \widetilde{x} \ast_l \widetilde{w}=\widetilde{[u_{-1}m]} \quad   \text{ and }  \quad     \{ \widetilde{x}, \widetilde{w} \}_{F^\bullet}=\widetilde{[u_{0}m]}.
  \]
There is a functor
\[
\begin{array}{ccc}
V^{[*]} \Mod^w & \longrightarrow & \gr^{[*]} A(V^{[*]})\Poiss \\[5pt]
M^{[*]} & \longmapsto & \gr^{[*]} A(M^{[*]}).
\end{array}
\] 
\end{proposition}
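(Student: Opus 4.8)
The plan is to assemble the statement from the pieces already in place and then transport the dg Poisson module axioms from $R^{[*]}(M^{[*]})$. By the discussion preceding the proposition we already know that $A(M^{[*]})$ carries compatible left and right actions $\ast_l,\ast_r$ of $A(V^{[*]})$ (Theorem~\ref{thm:A(M)bimod}), that the degree filtration $F^\bullet A(M^{[*]})$ is preserved by $F^\bullet A(V^{[*]})$ on both sides, and that on $\gr^{[*]}A(M^{[*]})$ the two actions agree up to the Koszul sign, so $\gr^{[*]}A(M^{[*]})$ is a left $\gr^{[*]}A(V^{[*]})$-module. First I would record the two leading-term identities: expanding $u\ast_l m=\sum_{n\ge 0}\binom{\on{wt}(u)}{n}u_{n-1}m$ and using $|u_{n-1}m|=|u|+|m|-2n$, only the $n=0$ term survives in $\gr^{[|u|+|m|]}A(M^{[*]})$, so $\widetilde{x}\ast_l\widetilde{w}=\widetilde{[u_{-1}m]}$; and the module analogue of Lemma~\ref{lem:uv-vu} gives $[u]\ast_l[m]-(-1)^{|u||m|}[m]\ast_r[u]\equiv\sum_{i\ge 0}\binom{\on{wt}(u)-1}{i}u_i m\pmod{O(M^{[*]})}$, where $|u_i m|=|u|+|m|-2i-2$, so the $i=0$ term is leading and $\{\widetilde{x},\widetilde{w}\}_{F^\bullet}=\widetilde{[u_0 m]}$. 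The differential on $\gr^{[*]}A(M^{[*]})$ is $d_{F^\bullet}(\widetilde{w})=\widetilde{d_M w}$; it is well defined since $d_M(O(M^{[*]}))\subseteq O(M^{[*]})$ (established before the statement) and, by Lemma~\ref{lem:Module_degree_mod_2}, changing the lift of $\widetilde{w}$ alters $d_M w$ only by an element of $O(M^{[*]})+F^{|w|}M^{[*]}$, while $d_{F^\bullet}^2=0$ follows from $d_M^2=0$.

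For the module axioms I would argue by transport of structure. In exact analogy with $\eta_{F^\bullet}$ of Proposition~\ref{prop:eta_VsurjectiveDG}, introduce
\[
\eta_{F^\bullet}^M\colon R^{[*]}(M^{[*]})\longrightarrow\gr^{[*]}A(M^{[*]}),\qquad m+C_2^{[|m|]}(M^{[*]})\longmapsto m+O(M^{[*]})+\bigoplus_{p<|m|}M^{[p]},
\]
well defined because $C_2^{[|m|]}(M^{[*]})\subseteq O(M^{[*]})+\bigoplus_{p<|m|}M^{[p]}$, and surjective by construction. The leading-term formulas show that $\eta_{F^\bullet}^M$ intertwines $\overline{v}.\overline{m}=\overline{v_{-1}m}$ with $\widetilde{x}\ast_l\widetilde{w}=\widetilde{[v_{-1}m]}$, the bracket $\{\overline{v},\overline{m}\}_{R(M)}=\overline{v_0 m}$ with $\{-,-\}_{F^\bullet}$, and $d_M$ with $d_{F^\bullet}$, all over the surjective morphism of dg Poisson algebras $\eta_{F^\bullet}\colon R^{[*]}(V^{[*]})\to\gr^{[*]}A(V^{[*]})$. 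Since $R^{[*]}(M^{[*]})$ is a dg Poisson module over $R^{[*]}(V^{[*]})$ (Section~\ref{sec:5}) and both $\eta$'s are surjective, each defining identity of a dg Poisson module — associativity of the action, the Lie module identity, the two Leibniz rules, and the derivation property of $d_{F^\bullet}$ — descends from $R^{[*]}(M^{[*]})$ to $\gr^{[*]}A(M^{[*]})$. The bracket $\{-,-\}_{F^\bullet}$ has degree $-2$ because $|v_0 m|=|v|+|m|-2$, and since $-2$ is even the signs $(-1)^{(|\widetilde x|+p)|\widetilde y|}$ in the axioms reduce to $(-1)^{|\widetilde x||\widetilde y|}$, matching the statement. (Alternatively, one verifies all identities directly in the style of Proposition~\ref{prop:grA(V)isDGPoisson}: lift to $V^{[*]}$ and $M^{[*]}$, apply the component form \eqref{eq:jacobi_2} of the Jacobi identity with $l=0$ and with $m=0$, and read off the leading filtration term, using Lemmas~\ref{lem:uv-vu}, \ref{lem:degree_mod_2} and \ref{lem:Module_degree_mod_2}.)

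For functoriality, a morphism $f\colon M^{[*]}\to N^{[*]}$ in $V^{[*]}\Mod^w$ is a chain map with $f(v_n m)=v_n f(m)$, hence it preserves cohomological degree and satisfies $f(O(M^{[*]}))\subseteq O(N^{[*]})$, so it induces a filtered map $A(f)\colon A(M^{[*]})\to A(N^{[*]})$ and thus $\gr^{[*]}A(f)$; intertwining with $\ast_l$, $\{-,-\}_{F^\bullet}$ and $d_{F^\bullet}$ is immediate from the formulas $\widetilde{[v_{-1}m]}$, $\widetilde{[v_0 m]}$, $\widetilde{d_M m}$, and $\gr^{[*]}A(-)$ evidently respects composition and identities. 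I expect the only genuinely delicate step to be the well-definedness of $\{-,-\}_{F^\bullet}$ on the associated graded — that the result lies in $\gr^{[p+q-2]}A(M^{[*]})$ independently of the chosen lifts — which is exactly the content of the lemma immediately preceding the proposition and, as in Lemma~\ref{lem:Poisson_defined}, relies on the parity control of Lemmas~\ref{lem:degree_mod_2} and \ref{lem:Module_degree_mod_2}; once this is granted, the transport argument renders the rest routine.
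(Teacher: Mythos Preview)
Your proof is correct but takes a genuinely different route from the paper. The paper proceeds by direct verification: it defines $d_{F^\bullet}(\widetilde{w})=\widetilde{d_M w}$ and then checks each dg Poisson module axiom (the Lie module identity, both Leibniz rules, and the derivation properties of $d_{F^\bullet}$ with respect to $\ast_l$ and $\{\cdot,\cdot\}_{F^\bullet}$) by explicit computation in the style of Proposition~\ref{prop:grA(V)isDGPoisson}, declaring the leading-term formulas and functoriality to be straightforward. You instead argue by transport of structure: you first pin down the leading-term formulas $\widetilde{x}\ast_l\widetilde{w}=\widetilde{[u_{-1}m]}$ and $\{\widetilde{x},\widetilde{w}\}_{F^\bullet}=\widetilde{[u_0 m]}$, introduce the surjective linear map $\eta^M_{F^\bullet}\colon R^{[*]}(M^{[*]})\to\gr^{[*]}A(M^{[*]})$ (which the paper only constructs \emph{after} this proposition, in Proposition~\ref{prop:eta_MsurjectiveDGmodules}), observe that it intertwines all three operations over $\eta_{F^\bullet}$, and then pull the axioms back from the already-established dg Poisson module $R^{[*]}(M^{[*]})$. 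This is a legitimate reordering of the logical flow: you use only the elementary well-definedness and surjectivity of $\eta^M_{F^\bullet}$ together with the intertwining relations, not any module-morphism statement that would presuppose the proposition. Your approach is more conceptual and avoids repeating the Jacobi-identity gymnastics, at the cost of front-loading the leading-term computations; the paper's direct approach keeps the section self-contained and leaves the comparison with $R^{[*]}(M^{[*]})$ for Section~\ref{sec:8}. You also correctly identify that the genuinely delicate point---well-definedness of $\{\cdot,\cdot\}_{F^\bullet}$ on $\gr^{[*]}A(M^{[*]})$---is handled by the preceding lemma via the parity control of Lemmas~\ref{lem:degree_mod_2} and~\ref{lem:Module_degree_mod_2}, and your parenthetical alternative is exactly the paper's argument.
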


\begin{proof}
Similarly to Proposition~\ref{prop:grA(V)isDGPoisson}, we define $d_{F^\bullet}: \gr^{[*]} A(M^{[*]}) \longrightarrow \gr^{[*]} A(M^{[*]})$ by $d_{F^\bullet}(\widetilde{w})=\widetilde{d_Mw}$, where $w$ is an element in $F^{|\widetilde{w}|}A(M^{[*]})$ whose image in $ \gr^{[*]} A(M^{[*]})$ is $\widetilde{w}$. For $\widetilde{x}, \widetilde{y} \in \gr^{[*]}A(V^{[*]})$ dg homogeneous and $\widetilde{w} \in \gr^{[|\widetilde{w} |]} A(M^{[*]})$, we can verify by direct computation that:
\begin{itemize}\setlength\itemsep{5pt}
\item $\{\widetilde{x}, \{ \widetilde{y}, \widetilde{w}\}_{F^\bullet} \}_{F^\bullet}=\{\{\widetilde{x}, \widetilde{y}\}_V, \widetilde{w}\}_{F^\bullet}+(-1)^{|\widetilde{x}||\widetilde{y}|}\{\widetilde{y}, \{ \widetilde{x}, \widetilde{w}\}_{F^\bullet} \}_{F^\bullet}$,
\item $\{\widetilde{x},  \widetilde{y} \ast_l \widetilde{w}\}_{F^\bullet}=\{\widetilde{x} ,\widetilde{y} \}_{F^\bullet} \ast_l \widetilde{w} +(-1)^{|\widetilde{x}||\widetilde{y}|} \widetilde{y} \ast_l \{ \widetilde{x}, \widetilde{w} \}_{F^\bullet}$.
\item $\{\widetilde{x} \ast_l \widetilde{y}, \widetilde{w}\}_{F^\bullet}= \widetilde{x} \ast_l \{\widetilde{y}, \widetilde{w}\}_{F^\bullet} +(-1)^{|\widetilde{x}||\widetilde{y}|} \widetilde{y} \ast_l \{ \widetilde{x}, \widetilde{w} \}_{F^\bullet}$.
\item $d_{F^\bullet} \circ d_{F^\bullet}=0$,
\item $d_{F^\bullet}(\widetilde{x} \ast_l \widetilde{w})=d_{F^\bullet}(\widetilde{x}) \ast_l \widetilde{w}+(-1)^{|\widetilde{x}|} \widetilde{x} \ast_l d_{F^\bullet}(\widetilde{w})$,
\item $d_{F^\bullet}(\{\widetilde{x}, \widetilde{w} \}_{F^\bullet})=\{d_{F^\bullet}(\widetilde{x}), \widetilde{w} \}_{F^\bullet}+(-1)^{|\widetilde{x}|}  \{\widetilde{x},  d_{F^\bullet}(\widetilde{w})\}_{F^\bullet}$.
\end{itemize}
The above relations prove the dg Poisson module statement. The expressions for $\ast_l$ and $\{ \cdot, \cdot\}_{F^\bullet}$, as well as the functorial statement are straightforward computations. 
\end{proof}

\subsection{Modules associated to $\gr_{*}A(V^{[*]})$}
We can do something similar to the previous section but using the filtration by weights, starting from a module $M^{[*]} \in V^{[*]}\Mod^{gr}$.

\begin{proposition}\label{prop:grA(M)isDFModule}
 The module $\gr_{*} A(M^{[*]})$ is a weight graded, left differential filtered module for the differential filtered algebra $\gr_{*} A(V^{[*]})$. 
\end{proposition}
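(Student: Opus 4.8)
The plan is to mirror the proof of Proposition~\ref{prop:gr_*(A(V))}, transporting the weight filtration from $V^{[*]}$ to $M^{[*]}$ and using the bimodule structure of Theorem~\ref{thm:A(M)bimod}, restricted to its left action $\ast_l$. Let $M^{[*]}=\bigoplus_{n}M^{[*]}(n)$ be in $V^{[*]}\Mod^{gr}$. First I would equip $A(M^{[*]})=M^{[*]}/O(M^{[*]})$ with the weight filtration $W_nA(M^{[*]})=$ image of $\bigoplus_{m\le n}M^{[*]}(m)$. Since each $M^{[*]}(m)$ is a subcomplex of $M^{[*]}$, $d_M$ preserves $W_nM^{[*]}$, and as $O(M^{[*]})$ is $d_M$-stable (established just before Theorem~\ref{thm:A(M)bimod}) the induced differential $d$ on $A(M^{[*]})$ satisfies $d(W_nA(M^{[*]}))\subseteq W_nA(M^{[*]})$. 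Together with the differential filtration $F^pA(M^{[*]})$ already available and the relation $F^p(W_nA(M^{[*]}))=F^pA(M^{[*]})\cap W_nA(M^{[*]})$, this makes each $W_nA(M^{[*]})$ a differential filtered vector space in the sense of Section~\ref{sec:df-algebra}, so that $\gr_{*}A(M^{[*]})=\bigoplus_n W_nA(M^{[*]})/W_{n-1}A(M^{[*]})$ is a weight graded object whose homogeneous pieces carry a differential filtered structure, exactly the setup of Section~\ref{sec:weight-filtration}.

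Next I would check the module axioms. For $u\in V^{[*]}_a$ weight homogeneous and $w\in M^{[*]}(b)$ one has $u_{k-1}w\in M^{[*]}(a+b-k)$, hence $u\ast_l w=\sum_{k\ge 0}\binom{a}{k}u_{k-1}w\in W_{a+b}A(M^{[*]})$, so $W_nA(V^{[*]})\ast_l W_mA(M^{[*]})\subseteq W_{n+m}A(M^{[*]})$; and since $|u_{k-1}w|=|u|+|w|-2k\le|u|+|w|$ one obtains the refinement $F^iW_nA(V^{[*]})\ast_l F^jW_mA(M^{[*]})\subseteq F^{i+j}W_{n+m}A(M^{[*]})$. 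Passing to the associated graded: if $u,u'$ (resp.\ $w,w'$) are two lifts of $\widetilde x\in\gr_rA(V^{[*]})$ (resp.\ $\widetilde w\in\gr_sA(M^{[*]})$), then $u-u'\in W_{r-1}V^{[*]}$ and $w-w'\in W_{s-1}M^{[*]}$ modulo $O$, whence $u\ast_l w-u'\ast_l w'\in W_{r+s-1}A(M^{[*]})$ by the above, using properties $(1)$ and $(3)$ in the proof of Theorem~\ref{thm:A(M)bimod} to see that altering representatives by elements of $O$ does not leave $O(M^{[*]})$; thus $\widetilde x\ast_l\widetilde w\in\gr_{r+s}A(M^{[*]})$ is well defined. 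Associativity $(\widetilde x\ast_l\widetilde y)\ast_l\widetilde w=\widetilde x\ast_l(\widetilde y\ast_l\widetilde w)$ and unitality $[\mathbf{1}]\ast_l\widetilde w=\widetilde w$ then follow by quotienting the relation $(6)$ of Theorem~\ref{thm:A(M)bimod} and the vacuum axiom.

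Finally I would verify the differential filtered module compatibility. Choose for $\widetilde x\in\gr_rA(V^{[*]})$ a weight homogeneous lift $u=\sum_{l\le p}u^l$ with $u^l\in V^{[l]}_r$, $p=|\widetilde x|$, and for $\widetilde w\in\gr_sA(M^{[*]})$ a weight homogeneous lift $w=\sum_{k\le q}w^k$ with $w^k\in M^{[k]}(s)$, $q=|\widetilde w|$. Applying \eqref{eq:d_M(v_nu)} termwise and using $\on{wt}(d_V(u^l))=\on{wt}(u^l)$ (noted in the excerpt), one gets $d_M(u^l\ast_l w^k)=d_V(u^l)\ast_l w^k+(-1)^{l}\,u^l\ast_l d_M(w^k)$. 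Summing over $l,k$, the discrepancy $d_M(u\ast_l w)-\bigl(d_V(u)\ast_l w+(-1)^p u\ast_l d_M(w)\bigr)$ equals $\sum_{l<p}\bigl((-1)^{l}-(-1)^p\bigr)\,u^l\ast_l d_M(w)$, each of whose terms has $F$-degree at most $(p-1)+(q+1)=p+q$ and therefore lies in $F^{p+q}A(M^{[*]})$; projecting into $\gr_{r+s}A(M^{[*]})$ yields exactly $d(\widetilde x\ast_l\widetilde w)-\bigl(d(\widetilde x)\ast_l\widetilde w+(-1)^{|\widetilde x|}\widetilde x\ast_l d(\widetilde w)\bigr)\in F^{|\widetilde x|+|\widetilde w|}\gr_{r+s}A(M^{[*]})$, the required differential filtered module condition. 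The main obstacle is purely this bookkeeping — isolating which non-cohomologically-homogeneous error terms survive and confirming they land in $F^{|\widetilde x|+|\widetilde w|}$ rather than in a strictly smaller filtration step — which is the same point already handled in Proposition~\ref{prop:gr_*(A(V))}; no new idea beyond Theorem~\ref{thm:A(M)bimod} is needed, and in particular, unlike in the degree-filtered case of Proposition~\ref{prop:grA(M)isDGPoissonModule}, one should \emph{not} attempt to define a Poisson bracket here, for the reason given in the Remark following Lemma~\ref{lem:degree_mod_2_variant}.
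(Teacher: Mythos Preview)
Your proposal is correct and follows essentially the same approach as the paper, which simply states that the proof is the same as Proposition~\ref{prop:gr_*(A(V))} with each axiom verified explicitly. Your write-up is in fact more detailed than the paper's own proof, and your explicit bookkeeping for the differential filtered compatibility (isolating the discrepancy $\sum_{l<p}((-1)^l-(-1)^p)\,u^l\ast_l d_M(w)$ and bounding its $F$-degree by $p+q$) is exactly the computation the paper alludes to; your closing remark about not defining a Poisson bracket here is also on point.
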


\begin{proof}
The proof is essentially the same as in Proposition~\ref{prop:gr_*(A(V))}, where we verify explicitly each axiom in the definition.
\end{proof}

The left and right $A(V^{[*]})$-module structures on $A(M^{[*]})$ induce two $ \gr_{*} A(M^{[*]})$-module structures on $\gr_{*} A(M^{[*]})$ which satisfy some kind of differential filtered commutative relation:

\begin{proposition}
For any $\widetilde{x} \in  F^p \gr_{n} A(V^{[*]})$, $\widetilde{w} \in F^q \gr_{m} A(M^{[*]}) $, we have
\[
\widetilde{x} \ast_l \widetilde{w} - (-1)^{pq} \widetilde{w} \ast_r \widetilde{x} \in F^{p+q-1}  \gr_{n+m} A(M^{[*]}).
\]
\end{proposition}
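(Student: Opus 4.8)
The plan is to evaluate the commutator on bi-homogeneous representatives and see that it breaks into a sum of terms each visibly of filtration degree at most $p+q-1$ in $\gr_{n+m}A(M^{[*]})$. The engine is the module analogue of Lemma~\ref{lem:uv-vu}: for $u\in V^{[*]}$ and $m\in M^{[*]}$ homogeneous in both the weight and the differential gradings one has, in $M^{[*]}$,
\[
u\ast_l m-(-1)^{|u||m|}m\ast_r u=\on{Res}_z\!\left(Y_M(u,z)(1+z)^{\on{wt}(u)-1}m\right)=\sum_{k\ge0}\binom{\on{wt}(u)-1}{k}u_k m,
\]
which is the computation carried out just before Lemma~\ref{lem:Module_degree_mod_2} (it rests on Lemma~\ref{lem:rightandleftproducts}).

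First I would dispose of the case $|\widetilde x|<p$ or $|\widetilde w|<q$: then both $\widetilde x\ast_l\widetilde w$ and $\widetilde w\ast_r\widetilde x$ already lie in $F^{p+q-1}\gr_{n+m}A(M^{[*]})$ by compatibility of the two actions with $F^\bullet$, and so does their difference. Hence assume $|\widetilde x|=p$ and $|\widetilde w|=q$. Choose lifts of $\widetilde x$ and $\widetilde w$ supported in weights $\le n$, resp.\ $\le m$, and in degrees $\le p$, resp.\ $\le q$; the weight components of weight $<n$ (resp.\ $<m$) produce elements of $W_{n+m-1}A(M^{[*]})$ and so are invisible in $\gr_{n+m}A(M^{[*]})$, so I may take the lifts weight-homogeneous: $u=\sum_{a\le p}u^{a}_{n}$ with $u^{a}_{n}\in V^{[a]}_{n}$, and $m=\sum_{c\le q}m^{c}_{m}$ with $m^{c}_{m}\in M^{[c]}_{m}$.

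Now I would expand $u\ast_l m-(-1)^{pq}m\ast_r u$ over these components. For a single pair the identity above gives $u^{a}_{n}\ast_l m^{c}_{m}-(-1)^{ac}m^{c}_{m}\ast_r u^{a}_{n}=\on{Res}_z(Y_M(u^{a}_{n},z)(1+z)^{n-1}m^{c}_{m})\in W_{n+m-1}M^{[*]}$, while $m^{c}_{m}\ast_r u^{a}_{n}\equiv(-1)^{ac}(u^{a}_{n})_{-1}m^{c}_{m}$ modulo $W_{n+m-1}M^{[*]}$, a term of differential degree $a+c$. Combining these,
\[
u^{a}_{n}\ast_l m^{c}_{m}-(-1)^{pq}m^{c}_{m}\ast_r u^{a}_{n}\equiv\big(1-(-1)^{ac+pq}\big)(u^{a}_{n})_{-1}m^{c}_{m}\pmod{W_{n+m-1}M^{[*]}}.
\]
The scalar $1-(-1)^{ac+pq}$ vanishes unless $ac\not\equiv pq\pmod2$; but $a\le p$ and $c\le q$, so $ac=pq$ unless $a\le p-1$ or $c\le q-1$, and in either case $a+c\le p+q-1$. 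Summing over all pairs, $u\ast_l m-(-1)^{pq}m\ast_r u$ is, modulo $W_{n+m-1}M^{[*]}$, an element all of whose differential-homogeneous components have degree $\le p+q-1$; hence its class lies in $F^{p+q-1}A(M^{[*]})$, and reducing modulo $W_{n+m-1}A(M^{[*]})$ yields $\widetilde x\ast_l\widetilde w-(-1)^{pq}\widetilde w\ast_r\widetilde x\in F^{p+q-1}\gr_{n+m}A(M^{[*]})$.

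The point that has to be got exactly right is this last step: the sign in the module identity is dictated by the degrees $a,c$ of the individual components rather than by $p,q$, and one needs the elementary observation that whenever these two signs disagree at least one factor has strictly smaller degree, so the offending terms are automatically pushed into $F^{p+q-1}$. The remaining ingredients — the residue calculus and the compatibility of $\ast_l,\ast_r$ with both filtrations — are already available from the proof of Theorem~\ref{thm:A(M)bimod} and Proposition~\ref{prop:grA(M)isDFModule}.
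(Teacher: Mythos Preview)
Your proof is correct and follows essentially the same approach as the paper's: choose bi-homogeneous lifts, expand, and use the residue identity $u\ast_l m-(-1)^{|u||m|}m\ast_r u=\on{Res}_z(Y_M(u,z)(1+z)^{\on{wt}(u)-1}m)$ to control the top-degree pair while the lower-degree pairs fall into $F^{p+q-1}$ automatically. The only cosmetic difference is that the paper keeps a double sum over both weights and degrees and splits directly into the case $(i,s)=(p,q)$ versus $i<p$ or $s<q$, whereas you first discard lower-weight components (legitimate, since they land in $W_{n+m-1}$) and then phrase the degree split via the vanishing of the scalar $1-(-1)^{ac+pq}$ at $(a,c)=(p,q)$; both routes reach the same place by the same mechanism.
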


\begin{proof}
Take $x \in W_n A(V^{[*]})$ a representative of $\widetilde{x}$, and write $x = u +O(V^{[*]})$ with $u \in \bigoplus_{\substack{ i \leq p \\ j \leq n}} V_j^{[i]}$. We decompose $u=\sum_{\substack{ i \leq p \\ j \leq n}}u^i_j$ with $|u_j^i|=i$ and $\on{wt}(u_j^i)=j$. Likewise, we chose a representative $w  \in W_m A(M^{[*]})$ of $\widetilde{w}$, and decompose $w = \sum_{\substack{ i \leq q \\ j \leq m}}m^i_j +O(M^{[*]})$ with $|m_j^i|=i \leq q$ and $\on{wt}(m_j^i)=j \leq m$. We have
\begin{align*}
\begin{array}{l}
\widetilde{x} \ast_l \widetilde{w} -(-1)^{pq} \widetilde{w} \ast_r \widetilde{x} \\[5pt]
 =  x \ast_l w -(-1)^{pq} w \ast_r x + W_{m+n-1} A(M^{[*]}), \\[5pt]
 = \displaystyle \sum_{\substack{ i,  j \\ s, t}} u^i_j \ast_l m^s_t -(-1)^{pq} m^s_t \ast_r u^i_j+O(M^{[*]})+W_{m+n-1}M^{[*]}, \\[5pt]
= \displaystyle \sum_{\substack{ j \leq n \\ t \leq m}} \left( u^p_j \ast_l m^q_t -(-1)^{pq} m^q_t \ast_r u^p_j \right) \\[5pt]
 \quad + \displaystyle \sum_{\substack{ i < p \text{ or } s  < q}} \left( u^i_j \ast_l m^s_t -(-1)^{pq} m^s_t \ast_r  u^i_j \right) +O(M^{[*]})+W_{m+n-1}M^{[*]}.
\end{array}
\end{align*}
Based on the definitions of the left and right actions, we see that for any $j, t$, we have
\[
u^p_j \ast_l m^q_t -(-1)^{pq} m^q_t \ast_r u^p_j = \on{Res}_z(Y_M(u^p_j, z)(1+z)^{j-1} m^q_t) \in F^{p+q-1}M^{[*]}.
\]
Then, when $i < p$ or $s <q$, we have
\[
u^i_j \ast_l m^s_t -(-1)^{pq} m^s_t \ast_r u^i_j  \in F^{i+s}V^{[*]} \subseteq F^{p+q-1}M^{[*]}.
\]
As $\on{wt}(u_j^i) \leq n$ and $\on{wt}(m^s_t) \leq m$, it follows that
\[
\widetilde{x} \ast_l \widetilde{w} -(-1)^{pq} \widetilde{w} \ast_r \widetilde{x} \in F^{p+q-1}A(M^{[*]}) \cap W_{m+n}A(M^{[*]}) \on{mod} W_{m+n-1}A(M^{[*]}),
\]
which is exactly the desired statement.
\end{proof}

\subsection{Using both module filtrations successively}

In Proposition~\ref{prop:double_graded_A(V)}, we used the two filtrations on $A(V^{[*]})$ to define $\gr_* \gr^{[*]} A(V^{[*]})$. If $M^{[*]}$ is an ordinary dg module for $V^{[*]}$, we can do the same for the natural two filtrations on $A(M^{[*]})$ and construct a new module $\gr_* \gr^{[*]} A(M^{[*]})$. The proof of the next result is similar to the proof of Proposition~\ref{prop:double_graded_A(V)}, but instead of taking $\widetilde{y} \in \gr_{n_2} \gr^{[p_2]}A(V^{[*]})$, we work with $\widetilde{w} \in \gr_{n_2} \gr^{[p_2]}A(M^{[*]})$.

\begin{proposition}
For $M^{[*]} \in V^{[*]}\Mod^{ord}$, the space $\gr_* \gr^{[*]} A(M^{[*]})$ is a weight graded dg Poisson module for the weight graded dg Poisson algebra $\gr_* \gr^{[*]} A(V^{[*]})$. The action is induced by $\ast_l$, the Poisson bracket is of degree $-2$ for the cohomological degree, of degree $-1$ for the weight, and the differential is induced by $d_M$.
\end{proposition}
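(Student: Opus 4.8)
The plan is to follow the template established by Proposition~\ref{prop:double_graded_A(V)} (the algebra case) and Proposition~\ref{prop:grA(M)isDGPoissonModule} (the module case for a single filtration), combining the two. First I would set up the doubly-graded module: by Lemma~\ref{lem:2gradations} applied in the module setting, $\gr_*\gr^{[*]}A(M^{[*]})\cong\gr^{[*]}\gr_*A(M^{[*]})$, with homogeneous pieces $V_n^{[p]}\bmod J_n^{[p]}$ where $J_n^{[p]}=O(M^{[*]})+\bigoplus_{q\le p,\,m<n}M^{[q]}_m+\bigoplus_{q<p,\,m\le n}M_m^{[q]}$; here I use that $M^{[*]}$ being ordinary gives the weight decomposition $M^{[*]}=\bigoplus_n M^{[*]}(n)$ and that $O(M^{[*]})$ is stable under $d_M$ and under the weight grading (as was checked for $A(M^{[*]})$). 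The left $\gr_*\gr^{[*]}A(V^{[*]})$-action on $\gr_*\gr^{[*]}A(M^{[*]})$ is induced by $\ast_l$, and the bracket of $\widetilde x\in\gr_{n_1}\gr^{[p_1]}A(V^{[*]})$ with $\widetilde w\in\gr_{n_2}\gr^{[p_2]}A(M^{[*]})$ is defined by lifting to $u\in V_{n_1}^{[p_1]}$, $m\in M_{n_2}^{[p_2]}$ and setting $\{\widetilde x,\widetilde w\}=u\ast_l m-(-1)^{p_1p_2}m\ast_r u\bmod J^{[p_1+p_2-2]}_{n_1+n_2-1}$.

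The key steps, in order: (1) show the bracket lands in the correct bigraded component — here I invoke Theorem~\ref{thm:A(M)bimod} together with the module analogue of Lemma~\ref{lem:uv-vu}, namely that $u\ast_l m-(-1)^{|u||m|}m\ast_r u\equiv\on{Res}_z(Y_M(u,z)(1+z)^{\on{wt}(u)-1}m)=\sum_{n\ge0}\binom{\on{wt}(u)-1}{n}u_n m$, which shows the difference drops both cohomological degree by $2$ (since $|u_n m|=|u|+|m|-2n-2$) and weight by $1$; (2) show well-definedness, i.e. independence of the choice of lifts $u,m$ — this mirrors the proof of Proposition~\ref{prop:double_graded_A(V)} verbatim, using the module version Lemma~\ref{lem:Module_degree_mod_2} to control the parity of the degree-drop of the error terms $f_i,g_j$ (and the weight-only error terms $z,w$), so that all the parity identities $p_1p_2\equiv|f_i|p_2\equiv p_1|g_j|\equiv|f_i||g_j|\bmod 2$ hold and formulas \eqref{lem:uv-vu(1)}--\eqref{lem:uv-vu(2)} kill every term but the leading one; (3) verify the dg Poisson module axioms — skew-type Jacobi, the two Leibniz rules relating $\ast_l$ and $\{-,-\}$, $d^2=0$, and compatibility of $d$ (induced by $d_M$) with $\ast_l$ and with the bracket — exactly as the bulleted list in the proof of Proposition~\ref{prop:grA(M)isDGPoissonModule}, now carried out in the bigraded setting; and (4) record the degrees: the bracket is of degree $-2$ cohomologically and $-1$ in weight, and the action is of degree $0$.

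I expect the main obstacle to be step (2), the well-definedness of the bracket under change of lifts. This is where one must simultaneously track two filtrations: replacing $u$ by $u'$ produces an error in $J^{[p_1]}_{n_1}$, which decomposes as an element of $O(V^{[*]})$, plus a strictly-lower-weight term $z$, plus bihomogeneous terms $f_i$ with $|f_i|<p_1$ and $\on{wt}(f_i)\le n_1$; the delicate point is that Lemma~\ref{lem:degree_mod_2} applies to $u-u'+z$ (the lower-weight part being harmless) to force $|f_i|\equiv p_1\bmod 2$, and the same on the module side via Lemma~\ref{lem:Module_degree_mod_2}. One then expands $u\ast_l m-(-1)^{p_1p_2}m\ast_r u$ into roughly a dozen cross terms — $u'$ with $g_j$, $f_i$ with $v'$, $f_i$ with $g_j$, $z$ with $v'$, $z$ with $w$, $z$ with $g_j$, $u'$ with $w$, $f_i$ with $w$ — and must check each lies in $J^{[p_1+p_2-2]}_{n_1+n_2-1}$; the parity bookkeeping and the use of both \eqref{lem:uv-vu(1)} (cohomological drop) and \eqref{lem:uv-vu(2)} (weight drop) is exactly the computation displayed in Proposition~\ref{prop:double_graded_A(V)}, now with one argument replaced by a module element, so no genuinely new idea is needed — it is a careful but routine adaptation.
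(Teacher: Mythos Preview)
Your proposal is correct and follows exactly the approach the paper itself takes: the paper's proof consists of the single sentence that the argument is ``similar to the proof of Proposition~\ref{prop:double_graded_A(V)}, but instead of taking $\widetilde{y} \in \gr_{n_2} \gr^{[p_2]}A(V^{[*]})$, we work with $\widetilde{w} \in \gr_{n_2} \gr^{[p_2]}A(M^{[*]})$,'' and you have spelled out precisely what that entails (one small typo: your homogeneous pieces should read $M_n^{[p]}\bmod J_n^{[p]}$, not $V_n^{[p]}$).
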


\section{Relations between $R^{[*]}(M^{[*]})$ and the associated graded modules}\label{sec:8}

\subsection{$R^{[*]}(M^{[*]})$  and $\gr^{[*]}A(M^{[*]})$}

In Proposition~\ref{prop:eta_VsurjectiveDG}, we looked at a surjective dg algebra morphism. We can obtain a similar result for the associated modules. The proof of the next result is similar to that of Proposition~\ref{prop:eta_VsurjectiveDG}.

\begin{proposition}\label{prop:eta_MsurjectiveDGmodules}
The map
\begin{align*}
\begin{array}{rccc}
& R^{[*]}(M^{[*]}) & \longrightarrow & \gr^{[*]} A(M^{[*]}) \\[5pt]
            & m+C_2^{[|m|]}(V^{[*]}) & \longmapsto & \displaystyle m+O(M^{[*]})+\bigoplus_{p<|m|}M^{[p]}
\end{array}
\end{align*}
is a surjective morphism of dg Poisson $R^{[*]}(V^{[*]})$-modules, where $\gr^{[*]} A(M^{[*]})$ is seen as an $R^{[*]}(V^{[*]})$-module through $\eta_{F^\bullet}$.
\end{proposition}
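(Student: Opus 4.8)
The plan is to follow the template of Proposition~\ref{prop:eta_VsurjectiveDG} (and its module analogue in \cite{Caradot-Jiang-Lin1, Arakawa-Lam-Yamada}) almost verbatim, replacing the algebra $V^{[*]}$ by the module $M^{[*]}$ wherever the second slot of a product or bracket appears. First I would check that the map is well-defined: since $C_2^{[*]}(M^{[*]})=\on{Span}\{v_{-k}m \mid v\in V^{[*]},\, m\in M^{[*]},\, k\geq 2\}$ (using $v_{-k}=\tfrac{1}{(k-1)!}(\mathcal D^{k-2}v)_{-2}$ and the intertwining relation $Y_M(\mathcal D v,z)=\tfrac{d}{dz}Y_M(v,z)$), and since $|v_{-k}m|=|v|+|m|+k-2 > |m|$ for $k\geq 2$ when $v$ and $m$ are dg homogeneous, we get $C_2^{[|m|]}(M^{[*]})\subseteq O(M^{[*]})+\bigoplus_{p<|m|}M^{[p]}$; moreover one checks $u\circ m\in O(M^{[*]})$ and $u\ast_l m\in M^{[*]}$ have the expected filtration degrees so the target is $\gr^{[|m|]}A(M^{[*]})$. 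Well-definedness then follows exactly as in Proposition~\ref{prop:eta_MsurjectiveDGmodules}'s predecessor: if $m\equiv m'\bmod C_2^{[|m|]}(M^{[*]})$ then $m-m'$ lies in $O(M^{[*]})+\bigoplus_{p<|m|}M^{[p]}$, which is precisely the kernel of passing to $\gr^{[|m|]}A(M^{[*]})$.

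Next I would verify surjectivity, which is immediate: every homogeneous class in $\gr^{[p]}A(M^{[*]})$ is represented by some $[m]$ with $m\in M^{[p]}$, hence is the image of $m+C_2^{[p]}(V^{[*]})$. Then I would check compatibility with the $R^{[*]}(V^{[*]})$-module structures. On $R^{[*]}(M^{[*]})$ the action is $\overline v\cdot\overline m=\overline{v_{-1}m}$ and the bracket is $\{\overline v,\overline m\}_{R(M)}=\overline{v_0 m}$ (Theorem on $R^{[*]}(M^{[*]})$ being a dg Poisson module). On $\gr^{[*]}A(M^{[*]})$, by Proposition~\ref{prop:grA(M)isDGPoissonModule} the action of $\widetilde{[u_{-1}m]}$ realizes $\widetilde x\ast_l\widetilde w$ and the bracket $\{\widetilde x,\widetilde w\}_{F^\bullet}=\widetilde{[u_0 m]}$, and $\gr^{[*]}A(M^{[*]})$ is made an $R^{[*]}(V^{[*]})$-module via $\eta_{F^\bullet}$, i.e.\ $\overline v$ acts as $\eta_{F^\bullet}(\overline v)=\widetilde{[v]}$. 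So the map sends $\overline v\cdot\overline m=\overline{v_{-1}m}\mapsto\widetilde{[v_{-1}m]}=\widetilde{[v]}\ast_l\widetilde{[m]}=\eta_{F^\bullet}(\overline v)\ast_l(\text{image of }\overline m)$, and similarly for the bracket $\overline{v_0 m}\mapsto\widetilde{[v_0 m]}=\{\widetilde{[v]},\widetilde{[m]}\}_{F^\bullet}$. Finally, the map is a chain map because $d_M$ descends compatibly to both $R^{[*]}(M^{[*]})$ and $\gr^{[*]}A(M^{[*]})$ from the same differential $d_M$ on $M^{[*]}$ (using \eqref{eq:d_M(v_nu)} to see $d_M C_2^{[*]}(M^{[*]})\subseteq C_2^{[*]}(M^{[*]})$ and $d_M O(M^{[*]})\subseteq O(M^{[*]})$).

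I do not expect any serious obstacle here; the content is entirely parallel to Proposition~\ref{prop:eta_VsurjectiveDG}. The one point requiring a little care — the ``main obstacle'' such as it is — is keeping track of the two distinct ways $C_2$ enters: the source quotient uses $C_2^{[|m|]}(V^{[*]})$ acting on $M^{[*]}$ while $O(M^{[*]})$ is the Zhu-type ideal, and one must confirm the inclusion $C_2^{[|m|]}(M^{[*]})\subseteq O(M^{[*]})+\bigoplus_{p<|m|}M^{[p]}$ rather than the naive $C_2\subseteq O$, since the latter is false at the level of graded pieces (as already noted, $O(M^{[*]})$ is not a subcomplex). Everything else is a routine transcription of the $V^{[*]}$ arguments with the bimodule products $\ast_l$ from Section~\ref{sec:7} in place of $\ast$, so I would simply state that the verifications are analogous to Propositions~\ref{prop:eta_VsurjectiveDG} and~\ref{prop:grA(M)isDGPoissonModule} and omit the computations.
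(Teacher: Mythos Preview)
Your proposal is correct and follows exactly the approach the paper itself indicates (the paper simply says the proof is similar to Proposition~\ref{prop:eta_VsurjectiveDG}). Two small corrections to your well-definedness step: the degree formula is $|v_{-k}m|=|v|+|m|+2k-2$, not $|v|+|m|+k-2$; and the inequality ``$>|m|$'' is not the point --- you need that an element of $C_2^{[p]}(M^{[*]})$ (i.e.\ a combination of $v_{-2}m'$ with $|v|+|m'|+2=p$) lies in $O(M^{[*]})+\bigoplus_{q<p}M^{[q]}$, which follows because $v\circ m'=v_{-2}m'+\sum_{i\geq 1}\binom{\on{wt}(v)}{i}v_{i-2}m'$ and each $v_{i-2}m'$ for $i\geq 1$ has degree $p-2i<p$ (this is spelled out in the proof of Proposition~\ref{prop:nat_F}).
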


We have seen in Proposition~\ref{prop:eta_VsurjectiveDG} that $\eta_{F^\bullet}: R^{[*]}(V^{[*]}) \longrightarrow \gr^{[*]} A(V^{[*]})$ is a surjective morphism of dg Poisson algebras. Hence for an $R^{[*]}(V^{[*]})$-dg Poisson module $W$, the space $\eta_{F^\bullet}(W):= \gr^{[*]} A(V^{[*]}) \otimes_{R^{[*]}(V^{[*]})} W \cong W/((\mathrm{Ker} \ \eta_{F^\bullet})W)$ is a $\gr^{[*]} A(V^{[*]})$-dg Poisson module. If $(W,W(\bullet))$ is graded, then $\eta_{F^\bullet}(W)$ is also naturally graded with $\eta_{F^\bullet}(W)(n)$ being the image of $W(n)$ for all $n \in \frac{1}{2}\mathbb{Z}$. By looking at the two paths in Diagram~\eqref{fig:losange}, we obtain the following result:

\begin{proposition}\label{prop:nat_F}
There is a natural transformation $\psi : \eta_{F^\bullet} \circ R^{[*]}(-) \longrightarrow \gr^{[*]}A(-)$, illustrated by the diagram below:
\begin{center}
 \begin{tikzpicture}[scale=0.9, transform shape]
\tikzset{>=stealth}
\node (1) at (0,0) []{$V^{[*]}\Mod^{w}$};
\node (2) at (6,0) []{$\gr^{[*]} A(V^{[*]})\Mod^{gr}$};

\draw[->]  (1) edge[out=30, in=150]  node[above] {$\gr^{[*]}A(-)$}(2);
\draw[->]  (1) edge[out=-30, in=-150]  node[below] {$\eta_{F^\bullet} \circ R^{[*]}(-)$}(2);
\draw[double distance=1.5pt,-{Stealth[scale=1.2]}] (3,-0.7) -- node[right] {$\psi$} (3,0.7);
\end{tikzpicture}
\end{center}
such that, for any $M^{[*]} \in V^{[*]}\Mod^{w}$, the morphism $\psi_{M^{[*]}}$ is the following surjective morphism of dg Poisson $\gr^{[*]}A(V^{[*]})$-modules:
\begin{align*}
\begin{array}{ccc}
 \eta_{F^\bullet}(R^{[*]}(M^{[*]})) & \stackrel{\psi_{M^{[*]}}}{\longrightarrow} & \gr^{[*]}A(M^{[*]}) \\[5pt]
 m+C_2^{[|m|]}(M^{[*]})+\left( (\on{Ker}  \eta_{F^\bullet})(R^{[*]}(M^{[*]}) \right)^{[|m|]} & \longmapsto & m+O(M^{[*]})+\displaystyle \bigoplus_{p<|m|}M^{[p]}.
\end{array}
\end{align*}
\end{proposition}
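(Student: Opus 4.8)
The plan is to establish the natural transformation $\psi$ by first checking that the component maps $\psi_{M^{[*]}}$ are well-defined, then that each is a morphism of dg Poisson $\gr^{[*]}A(V^{[*]})$-modules, and finally that the collection is natural in $M^{[*]}$. The starting point is Proposition~\ref{prop:eta_MsurjectiveDGmodules}, which already gives a surjective morphism $R^{[*]}(M^{[*]}) \longrightarrow \gr^{[*]}A(M^{[*]})$ of dg Poisson $R^{[*]}(V^{[*]})$-modules. Since $\gr^{[*]}A(M^{[*]})$ is in fact a $\gr^{[*]}A(V^{[*]})$-module (Proposition~\ref{prop:grA(M)isDGPoissonModule}) and the $R^{[*]}(V^{[*]})$-structure on it is the one pulled back along $\eta_{F^\bullet}$, the kernel $(\on{Ker}\,\eta_{F^\bullet})\cdot R^{[*]}(M^{[*]})$ is killed by that morphism, so it factors through the quotient $\eta_{F^\bullet}(R^{[*]}(M^{[*]})) = \gr^{[*]}A(V^{[*]}) \otimes_{R^{[*]}(V^{[*]})} R^{[*]}(M^{[*]})$. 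This factorization is $\psi_{M^{[*]}}$, and surjectivity is inherited from Proposition~\ref{prop:eta_MsurjectiveDGmodules}; that it is a chain map and respects the $\ast_l$-action and the bracket $\{-,-\}_{F^\bullet}$ follows because all these structures are defined on representatives $m \in M^{[*]}$ compatibly on both sides, exactly as in the proof of Proposition~\ref{prop:eta_VsurjectiveDG}.

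First I would verify well-definedness of $\psi_{M^{[*]}}$ at the level of representatives: an element of $\eta_{F^\bullet}(R^{[*]}(M^{[*]}))$ in cohomological degree $|m|$ is a class of $m$ modulo $C_2^{[|m|]}(M^{[*]})$ together with the degree-$|m|$ part of $(\on{Ker}\,\eta_{F^\bullet})\cdot R^{[*]}(M^{[*]})$; I need that $C_2^{[|m|]}(M^{[*]}) \subseteq O(M^{[*]}) + \bigoplus_{p<|m|}M^{[p]}$ (which is how $R^{[*]}(M^{[*]}) \to \gr^{[*]}A(M^{[*]})$ was defined) and that the extra kernel also lands in $O(M^{[*]}) + \bigoplus_{p<|m|}M^{[p]}$ — the latter holds because $\on{Ker}\,\eta_{F^\bullet} = \eta_{F^\bullet}^{-1}(0)$ consists of classes that vanish in $\gr^{[*]}A(V^{[*]})$, hence are represented by elements of $O(V^{[*]}) + \bigoplus_{q<|u|}V^{[q]}$, and acting with such elements on $M^{[*]}$ via $\ast_l$ lands in $O(M^{[*]})$ plus lower-degree terms by point (1) of Theorem~\ref{thm:A(M)bimod} together with the degree-lowering estimate $|u_i m| = |u|+|m|-2i-2$. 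Next I would confirm $\psi_{M^{[*]}}$ is a morphism of dg Poisson modules: the differential is induced by $d_M$ on all four modules in question, and the $\gr^{[*]}A(V^{[*]})$-action and Poisson bracket are both computed on representatives by $\widetilde{x}\ast_l\widetilde{w} = \widetilde{[u_{-1}m]}$ and $\{\widetilde{x},\widetilde{w}\}_{F^\bullet} = \widetilde{[u_0 m]}$ (Proposition~\ref{prop:grA(M)isDGPoissonModule}), which match the formulas on $R^{[*]}(M^{[*]})$ from Section~\ref{sec:5} — so compatibility is immediate once well-definedness is known.

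Finally, naturality: given $f : M^{[*]} \to N^{[*]}$ in $V^{[*]}\Mod^w$, both $\eta_{F^\bullet}\circ R^{[*]}(-)$ and $\gr^{[*]}A(-)$ are functors (the first from Section~\ref{sec:5} composed with the quotient, the second from Proposition~\ref{prop:grA(M)isDGPoissonModule}), and $f$ maps representatives to representatives compatibly with $C_2^{[*]}$ and $O$; the square
\begin{align*}
\begin{CD}
\eta_{F^\bullet}(R^{[*]}(M^{[*]})) @>{\psi_{M^{[*]}}}>> \gr^{[*]}A(M^{[*]}) \\
@V{\eta_{F^\bullet}(R^{[*]}(f))}VV @VV{\gr^{[*]}A(f)}V \\
\eta_{F^\bullet}(R^{[*]}(N^{[*]})) @>{\psi_{N^{[*]}}}>> \gr^{[*]}A(N^{[*]})
\end{CD}
\end{align*}
commutes because on a representative $m$ it reads $f(m) \mapsto f(m)$ both ways. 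I expect the main obstacle to be the well-definedness step, specifically checking that the image of $(\on{Ker}\,\eta_{F^\bullet})\cdot R^{[*]}(M^{[*]})$ in cohomological degree $|m|$ is contained in $O(M^{[*]}) + \bigoplus_{p<|m|}M^{[p]}$; this is the one place where the interaction between the two filtrations (degree vs.\ the $\ast_l$-action lowering degree by $2i+2$) and the non-subcomplex nature of $O(M^{[*]})$ has to be handled carefully, and it is essentially the module-theoretic analogue of the computation already carried out for $\eta_{F^\bullet}$ in Proposition~\ref{prop:eta_VsurjectiveDG}. Everything else reduces to transporting known identities along the quotient maps.
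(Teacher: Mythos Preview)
Your proposal is correct and follows essentially the same route as the paper: verify that $C_2^{[p]}(M^{[*]})$ and $(\on{Ker}\,\eta_{F^\bullet})\cdot R^{[*]}(M^{[*]})$ both land in $O(M^{[*]})+\bigoplus_{q<p}M^{[q]}$ using $O(V^{[*]})\ast_l m\subseteq O(M^{[*]})$ and the degree estimate $|u_i m|=|u|+|m|-2i-2$, then check compatibility with $\ast_l$, the bracket, the differential, and naturality on representatives. Your opening factorization argument via Proposition~\ref{prop:eta_MsurjectiveDGmodules} is a slightly cleaner way to package well-definedness than the paper's purely computational treatment, but the substance is identical; just be careful, as the paper is, that the $R^{[*]}(V^{[*]})$-action on $R^{[*]}(M^{[*]})$ is through $u_{-1}m$ rather than $u\ast_l m$, so the passage from $x^i_{-1}m^i$ to $x^i\ast_l m^i$ modulo lower degree needs one extra line.
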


\begin{proof}
Fix $M^{[*]} \in V^{[*]}\Mod^{w}$ and $p \in \mathbb{Z}$. The space $C_2^{[p]}(M^{[*]})=C_2^{[*]}(M^{[*]}) \bigcap M^{[p]}$ is spanned by elements of the form $v_{-2}m$ with dg homogeneous $v \in V^{[*]}$, $m \in M^{[*]}$, such that $|v|+|m|+2=p$. We know that $|v_{i-2}m|=|v|+|m|+2-2i$, so if $v_{-2}m \in C_2^{[p]}(M^{[*]})$, then $|v_{i-2}m|=p-2i$. Therefore if $i \geq 1$, we have $|v_{i-2}m| < p$ and $v_{i-2}m \in \bigoplus_{q < p}M^{[q]}$. It follows that $v \circ m=\sum_{i \geq 0}\binom{\on{wt}(v)}{i}v_{i-2}m \equiv v_{-2}m \ \on{mod} \  \bigoplus_{q < p}M^{[q]}$. Thus $C_2^{[p]}(M^{[*]}) \subseteq O(M^{[*]})+\bigoplus_{q < p }M^{[q]}$.

Set $v \in V^{[*]}$ dg homogeneous and consider $\overline{v} \in R^{[*]}(V^{[*]})$. Then $\overline{v} \in \on{Ker}  \eta_{F^\bullet}$ if and only if $v \in O(V^{[*]}) + \bigoplus_{q < |v|}V^{[q]}$. Thus there exists $x \in O(V^{[*]})$ and some dg homogeneous $v^i \in V^{[*]}$ with $|v^i| < |v|$ such that $v=x+\sum_{|v^i|<|v|}v^i$. \\
As $\on{Ker}  \eta_{F^\bullet}$ is a dg Poisson submodule of $R^{[*]}(V^{[*]})$ and $R^{[*]}(M)$ is a dg Poisson module for $R^{[*]}(V^{[*]})$, it follows that $(\on{Ker}  \eta_{F^\bullet})R^{[*]}(M^{[*]})$ is a dg Poisson submodule of $R^{[*]}(M^{[*]})$. An element of $\left( (\on{Ker}  \eta_{F^\bullet})R^{[*]}(M^{[*]}) \right)^{[p]}$ can be written $\sum_{i}a^i_{-1}m^i+C_2^{[p]}(M^{[*]})$ with $a^i +C_2^{[|a^i|]}(V^{[*]}) \in \on{Ker}  \eta_{F^\bullet}$, $m^i +C_2^{[|m^i|]}(M^{[*]}) \in R^{[*]}(M^{[*]})$ and $|a^i|+|m^i|=p$ for all $i$. However, we just saw that for each $i$, we have $a^i=x^i+\sum_{|a^{i, j}|< |a^i|}a^{i, j}$ for some $x^i \in O(V^{[*]})$ and dg homogeneous $a^{i, j} \in V^{[*]}$. Therefore one can write $a^i_{-1}m^i=x^i_{-1}m^i+\sum_{|a^{i, j}| < |a^i|}a^{i, j}_{-1}m^i$. On the other hand, the definition of the $\ast_l$-product gives: 
\begin{align*}
 \begin{array}{cl}
 x^i \ast_l m^i & =a^i \ast_l m^i-\displaystyle \sum_{|a^{i, j}| < |a^i|}a^{i, j} \ast_l m^i, \\
 & =\displaystyle\sum_{n \geq 0}\binom{\on{wt}(a^i)}{n}a^i_{n-1}m^i -\sum_{|a^{i, j}| < |a^i|}\sum_{n \geq 0}\binom{\on{wt}(a^{i, j})}{n}a^{i, j}_{n-1}m^i, \\
  & =x^i_{-1} m^i + \displaystyle\sum_{n \geq 1} \left( \binom{\on{wt}(a^i)}{n}a^i_{n-1}m^i -\sum_{|a^{i, j}| < |a^i|}\binom{\on{wt}(a^{i, j})}{n}a^{i, j}_{n-1}m^i \right).
 \end{array}
\end{align*}
It follows that $x^i \ast_l m^i \equiv x^i_{-1} m^i \ \on{mod} \  \bigoplus_{q<p}M^{[q]}$. As explained in Theorem~\ref{thm:A(M)bimod}, $O(V^{[*]}) \ast_l m \subseteq O(M^{[*]})$ for all $m \in M^{[*]}$, and so $x^i_{-1} m^i \in O(M^{[*]})+ \bigoplus_{q<p}M^{[q]}$, which in turns implies that $\sum_{i}a^i_{-1}m^i \in O(M^{[*]})+ \bigoplus_{q<p}M^{[q]}$. We saw above that $C_2^{[p]}(M) \subseteq O(M^{[*]})+\sum_{q<p}M^{[q]}$, therefore $C_2^{[p]}(M) +\left((\on{Ker}  \eta_{F^\bullet})R^{[*]}(M^{[*]})\right)^{[p]} \subseteq O(M^{[*]})+ \bigoplus_{q<p}M^{[q]}$ and the map $\psi_{M^{[*]}}$ is well-defined and surjective.

There is a natural differential $\widehat{d}$ on $\eta_{F^\bullet}(R^{[*]}(M^{[*]}))$ given by
\[
\widehat{d}(\widehat{m})=\widehat{d_M(m)} 
\]
where $\widehat{m}=m+C_2^{[|m|]}(M) +\left((\on{Ker}  \eta_{F^\bullet})R^{[*]}(M^{[*]})\right)^{[|m|]}$ for any dg homogeneous $m$ in $M^{[*]}$. It is straightforward to check that $\psi_{M^{[*]}} \circ \widehat{d}=d_{F^\bullet} \circ \psi_{M^{[*]}}$. As the morphism $\psi_{M^{[*]}}$ is clearly of degree $0$, it is a chain map.

Set $v \in V^{[*]}$ dg homogeneous and define $\widehat{v}=\overline{v}+\on{Ker} \eta_{F^\bullet} \in R^{[*]}(V^{[*]})/\on{Ker} \eta_{F^\bullet}$. Likewise, set $m \in M^{[*]}$ dg homogeneous and $\widehat{m}=m+C_2^{[|m|]}(M) +\left((\on{Ker}  \eta_{F^\bullet})R^{[*]}(M^{[*]})\right)^{[|m|]}$ $\in \eta_{F^\bullet}(R^{[*]}(M^{[*]}))$. We have:
\begin{align*}
 \begin{array}{cl}
 \widehat{v}. \widehat{m} & = (\overline{v}+(\on{Ker} \eta_{F^\bullet})^{[|v|]})_{-1}(m+C_2^{[|m|]}(M^{[*]})+((\on{Ker}  \eta_{F^\bullet})R^{[*]}(M^{[*]}))^{[|m|])}), \\[5pt]
                             & = v_{-1}m+C_2^{[|v|+|m|]}(M^{[*]}) +\left((\on{Ker}  \eta_{F^\bullet})R^{[*]}(M^{[*]})\right)^{[|v|+|m|]}.
  \end{array}
\end{align*}
We can then write:
\begin{align*}
\begin{array}{cl}
\psi_{M^{[*]}}(\widehat{v}. \widehat{m}) & =v_{-1}m+O(M^{[*]})+\displaystyle \bigoplus_{q< |v|+|m|}M^{[q]}, \\[5pt]
                                    & =v \ast_l m+O(M^{[*]})+\displaystyle \bigoplus_{q< |v|+|m|}M^{[q]}, \\[5pt]
                                    & =[v] \ast_l [m]+F^{|v|+|m|-1}A(M^{[*]}), \\[5pt]
                                    & =\displaystyle \left(v+O(V^{[*]})+\bigoplus_{q<|v|}V^{[q]} \right) \ast_l \left(m+O(M^{[*]})+\bigoplus_{q<|m|}M^{[q]}\right), \\[5pt]
                                    & =\overline{\eta_{F^\bullet}}(\widehat{v}) \ast_l \psi_{M^{[*]}}(\widehat{m}),
\end{array}
\end{align*}
where $\overline{\eta_{F^\bullet}}$ is the isomorphism $R^{[*]}(V^{[*]})/\on{Ker} \eta_{F^\bullet} \cong \gr^{[*]} A(V^{[*]})$. 

There is a Poisson bracket on $ \eta_{F^\bullet}(R^{[*]}(M^{[*]}))$ induced by the one on $R^{[*]}(M^{[*]})$, and it is given by:
\begin{align*}
 \begin{array}{cl}
\{\widehat{v}, \widehat{m}\} & = (\overline{v}+(\on{Ker} \eta_{F^\bullet})^{[|v|]})_{0}(m+C_2^{[|m|]}(M^{[*]})+\left((\on{Ker}  \eta_{F^\bullet})R^{[*]}(M^{[*]})\right)^{[|m|])}), \\[5pt]
                             & = v_{0}m+C_2^{[|v|+|m|-2]}(M^{[*]}) +\left((\on{Ker}  \eta_{F^\bullet})R^{[*]}(M^{[*]})\right)^{[|v|+|m|-2]},
  \end{array}
\end{align*}
so we can then write:
\begin{align*}
\begin{array}{cl}
\psi_{M^{[*]}}(\{\widehat{v}, \widehat{m}\}) & =v_{0}m+O(M^{[*]})+\displaystyle \bigoplus_{q< |v|+|m|-2}M^{[q]}, \\[5pt]
                                    & =\widetilde{[v_{0}m]}, \\[5pt]
                                    & =\{ \widetilde{[v]}, \widetilde{[m]} \}_{F^\bullet}, \\[5pt]
                                    & =\{\overline{\eta_{F^\bullet}}(\widehat{v}), \psi_{M^{[*]}}(\widehat{m})\}_{F^\bullet},
\end{array}
\end{align*}
by Proposition~\ref{prop:grA(M)isDGPoissonModule}. It follows that $\psi_{M^{[*]}}$ is a surjective morphism of dg Poisson $\gr^{[*]}A(V^{[*]})$-modules.

As a morphism in the category $V^{[*]}\Mod^{w}$ is a chain map, one can check directly that for any $f: M^{[*]} \longrightarrow N^{[*]}$ in $V^{[*]}\Mod^{w}$, we have the following commutative diagram:
\begin{center}
 \begin{tikzpicture}[scale=0.9, transform shape]
\tikzset{>=stealth}
\node (1) at (0,0) []{$ \eta_{F^\bullet}(R^{[*]}(M^{[*]}))$};
\node (2) at (4,0) []{$\gr^{[*]} A(M^{[*]})$};
\node (3) at (0,-3) []{$ \eta_{F^\bullet}(R^{[*]}(N^{[*]}))$};
\node (4) at (4,-3) []{$\gr^{[*]} A(N^{[*]})$};

\draw[->]  (1) -- node[above] {$\psi_{M^{[*]}}$}(2);
\draw[->]  (3) -- node[below] {$\psi_{N^{[*]}}$}(4);
\draw[->]  (1) -- node[left] {$ \eta_{F^\bullet}(R^{[*]}(f))$}(3);
\draw[->]  (2) -- node[right] {$\gr^{[*]}A(f)$}(4);
\end{tikzpicture}.
\end{center}
So $\psi$ is a natural transformation.
\end{proof}

\subsection{$R^{[*]}(M^{[*]})$  and $\gr_{*}A(M^{[*]})$}

The next result is an analogue of Proposition~\ref{prop:eta_VsurjectiveDF} but for the associated modules.

\begin{proposition}\label{prop:eta_MsurjectiveDFmodules}
The map
\begin{align*}
\begin{array}{rccc}
 & R^{[*]}(M^{[*]}) & \longrightarrow & \gr_{*} A(M^{[*]}) \\[5pt]
            & m+C_2^{[*]}(V^{[*]})_{\on{wt}(m)} & \longmapsto & \displaystyle m+O(M^{[*]})+\bigoplus_{n< \on{wt}(m)}M_{n}^{[*]}
\end{array}
\end{align*}
is a surjective morphism of differential filtered $R^{[*]}(V^{[*]})$-modules, where $ \gr_{*} A(M^{[*]})$ is seen as an $R^{[*]}(V^{[*]})$-module through $\eta_{W_\bullet}$.
\end{proposition}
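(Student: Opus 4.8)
The plan is to mirror the proof of Proposition~\ref{prop:eta_VsurjectiveDG} (and its module analogue Proposition~\ref{prop:eta_MsurjectiveDGmodules}), replacing the degree filtration $F^\bullet$ by the weight filtration $W_\bullet$ throughout, and to make crucial use of the weight-side estimates \eqref{lem:uv-vu(2)} and \eqref{lem:uv-vu(4)} from Lemma~\ref{lem:uv-vu} together with the module versions of the operations $\ast_l, \ast_r$ and the vanishing lemmas \ref{lem:Frenkel-Zhu}, \ref{lem:wt(u)-1}. First I would check that the map is well-defined: for dg homogeneous and weight homogeneous $m \in M^{[*]}$, we have $C_2^{[*]}(V^{[*]})_{\on{wt}(m)} \subseteq O(M^{[*]}) + \bigoplus_{n < \on{wt}(m)} M_n^{[*]}$, exactly as noted for $\eta_{W_\bullet}$ on $R^{[*]}(V^{[*]})$; this follows because $v_{-2}m$ with $\on{wt}(v)+\on{wt}(m)-1 = \on{wt}(m)$ forces $\on{wt}(v) = 1$, and then $v \circ m = \sum_{i\geq 0}\binom{1}{i}v_{i-2}m \equiv v_{-2}m$ modulo terms of strictly lower weight (since $\on{wt}(v_{i-2}m) = \on{wt}(v)+\on{wt}(m)-i+1-... $, i.e. decreasing in $i$). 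On non-weight-homogeneous elements one extends linearly, using that the weight-homogeneous components of an element of $O(M^{[*]})$ again lie in $O(M^{[*]})$ (this is the module analogue of the decomposition argument in Theorem~\ref{thm:R(V)dgPoissonAlgebra}, valid because $M^{[*]}$ is assumed $\frac12\bb Z$-graded as an object of $V^{[*]}\Mod^{gr}$).

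Next I would verify surjectivity, which is immediate from the construction of $\gr_* A(M^{[*]})$: every class in $W_n A(M^{[*]})/W_{n-1}A(M^{[*]})$ has a representative in $\bigoplus_{j \leq n} M_j^{[*]}$, whose top weight component is hit by the map. Then I would check that the map respects the module structures. The left $R^{[*]}(V^{[*]})$-action on $R^{[*]}(M^{[*]})$ is $\overline v \cdot \overline m = \overline{v_{-1}m}$, while the $\gr_* A(V^{[*]})$-action on $\gr_* A(M^{[*]})$ (restricted to $R^{[*]}(V^{[*]})$ via $\eta_{W_\bullet}$) is induced by $\ast_l$; as in Proposition~\ref{prop:nat_F} one shows $v \ast_l m = \sum_{i\geq 0}\binom{\on{wt}(v)}{i}v_{i-1}m \equiv v_{-1}m$ modulo $\bigoplus_{n < \on{wt}(v)+\on{wt}(m)}M_n^{[*]}$, since $\on{wt}(v_{i-1}m) = \on{wt}(v)+\on{wt}(m)-i$ drops for $i \geq 1$. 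Similarly the Poisson bracket $\{\overline v, \overline m\}_{R(M)} = \overline{v_0 m}$ on the source matches the bracket on $\gr_* A(M^{[*]})$ induced by the super-commutator (more precisely the relevant part of the differential filtered commutativity from the previous subsection), using \eqref{lem:uv-vu(4)}: $u \ast_l m - (-1)^{|u||m|}m \ast_r u - u_0(m) \in W_{\on{wt}(u)+\on{wt}(m)-2}V^{[*]}$ modulo $O$. Finally, that the map is a morphism of differential filtered modules follows since it is induced by the identity on the underlying spaces, hence is filtration-preserving and intertwines the differentials $d_M$ (and the square-zero-up-to-filtration-jump condition is inherited from $M^{[*]}$); naturality in $M^{[*]}$ is automatic because every morphism in $V^{[*]}\Mod^{gr}$ is a chain map preserving both filtrations.

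The main obstacle I anticipate is \emph{not} any single identity but rather keeping the bookkeeping of the two independent filtrations straight while only the weight one is in play here: one must be careful that the extra degree-filtration data is simply carried along passively and that all the ``$\equiv \mod O(M^{[*]})$ and $\mod W_{n-1}$'' reductions are genuinely independent of the choice of weight-homogeneous representative. Concretely, the one place where a real argument (not mere symbol-pushing) is needed is the well-definedness of the bracket: if $m, m'$ are two representatives of the same class, their difference lies in $O(M^{[*]}) + \bigoplus_{n<\on{wt}(m)}M_n^{[*]}$, and one must show the correction terms produced when expanding $u \ast_l m - u \ast_l m'$ via the lower-weight pieces land in $W_{\on{wt}(u)+\on{wt}(m)-2}A(M^{[*]})$ — this is exactly where \eqref{lem:uv-vu(2)}, \eqref{lem:uv-vu(4)}, and Lemma~\ref{lem:wt(u)-1} (in its module form, relating $\on{Res}_z Y_M(u,z)(1+z)^{\on{wt}(u)-1}z^{-n}m$ for different $n$ modulo $O(M^{[*]})$) do the work. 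Since the paper explicitly says ``The proof is similar to Proposition~\ref{prop:eta_VsurjectiveDG}'', I would keep the write-up to a few lines: state well-definedness via the containment of $C_2$, state surjectivity, and refer to the computations in Propositions~\ref{prop:eta_VsurjectiveDF} and \ref{prop:eta_MsurjectiveDGmodules} for the verification that the algebra action, the Poisson bracket, the differential, and naturality are all preserved.
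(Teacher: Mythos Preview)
Your overall approach matches the paper's: the paper gives no explicit proof here, merely indicating that this is the module analogue of Proposition~\ref{prop:eta_VsurjectiveDF}, whose proof in turn is ``similar to Proposition~\ref{prop:eta_VsurjectiveDG}''. Your plan to check well-definedness via the containment $C_2^{[*]}(M^{[*]})_{\on{wt}(m)} \subseteq O(M^{[*]}) + \bigoplus_{n<\on{wt}(m)} M_n^{[*]}$, then surjectivity, then compatibility with the action and the differential, is exactly right.

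Two corrections. First, your well-definedness computation is muddled: you write that $v_{-2}m$ having the correct weight ``forces $\on{wt}(v)=1$'', but this is false and unnecessary. A general element of $C_2^{[*]}(M^{[*]})_N$ is a sum of terms $v_{-2}m'$ with $\on{wt}(v)+\on{wt}(m')+1 = N$ (no constraint on $\on{wt}(v)$ alone). The argument you actually need is the one you give in the next breath: $\on{wt}(v_{i-2}m') = \on{wt}(v)+\on{wt}(m')+1-i$ is strictly decreasing in $i$, so $v_{-2}m' = v\circ m' - \sum_{i\geq 1}\binom{\on{wt}(v)}{i}v_{i-2}m' \in O(M^{[*]}) + W_{N-1}M^{[*]}$. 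Drop the spurious ``$\on{wt}(v)=1$'' step entirely.

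Second, and more importantly, your entire discussion of the Poisson bracket is misplaced. The proposition asserts only that the map is a morphism of \emph{differential filtered modules}, not of Poisson modules. This is not an oversight: as the paper remarks explicitly after Lemma~\ref{lem:degree_mod_2_variant}, one \emph{cannot} define a Poisson structure on $\gr_* A(V^{[*]})$ by the super-commutator, because the analogue of Lemma~\ref{lem:degree_mod_2} for the weight filtration (Lemma~\ref{lem:degree_mod_2_variant}) does not give $\on{wt}(f_i)<n$. So there is no Poisson bracket on the target for your map to preserve, and your ``main obstacle'' paragraph is attacking a nonexistent problem. Delete all references to brackets, \eqref{lem:uv-vu(4)}, and Lemma~\ref{lem:wt(u)-1}; what remains to check is only the module action (your $v\ast_l m \equiv v_{-1}m$ argument, which is correct), the weight grading, and the differential filtration, all of which are straightforward.
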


Using similar arguments as in the previous section, for a weight graded differential filtered $R^{[*]}(V^{[*]})$-module $W$, the space $\eta_{W_\bullet}(W)= \gr_{*} A(V^{[*]}) \otimes_{R^{[*]}(V^{[*]})} W \cong W/((\mathrm{Ker} \ \eta_{W_\bullet})W)$ is a weight graded differential filtered  $\gr_{*} A(V^{[*]})$-module. With a proof similar to that of Proposition~\ref{prop:nat_F}, we obtain:

\begin{proposition}
There is a natural transformation $\varphi: \eta_{W_\bullet} \circ R^{[*]}(-) \longrightarrow \gr_{*}A(-)$, illustrated in the diagram below:
\begin{center}
 \begin{tikzpicture}[scale=0.9, transform shape]
\tikzset{>=stealth}
\node (1) at (0,0) []{$V^{[*]}\Mod^{gr}$};
\node (2) at (6,0) []{$\gr_{*} A(V^{[*]})\Mod^{gr}$};

\draw[->]  (1) edge[out=30, in=150]  node[above] {$\gr_{*}A(-)$}(2);
\draw[->]  (1) edge[out=-30, in=-150]  node[below] {$\eta_{W_\bullet} \circ R^{[*]}(-)$}(2);
\draw[double distance=1.5pt,-{Stealth[scale=1.2]}] (3,-0.7) -- node[right] {$\varphi$} (3,0.7);
\end{tikzpicture}
\end{center}
such that, for any $M^{[*]} \in V^{[*]}\Mod^{gr}$, the morphism $\varphi_{M^{[*]}}$ is the following surjective morphism of weight graded differential filtered $\gr_{*}A(V^{[*]})$-modules:
\begin{align*}
\begin{array}{ccc}
 \eta_{W_\bullet}(R^{[*]}(M^{[*]})) & \stackrel{\varphi_{M^{[*]}}}{\longrightarrow} & \gr_{*}A(M^{[*]}) \\[5pt]
 m+C_2(M^{[*]})_{\on{wt}(m)}+\left((\on{Ker}  \eta_{W_\bullet})(R^{[*]}(M^{[*]})\right)_{\on{wt}(m)} & \longmapsto & m+O(M^{[*]})+\displaystyle \bigoplus_{n<\on{wt}(m)}M_n^{[*]}.
\end{array}
\end{align*}
\end{proposition}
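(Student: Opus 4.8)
The plan is to mirror exactly the structure of the proof of Proposition~\ref{prop:nat_F}, replacing everywhere the cohomological-degree filtration $F^\bullet$ by the weight filtration $W_\bullet$ and the cohomological degree $|\cdot|$ by the weight $\on{wt}(\cdot)$. The starting point is Proposition~\ref{prop:eta_MsurjectiveDFmodules}, which already gives the map $R^{[*]}(M^{[*]}) \to \gr_* A(M^{[*]})$ as a surjective morphism of differential filtered $R^{[*]}(V^{[*]})$-modules, and Proposition~\ref{prop:eta_VsurjectiveDF}, which tells us $\eta_{W_\bullet}$ is a surjective morphism of differential filtered algebras, so that $\eta_{W_\bullet}(W) = \gr_* A(V^{[*]}) \otimes_{R^{[*]}(V^{[*]})} W$ makes sense and is a weight graded differential filtered $\gr_* A(V^{[*]})$-module for any weight graded differential filtered $R^{[*]}(V^{[*]})$-module $W$.

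First I would fix $M^{[*]} \in V^{[*]}\Mod^{gr}$ and establish well-definedness of $\varphi_{M^{[*]}}$: one must check that $C_2(M^{[*]})_{\on{wt}(m)} + \left((\on{Ker}\eta_{W_\bullet}) R^{[*]}(M^{[*]})\right)_{\on{wt}(m)} \subseteq O(M^{[*]}) + \bigoplus_{n<\on{wt}(m)}M_n^{[*]}$. For the first summand this uses that for weight-homogeneous $v \in V^{[*]}$, $m \in M^{[*]}$, the term $v_{i-2}m$ has weight $\on{wt}(v)+\on{wt}(m)-i+1$, so the $i\geq 1$ terms in $v\circ m$ drop the weight strictly, giving $v\circ m \equiv v_{-2}m$ modulo lower weight. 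For the second summand, one writes an element of $\on{Ker}\eta_{W_\bullet}$ as $x + \sum v^i$ with $x \in O(V^{[*]})$ and $\on{wt}(v^i) < \on{wt}(v)$, then uses the definition of $\ast_l$ to see $x \ast_l m \equiv x_{-1}m$ modulo lower weight, together with $O(V^{[*]}) \ast_l m \subseteq O(M^{[*]})$ from Theorem~\ref{thm:A(M)bimod}. Surjectivity is immediate from the construction of $\gr_* A(M^{[*]})$. Then I would exhibit the induced differential $\widehat{d}$ on $\eta_{W_\bullet}(R^{[*]}(M^{[*]}))$ — here one must invoke that $W_n$ is stable under $d_M$ (this follows from $L(0)d_M(u) = \on{wt}(u)\,d_M(u)$, which was used to set up $\gr_* A(V^{[*]})$), so this part is genuinely different in flavor from the $F^\bullet$ case though routine — and check $\varphi_{M^{[*]}} \circ \widehat{d} = d \circ \varphi_{M^{[*]}}$, where $d$ is the differential on $\gr_* A(M^{[*]})$ from Proposition~\ref{prop:grA(M)isDFModule}.

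Next I would verify that $\varphi_{M^{[*]}}$ respects the module structures: for weight-homogeneous $v \in V^{[*]}$, $m \in M^{[*]}$, one computes $\widehat{v}.\widehat{m} = v_{-1}m + \cdots$ and then $\varphi_{M^{[*]}}(\widehat{v}.\widehat{m}) = v \ast_l m + O(M^{[*]}) + \bigoplus_{n<\on{wt}(v)+\on{wt}(m)}M_n^{[*]} = \overline{\eta_{W_\bullet}}(\widehat{v}) \ast_l \varphi_{M^{[*]}}(\widehat{m})$, using that $v_{-1}m \equiv v\ast_l m$ modulo strictly lower weight (since $v_{n-1}m$ has weight $\on{wt}(v)+\on{wt}(m)-n$). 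Here $\overline{\eta_{W_\bullet}}: R^{[*]}(V^{[*]})/\on{Ker}\eta_{W_\bullet} \cong \gr_* A(V^{[*]})$. One must also check compatibility with the differential filtrations, i.e.\ $\varphi_{M^{[*]}}(F^p \eta_{W_\bullet}(R^{[*]}(M^{[*]}))) \subseteq F^p \gr_* A(M^{[*]})$, which is immediate from the construction. Finally, naturality: for a morphism $f: M^{[*]} \to N^{[*]}$ in $V^{[*]}\Mod^{gr}$ (hence a chain map preserving both gradations), the square relating $\varphi_{M^{[*]}}, \varphi_{N^{[*]}}, \eta_{W_\bullet}(R^{[*]}(f)), \gr_* A(f)$ commutes by tracking a representative $m$ through both paths.

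The main obstacle is conceptual bookkeeping rather than any deep difficulty: unlike the $F^\bullet$ case, here there is \emph{no Poisson bracket} on $\gr_* A(M^{[*]})$ (as the Remark after Lemma~\ref{lem:degree_mod_2_variant} explains, the super-commutator fails to descend because $\on{wt}(f_i)\leq n$ rather than $<n$), so one must be careful to claim only that $\varphi_{M^{[*]}}$ is a morphism of \emph{weight graded differential filtered} $\gr_* A(V^{[*]})$-modules, dropping entirely the Poisson-bracket clauses present in Proposition~\ref{prop:nat_F}. Correspondingly the only algebraic structure to preserve is the associative $\ast_l$-action and the differential filtration, which simplifies the verification; the delicate point is simply ensuring that at every step the ``lower weight'' error terms genuinely land in $\bigoplus_{n<\on{wt}(\cdot)}M_n^{[*]} + O(M^{[*]})$, which follows uniformly from the weight bookkeeping $\on{wt}(v_{i}m) = \on{wt}(v) + \on{wt}(m) - i - 1$ for $u \in V_{\on{wt}(u)}$ together with the residue identities of Lemma~\ref{lem:rightandleftproducts}, Lemma~\ref{lem:Frenkel-Zhu}, and Lemma~\ref{lem:wt(u)-1}.
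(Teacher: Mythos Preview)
Your proposal is correct and follows exactly the approach the paper indicates: the paper's proof consists solely of the sentence ``With a proof similar to that of Proposition~\ref{prop:nat_F}, we obtain:'' preceding the statement, and you have faithfully unpacked what that similar proof entails, including the correct observation that the Poisson-bracket clauses must be dropped in the $W_\bullet$ setting.
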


\subsection{$R^{[*]}(M^{[*]})$  and $\gr_* \gr^{[*]}A(M^{[*]})$}

A result similar to Proposition~\ref{prop:eta_double_grading} but for the associated modules can be obtained by modifying the proof of Proposition~\ref{prop:eta_VsurjectiveDG}:

\begin{proposition}
The map
\begin{align*}
\begin{array}{rccc}
& R^{[*]}(M^{[*]}) & \longrightarrow &\gr_* \gr^{[*]} A(M^{[*]}) \\[5pt]
            & m+C_2^{[|m|]}(V^{[*]})_{\on{wt}(m)} & \longmapsto & \displaystyle m+O(M^{[*]})+\bigoplus_{\substack{q \leq |m| \\ n<\on{wt}(m)}}M_n^{[q]}+\bigoplus_{\substack{q < |m| \\ n \leq \on{wt}(m)}}M_n^{[q]}
\end{array}
\end{align*}
is a surjective morphism of weight graded dg Poisson $R^{[*]}(V^{[*]})$-modules, where $\gr_* \gr^{[*]} A(M^{[*]})$ is seen as an $R^{[*]}(V^{[*]})$-module through $\eta_{F^\bullet W_\bullet}$.
\end{proposition}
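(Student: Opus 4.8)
The plan is to transcribe, into the doubly-filtered module setting, the computations already made for $\eta_{F^\bullet W_\bullet}$ in Proposition~\ref{prop:eta_double_grading} and for the singly-filtered module maps in Propositions~\ref{prop:eta_MsurjectiveDGmodules} and~\ref{prop:nat_F}. Throughout I would work with elements $m$ that are homogeneous for both the cohomological degree and the weight, with $|m|=p$ and $\on{wt}(m)=n$, and extend everything linearly; exactly as in Theorem~\ref{thm:R(V)dgPoissonAlgebra} the space $C_2^{[*]}(M^{[*]})$ decomposes as $\bigoplus_{p,n}C_2^{[p]}(M^{[*]})_n$, so $R^{[*]}(M^{[*]})$ is $\bb Z\times\frac12\bb Z$-graded and the formula is well-posed on bihomogeneous classes. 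Write $I^{[p]}_n=O(M^{[*]})+\bigoplus_{q\le p,\,r<n}M_r^{[q]}+\bigoplus_{q<p,\,r\le n}M_r^{[q]}$, so that by construction $\gr_*\gr^{[*]}A(M^{[*]})=\bigoplus_{p,n}M_n^{[p]}/(M_n^{[p]}\cap I^{[p]}_n)$ and the asserted map is the passage to these quotients.

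First I would check well-definedness, i.e.\ that $C_2^{[p]}(M^{[*]})_n\subseteq I^{[p]}_n$. Using the translation operator $\mathcal D$ on $M^{[*]}$, and $Y_M(\mathcal D v,x)=\frac{d}{dx}Y_M(v,x)$, one has as for $V^{[*]}$ that $C_2^{[*]}(M^{[*]})$ is spanned by the $u_{-k}m$ with $k\ge2$, so it suffices to treat $u\circ m$ for bihomogeneous $u\in V^{[a]}_s$, $m\in M^{[b]}_t$. The summand $u_{j-2}m$ of $u\circ m$ has cohomological degree $a+b+2-2j$ and weight $s+t+1-j$; for $j=0$ these are $(a+b+2,\,s+t+1)$, while for every $j\ge1$ both strictly decrease. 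Hence $u\circ m\equiv u_{-2}m$ modulo $\bigoplus_{q<a+b+2}M^{[q]}_\bullet\subseteq I^{[a+b+2]}_{s+t+1}$, so every generator of $C_2^{[p]}(M^{[*]})_n$ lies in $I^{[p]}_n$ and the map descends. Surjectivity is immediate, since every bihomogeneous class of the target lifts to a bihomogeneous element of $M^{[*]}$, hence to a class in $R^{[*]}(M^{[*]})$.

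Next I would verify the module-morphism and chain-map statements. Pulling back the $\gr_*\gr^{[*]}A(V^{[*]})$-module structure along the surjection $\eta_{F^\bullet W_\bullet}$ of Proposition~\ref{prop:eta_double_grading}, the operations on $\gr_*\gr^{[*]}A(M^{[*]})$ coming from the bimodule of Theorem~\ref{thm:A(M)bimod} are the $\ast_l$-action and the bracket $\{\cdot,\cdot\}_{F^\bullet W_\bullet}$ built as in the module analogue of Proposition~\ref{prop:double_graded_A(V)}. Since $v\ast_l m=\sum_{k\ge0}\binom{\on{wt}(v)}{k}v_{k-1}m$ and, for $k\ge1$, the term $v_{k-1}m$ drops by $2k$ in degree and by $k$ in weight relative to $v_{-1}m$, we get $v\ast_l m\equiv v_{-1}m$ modulo $I$; thus $\overline v\cdot\overline m=\overline{v_{-1}m}$ maps to $\eta_{F^\bullet W_\bullet}(\overline v)\ast_l(\text{image of }\overline m)$. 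Likewise, the module version of Lemma~\ref{lem:uv-vu} recorded in Section~\ref{sec:7} gives $v\ast_l m-(-1)^{|v||m|}m\ast_r v\equiv\sum_{i\ge0}\binom{\on{wt}(v)-1}{i}v_i m\bmod O(M^{[*]})$, whose leading bihomogeneous term is $v_0 m$ (of bidegree $(-2,-1)$ relative to $v,m$), so $\{\overline v,\overline m\}_{R(M)}=\overline{v_0m}$ maps to $\{\eta_{F^\bullet W_\bullet}(\overline v),\ \text{image of }\overline m\}_{F^\bullet W_\bullet}$, which has the asserted bidegree. For the differential: $d_M$ commutes with $L(0)$, so it preserves the weight and raises cohomological degree by $1$, and it preserves $C_2^{[*]}(M^{[*]})$ and $O(M^{[*]})$ (Sections~\ref{sec:5} and~\ref{sec:7}); hence it descends to the degree-$(+1,0)$ differential on $\gr_*\gr^{[*]}A(M^{[*]})$ and the map is a degree-$0$ chain map intertwining the two differentials induced by $d_M$. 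Naturality in $M^{[*]}$ is formal: a morphism $f$ in $V^{[*]}\Mod^{w}$ respects the bigrading and satisfies $f(C_2^{[*]}(M^{[*]}))\subseteq C_2^{[*]}(N^{[*]})$, $f(O(M^{[*]}))\subseteq O(N^{[*]})$, giving the evident commuting square as in Proposition~\ref{prop:nat_F}.

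The one genuine point of care is the simultaneous bookkeeping of the two filtrations: at each use of a vertex mode one must observe that stepping one index past the leading one decreases the cohomological degree by $2$ \emph{and} the weight by $1$, so the error terms land in one of the two explicit ``lower'' summands of $I^{[p]}_n$ and one never needs the delicate parity argument of Lemma~\ref{lem:degree_mod_2}; that subtlety was already absorbed into the construction of $\{\cdot,\cdot\}_{F^\bullet W_\bullet}$ in Proposition~\ref{prop:double_graded_A(V)} and is here only invoked, not re-proved. Beyond this, the proof is a routine transcription of the single-filtration module results of Sections~\ref{sec:7} and~\ref{sec:8}.
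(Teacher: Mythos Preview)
Your proposal is correct and follows essentially the same approach as the paper, which merely says the result ``can be obtained by modifying the proof of Proposition~\ref{prop:eta_VsurjectiveDG}'' without further detail. You have carefully spelled out the well-definedness, surjectivity, compatibility with the product, bracket, and differential---exactly the ``direct computations'' the paper leaves implicit---and your observation that the parity argument of Lemma~\ref{lem:degree_mod_2} is not needed here (since stepping one mode index simultaneously drops both gradings) is a useful clarification.
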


Like in Proposition~\ref{prop:nat_F}, for a weight graded dg module $W$ for $R^{[*]}(V^{[*]})$, we can define $\eta_{F^\bullet W_\bullet }(W) = \gr_{*} \gr^{[*]} A(V^{[*]}) \otimes_{R^{[*]}(V^{[*]})} W \cong W/((\mathrm{Ker} \ \eta_{F^\bullet W_\bullet})W)$ and prove:

\begin{proposition}
There is a natural transformation $\Psi: \eta_{F^\bullet W_\bullet} \circ R^{[*]}(-) \longrightarrow \gr_{*} \gr^{[*]} A(-)$, illustrated in the diagram below:
\begin{center}
 \begin{tikzpicture}[scale=0.9, transform shape]
\tikzset{>=stealth}
\node (1) at (0,0) []{$V^{[*]}\Mod^{gr}$};
\node (2) at (6,0) []{$\gr_{*} \gr^{[*]} A(V^{[*]})\Mod^{gr}$};

\draw[->]  (1) edge[out=30, in=150]  node[above] {$\gr_{*} \gr^{[*]}A(-)$}(2);
\draw[->]  (1) edge[out=-30, in=-150]  node[below] {$\eta_{F^\bullet W_\bullet} \circ R^{[*]}(-)$}(2);
\draw[double distance=1.5pt,-{Stealth[scale=1.2]}] (3,-0.7) -- node[right] {$\Psi$} (3,0.7);
\end{tikzpicture}
\end{center}
such that, for any $M^{[*]} \in V^{[*]}\Mod^{gr}$, the morphism $\Psi_{M^{[*]}}$ 
\begin{align*}
\begin{array}{ccc}
 \eta_{F^\bullet W_\bullet}(R^{[*]}(M^{[*]})) & \stackrel{\Psi_{M^{[*]}}}{\longrightarrow} & \gr_{*}\gr^{[*]}A(M^{[*]})
\end{array}
\end{align*}
sending $m+C_2^{[|m|]}(M^{[*]})_{\on{wt}(m)}+\left((\on{Ker}  \eta_{F^\bullet W_\bullet})(R^{[*]}(M^{[*]})\right)^{[|m|]}_{\on{wt}(m)}$ to $m+O(M^{[*]})+\displaystyle \bigoplus_{\substack{q \leq |m| \\ n<\on{wt}(m)}}M_n^{[q]}+\bigoplus_{\substack{q < |m| \\ n \leq \on{wt}(m)}}M_n^{[q]}$
is a surjective morphism of weight graded dg Poisson $\gr_{*} \gr^{[*]} A(V^{[*]})$-modules.
\end{proposition}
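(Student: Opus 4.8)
The plan is to follow the template of the proof of Proposition~\ref{prop:nat_F}, but carrying along both the cohomological and the weight filtrations simultaneously, exactly as the passage from Proposition~\ref{prop:grA(V)isDGPoisson} to Proposition~\ref{prop:double_graded_A(V)} did at the level of algebras. The functor $\eta_{F^\bullet W_\bullet}\circ R^{[*]}(-)$ sends $M^{[*]}$ to $\gr_{*}\gr^{[*]}A(V^{[*]})\otimes_{R^{[*]}(V^{[*]})}R^{[*]}(M^{[*]})\cong R^{[*]}(M^{[*]})/\big((\on{Ker}\eta_{F^\bullet W_\bullet})R^{[*]}(M^{[*]})\big)$, which by Proposition~\ref{prop:eta_double_grading} is a weight graded dg Poisson module over $\gr_{*}\gr^{[*]}A(V^{[*]})$; so the whole content is to produce, for each $M^{[*]}$, the comparison map to $\gr_{*}\gr^{[*]}A(M^{[*]})$ and to check that it is a morphism of such modules, natural in $M^{[*]}$.

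First I would prove that $\Psi_{M^{[*]}}$ is well defined. Writing $I_n^{[p]}(M^{[*]})=O(M^{[*]})+\bigoplus_{q\le p,\,m<n}M_m^{[q]}+\bigoplus_{q<p,\,m\le n}M_m^{[q]}$, this amounts to the inclusions $C_2^{[|m|]}(M^{[*]})_{\on{wt}(m)}\subseteq I^{[|m|]}_{\on{wt}(m)}(M^{[*]})$ and $\big((\on{Ker}\eta_{F^\bullet W_\bullet})R^{[*]}(M^{[*]})\big)^{[|m|]}_{\on{wt}(m)}\subseteq I^{[|m|]}_{\on{wt}(m)}(M^{[*]})$. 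The first follows from the bidegree identities $|v_{i-2}m|=|v|+|m|+2-2i$ and $\on{wt}(v_{i-2}m)=\on{wt}(v)+\on{wt}(m)+1-i$, so that for $i\ge 1$ the term $v_{i-2}m$ is strictly lower in both filtrations and hence $v\circ m\equiv v_{-2}m$ modulo $I^{[|v|+|m|+2]}_{\on{wt}(v)+\on{wt}(m)+1}(M^{[*]})$, exactly as in Proposition~\ref{prop:nat_F}. The second uses that $\on{Ker}\eta_{F^\bullet W_\bullet}$ is the dg Poisson submodule of $R^{[*]}(V^{[*]})$ of classes $\overline v$ with $v\in O(V^{[*]})+\bigoplus_{q\le|v|,\,m<\on{wt}(v)}V_m^{[q]}+\bigoplus_{q<|v|,\,m\le\on{wt}(v)}V_m^{[q]}$, together with $O(V^{[*]})\ast_l M^{[*]}\subseteq O(M^{[*]})$ from Theorem~\ref{thm:A(M)bimod} and the fact that $v\ast_l m\equiv v_{-1}m$ modulo strictly lower bifiltration degree; one then traces through, as in the corresponding step of Proposition~\ref{prop:nat_F}, that an element of $\big((\on{Ker}\eta_{F^\bullet W_\bullet})R^{[*]}(M^{[*]})\big)^{[|m|]}_{\on{wt}(m)}$ lies in $O(M^{[*]})$ plus terms of strictly lower degree or strictly lower weight. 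Surjectivity is immediate from the construction of $\gr_{*}\gr^{[*]}A(M^{[*]})$.

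Next I would check that $\Psi_{M^{[*]}}$ respects all the structure. It is a chain map for the differential $\widehat d$ on $\eta_{F^\bullet W_\bullet}(R^{[*]}(M^{[*]}))$ induced by $d_M$ and for the differential on $\gr_{*}\gr^{[*]}A(M^{[*]})$ induced by $d_M$: both are computed from $d_M$ on representatives, so $\Psi_{M^{[*]}}\circ\widehat d=d\circ\Psi_{M^{[*]}}$ on representatives, and $\Psi_{M^{[*]}}$ is visibly of degree $0$. Compatibility with the $\ast_l$-action reduces, via the identification $R^{[*]}(V^{[*]})/\on{Ker}\eta_{F^\bullet W_\bullet}\cong\gr_{*}\gr^{[*]}A(V^{[*]})$, to $\Psi_{M^{[*]}}(\widehat v.\widehat m)=v_{-1}m+I^{[|v|+|m|]}_{\on{wt}(v)+\on{wt}(m)}(M^{[*]})=[v]\ast_l[m]+(\text{lower bifiltration})$; and the bracket on $\eta_{F^\bullet W_\bullet}(R^{[*]}(M^{[*]}))$ induced from that on $R^{[*]}(M^{[*]})$ sends $\{\widehat v,\widehat m\}$ to the class of $v_0 m$, which is $\{\widetilde{[v]},\widetilde{[m]}\}_{F^\bullet W_\bullet}$ by the module analogue of Proposition~\ref{prop:double_graded_A(V)} (the bracket having cohomological degree $-2$ and weight degree $-1$). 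Finally, for a morphism $f:M^{[*]}\to N^{[*]}$ in $V^{[*]}\Mod^{gr}$, since $f$ is a chain map commuting with the vertex operators and preserving the $\frac12\bb Z$-grading, it induces maps at every stage compatible with passing to $C_2$, to $O$, and to both associated gradeds, so the naturality square $\Psi_{N^{[*]}}\circ\big(\eta_{F^\bullet W_\bullet}(R^{[*]}(f))\big)=\big(\gr_{*}\gr^{[*]}A(f)\big)\circ\Psi_{M^{[*]}}$ commutes by direct inspection on representatives.

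The main obstacle is the bidegree bookkeeping in the well-definedness step: one must show that every ``error term'' produced when changing representatives — the commutators $u'\ast_l g_j$, $f_i\ast_r v'$, and so on, together with the extra terms coming from the lower-weight summand $z$ and the lower-degree summand $w$ as in Proposition~\ref{prop:double_graded_A(V)} — lands in $O(M^{[*]})+\bigoplus_{q\le|m|,\,n<\on{wt}(m)}M_n^{[q]}+\bigoplus_{q<|m|,\,n\le\on{wt}(m)}M_n^{[q]}$. This requires simultaneously invoking the parity normalizations of Lemmas~\ref{lem:degree_mod_2}, \ref{lem:degree_mod_2_variant} and \ref{lem:Module_degree_mod_2} to make the Koszul signs $(-1)^{|f_i||g_j|}$ agree with $(-1)^{pq}$, and the four containments \eqref{lem:uv-vu(1)}--\eqref{lem:uv-vu(4)} of Lemma~\ref{lem:uv-vu} to pin down the filtration level of each commutator, mirroring the computation in the proof of Proposition~\ref{prop:double_graded_A(V)}. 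Everything else is routine once this is in place.
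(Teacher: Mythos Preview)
Your proposal is correct and follows exactly the approach the paper indicates: the paper does not give a detailed proof of this proposition but simply says it is proved ``like in Proposition~\ref{prop:nat_F},'' and your argument carries out precisely that template, tracking both filtrations simultaneously and invoking the parity lemmas and the containments of Lemma~\ref{lem:uv-vu} just as in Proposition~\ref{prop:double_graded_A(V)}. There is nothing to add.
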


\subsection{More information on the functors}

We have obtained the following diagram:

\begin{center}
 \begin{tikzpicture}[scale=1, transform shape]
\tikzset{>=stealth}
\node (1) at (0,0) []{$V^{[*]}\Mod^{gr}$};
\node (2) at (6,5) []{$\gr^{[*]} A(V^{[*]})\Mod^{gr}$};

\node (3) at (6,-5) []{$\gr_{*} A(V^{[*]})\Mod^{gr}$};
\node (4) at (12,0) []{$\gr_{*} \gr^{[*]}A(V^{[*]})\Mod^{gr}$};

\draw[->]  (1) edge[out=80, in=190]  node[above, left] {$\gr^{[*]}A(-)$}(2);
\draw[->]  (1) edge[out=20, in=-120]  node[right] {$\eta_{F^\bullet} \circ R^{[*]}(-)$}(2);
\draw[double distance=1.5pt,-{Stealth[scale=1.2]}]  (3.6,2) -- node[above] {$\psi$} (1.7,3.3);

\draw[->]  (1) edge[out=-80, in=-190]  node[below, left] {$\gr_{*}A(-)$}(3);
\draw[->]  (1) edge[out=-20, in=120]  node[right] {$\eta_{W_\bullet} \circ R^{[*]}(-)$}(3);
\draw[double distance=1.5pt,-{Stealth[scale=1.2]}] (3.6,-2) -- node[below] {$\varphi$} (1.7,-3.3);

\draw[->]  (2) -- node[above, right] {$\gr_{*}(-)$}(4);
\draw[->]  (3) -- node[below, right] {$\gr^{[*]}(-)$}(4);

\draw[->]  (1) edge[out=10, in=170,  looseness=1] node[above] {$\gr_{*} \circ \gr^{[*]}A(-)$} (4);
\draw[->]  (1) edge[out=-10, in=-170, looseness=1] node[below] {$\eta_{F^\bullet W_\bullet} \circ R^{[*]}(-)$}   (4);
\draw[double distance=1.5pt,-{Stealth[scale=1.2]}] (6,-0.5) -- node[right] {$\Psi$} (6,0.5);
\end{tikzpicture}
\end{center}

We can define two other natural transformations. Set $M^{[*]} \in V^{[*]}\Mod^{gr}$ and consider $\psi_{M^{[*]}}$. There are filtrations on $\eta_{F^\bullet} \circ R^{[*]}(M^{[*]})$ and $\gr^{[*]}A(M^{[*]})$ induced by the weight grading on $M^{[*]}$, and we can verify that $\psi_{M^{[*]}}$ preserves these filtrations. Hence we can construct a morphism $\gr_* (\psi_{M^{[*]}})$ between the associated graded algebras, and we obtain the natural transformation
\[
\gr_* \psi : \gr_* \circ \ \eta_{F^\bullet} \circ R^{[*]}(-) \longrightarrow \gr_* \gr^{[*]}A(-)
\]
given by $(\gr_* \psi)_{M^{[*]}}=\gr_* (\psi_{M^{[*]}})$. As $\psi_M^{[*]}$ is surjective, it follows that $(\gr_* \psi)_{M^{[*]}}$ is also surjective. We can do something similar using $\varphi_{M^{[*]}}$ and the filtration coming from the cohomological grading of $M^{[*]}$, and we can construct a natural transformation
\[
\gr^{[*]} \varphi : \gr^{[*]} \circ \ \eta_{W_\bullet} \circ R^{[*]}(-) \longrightarrow \gr_* \gr^{[*]}A(-)
\]
and $(\gr^{[*]} \varphi)_{M^{[*]}}$ is a surjective morphism.

We will need the following lemma:

\begin{lemma}\label{lemma:lift_filtration}
Let $A$ and $B$  be filtered algebras with $F^nA = \{0\}=F^nB$ for $n \ll 0$, and set $f: A \longrightarrow B$ a morphism of filtered algebras satisfying $f(F^nA) \subset F^nB$ for all $n \in \mathbb{Z}$. Then we have:
\begin{center}
$f$ is an isomorphism $\Longleftrightarrow$ $\gr_F f$ is an isomorphism.
\end{center}
\end{lemma}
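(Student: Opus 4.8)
The plan is to prove the two implications separately: the forward one ($f$ an isomorphism $\Rightarrow \gr_F f$ an isomorphism) is essentially formal, while the backward one carries all the content. Throughout, I read ``isomorphism of filtered algebras'' as a filtered algebra morphism admitting a filtered algebra morphism as two-sided inverse (equivalently, a bijective strict morphism, i.e.\ one with $f(F^nA)=F^nB$ for all $n$), and I use that each filtration is exhaustive and bounded below.

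For ``$f$ iso $\Rightarrow \gr_F f$ iso'' I would simply invoke functoriality of $\gr_F(-)$: if $g$ is a filtered inverse of $f$, then $\gr_F g$ is a two-sided inverse of $\gr_F f$. Concretely, $f$ restricts to isomorphisms $F^nA\cong F^nB$ and $F^{n-1}A\cong F^{n-1}B$, hence induces an isomorphism on the quotients $\gr^{[n]}_F A\cong\gr^{[n]}_F B$ for every $n$, so $\gr_F f$ is an isomorphism of graded algebras.

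For ``$\gr_F f$ iso $\Rightarrow f$ iso'' I would argue as follows. Since the filtrations are exhaustive and bounded below, every nonzero $a\in A$ has a well-defined degree $|a|=\min\{n:a\in F^nA\}$ with nonzero symbol $\overline a\in\gr^{[|a|]}_F A$. \emph{Injectivity of $f$}: if $a\neq 0$ and $f(a)=0$ then $\gr^{[|a|]}_F f(\overline a)=\overline{f(a)}=0$ with $\overline a\neq 0$, contradicting injectivity of $\gr_F f$. \emph{Surjectivity and strictness of $f$}: I would prove $f(F^nA)=F^nB$ by induction on $n$. The base case $n\ll 0$ is immediate since both sides vanish. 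For the inductive step, assume $f(F^{n-1}A)=F^{n-1}B$ and take $b\in F^nB$; surjectivity of $\gr^{[n]}_F f$ gives $\overline a\in\gr^{[n]}_F A$ with $\gr^{[n]}_F f(\overline a)=\overline b$, and lifting $\overline a$ to $a\in F^nA$ yields $b-f(a)\in F^{n-1}B=f(F^{n-1}A)$, so $b-f(a)=f(a')$ for some $a'\in F^{n-1}A$, whence $b=f(a+a')\in f(F^nA)$. Combined with the hypothesis $f(F^nA)\subseteq F^nB$ this proves the claim, and exhaustiveness then gives $B=\bigcup_n F^nB=\bigcup_n f(F^nA)=f(A)$; since moreover $f(F^nA)=F^nB$ for all $n$, the set-theoretic inverse of $f$ is again a filtered algebra morphism, so $f$ is an isomorphism of filtered algebras.

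I do not anticipate a genuine obstacle: this is the standard ``filtered-to-graded'' comparison lemma, and the only places the hypotheses are truly used are the base case $F^nA=F^nB=0$ for $n\ll 0$ of the induction and the exhaustiveness at the final step (plus boundedness below to make the degree function, hence the injectivity argument, well-defined). The one point to state carefully is that the whole argument concerns only the underlying filtered vector spaces: the multiplicative structure enters solely through the assumption that $f$ and $\gr_F f$ are algebra maps, so no additional compatibility with products needs to be verified.
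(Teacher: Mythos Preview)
Your proof is correct and follows essentially the same strategy as the paper's, with only a presentational difference. The paper proves the nontrivial direction by induction on $n$, invoking the Five Lemma on the short exact sequences $0\to F^{n-1}\to F^n\to \gr^{[n]}\to 0$ to conclude that each restriction $f\colon F^nA\to F^nB$ is an isomorphism; you instead separate the two halves of that diagram chase, establishing injectivity globally via symbols and surjectivity (together with strictness) by the same induction carried out by hand. Your version is slightly more self-contained, while the paper's is more compact; the underlying argument is identical. Your care in specifying that ``isomorphism'' should mean a filtered isomorphism (equivalently, a strict bijection) is well placed: without strictness the forward implication can fail, and the paper's own inductive argument in the backward direction does produce strictness, consistent with your reading.
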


\begin{proof}
One direction is obvious. For the other direction, assume that $\gr_F f$ is an isomorphism. Set $i$ such that $F^jA=F^jB=\{0\}$ for $j <i$ and $F^iA$ or $F^iB$ is not zero. We have the following diagram:
\begin{center}
 \begin{tikzpicture}[scale=1, transform shape]
\tikzset{>=stealth}
\node (1) at (0,0) []{$0$};
\node (2) at (2,0) []{$F^{i-1}A$};
\node (3) at (4,0) []{$F^iA$};
\node (4) at (6.5,0) []{$F^iA/F^{i-1}A$};
\node (5) at (9,0) []{$0$};
\draw[->]  (1) -- (2);
\draw[->]  (2) -- (3);
\draw[->]  (3) -- (4);
\draw[->]  (4) -- (5);

\node (6) at (0,-2) []{$0$};
\node (7) at (2,-2) []{$F^{i-1}B$};
\node (8) at (4,-2) []{$F^iB$};
\node (9) at (6.5,-2) []{$F^iB/F^{i-1}B$};
\node (10) at (9,-2) []{$0$};
\draw[->]  (6) -- (7);
\draw[->]  (7) -- (8);
\draw[->]  (8) -- (9);
\draw[->]  (9) -- (10);

\draw[->]  (2) -- node[right] {$f$} (7);
\draw[->]  (3) -- node[right] {$f$} (8);
\draw[->]  (4) -- node[right] {$\gr_F f$} (9);
\end{tikzpicture}
\end{center}
where the rows are exact. The left most down arrow is $\{0\} \longrightarrow \{0\}$ and the right most down arrow is an isomorphism of vector spaces by assumption. By the Five Lemma, $f : F^iA \longrightarrow F^iB$ is an isomorphism of vector spaces. By doing an induction on $i$ and repeating this reasoning, we see that $f :A \longrightarrow B$ is an isomorphism of algebras.
\end{proof}

We now consider $V^{[*]}$ as the adjoint $V^{[*]}$-module.
\begin{theorem}
Let $V^{[*]}$ be a dg vertex operator algebra. Then $\Psi_{V^{[*]}}$, $(\gr_* \psi)_{V^{[*]}}$, $(\gr^{[*]} \varphi)_{V^{[*]}}$ and  $\psi_{V^{[*]}}$ are isomorphisms. If $V^{[p]}=\{0\}$ for $p \ll 0$, then $\varphi_{V^{[*]}}$ is an isomorphism.
\end{theorem}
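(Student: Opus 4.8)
The plan is to identify each of the five maps, evaluated at the adjoint module $V^{[*]}$, with a canonical isomorphism coming from one of the surjections $\eta_{F^\bullet}$, $\eta_{W_\bullet}$, $\eta_{F^\bullet W_\bullet}$ of Propositions~\ref{prop:eta_VsurjectiveDG}, \ref{prop:eta_VsurjectiveDF}, \ref{prop:eta_double_grading}, and then to pin down in which ambient category bijectivity already suffices. First I would note that for $M^{[*]}=V^{[*]}$ the subspace $C_2^{[*]}(V^{[*]})$ vanishes in $R^{[*]}(V^{[*]})$ and $(\on{Ker}\eta_{F^\bullet})R^{[*]}(V^{[*]})=\on{Ker}\eta_{F^\bullet}$ since the latter is an ideal, so $\eta_{F^\bullet}\big(R^{[*]}(V^{[*]})\big)\cong R^{[*]}(V^{[*]})/\on{Ker}\eta_{F^\bullet}$; comparing with the explicit formula in Proposition~\ref{prop:nat_F}, the map $\psi_{V^{[*]}}$ is exactly the morphism $\overline{\eta_{F^\bullet}}$ induced by $\eta_{F^\bullet}$ on this quotient. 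As $\eta_{F^\bullet}$ is a surjection of dg Poisson algebras, $\overline{\eta_{F^\bullet}}$ is an isomorphism of dg Poisson algebras, hence $\psi_{V^{[*]}}$ is an isomorphism of dg Poisson $\gr^{[*]}A(V^{[*]})$-modules. Verbatim the same argument with $\eta_{F^\bullet W_\bullet}$ gives $\Psi_{V^{[*]}}=\overline{\eta_{F^\bullet W_\bullet}}$, an isomorphism of weight graded dg Poisson modules; in both of these the cohomological degree and the weight are genuine gradings on the target, so bijectivity plus structure preservation forces the inverse to be a morphism as well.

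Next, $(\gr_*\psi)_{V^{[*]}}=\gr_*(\psi_{V^{[*]}})$ and $(\gr^{[*]}\varphi)_{V^{[*]}}=\gr^{[*]}(\varphi_{V^{[*]}})$. I would argue that both land, by Lemma~\ref{lem:2gradations}, in the doubly graded algebra $\gr_*\gr^{[*]}A(V^{[*]})\cong\gr^{[*]}\gr_*A(V^{[*]})$, that both source functors evaluated at $R^{[*]}(V^{[*]})$ are canonically identified with $\eta_{F^\bullet W_\bullet}\big(R^{[*]}(V^{[*]})\big)$ (using that $R^{[*]}(V^{[*]})$ is already cohomologically graded and base change of $R^{[*]}(V^{[*]})$ over itself is the identity), and that under these identifications — whose mutual compatibility is precisely the content of Lemma~\ref{lem:2gradations} — both maps become $\Psi_{V^{[*]}}=\overline{\eta_{F^\bullet W_\bullet}}$, hence isomorphisms.

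It remains to treat $\varphi_{V^{[*]}}$, which again equals the bijective, structure preserving map $\overline{\eta_{W_\bullet}}\colon R^{[*]}(V^{[*]})/\on{Ker}\eta_{W_\bullet}\longrightarrow\gr_*A(V^{[*]})$, now a morphism of filtered algebras. The point is that on $\gr_*A(V^{[*]})$ the cohomological degree survives only as a \emph{filtration} $F^\bullet$ — because $O(V^{[*]})$ is not a subcomplex, $A(V^{[*]})$, and hence $\gr_*A(V^{[*]})$, carries the degree as a filtration, not as a grading — so bijectivity of $\varphi_{V^{[*]}}$ does not by itself make its inverse respect $F^\bullet$. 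To fix this I would apply Lemma~\ref{lemma:lift_filtration} for the filtration $F^\bullet$: its associated graded is $\gr^{[*]}(\varphi_{V^{[*]}})=(\gr^{[*]}\varphi)_{V^{[*]}}$, an isomorphism by the previous paragraph, and under the hypothesis $V^{[p]}=\{0\}$ for $p\ll 0$ the filtration $F^\bullet$ is bounded below on both $\eta_{W_\bullet}\big(R^{[*]}(V^{[*]})\big)$ and $\gr_*A(V^{[*]})$; Lemma~\ref{lemma:lift_filtration} then yields that $\varphi_{V^{[*]}}$ is an isomorphism of (weight graded) differential filtered algebras, hence also of the corresponding $\gr_*A(V^{[*]})$-modules. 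This last step is the main obstacle: one must track carefully which pieces of structure are gradings and which are merely filtrations, observe that the degenerate (only filtered) cohomological structure on $\gr_*A(V^{[*]})$ is exactly what blocks a purely formal argument, and verify the hypotheses of Lemma~\ref{lemma:lift_filtration} — in particular that the boundedness of $V^{[*]}$ in cohomological degree is what makes the lemma applicable.
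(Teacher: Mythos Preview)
Your approach is correct and takes a genuinely different route from the paper. The paper argues as follows: it observes that for the adjoint module the source and target of each map are ``the same'' (so the maps are surjective endomorphisms), uses that $\dim V^{[p]}_n<\infty$ to conclude that $\Psi_{V^{[*]}}$, $(\gr_*\psi)_{V^{[*]}}$, and $(\gr^{[*]}\varphi)_{V^{[*]}}$ are isomorphisms on each finite-dimensional bigraded piece, and then invokes Lemma~\ref{lemma:lift_filtration} \emph{twice} --- once for the weight filtration to lift $(\gr_*\psi)_{V^{[*]}}$ to $\psi_{V^{[*]}}$, and once for the degree filtration (under the extra hypothesis) to lift $(\gr^{[*]}\varphi)_{V^{[*]}}$ to $\varphi_{V^{[*]}}$. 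You instead make the paper's cryptic ``endomorphism'' remark explicit: for $M^{[*]}=V^{[*]}$ one has $(\Ker\eta)\cdot R^{[*]}(V^{[*]})=\Ker\eta$, so $\psi_{V^{[*]}}$ and $\Psi_{V^{[*]}}$ are literally the first-isomorphism-theorem maps $\overline{\eta_{F^\bullet}}$ and $\overline{\eta_{F^\bullet W_\bullet}}$, hence isomorphisms with no appeal to finite dimensionality and no need for Lemma~\ref{lemma:lift_filtration} in the case of $\psi_{V^{[*]}}$. Your treatment of $\varphi_{V^{[*]}}$ via Lemma~\ref{lemma:lift_filtration} coincides with the paper's.

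One point to tighten: your identification of $(\gr_*\psi)_{V^{[*]}}$ and $(\gr^{[*]}\varphi)_{V^{[*]}}$ with $\Psi_{V^{[*]}}$ is correct but your justification via Lemma~\ref{lem:2gradations} is a bit elliptical. The cleanest way to phrase it is that once $\psi_{V^{[*]}}=\overline{\eta_{F^\bullet}}$ is known to be bijective, the two quotients of $V^{[*]}$ it identifies are literally equal as subquotients, so the induced weight filtrations coincide and $\gr_*(\psi_{V^{[*]}})$ is the identity under that identification; the analogous statement for $\gr^{[*]}(\varphi_{V^{[*]}})$ follows the same way. What your approach buys is independence from the finiteness hypothesis $\dim V^{[p]}_n<\infty$ for $\psi_{V^{[*]}}$ and $\Psi_{V^{[*]}}$; what the paper's approach buys is that one does not need to track the precise identifications, only dimensions.
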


\begin{proof}
As we consider the adjoint module, all morphisms in the theorem are in fact endomorphisms. From the definition of $V^{[*]}$, we know that $\on{dim} V_n^{[p]}$ is finite for all $n$ and $p$, so each summand of $\gr_* \gr^{[*]}A(V^{[*]})$ is finite dimensional. As $\Psi_{V^{[*]}}$ preserves the gradation and is surjective, it follows that it is an isomorphism. The same is true for $(\gr_* \psi)_{V^{[*]}}$ and $(\gr^{[*]} \varphi)_{V^{[*]}}$. In the definition of a dg vertex operator algebra, we have $V_n^{[*]}=\{0\}$ for $n \ll 0$. We can apply Lemma~\ref{lemma:lift_filtration} to $(\gr_* \psi)_{V^{[*]}}$ and see that $\psi_{V^{[*]}}$ is an isomorphism. If  $V^{[p]}=\{0\}$ for $p \ll 0$, then the same reasoning shows that $\varphi_{V^{[*]}}$ is an isomorphism.
\end{proof}

\delete{
\begin{definition}
A graded dg module $M^{[*]}$ for a dg vertex operator algebra $V^{[*]}$ is called dg compact if for any $p \in \mathbb{Z}$, $n \in \frac{1}{2}\mathbb{Z}$, we have $\on{dim} M_n^{[p]} < \infty$. 
\end{definition}}

\end{document}